\newcommand{\CC}{\mathbb{C}}
\newcommand{\PP}{\mathbb{P}}
\newcommand{\RR}{\mathbb{R}}
\newcommand{\TT}{\mathbb{T}}
\newcommand{\cala}{\mathcal{A}}
\newcommand{\calb}{\mathcal{B}}
\newcommand{\calbl}{\mathcal{B}^{\circlearrowleft}}
\newcommand{\calbr}{\mathcal{B}^{\circlearrowright}}
\newcommand{\cale}{\mathcal{E}}
\newcommand{\calc}{\mathcal{C}}
\newcommand{\calf}{\mathcal{F}}
\newcommand{\calh}{\mathcal{H}}
\newcommand{\calhl}{\mathcal{H}_{\circlearrowleft}}
\newcommand{\calhr}{\mathcal{H}_{\circlearrowright}}
\newcommand{\calo}{\mathcal{O}}
\newcommand{\calx}{\mathcal{X}}
\newcommand{\caly}{\mathcal{Y}}
\newcommand{\calz}{\mathcal{Z}}
\newcommand{\calu}{\mathcal{U}}
\newcommand{\calv}{\mathcal{V}}
\newcommand{\calw}{\mathcal{W}}
\newcommand{\nc}{\mathbf{nc}}
\newcommand{\rg}{\Upsilon}
\newcommand{\del}{\partial}
\newcommand{\cofib}{\operatorname{cofib}}
\newcommand{\fib}{\operatorname{fib}}
\newcommand{\Hom}{\operatorname{Hom}}
\newcommand{\Loc}{\operatorname{Loc}}
\newcommand{\Map}{\operatorname{Map}}
\newcommand{\Mod}{\operatorname{Mod}}
\newcommand{\map}{\underline{\operatorname{Map}}}
\newcommand{\Mor}{\boldsymbol{\operatorname{Map}}}
\newcommand{\Perf}{\operatorname{Perf}}
\newcommand{\perf}{\underline{\operatorname{Perf}}}
\newcommand{\Spec}{\operatorname{Spec}}
\newcommand{\Sym}{\operatorname{Sym}}
\newcommand{\Symp}{\operatorname{Symp}}
\newcommand{\Fuk}{\operatorname{Fuk}}
\newcommand{\FS}{\operatorname{FS}}
\newcommand{\HH}{\operatorname{HH}}
\newcommand{\HC}{\operatorname{HC}}
\newcommand{\tot}{\operatorname{tot}}
\newcommand{\Rep}{\operatorname{Rep}}
\newcommand{\id}{\operatorname{id}}
\newcommand{\SH}{\operatorname{SH}}
\newcommand{\Fun}{\operatorname{Fun}}
\newcommand{\ev}{\operatorname{ev}}
\newcommand{\ch}{\operatorname{ch}}
\newcommand{\QCoh}{\operatorname{QCoh}}
\newcommand{\colim}{\operatorname{colim}}
\newcommand{\catk}{\operatorname{Cat}_k}
\newcommand{\tr}{\operatorname{tr}}
\newcommand{\add}{\operatorname{add}}
\newcommand{\chain}{\operatorname{C}}
\newcommand{\pr}{\operatorname{pr}}
\newcommand{\hml}{\operatorname{H}}
\newcommand{\traj}{\operatorname{Traj}}
\newcommand{\free}{\operatorname{Free}}
\newcommand{\Tw}{\operatorname{Tw}}
\newcommand{\Ob}{\operatorname{Ob}}
\newcommand{\Gen}{\operatorname{Gen}}
\newcommand{\nondeg}{\operatorname{Nondeg}}
\newcommand{\rev}{\operatorname{rev}}
\theoremstyle{plain}
\newtheorem{thm}{Theorem}
\numberwithin{thm}{section}
\newtheorem{lem}[thm]{Lemma}
\newtheorem{prop}[thm]{Proposition}
\newtheorem{cor}[thm]{Corollary}
\theoremstyle{definition}
\newtheorem{defn}[thm]{Definition}
\newtheorem{example}[thm]{Example}
\newtheorem{notation}[thm]{Notation}
\newtheorem{question}[thm]{Question}
\newtheorem{problem}[thm]{Problem}
\newtheorem{construction}[thm]{Construction}
\theoremstyle{remark}
\newtheorem{rmk}[thm]{Remark}
\title{Calabi-Yau structures,  spherical functors, and shifted symplectic structures}
\author{Ludmil Katzarkov, Pranav Pandit and Ted Spaide}
\date{}
\begin{document}

\maketitle
\begin{abstract}
A categorical formalism is introduced for studying various features of the symplectic geometry of Lefschetz fibrations and the algebraic geometry of Tyurin degenerations. This approach is informed by homological mirror symmetry, derived noncommutative geometry, and the theory of Fukaya categories with coefficients in a perverse Schober. The main technical results include (i) a comparison between the notion of relative Calabi-Yau structures and a certain refinement of the notion of a spherical functor, (ii) a local-to-global gluing principle for constructing Calabi-Yau structures, and (iii) the construction of shifted symplectic structures and Lagrangian structures on certain derived moduli spaces of branes. Potential applications to a theory of derived hyperk\"ahler geometry are sketched.

\end{abstract}

\tableofcontents

\section{Introduction}

This paper develops a program aimed at studying various features of Picard-Lefschetz theory using the language and methods of higher category theory and derived geometry, with a view toward applications in algebraic geometry, symplectic geometry, and homological mirror symmetry.  Some of the potential applications to fundamental questions in classical geometry, and hyperk\"ahler geometry in particular, are sketched in section \ref{sec:future-directions}, where we outline the contours of a twistorial approach to a theory of derived hyperk\"ahler geometry.  The main results of this paper are summarized in subsection \ref{subsec:results} below. Before giving a precise statement of the results, we begin with a leisurely informal discussion of the motivation and background for these results.

\subsection{Categorical Picard-Lefschetz theory: spherical functors and monodromy.}\label{subsec:categorical-Picard-Lefschetz}

The categorical approach to Picard-Lefschetz theory derives from viewing Picard-Lefschetz theory through the lens of symplectic geometry. Arnol'd \cite{Arnold-symp-mon} observed that the monodromy transformations of Picard-Lefschetz theory are in fact symplectomorphisms, leading to a shift in perspective, from a topological view of Lefschetz fibrations, to the richer and more refined symplectic viewpoint.  The theory of \emph{symplectic} Lefschetz fibrations was introduced by Donaldson \cite{Donaldson-symplectic-Lefschetz}, and many of the categorical ideas are implicit in his work. Following seminal ideas of Donaldson, Fukaya and Kontsevich, Seidel introduced and developed a beautiful and far-reaching theory of Fukaya-type categories associated to Lefschetz fibrations \cite{Seidel-book}, generalizing many features of classical Picard-Lefschetz theory to the realm of symplectic topology. The point of view that we will take in this paper, following Seidel, is that the symplectic topology of a Lefschetz fibration is captured by the interaction between the Fukaya-Seidel category of the fibration and the Fukaya category of its generic fiber. As we will show below, this interaction is encoded in the data of certain \emph{spherical functors}.

 Let \(w: X \rightarrow \mathbb{C}\) be a Lefschetz fibration. For simplicity and concreteness, let us assume that \(X\) is a quasi-projective variety over \(\mathbb{C}\), and that \(w\) is a proper holomorphic map. The function \(w\) has a finite set \( \{p_1,p_2,...,p_n\}\) of critical values. On \(\mathbb{C}-\{p_1,...,p_n\}\), \(w\) defines a locally trivial fiber bundle. To a point \(t \in \mathbb{C}-\{p_1,...,p_n\}\), we can associate the Fukaya category \(\Fuk(X_t)\) \cite{Fooh1, Fooh2} of the fiber \(X_t := w^{-1}(t)\). This is an \(\infty\)-category that is linear over the Novikov field \(\mathbb{C}((t^{\mathbb{R}}))\). The objects of this category are, roughly speaking, Lagrangian submanifolds of \(X_t\) equipped with unitary local systems. The space of morphisms between two objects is given by Lagrangian Floer cohomology. As \(t\) varies, we obtain a local system of \(\infty\)-categories over the complement of the set of critical values of \(w\): indeed, if \(\gamma: [0,1] \rightarrow \mathbb{C} -\{p_1,...,p_n\}\) is a path, then symplectic parallel transport gives rise to a symplectomorphism \(X_{\gamma(0)} \rightarrow X_{\gamma(1)}\), which in turn induces an equivalence of categories \(\Fuk(X_{\gamma(0)}) \rightarrow \Fuk(X_{\gamma(1)})\). 

On the other hand, we have the Fukaya-Seidel category \(\FS(X,w)\) of the Lefschetz fibration. Objects of this category are, roughly speaking, Lagrangian submanifolds of \(X\) decorated with unitary local systems, with the property that the intersection of \(w(L)\) with the complement of some compact set is contained in the positive real axis. If we fix a smooth fiber \(Y:= X_t\) of \(w\), then there is a natural functor

\[
\cap: \FS(X,w) \rightarrow \Fuk(Y)
\]

given by sending a Lagrangian \(L\) in \(X\) to \(L \cap X_t\). This leads naturally to the following:

\begin{question}\label{question:cap-functor}
What extra structure does the functor \(\cap: \FS(X,w) \rightarrow \Fuk(Y)\) carry?
\end{question}

A partial answer to this question is given by the  notion of spherical functor introduced by Anno and Logvinenko \cite{AL}. A functor \(F: \calx \rightarrow \caly \) between stable \(\infty\)-categories is \emph{spherical} if 

\begin{itemize}
\item[-] \(F \) admits a left adjoint \(F^*\) and a right adjoint \(F^!\).
\item[-] The homotopy cofiber \(T_{\caly}\) of the counit \(F \circ F^! \rightarrow \id_{\caly}\) is an autoequivalence of \(\caly\)
\item[-] The homotopy fiber \(T_{\calx}\) of the unit \(\id_{\calx} \rightarrow F^! \circ F \) is an autoequivalence of \(\calx\).
\end{itemize}

As originally observed by Kapranov and Schechtman \cite{KaSch}, this structure should be viewed as a natural categorification of the structure of a perverse sheaf on the unit disc \(D\) in \(\mathbb{C}\), with Whitney stratification consisting of the strata \(\{0\}\) and \(D - \{0\}\). Indeed, to a perverse sheaf \(\calf\) on \(D\) one can associate a pair of vector spaces: the vanishing cycles \(\Phi(\calf)\) of \(\calf\), and the nearby cycles \(\Psi(\calf\)). These vector spaces come with a pair of maps 
\[
\xymatrix{
 \Phi(\calf) \ar@<.5ex>[r]^{u} &  \Psi(\calf) \ar@<.5ex>[l]^{v} \\
}
\]
with the property that \(T_{\Psi} := \id_{\Psi} - uv \) and \(T_{\Phi} := \id_{\Phi} - vu \) are invertible morphisms of vector spaces. Furthermore, the classification of perverse sheaves on \(D\) \cite{Beilinson-perverse-gluing, GGM} says that the datum of a perverse sheaf on \(D\) is equivalent to the data of pair of vector spaces and maps satisfying the condition above.

Theorems \ref{intro:thm-relcy-iff-spherical} and \ref{intro:thm-A-model-relcy} together imply that the functor \( \cap: \FS(X,w) \rightarrow \Fuk(Y)\) is spherical, under hypotheses that are expected to hold in great generality. We refer the reader to Section \ref{sec:A-model} for the precise statement. This fact has been anticipated by the experts, and a different proof was outlined by Abouzaid in \cite{A}. The emphasis of our investigation is on the interaction of the spherical functor with Calabi-Yau structures, to which we turn now.

Let \(\calx\) be an \(\infty\)-category linear over some field \(k\). Recall that a Serre functor \cite{BK-Serre} for \(\calx\) is an autoequivalence \(S_{\calx}: \calx \rightarrow \calx\) with the property that for any two objects \(x, y\) in \(\calx\) we have 
\[
\Mor(x,y) \simeq \Mor(x, S_{\calx}y)^{\vee}
\]
where \(\Mor(x,y)\) is the \(k\)-module spectrum of maps from \(x\) to \(y\). For example, if \(\calx = \Perf(X)\) is the category of perfect complexes on a smooth projective variety \(X\) of dimension \(d\), then \(S_{\calx} = (-) \otimes \omega_X[d]\) where \(\omega_X\) is the canonical line bundle. A Serre functor is unique up to isomorphism if it exists, and a Serre functor exists for any smooth and proper category.  A \emph{weak} Calabi-Yau structure of dimension $d$ on $\calx$ is an equivalence of functors between the Serre functor $S_{\calx}$ of $\calx$ and $\id_{\calx}[d]$.

If we are given a Lefschetz fibration \(w: X \rightarrow \mathbb{C}\) as above, with \(w\) proper,  then the generic smooth fiber \(Y\) is a compact oriented manifold. Poincare duality on \(Y\) is reflected in the fact that the category \(\caly:= \Fuk(Y)\) carries a Calabi-Yau structure \cite{Gan2}, and in particular \(S_{\caly} \simeq \id_{\caly}[d]\), where \(2d\) is the real dimension of \(Y\). On the other hand, the Serre functor \(S_{\calx}\) of \(\calx := \FS(X,w)\) is non-trivial. This leads naturally to the following question:

\begin{question}\label{question:spherical-cy-interaction}
How is the Serre functor of \(\FS(X,w)\) related to the data of the spherical adjunction \(\FS(X,w) \leftrightarrows \Fuk(Y)\) and the monodromy of the Lefschetz fibration \(w: X \rightarrow \mathbb{C}\)?
\end{question}

The monodromy around infinity of the fibration \(w\) gives rise to an autoequivalence \(\sigma_*\) of \(\FS(X,w)\). In \cite{Seidel-symplectic-Hochschild}, Seidel observed, using symplectic geometry, that there is a natural map \(\sigma_* \rightarrow \id \). In the language of spherical functors that we have introduced above, \(\sigma_*\) is the functor \(T_{\calx} := \fib(\id_{\calx} \rightarrow \cap^! \circ \cap)\). Seidel attributes to Kontsevich the idea that \(\sigma_* \simeq S_{\calx}[-(d+1)]\). 

Motivated by these considerations, we introduce the notion of what we will call a \emph{compatible spherical functor}, building on the notion of spherical functor introduced by Anno and Logvinenko \cite{AL} and a subsequent addition by Katzarkov, Kontsevich and Pantev \cite{KKP} (see Definition \ref{def:spherical}). Roughly speaking, given an arbitrary linear \(\infty\)-category \(\calx\), a \(d\)-Calabi-Yau category \(\caly\), and a spherical functor \(F: \calx \rightarrow \caly\), we say that \(F\) is \emph{compatible with the Calabi-Yau structure}, or simply that \(F\) is a compatible spherical functor, if we are given an equivalence \(T_{\calx} \simeq S_{\calx}[-(d+1)]\) satisfying a compatibility condition. This is a relative version of the notion of a weak Calabi-Yau structure in the sense that it reduces to the latter when $\caly = 0$.  Theorems \ref{intro:thm-relcy-iff-spherical} and \ref{intro:thm-A-model-relcy} imply that the functor \(\cap: \FS(X,w) \rightarrow \Fuk(Y)\) is in fact a spherical functor that is compatible with the natural Calabi-Yau structure on \(\Fuk(Y)\), under certain technical hypotheses.

\subsection{Gluing spherical functors: perverse Schobers and categorical surgery.}\label{subsec:gluing-spherical}

The monodromy around infinity of the Lefschetz fibration can be computed from the monodromy around the individual singular fibers. From the discussion of the last paragraph of \S \ref{subsec:categorical-Picard-Lefschetz} above, we should expect that this manifests itself categorically as a ``decomposition'' of the Serre functor of \(\FS(X,w)\). Let us try to formulate this more precisely.

Let the notation be as in \S \ref{subsec:categorical-Picard-Lefschetz}. Choose a disc \(U_i\) centered at each of the critical values \(p_i \in \mathbb{C}\) of \(w: X \rightarrow \mathbb{C}\), such that the discs are pairwise disjoint. Let \(X_i := w^{-1}(U_i)\) and let \(w_i: X_i \rightarrow U_i\) be the restriction of \(w\) to \(X_i\). Then we can define a Fukaya-Seidel category \(\calx_i = \FS(X_i,w_i)\) associated to \((X_i, w_i)\) exactly as we did for \((X,w)\). More precisely, if we choose a point \(q_i^\infty\) on \(\partial U_i\), then the objects of \(\FS(X_i, w_i)\) are decorated Lagrangians that, outside some compact set containing \(p_i\),  project to the ray joining \(p_i\) to \(q_i^\infty\). The morphisms are defined exactly as for \(\FS(X,w)\) (see \cite{Seidel-book}). Let us call \(\FS(X_i, w_i)\) the local Fukaya-Seidel category at \(p_i\). This is a categorification of the space of vanishing cycles at \(p_i\).

\begin{question}\label{question:gluing}
Can the Fukaya-Seidel category \(\FS(X,w)\) be reconstructed from the data of the local categories \(\FS(X_i, w_i)\) and the collection of spherical functors \(\cap_i: \FS(X_i, w_i) \rightarrow \Fuk(Y)\), where \(Y\) is a generic smooth fiber of the Lefschetz fibration?
\end{question}

Kontsevich has proposed a powerful sheaf-theoretic framework for Fukaya-type categories \cite{kontsevich-symp}, which provides a natural setting within within which to pose and study such local-to-global questions. Given a topological fibration \(\pi: X \rightarrow B\), we can compute the cohomology of \(X\) as the derived global sections of a sheaf on \(B\) whose stalks are the cohomology of the fibers: \(H^*(X, \mathbb{Z}) = R\Gamma(B, R\pi_*\mathbb{Z})\). In the same vein, if \(\pi: X \rightarrow B\) is a symplectic fibration of symplectic manifolds, then Kontsevich proposes that one should recover Fukaya-type categories associated to \(X\) as a certain global object associated to  a \emph{sheaf of categories} over \(B\).

More precisely, Kontsevich proposes that one should introduce a notion of a \emph{perverse sheaf of linear \(\infty\)-categories}. A symplectic fibration  \(\pi: X \rightarrow B\) should give rise to such a ``perverse sheaf'' \(\mathfrak{X}\), whose generic stalk is the Fukaya category of the generic fiber of \(\pi\). By passing to a suitable additive invariant, such as periodic cyclic homology, one should obtain a perverse sheaf on \(B\), in the usual sense of the term.  

Given a singular Lagrangian \(\mathcal{L} \rightarrow B\), one should have an induced constructible sheaf of \(\infty\)-categories \(\mathfrak{X}_{\mathcal{L}}\) on \(\mathcal{L}\), and we define \(\Fuk(\mathcal{L}, \mathfrak{X})\) to be the global sections of \(\mathfrak{X}_{\mathcal{L}}\) (there is a dual version involving cosheaves; for simplicity we stick to the case of sheaves here). The \emph{semi-classical Fukaya category of B with coefficients in} \(\mathfrak{X}\) can then be defined to be
\[
\Fuk^{\mathrm{sc}}(B, \mathfrak{X}) = \colim_{\mathcal{L} \rightarrow B}  \Fuk(\mathcal{L}, \mathfrak{X})
\] 

The Fukaya category \(\Fuk(B, \mathfrak{X})\) of \(B\) with coefficients in \(\mathfrak{X}\) should then be a certain deformation of \(\Fuk^{\mathrm{sc}}(B, \mathfrak{X})\) by pseudo-holomorphic discs in \(B\).  Conjecturally, when \(\mathfrak{X}\) comes from a symplectic fibration \(\pi: X \rightarrow B\),  \(\Fuk(B, \mathfrak{X})\) should recover an appropriate version of the partially wrapped Fukaya category of \(X\).

In the special case that this paper is concerned with, the base has real dimension 2. In this situation, the notion of a perverse sheaf of categories has been given a concrete mathematical incarnation under the moniker ``perverse Schobers'' by Kapranov and Schechtman in \cite{KaSch}. Let us restrict, as we did above, to the case where the base is topologically \(\mathbb{C}\). Then a perverse Schober \(\mathfrak{X}\) on \(\mathbb{C}\) with singularities at \(\{p_1,...,p_n\}\) is essentially given by the data of categories \(\caly, \calx_1,..., \calx_n\), and spherical functors \(F_i: \calx_i \rightarrow \caly\) (see \S2 of \cite{KaSch}). The particular presentation of the Schober depends on a ``system of cuts'' in the base; we refer to \emph{loc. cit} for details about how the presentation depends on this choice. The category \(\caly\) is the stalk of the local system of categories \(\mathfrak{X}_{ \lvert \mathbb{C}- \{ p_1,...,p_n \} }\) at some fixed point \(t \in \mathbb{C}-\{p_1,...,p_n\} \). When the Schober ``arises from'' a Lefschetz fibration, then we have \(\calx_i = \FS(X_i, w_i)\), \(\caly = \Fuk(Y) = \Fuk(X_t)\) and \(F_i = \cap_i\). 

Note that in this situation, a singular Lagrangian is just a ribbon graph \(\Gamma\) in \(\mathbb{C}\). The sheaf \(\mathfrak{X}_{\Gamma}\) associates to 
\begin{itemize}
\item[-] a generic smooth point of \(\Gamma\) the category \(\caly\), 
\item[-] an \(n\)-valent vertex located at point in \(\mathbb{C}- \{ p_1,...,p_n \}\) the category \(\caly \otimes \Rep(A_{n-1})\)
\item[-] a \(1\)-valent vertex that coincides with the critical point \(p_i\) of \(w\) the category \(\calx_i\)
\end{itemize}

Conjecturally, we should have
\[\Fuk(\mathbb{C}, \mathfrak{X}) \simeq \FS(X,w)
\]

With this background in mind we can formulate

\begin{question}\label{question:CY-Schober}
\begin{enumerate}[label=(\roman*)]
\item Monodromy around infinity induces an autoequivalence of the generic fiber \(\caly\) of the perverse Schober \(\mathfrak{X}\). How can describe this autoequivalence in terms of the presentation of the Schober by spherical functors \(F_i: \calx_i \rightarrow \caly\)? In other words, can we effectively compute the Serre functor of the global sections of the Schober in terms of the Serre functors \(S_{\calx_i}\) and the spherical functors \(F_i\)?
\item Can (compatible) spherical functors be glued together to produce new (compatible) spherical functors?
\item Let \(\mathfrak{X}\) be a perverse Schober, such that the generic fiber is Calabi-Yau, and all of the spherical functors in some presentation are compatible with the Calabi-Yau structure (see Definition \ref{def:spherical}). Is there a natural compatible spherical structure on the map from the global sections of this Schober to the generic fiber?
\end{enumerate}

\end{question}

The phenomena that this question seeks to address have been studied using a different language in \cite{Seidel-symplectic-Hochschild, Seidel-nc-divisor} and \cite{KV}. In \cite{Seidel-nc-divisor}, Seidel introduces the notion of a \emph{noncommutative divisor} to capture the structures associated with the symplectic geometry of a  Lefschetz fibration in the language of \(A_{\infty}\)-algebras. Essentially the same algebraic structure was introduced earlier in a different context by Tradler and Zeinalian in \cite{Tradler-Zeinalian}, under the moniker \emph{\(V_{\infty}\)-algebra}. Independently, Kontsevich and Vlassopoulos have introduced a generalization of this structure in \cite{KV}. Our approach differs in that, whereas the aforementioned papers rely on explicit formulas and resolutions within the \(A_{\infty}\)-formalism, here we seek to formulate everything intrinsically, and in a manifestly model-independent manner. It would be an interesting question to compare the structures and constructions we introduce here with those in \emph{loc. cit}. As in those papers, our constructions are most naturally viewed from the perspective of derived noncommutative geometry, to which we turn now.

\subsection{Noncommutative oriented cobordisms  and Tyurin degenerations.}\label{subsec:Tyurin} 
Derived noncommutative geometry, pioneered by Kontsevich and developed in \cite{KS, KKP}, is a powerful paradigm in which to formulate and study various dualities predicted by string theory, such as mirror symmetry. The fundamental underlying principle is that it is useful to identify a ``space'' with the physical theory it defines. In the context of topological string theory, the relevant physical theory is an extended \(2d\)-topological field theory (TFT). According to the cobordism hypothesis \cite{Lurie-TFT}, such a TFT is determined by the \(k\)-linear \(\infty\)-category that it assigns to a framed point. With this as motivation, one defines a derived noncommutative space, or \(\nc\)-space for short, to be a \(k\)-linear stable \(\infty\)-category. This point of view leads naturally to the homological/categorical interpretation of physical mirror symmetry introduced in \cite{Kontsevich-HMS}. The central problem of \(\nc\)-geometry is to extract the partition function of the TFT on various manifolds from the data of the category, and to describe various geometric operations on the target spaces for the A-model TFT and B-model TFT in purely categorical terms.

A smooth and proper category determines an extended \(2d\)-TFT defined on \emph{framed} manifolds. In order to obtain a theory that is defined on \emph{oriented} manifolds, the category must be equipped with an additional structure: namely, a Calabi-Yau structure. The notion of a \emph{weak} Calabi-Yau structure introduced in \S\ref{subsec:categorical-Picard-Lefschetz} (an isomorphism between the Serre functor and a shift of the identity functor), turns out to be insufficient for this purpose. To rectify this, Kontsevich and Soibelman introduced \cite{KS} the notion of a Calabi-Yau structure of dimension \(d\) on $\calx$, which is a class of degree \(-d\) 

\[
\HH_*(\calx) \rightarrow k[-d]
\]

in the dual of the cyclic homology $\HC_{\ast}(\calx) \simeq \HH_{\ast}(\calx)_{hS^{1}}$ of $\calx$ which induces an isomorphism of the Serre functor with $\id_{\calx}[d]$ under the identification \(\HH_*(\calx)^{\vee} \simeq \Mor(\id_{\calx}, S_{\calx})\) upon forgetting the $S^1$ equivariance data. Kontsevich-Soibelman \cite{KS} and Costello \cite{Costello-CY} showed how to construct an oriented topological field theory from the data of a category equipped with a Calabi-Yau structure. Lurie's classification of topological field theories \cite{Lurie-TFT} places these results within a very general context. In \emph{loc. cit.}, he introduces the notion of a Calabi-Yau object in an arbitrary symmetric monoidal $(\infty,2)$-category, and shows that Calabi-Yau objects in a certain  $(\infty,2)$-category of $k$-linear categories can be identified with Calabi-Yau categories in the sense of \cite{KS}.

There is another sense in which the term orientation is associated with Calabi-Yau structures. If \(X\) is a compact oriented \(C^{\infty}\)-manifold of real dimension \(d\), then capping with the fundamental class defined by the orientation defines a map of degree \(-d\)

\[
\cap [X]: C^*(X,k) \rightarrow k[-d]
\]
 
from the space of \(k\) valued cochains on \(X\) to the base field \(k\). This map implements Poincar\'e duality in the sense that the composite map
\[
\xymatrix{
C^*(X,k) \otimes C^*(X,k) \ar[r]^(0.6){\cup} & C^*(X,k) \ar[r]^{\cap [X]} & k[-d] \\
}
\]

defines a perfect pairing on \(C^*(X,k)\).

One can view the a manifold \(X\), or more generally any topological space \(X\), as a constant derived stack. From this point of view, \(C^*(X,k)\) can be viewed as the space of functions on \(X\), i.e., the (derived) global section \(\Gamma(X, \mathcal{O}_X)\). Now, if \(X\) is an arbitrary derived stack satisfying certain finiteness conditions, then one defines \cite{PTVV} an \emph{\(\mathcal{O}\)-orientation of dimension \(d\)} on \(X\) to be a map 
\[
\mathrm{or}: \Gamma(X, \mathcal{O}_X) \rightarrow k[-d]
\]
satisfying properties analogous to those satisfied by \(\cap [X]\) above. If \(X\) is a projective variety of dimension \(d\), then it is straightforward to see that \emph{an \(\mathcal{O}\)-orientation of dimension \(d\) on \(X\) is a Calabi-Yau structure of dimension \(d\) on \(X\)}. The formal similarity between the definition of a Calabi-Yau structure on a category and an \(\mathcal{O}\)-orientation on a derived stack is manifest. Furthermore, it is true that a projective variety \(X\) admits an \(\mathcal{O}\)-orientation of dimension \(d\) if and only if the linear \(\infty\)-category \(\Perf(X)\) admits a Calabi-Yau structure of dimension \(d\).

In manifold theory, it is essential to understand how manifolds can be glued out of more elementary manifolds \emph{with boundary}. Tyurin \cite{Tyurin-Fano-CY} has emphasized that the following two situations should be seen as analogous:
\begin{itemize}
\item[-] a smooth oriented \(C^{\infty}\)-manifold \(X\) with boundary \(\partial X \subset X \), as encoded by the pullback map \(C^*(X,k) \rightarrow C^*(\partial X, k)\). Note that the boundary inherits a natural orientation.

\item[-] an anticanonical divisor \(D:= \partial X\) in a Fano variety \(X\), and the associated pullback map on derived global sections \(\Gamma(X, \mathcal{O}_X) \rightarrow \Gamma(\partial X, \mathcal{O}_{\partial X})\). Note that since \(\partial X\) is \emph{anticanonical}, it acquires a natural Calabi-Yau structure. 
\end{itemize}

Broadening our world of spaces to include all derived stacks, these two examples can be placed on a common footing. Motivated by the theory of Lagrangian structures in shifted symplectic geometry, Calaque has introduced the notion of a \emph{nondegenerate boundary structure} \cite{Calaque}, which is essentially an \(\mathcal{O}\)-orientation on a ``derived stack with boundary''.  Roughly speaking, it consists of an orientation \(\Gamma(\partial X, \mathcal{O}_{\partial X}) \rightarrow k[-d]\) together with a homotopy commutative diagram
\[
\xymatrix{
\Gamma(X,\mathcal{O}_X) \ar[r] \ar[d]  & \Gamma(\partial  X, \mathcal{O}_{\partial X}) \ar[d] \\
0 \ar[r] & k[-d]
}
\]
witnessing a nullhomotopy of the pullback of the orientation, that is required to satisfy a certain nondegeneracy condition. 

Tyurin's fundamental insight was that, just as one can understand an oriented manifold by viewing it as being obtained by gluing two oriented manifolds with boundary (i.e., by splitting along a codimension one submanifold), one can study a Calabi-Yau variety \(X\) by \emph{degenerating} it to the union of two Fano varieties \(X_1, X_2\) glued along a common anticanonical divisor \(Z\): \(X \rightsquigarrow X_1 \cup_{Z} X_2\). From the perspective of the \(B\)-model, this corresponds to a deformation of the category \(\Perf(X_1) \times_{\Perf(Z)} \Perf(X_2)\) to the category \(\Perf(X)\). Since the latter category is Calabi-Yau, it is natural to expect that the former category carries a natural Calabi-Yau structure.

To better understand this situation, we introduce the notion of a relative Calabi-Yau structure on a functor $F: \calx \rightarrow \caly$  (Definition \ref{def:relcy}); this is a noncommutative analogue of the nondegenerate boundary structures of \cite{Calaque}. Our definition builds on a suggestion  made by To\"en in \cite{Toen-DAG}. Expanding on To\"en's suggestion and ideas that emerged initially in discussions between Dyckerhoff and the second named author, Brav and Dyckerhoff  have also introduced and studied notions of relative Calabi-Yau structures in \cite{BD}. Roughly speaking, a relative Calabi-Yau structure on \(F: \calx \rightarrow \caly\) consists of a Calabi-Yau structure \(\phi: \HH_*(\caly) \rightarrow k[-d]\), together with ``isotropy data'' given by a functor \(\Delta^1 \times \Delta^1 \rightarrow \Perf(k)\) 
\[
\xymatrix{
\HH_*(\calx) \ar[r] \ar[d] &\HH_*(\caly) \ar[d]^{\phi} \\
0 \ar[r] & k[-d] \\
}
\]

satisfying a certain nondegeneracy condition. 

In \cite{BD}, it is shown that, in the situation above, the restriction functor \(i^*: \Perf(X_i) \to \Perf(Z)\) carries a natural relative Calabi-Yau structure. Now, the mirror to this functor is the cap functor \(\cap: \FS(X^{\vee}_i,w_i) \to \Fuk(Z^{\vee})\) for a Landau-Ginzburg model \((X^{\vee}_i,w_i)\) mirror to \(X_i\). Thus we are led to the following question, to which an affirmative answer is provided by Theorem \ref{intro:thm-A-model-relcy} (see \S\ref{sec:A-model}):

\begin{question}
How can we construct a natural relative Calabi-Yau structure on the cap functor \(\cap: \FS(X,w) \to \Fuk(Y)\) of a Landau-Ginzburg model without appealing to mirror symmetry (i.e., using symplectic geometry)? 
\end{question}

As mentioned earlier, a key feature of \(\cap: \FS(X,w) \to \Fuk(Y)\) is that it is a spherical functor. Given a spherical functor \(F: \calx \to \caly\) with Calabi-Yau target, there is natural compatibility structure that one can ask for, relating the Calabi-Yau structure to \(F\) (Definition \ref{def:spherical}).

\begin{question}\label{question:spherical-vs-relcy}
What is the relationship between the notion of a compatible spherical functor and the notion of a relative Calabi-Yau structure? 
\end{question}

Theorem \ref{intro:thm-relcy-iff-spherical} if a functor \(F\) admits both left and right adjoints then a \emph{weak} Calabi-Yau structure on \(F\) are the same thing as the structure of a compatible spherical functor.

\begin{rmk}It is implied by a conjecture in \cite{KS}(Conjecture 10.2.8) that given a weak Calabi-Yau structure, there exists a \emph{strong} Calabi-Yau structure for which the underlying equivalence $S_{\calx} \simeq \id_{\calx}[d]$ is the given weak CY-structure. The relative analogue of this conjecture would be the statement that every compatible spherical functor can be lifted to a relative Calabi-Yau structure. In light of this, it seems interesting to investigate the relationship between compatible spherical functors and \emph{strong} relative Calabi-Yau structures. This investigation is particularly germane to the problem of constructing Calabi-Yau structures on perverse Schobers \cite{KaSch}, and showing that such a structure gives rise to a Calabi-Yau structure on the Fukaya category of the base manifold with coefficients in the Schober, as is expected \cite{kontsevich-symp}.
\end{rmk}

We return to the problem of constructing a Calabi-Yau structure on the fiber product category \(\Perf(X_1) \times_{\Perf(Z)} \Perf(X_2)\). Let  \(\caly\) be a category equipped with a \(d\)-Calabi-Yau structure \(\phi_{\caly}\) and \(\calz\) be equipped with a \(d\)-Calabi-Yau structure \(\phi_{\calz}\). By an oriented noncommutative cobordism from \((\caly, \phi_{\caly})\) to \((\calz, \phi_{\calz})\), we mean a functor \(F: \calx \rightarrow \caly \times \calz\), together with a relative Calabi-Yau structure on \(F\) for which the Calabi-Yau structure on \(\caly \times \calz\) is given by \((- \phi_{\caly}, \phi_{\calz})\). The discussion in the paragraphs above motivates the following:

\begin{question}\label{question:cobordism}
Can noncommutative oriented cobordisms be glued together? More precisely:
\begin{enumerate}
\item If \(\calx_i \to \calz\) carry relative Calabi-Yau structures, then is \(\calx_1 \times_{\calz} \calx_2\) equipped with a natural Calabi-Yau structure?
\item If each of the functors defining a perverse Schober is equipped with a relative Calabi-Yau structure, does the global sections of the Schober over a Lagrangian skeleton inherit a (relative) Calabi-Yau structure?
\end{enumerate}
\end{question}

One of our main results, Theorem \ref{intro:thm-main-gluing}, is an affirmative answer to the first part of this question. The remaining parts will be the subject of a future investigation.  

\begin{rmk}
The notion of a Pre-CY structure introduced by Kontsevich and Vlassopoulos \cite{KV} long predates the concept of relative Calabi-Yau structures, and appears to be very closely related to the latter. They construct in \emph{loc. cit} a TQFT starting from the data of a Pre-CY structure. It seems very plausible that the gluing constructions involved in the construction of their TQFT are related to the gluing construction of Theorem \ref{intro:thm-main-gluing}. It would be very interesting to understand the precise relationship between the two approaches.
\end{rmk}

One may also consider the inverse problem: given two Fano varieties \(X_1, X_2\) glued along a common anticanonical divisor \(Z\), when may we consider \(X_1\cup_Z X_2\) a degeneration of a smooth Calabi-Yau \(X\)?  For sufficiently nice \(X_i\) and \(Z\), Kawamata and Namikawa \cite{KawaNami} show this is equivalent to the criterion that the normal bundles \(N_{Z/X_1}\) and \(N_{Z/X_2}\) are inverse.  This criterion is then the same as certain spherical twists being inverses.  Following Doran, Harder, and Thompson \cite{DHT} we note that this situation is entirely analogous to the case where we have two Landau-Ginzburg models \(w_i:X_i\to\CC\) with the same generic fibre, which we can then glue to form a single model \(w:X\to\CC\).  If the monodromies on the fibre of each \(X_i\) are inverses, then the monodromy at infinity of \(w\) is trivial, so we can extend this to some \(\tilde{w}:\tilde{X}\to\mathbb{P}^1\).  We may consider \(\Fuk(\tilde{X})\) to be a deformation of \(\Fuk(X)\) by instanton corrections; this latter category is obtained by gluing the Fukaya-Seidel categories of each \(X_i\).  In \cite{DHT} the authors conjecture that the two cases are equivalent under homological mirror symmetry, and show several cases where this is true.  We may consider all these cases within the realm of noncommutative geometry, which lets us treat them on an equal footing.

\begin{question}\label{question:KW}
Is there a noncommutative version of the theory of Tyurin degenerations and Friedman-Kawamata-Namikawa smoothings in the language of noncommutative cobordisms?
\end{question}

\subsection{From noncommutative orientations to shifted symplectic structures.}\label{subsec:nc-symp} Shifted symplectic structures are analogues of symplectic structures in the world of derived geometry. The main novel features in the derived context are:

\begin{itemize}

\item[-] For a differential form (such as a pre-symplectic form) to be \emph{closed} is a \emph{property/condition} in ordinary geometry, while it is an \emph{extra structure} in derived geometry.

\item[-] A \emph{\(n\)-shifted} symplectic structure induces an equivalence \(\mathbb{T}_{X} \rightarrow \mathbb{L}_{X}[n]\) between the tangent complex and a \emph{shift} of the cotangent complex. 
\end{itemize}

Shifted symplectic structures were first introduced in the context of supermanifolds in \cite{AKSZ} in order to construct certain topological field theories. This theory has been vastly generalized in \cite{PTVV} to the world of derived $\infty$-stacks. As in that paper, we will work in an algebro-geometric context; thus, the shifted symplectic structures in this paper are analogues of \emph{holomorphic} symplectic structures, rather than \(C^{\infty}\) ones. Throughout this paper, we will use the powerful language developed in \cite{PTVV}, and we refer the reader to that paper for a more detailed discussion of shifted symplectic geometry.

There are two main ways one construct new shifted symplectic stacks from old ones:

\begin{itemize}
\item[-] Forming the derived mappping stack from an oriented stack to a shifted symplectic stack

\item[-] Forming the derived intersection of two Lagrangians in a shifted symplectic stack
\end{itemize}

One of the motivating goals of this paper is to formulate and exploit noncommutative analogues of these constructions. After recalling these constructions, we will explain how the results proven here contribute toward this goal.

Let \(X\) and \(Y\) be derived Artin stacks, with \(Y\) having an \(n\)-shifted symplectic form.  A \emph{\(d\)-orientation} \([X]\) on \(X\) is a ``fundamental class'' \([X]:\Gamma(X,\calo_X)\to k[-d]\) satisfying certain nondegeneracy properties.  Given such an orientation, we can construct a symplectic form on \(\map(X,Y)\):

\begin{thm}[\cite{PTVV}, Theorem 2.5]\label{thm:PTVV-transgression}  Let \(Y\) be a derived Artin stack, and let \(X\) be an \(\calo\)-compact derived stack with a d-orientation \([X]\).  Assume the derived mapping stack \(\map(X,Y)\) is a derived Artin stack locally of finite presentation over \(k\).  Then we have a transgression map:
\[
\int_{[X]}\ev^*(-):\Symp(Y,n)\to\Symp(\map(X,Y),n-d).
\]
\label{thm:sympmapthm}
\end{thm}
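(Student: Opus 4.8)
The plan is to build the transgression in two stages --- pullback along the universal evaluation map, then integration over the oriented stack $X$ --- and finally to check that nondegeneracy is preserved.

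Write $M := \map(X,Y)$ and let $\ev \colon M \times X \to Y$ be the universal evaluation. Recall from \cite{PTVV} that every derived stack $Z$ has a space $\mathcal{A}^{2,\mathrm{cl}}(Z,n)$ of closed $2$-forms of degree $n$, extracted from the weight-graded mixed de Rham complex of $Z$, and that this assignment is contravariantly functorial. The first stage is simply to pull back: $\ev^{*}\colon \mathcal{A}^{2,\mathrm{cl}}(Y,n)\to \mathcal{A}^{2,\mathrm{cl}}(M\times X,n)$, sending the $n$-shifted symplectic form $\omega$ on $Y$ to a closed $2$-form $\ev^{*}\omega$ of degree $n$ on $M\times X$.

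The second stage is the core of the argument. Using the $d$-orientation $[X]\colon \Gamma(X,\calo_{X})\to k[-d]$, I would produce an integration map $\int_{[X]}\colon \mathcal{A}^{2,\mathrm{cl}}(M\times X,n)\to \mathcal{A}^{2,\mathrm{cl}}(M,n-d)$. The input here is $\calo$-compactness of $X$: it yields a K\"unneth-type identification of the (graded mixed) de Rham complex of $M\times X$ with a suitable tensor product of the de Rham complexes of $M$ and of $X$, with enough functoriality in the first variable. Capping the $X$-factor against the orientation class $[X]$ then defines a weight-preserving, degree-$(-d)$ map on the associated negative cyclic complexes, hence on closed $p$-forms of each degree. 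Composing the two stages gives the transgression $\int_{[X]}\ev^{*}(-)$ as a map of spaces of closed $2$-forms. It remains to see this lands in symplectic forms. Closedness of $\int_{[X]}\ev^{*}\omega$ is built into the construction, so the point is nondegeneracy of the induced map $\theta\colon\mathbb{T}_{M}\to\mathbb{L}_{M}[n-d]$. Since $\map(X,Y)$ is assumed Artin and locally of finite presentation, it suffices to check $\theta$ is an equivalence on tangent complexes at each point $f\colon X\to Y$, where $\mathbb{T}_{M,f}\simeq \Gamma(X,f^{*}\mathbb{T}_{Y})$. Nondegeneracy of $\omega$ gives $f^{*}\mathbb{T}_{Y}\xrightarrow{\ \sim\ } f^{*}\mathbb{L}_{Y}[n]$; since $f^{*}\mathbb{T}_{Y}$ is a perfect complex on $X$ and the orientation $[X]$ is nondegenerate, duality over $X$ gives $\Gamma(X,f^{*}\mathbb{L}_{Y})\simeq \Gamma(X,f^{*}\mathbb{T}_{Y})^{\vee}[-d]\simeq \mathbb{L}_{M,f}[-d]$. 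Concatenating these produces $\mathbb{T}_{M,f}\simeq\mathbb{L}_{M,f}[n-d]$, and a diagram chase through the two-stage construction identifies this composite with $\theta_{f}$, so $\int_{[X]}\ev^{*}\omega$ is $(n-d)$-shifted symplectic.

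The hard part will be the construction of $\int_{[X]}$ at the level of \emph{closed} forms rather than bare forms: one must check that capping with $[X]$ is compatible with the de Rham differential and the full mixed (negative cyclic) structure encoding closedness, and that $\calo$-compactness supplies the K\"unneth identification functorially enough in $M$ for the construction to be natural. A secondary difficulty is reconciling the two incarnations of $\theta_{f}$ --- the one coming directly from the transgressed $2$-form, and the one assembled from nondegeneracy of $\omega$ together with Poincar\'e-type duality on $X$ --- which is what ultimately certifies that the transgressed form is genuinely nondegenerate.
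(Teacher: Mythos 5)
This theorem is stated in the paper as a quotation of \cite{PTVV}, Theorem~2.5, and the paper gives no proof of its own — so there is no in-paper argument to compare against. Your two-stage outline (pull back along $\ev$, then integrate against the orientation using $\calo$-compactness to get a K\"unneth identification of graded mixed de Rham complexes, followed by the pointwise nondegeneracy check via $\mathbb{T}_{M,f}\simeq\Gamma(X,f^*\mathbb{T}_Y)$ and the duality $\Gamma(X,E^\vee)\simeq\Gamma(X,E)^\vee[-d]$ supplied by the orientation) is a faithful sketch of the route taken in PTVV itself, and the two places you flag as carrying the real technical weight — constructing $\int_{[X]}$ on closed (negative-cyclic) forms rather than bare forms, and identifying the transgressed pairing with the duality pairing — are indeed where that proof does its work.
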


\begin{thm}[\cite{Calaque}, Theorem 2.9]\label{thm:Calaque-transgression}
 Let \(Y\) be a derived Artin stack equipped with an \(n\)-shifted symplectic structure. Let \(X\) be an \(\calo\)-compact derived stack with a d-orientation \([X]\), and let \(i: X \rightarrow X'\) be a morphism equipped with a nondegenerate boundary structure.  Assume that the mapping stacks \(\map(X,Y)\) and \(\map(X',Y)\) are derived Artin stacks locally of finite presentation over \(k\). Then the pullback map:
\[
i^*: \map(X,Y') \to \map(X,Y)
\]
is equipped with a natural Lagrangian structure.

\end{thm}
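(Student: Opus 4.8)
The plan is to relativize the transgression construction of Theorem~\ref{thm:PTVV-transgression}. Recall that a Lagrangian structure on a morphism $f\colon L\to Z$ into an $n$-shifted symplectic stack $(Z,\omega_Z)$ consists of a nullhomotopy of $f^{*}\omega_Z$ in the space $\cala^{2,\mathrm{cl}}(L;n)$ of closed $n$-shifted $2$-forms, subject to the nondegeneracy requirement that the map $\TT_{L/Z}\to\LL_L[n-1]$ it induces be an equivalence; equivalently, that $\TT_L\to f^{*}\TT_Z\to\LL_L[n]$ be a fibre sequence. We take $Z=\map(X,Y)$ with the $(n-d)$-shifted symplectic form $\omega_X:=\int_{[X]}\ev_X^{*}\omega$ produced by Theorem~\ref{thm:PTVV-transgression}, $L=\map(X',Y)$, and $f=i^{*}\colon\map(X',Y)\to\map(X,Y)$ the pullback along $i$.

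\emph{The isotropic structure.} Under the mapping-stack adjunction the tautological identity $\ev_X\circ(\id_X\times i^{*})=\ev_{X'}\circ(i\times\id)$ holds for morphisms $X\times\map(X',Y)\to Y$; since integration against $[X]$ is natural in the mapping-stack variable, this yields
\[
(i^{*})^{*}\omega_X=(i^{*})^{*}\!\!\int_{[X]}\ev_X^{*}\omega=\int_{[X]}(i\times\id)^{*}\ev_{X'}^{*}\omega .
\]
The nondegenerate boundary structure on $i$ is precisely the datum that allows one to integrate a closed $n$-shifted $2$-form on $X'\times M$ ``over $X'$ relative to the boundary $X$'': concretely it provides a nullhomotopy of the composite $\Gamma(X',\calo_{X'})\to\Gamma(X,\calo_X)\xrightarrow{[X]}k[-d]$, and feeding this into the source factor of the restriction $\cala^{2,\mathrm{cl}}(X'\times\map(X',Y);n)\to\cala^{2,\mathrm{cl}}(X\times\map(X',Y);n)$ followed by $\int_{[X]}$ produces, by a Stokes-type argument, a canonical nullhomotopy of $\int_{[X]}(i\times\id)^{*}\ev_{X'}^{*}\omega$ in $\cala^{2,\mathrm{cl}}(\map(X',Y);n-d)$. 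Together with the displayed identity this is the desired isotropic structure $\gamma$ on $i^{*}$.

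\emph{Nondegeneracy.} Set $\mathcal{G}:=\ev_{X'}^{*}\TT_Y$, and write $\pi$ for the projections of $X'\times\map(X',Y)$ and of $X\times\map(X',Y)$ onto $\map(X',Y)$. The deformation theory of mapping stacks gives $\TT_{\map(X',Y)}\simeq\pi_{*}\mathcal{G}$, while base change along the square formed by these projections and the morphisms $i\times\id$, $i^{*}$ gives $(i^{*})^{*}\TT_{\map(X,Y)}\simeq\pi_{*}\big((i\times\id)^{*}\mathcal{G}\big)$; under these identifications the canonical map $\TT_{\map(X',Y)}\to(i^{*})^{*}\TT_{\map(X,Y)}$ is the relative pushforward of the restriction $\Gamma(X',\mathcal{G})\to\Gamma(X,i^{*}\mathcal{G})$, so its cofibre is the relative pushforward of $\Gamma(X',X;\mathcal{G})[1]$, where $\Gamma(X',X;-):=\fib\!\big(\Gamma(X',-)\to\Gamma(X,i^{*}-)\big)$. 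On the other hand $\LL_{\map(X',Y)}=\TT_{\map(X',Y)}^{\vee}=\Gamma(X',\mathcal{G})^{\vee}$, and the nondegeneracy clause of the boundary structure is equivalent to the statement that the relative fundamental class $[X',X]\colon\Gamma(X',X;\calo)\to k[-(d+1)]$ extracted from the nullhomotopy implements a Poincar\'e--Lefschetz duality $\Gamma(X',\calf)^{\vee}\simeq\Gamma(X',X;\calf^{\vee})[d+1]$ for perfect $\calf$ on $X'$. Applying this with $\calf=\mathcal{G}$ and using $\mathcal{G}^{\vee}=\ev_{X'}^{*}\LL_Y\simeq\ev_{X'}^{*}\TT_Y[-n]=\mathcal{G}[-n]$, which comes from the $n$-shifted symplectic form on $Y$, we obtain
\[
\LL_{\map(X',Y)}[n-d]\simeq\Gamma(X',X;\mathcal{G})[d+1-n][n-d]=\Gamma(X',X;\mathcal{G})[1],
\]
which agrees with the cofibre computed above. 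It then remains to verify that this equivalence is the one induced by $\gamma$; this is a diagram chase comparing, through the duality pairing, the two nullhomotopies of the composite $\Gamma(X',\mathcal{G})\to\Gamma(X,i^{*}\mathcal{G})\to\LL_{\map(X',Y)}[n-d]$ — the one coming from the fibre sequence of tangent complexes and the one coming from $\gamma$.

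\emph{The main obstacle.} As usual in shifted symplectic geometry, the substance lies in homotopy coherence rather than in any single computation: one must (i) construct the relativized integration map so that it is functorial in all variables and compatible with the absolute transgression of Theorem~\ref{thm:PTVV-transgression}; (ii) justify the base-change identification of the tangent complexes, which is where the hypotheses that $\map(X,Y)$ and $\map(X',Y)$ be Artin and locally of finite presentation are used; and, most delicately, (iii) check in the nondegeneracy step that the Poincar\'e--Lefschetz equivalence literally is the map induced by the isotropic structure $\gamma$, and not merely abstractly isomorphic to it. Step (iii) is the crux. A clean way to dispatch it is to repackage the orientation $[X]$ together with the boundary structure on $i$ as a single morphism in a suitable $(\infty,1)$-category of oriented cospans of $\calo$-compact stacks, and to promote $\int_{[-]}\ev^{*}(-)$ to a functor from that category to the category of Lagrangian correspondences between $n$-shifted symplectic stacks; the isotropic datum for $i^{*}$ and its nondegeneracy are then produced simultaneously, and the compatibility in (iii) holds by construction.
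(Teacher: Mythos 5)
This statement is not proved in the paper at all: it is quoted verbatim from Calaque (his Theorem 2.9) and used only as background, so there is no in-paper proof to compare against. Your sketch follows essentially the same route as Calaque's original argument — relativize the PTVV transgression, obtain the isotropic structure from the nullhomotopy $\Gamma(X',\calo_{X'})\to\Gamma(X,\calo_X)\to k[-d]$ supplied by the boundary structure, and deduce nondegeneracy from the Poincar\'e--Lefschetz-type duality encoded in the nondegeneracy of that boundary structure — and your degree bookkeeping ($\Gamma(X',X;\ev_{X'}^{*}\TT_Y)[1]$ against $\LL_{\map(X',Y)}[n-d]$) is correct; the coherence issues you flag in step (iii) are exactly where the remaining work lies, as in Calaque's treatment.
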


Several examples of orientations are given in \cite{PTVV}, following Theorem 2.5. Here is one:

\begin{example}\label{ex:CY-variety}
Let \(X\) be a smooth and \emph{compact} Calabi-Yau variety.  If \(X\) has complex dimension \(d\) and we have an isomorphism \(\omega_X\simeq\calo_X\), then projection of \(\Gamma(X,\calo_X)\) onto the degree \(d\) cohomology \(\hml^d(X,\calo_X)[-d]\), followed by the isomorphism
\[
\hml^d(X,\calo_X)\simeq \hml^d(X,\omega_X)\simeq k
\]
provides a map \([X]:\Gamma(X,\calo_X)\to k[-d]\). The fact that this map defines a \(d\)-orientation is essentially the content of Serre duality on \(X\). By Theorem \ref{thm:PTVV-transgression}, \(\perf(X) = \map(X, \perf)\) is equipped with a natural \((2-d)\)-shifted symplectic structure, since \(\perf\) carries a canonical \(2\)-shifted symplectic structure. 
\end{example}

\begin{question}\label{question:symplectic-non-compact}
What can be said in Example \ref{ex:CY-variety} if we drop the hypothesis that \(X\) is compact?
\end{question}

Theorem \ref{intro:thm-meromorphic-shifted-symplectic} is the answer that we offer in this paper. It says that the moduli stack of perfect complexes \emph{with compact support} carries a shifted symplectic structure, provided that the variety can be compactified in a manner compatible with the trivialization of the canonical bundle.

\begin{question}\label{question:nc-mapping-space}
What are the noncommutative analogues of Theorem \ref{thm:PTVV-transgression} and Theorem \ref{thm:Calaque-transgression}?
\end{question}

The expected answer to this question is the following: if a functor \(F: \calx \to \caly\) is equipped with a (strong) relative Calabi-Yau structure, and the moduli spaces of objects in these categories are locally geometric stacks, then the induced map on moduli spaces of objects carries a natural Lagrangian structure. We do not prove this statement here. However, we give evidence for this statement, by adding to the following to the list of relative Calabi-Yau functors for which the statement is true:

\begin{itemize}
\item[-] the functor \(\Perf_c(U) \to 0\) for \(U\) a ``non-compact Calabi-Yau'' (Theorem  \ref{intro:thm-meromorphic-shifted-symplectic}).

\item[-] the functor \(i_*: \Perf(D) \to \Perf(X)\) for \(D\) a smooth divisor in a smooth and compact Calabi-Yau variety \(X\) (Theorem \ref{intro:thm-pushforward-shifted-symplectic}). 
\end{itemize}

\begin{rmk}
Strictly speaking, we only prove that \(i_*: \Perf(D) \to \Perf(X)\) is a compatible spherical functor, which in turn implies that it carries a \emph{weak} relative Calabi-Yau structure. We expect that this can be promoted to a strong structure. One can also interpret Theorem \ref{intro:thm-pushforward-shifted-symplectic} as providing evidence for this claim.
\end{rmk}

The next theorem implies, in particular, that the derived critical locus \(\mathrm{Crit}(f)\) of any function \(f\) carries a shifted symplectic structure, since \(\mathrm{Crit}(f)\) is the intersection of the zero section with the graph of \(df\) in the cotangent bundle.

\begin{thm}[\cite{PTVV}, Theorem 2.9]\label{thm:PTVV-Lag-int}
Let \((X,\omega)\) be a derived stack with \(n\)-shifted symplectic structure, and let \(L \to X\) and \(L' \to X\) be morphisms of derived stacks equipped with Lagrangian structures. Then \(L \times_{X} L'\) is equipped with a natural \((n-1)\)-shifted symplectic structure.
\end{thm}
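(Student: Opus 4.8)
The plan is to build the $(n-1)$-shifted symplectic form on $M := L\times_X L'$ directly from the two pieces of isotropy data, and then to verify nondegeneracy by a diagram chase on (co)tangent complexes. Write $f\colon L\to X$, $f'\colon L'\to X$, and let $p\colon M\to L$, $p'\colon M\to L'$, $\pi\colon M\to X$ be the projections, so $\pi = f\circ p = f'\circ p'$. Recall that a Lagrangian structure on $f$ is a path $\gamma$ in the space $\mathcal{A}^{2,\mathrm{cl}}(L,n)$ of closed $2$-forms of degree $n$ joining $f^*\omega$ to $0$, subject to a nondegeneracy condition, and similarly a path $\gamma'$ for $f'$. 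Pulling back, $p^*\gamma$ and $p'^*\gamma'$ are two paths in $\mathcal{A}^{2,\mathrm{cl}}(M,n)$ from $\pi^*\omega$ to $0$. Concatenating $p^*\gamma$ with the reverse of $p'^*\gamma'$ produces a loop at $0\in\mathcal{A}^{2,\mathrm{cl}}(M,n)$, and under the equivalence $\Omega_0\,\mathcal{A}^{2,\mathrm{cl}}(M,n)\simeq\mathcal{A}^{2,\mathrm{cl}}(M,n-1)$ recalled from \cite{PTVV}, this loop is precisely a closed $2$-form $\omega_M$ of degree $n-1$ on $M$. This is the candidate structure; functoriality and naturality are automatic from the construction.

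It then remains to show $\omega_M$ is nondegenerate, i.e.\ that the induced map $\Theta_{\omega_M}\colon\mathbb{T}_M\to\mathbb{L}_M[n-1]$ is an equivalence. Here I would use three inputs. First, since $M$ is a derived fibre product, base change for cotangent complexes gives a square that is both a pullback and a pushout,
\[
\xymatrix{
\pi^*\mathbb{L}_X \ar[r]\ar[d] & p^*\mathbb{L}_L \ar[d] \\
p'^*\mathbb{L}_{L'} \ar[r] & \mathbb{L}_M
}
\]
together with its dual for tangent complexes, as well as the base-change identifications $\mathbb{T}_{M/L}\simeq p'^*\mathbb{T}_{L'/X}$ and $\mathbb{T}_{M/L'}\simeq p^*\mathbb{T}_{L/X}$ (and likewise for $\mathbb{L}$). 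Second, nondegeneracy of $\omega$ gives that $\Theta_\omega\colon\pi^*\mathbb{T}_X\to\pi^*\mathbb{L}_X[n]$ is an equivalence. Third, the Lagrangian nondegeneracy of $\gamma$ and $\gamma'$ gives equivalences $p^*\mathbb{T}_{L/X}\simeq p^*\mathbb{L}_L[n-1]$ and $p'^*\mathbb{T}_{L'/X}\simeq p'^*\mathbb{L}_{L'}[n-1]$. I would then assemble these into a commutative ladder of fibre/cofibre sequences in which $\Theta_{\omega_M}$ (up to shift) appears as the morphism induced on successive fibres/cofibres by a shift of $\Theta_\omega$ on one term and by the Lagrangian equivalences on the others — the precise identification of which arrow plays which role being dictated by the fact that $\omega_M$ was defined as the concatenation of $p^*\gamma$ with $\overline{p'^*\gamma'}$ — and conclude by two-out-of-three (a five-lemma/octahedral argument).

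The main obstacle I anticipate is exactly this last matching: one must check that the equivalence produced by the chase is the morphism \emph{literally induced by $\omega_M$}, not merely an abstract equivalence between complexes of the same isomorphism type. Making this rigorous requires unwinding the loop-space description of $\omega_M$ and comparing the resulting Mayer--Vietoris presentation of $\mathbb{L}_M$ and $\mathbb{T}_M$ with the restriction and transgression maps on forms, all at the level of the full $\infty$-categorical data rather than of cohomology groups; this is the one genuinely technical point, and once it is in place nondegeneracy of $\omega_M$ follows formally from that of $\omega$, $\gamma$ and $\gamma'$. As a consistency check and the source of the main application, taking $X = T^*N$ with its canonical $0$-shifted symplectic form, $L = N$ the zero section and $L' = \Gamma_{df}$ the graph of $df$ — both manifestly Lagrangian — yields the expected $(-1)$-shifted symplectic structure on $\mathrm{Crit}(f) = N\times_{T^*N}\Gamma_{df}$.
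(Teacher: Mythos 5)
The paper does not prove this statement; it is quoted verbatim from \cite{PTVV} (Theorem 2.9) as background. That said, your argument faithfully reconstructs the PTVV proof: define the candidate $(n-1)$-form by pasting the two isotropy nullhomotopies into a loop in $\mathcal{A}^{2,\mathrm{cl}}(M,n)$ and delooping, then deduce nondegeneracy from the bicartesian square of (co)tangent complexes together with the nondegeneracy of $\omega$ and the two Lagrangian structures. One small slip: the concatenation of $p^*\gamma\colon\pi^*\omega\rightsquigarrow 0$ with the \emph{reverse} of $p'^*\gamma'\colon\pi^*\omega\rightsquigarrow 0$ is a loop based at $\pi^*\omega$, not at $0$; to land literally in $\Omega_0\mathcal{A}^{2,\mathrm{cl}}(M,n)$ you should instead concatenate $\overline{p^*\gamma}$ with $p'^*\gamma'$ (or translate by $-\pi^*\omega$, which is harmless since this is a grouplike $E_\infty$-space). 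You are right that the ``genuinely technical point'' is matching the abstract equivalence produced by the two-out-of-three argument with the map literally induced by $\omega_M$; this is precisely the part that PTVV handle carefully. It is worth noting that Theorem~\ref{thm:main-gluing-theorem} of this paper, advertised as the noncommutative analogue of the present statement, is proved by exactly your two-step pattern: Construction~\ref{constr:glued-isotropy-data} pastes isotropy squares to produce the glued isotropy datum, and nondegeneracy is obtained in the proof of Theorem~\ref{thm:main-gluing-theorem} by showing all four adjacent squares of a $3\times 3$ diagram are bicartesian (via Lemmas~\ref{lem:cat-to-mor-pullback} and~\ref{lemma:square-equals-triangle}) so that the outer square is too.
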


It is natural to ask whether this theorem has a noncommutative analogue. In fact, we have already encountered this question before (Question \ref{question:cobordism}), in a slightly different context. Theorem \ref{intro:thm-main-gluing}, which says that the fiber product of relative Calabi-Yau functors carries a Calabi-Yau structure, is a noncommutative analogue of Theorem \ref{thm:PTVV-Lag-int}.

\subsection{Main results}\label{subsec:results}

The main results of this paper are partial answers to the questions raised in sections \ref{subsec:categorical-Picard-Lefschetz} through \ref{subsec:nc-symp} above. Our first result, which partially answers Question \ref{question:spherical-vs-relcy}, can be interpreted as expressing a close relationship between the notion of relative orientation in the sense of \textbf{nc}-geometry on the one hand, and the monodromy of Lefschetz fibrations as captured by the notion of a compatible spherical functor (Definition \ref{def:spherical}). 

\begin{thm}\label{intro:thm-relcy-iff-spherical}
Let \(\caly\) be a Calabi-Yau category, and let $F: \calx \rightarrow \caly$ be a functor that admits left and right adjoints. Then \(F\) has a weak relative right Calabi-Yau structure if and only if it has the structure of a compatible spherical functor.
\end{thm}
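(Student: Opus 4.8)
The plan is to unwind both notions into explicit data attached to the adjunction $F \dashv F^! $ (with further left adjoint $F^*$) and the spherical twist/cotwist functors, and then match them term by term. Recall that a weak relative right Calabi-Yau structure on $F: \calx \to \caly$ consists of a (weak) Calabi-Yau structure $\phi_\caly$ on $\caly$ of dimension $d$ together with a nullhomotopy of the composite $\HH_\ast(\calx) \to \HH_\ast(\caly) \xrightarrow{\phi_\caly} k[-d]$, subject to a nondegeneracy condition. Passing through the identifications $\HH_\ast(\calx)^\vee \simeq \Mor(\id_\calx, S_\calx)$ and the analogous statement for $\caly$, the Calabi-Yau structure $\phi_\caly$ corresponds to an equivalence $\eta_\caly: S_\caly \simeq \id_\caly[d]$, and the relative datum becomes a nullhomotopy of a map built from $\eta_\caly$ and the counit/unit of the adjunction. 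The key observation is that the fiber sequence defining the spherical cotwist $T_\calx = \fib(\id_\calx \to F^! F)$ dualizes, under a Serre functor, to a fiber sequence relating $S_\calx$, $S_\calx$-twisted $F^! F$, and $F^* F$; and that the compatibility datum of a compatible spherical functor is exactly an equivalence $T_\calx \simeq S_\calx[-(d+1)]$ satisfying a coherence condition with $\eta_\caly$ and the adjunction data. So the two structures should be literally the same package of data once everything is expressed intrinsically.

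Concretely, I would proceed in the following steps. \textbf{Step 1:} Fix notation for the spherical adjunction; in particular record the standard fact (from Anno--Logvinenko, reproved intrinsically earlier in the paper) that for a functor admitting both adjoints, sphericity is equivalent to the twist $T_\caly = \cofib(F F^! \to \id_\caly)$ being an equivalence, and that then automatically $F^* \simeq T_\calx^{-1} \circ F^!$ (or the mirror identity), together with the "rotation" fiber sequences relating unit, counit, twist and cotwist. \textbf{Step 2:} Use the Calabi-Yau structure on $\caly$ to identify $S_\caly \simeq \id_\caly[d]$ and feed this through the adjunction: combining $F^* \dashv F \dashv F^!$ with a Serre functor on $\caly$ and the (not-yet-assumed) Serre functor on $\calx$ gives a canonical equivalence $S_\calx \circ F^* \simeq F^![d]$, hence $S_\calx \simeq F^! [d] \circ (F^*)^{-1}$ on the relevant subcategory — more precisely, one gets $F^! \simeq S_\calx F^* [-d]$. \textbf{Step 3:} Substitute this into the fiber sequence $T_\calx \to \id_\calx \to F^! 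F$: it becomes $T_\calx \to \id_\calx \to S_\calx F^* F[-d]$. Using the other rotation identity $F^* F \simeq$ (shift of) $\cofib$ of something involving $\id_\calx$, rearrange to obtain a canonical identification between the datum "$T_\calx \simeq S_\calx[-(d+1)]$ compatibly" and the datum "$S_\calx \simeq \id_\calx[d]$ relative to $F$", i.e. a nullhomotopy of $\HH_\ast(\calx) \to k[-d]$. \textbf{Step 4:} Check that the nondegeneracy condition in the definition of relative Calabi-Yau structure corresponds exactly to the requirement that $T_\calx$ (and $T_\caly$) be autoequivalences, i.e. to the sphericity itself, so that no information is lost in either direction. \textbf{Step 5:} Verify the two constructions are mutually inverse, i.e. that the round trip is the identity on the space of structures — this is where one must be careful about higher coherences rather than just isomorphism classes.

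The main obstacle I expect is \textbf{Step 3 together with Step 5}: keeping track of the homotopy-coherent compatibility data. On the level of $\pi_0$ (isomorphism classes of autoequivalences and of pairings) the matching is essentially the computation of Kapranov--Schechtman / Seidel and is straightforward; the real content is that the \emph{space} of weak relative Calabi-Yau structures and the \emph{space} of compatible spherical structures are equivalent, which requires producing a zig-zag of equivalences of mapping spaces (or a single explicit map) and checking it is a homotopy equivalence. Concretely this means carefully commuting the duality functor $(-)^\vee$ past the (co)fiber sequences that define $T_\calx, T_\caly$ and tracking how the counit $FF^! \to \id$ and unit $\id \to F^! F$ get exchanged under adjunction and Serre duality, since the "compatibility condition" in Definition~\ref{def:spherical} is precisely a homotopy making a square involving these (co)units commute. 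A secondary subtlety is the distinction between \emph{left} and \emph{right} relative Calabi-Yau structures and which of the two twist functors ($T_\calx$ vs. a shift of $T_\caly$-related functor) appears; one must make sure the "right" version is matched with the cotwist $T_\calx = \fib(\id_\calx \to F^!F)$ rather than with $\cofib(F^*F \to \id_\calx)$, which would give the left version.
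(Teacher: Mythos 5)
Your proposal follows essentially the same route as the paper's proof (Proposition \ref{prop:relcy-implies-spherical} and Theorem \ref{thm:spherical-implies-relcy}): both directions are reduced to the identification $\HH_*(\calx)^\vee \simeq \Mor(\id_\calx, S_\calx)$, the formal adjoint identity $\kappa: F^! \simeq S_\calx F^* S_\caly^{-1} \simeq S_\calx[-d]F^*$, and diagram chases matching the (co)fiber sequence defining the cotwist $T = \cofib(\id_\calx \to F^!F)$ against the fiber sequence coming from the nondegeneracy condition. Your Steps 1--4 capture the correct skeleton.

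Two remarks on where your proposal is looser than the paper. First, Step 5 is not needed for the statement as given: the theorem is an ``if and only if'' at the level of existence, not an equivalence of spaces of structures, so you only need two independent implications, not that the constructions are mutually inverse. You are right that a space-level equivalence would require the extra coherence bookkeeping you flag, but that is a strictly stronger statement than what is being proved. Second, in the direction ``compatible spherical $\Rightarrow$ weak relative CY,'' the crux (which the paper makes explicit) is that under the identifications above, the pulled-back class $F^*\phi$ is literally the composite $\id_\calx \to F^!F \to \cofib(\id_\calx \to F^!F)$, which carries a canonical nullhomotopy coming from the cofiber; you describe this only as ``rearrange to obtain a canonical identification.'' If you sharpen Step 3 to exhibit this canonical nullhomotopy and then verify nondegeneracy by the diagram with the three vertical maps $\Mor(x,y) \to \Mor(Fx,Fy) \to \Mor(y,x)^\vee[-d]$, you will be reproducing the paper's argument exactly. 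Your final remark about the left/right distinction (the cofiber of the unit $\id_\calx \to F^!F$ versus the cofiber of the counit $F^*F \to \id_\calx$) is indeed a real subtlety, and the paper's proof keeps this straight by consistently working with $F^!$ and the right Calabi-Yau structure.
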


The remaining sections are devoted to providing evidence for the existence of relative Calabi-Yau structures on compatible spherical functors arising from symplectic geometry and algebraic geometry. In Section \ref{sec:A-model}, we prove the following theorem, giving one possible answer to Question \ref{question:spherical-cy-interaction}, which was raised at the beginning of this introduction:

\begin{thm}\label{intro:thm-A-model-relcy}
Let \(w: X \rightarrow \mathbb{C}\) be a \emph{admissible} Landau-Ginzburg model (Definition \ref{def:admissible-LG}) with generic fiber \(Y\). Then the cap functor \(\cap: \FS(X,w) \rightarrow \Fuk(Y)\) carries a natural relative Calabi-Yau structure (see Theorem \ref{A-model-rel-CY} for a precise statement). 
\end{thm}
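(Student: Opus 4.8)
\emph{Proof proposal.} The plan is to deduce the statement from the abstract comparison of Theorem~\ref{intro:thm-relcy-iff-spherical} by exhibiting $\cap\colon\FS(X,w)\to\Fuk(Y)$ as a \emph{compatible} spherical functor (Definition~\ref{def:spherical}) over the Calabi--Yau structure on $\Fuk(Y)$ coming from Poincar\'e duality on the compact fiber $Y$ \cite{Gan2}; an application of Theorem~\ref{intro:thm-relcy-iff-spherical} then promotes this to a (weak) relative right Calabi--Yau structure on $\cap$, which is the content of Theorem~\ref{A-model-rel-CY}. I would organize the argument into three steps: (a) $\cap$ admits a left adjoint $\cap^{*}$ and a right adjoint $\cap^{!}$ and is spherical; (b) the twist $T_{\calx}=\fib(\id_{\calx}\to\cap^{!}\circ\cap)$ is the monodromy $\sigma_{*}$ around infinity of $w$, and the cotwist $T_{\caly}=\cofib(\cap\circ\cap^{!}\to\id_{\caly})$ is the total Picard--Lefschetz monodromy on $\Fuk(Y)$; (c) there is an equivalence $\sigma_{*}\simeq S_{\calx}[-(d+1)]$ fitting into the compatibility square of Definition~\ref{def:spherical}. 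The role of the admissibility hypotheses in Definition~\ref{def:admissible-LG} is precisely to make (a)--(c) available: convexity/properness at infinity so that symplectic parallel transport produces genuine Lagrangian thimbles and the adjoints exist; a trivialization of the relative canonical bundle so that $\Fuk(Y)$ is honestly $d$-Calabi--Yau and $\FS(X,w)$ is smooth with a Serre functor; and a matching of gradings that makes the shift in (c) correct.

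For step (a), the left adjoint $\cap^{*}$ is the ``cup'' functor that sweeps a Lagrangian $\ell\subset Y$ out to infinity along the chosen ray (a thimble-type construction), and $\cap^{!}$ is obtained symmetrically along the opposite direction, or formally as $S_{\calx}\circ\cap^{*}\circ S_{\caly}^{-1}$; that the resulting (co)units have invertible twist and cotwist is essentially Seidel's theory of Lefschetz fibrations \cite{Seidel-book}, and the fact that $\cap$ is spherical has been anticipated, with an outline due to Abouzaid \cite{A}. Here I would only need to repackage these inputs in the $\infty$-categorical language used elsewhere in the paper. Step (b) follows from the exact triangles relating $\FS(X,w)$, $\Fuk(Y)$ and the subcategory generated by the thimbles, together with the identification of $\cap^{!}\circ\cap$-type cotwists with parallel transport around loops, along the lines of \cite{Seidel-book,Seidel-symplectic-Hochschild}.

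The crux is step (c). Abstractly it is expected (Seidel, attributing the identification $\sigma_{*}\simeq S_{\calx}[-(d+1)]$ to Kontsevich), and Seidel constructs in \cite{Seidel-symplectic-Hochschild} the natural transformation $\sigma_{*}\to\id_{\calx}$ underlying the spherical unit. To get a \emph{compatible} spherical functor one must promote this to an equivalence compatible with the adjunction data and with the fixed class $\phi_{\caly}\colon\HH_{*}(\Fuk(Y))\to k[-d]$. I would do this through relative Poincar\'e--Lefschetz duality: $X$ together with the distinguished fiber $Y$ behaves like a compact oriented manifold with boundary $Y$, and capping with the relative fundamental class gives a duality on $(X,Y)$ that restricts on $Y$ to the chosen one; transporting this across the (cyclic) open--closed maps of $\FS(X,w)$ and $\Fuk(Y)$ --- or through Seidel's noncommutative-divisor package \cite{Seidel-nc-divisor}, which encodes exactly this chain-level datum --- yields the homotopy-coherent compatibility, while the autoequivalence property from (a)--(b) supplies the nondegeneracy.

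The main obstacle is this last point: not the bare existence of $T_{\calx}\simeq S_{\calx}[-(d+1)]$, but that it slots into the $\Delta^{1}\times\Delta^{1}$ diagram of Definition~\ref{def:spherical} over the given Calabi--Yau structure on $\Fuk(Y)$ --- a statement about the homotopy-coherent interaction of three dualities (Poincar\'e duality on $Y$, relative Poincar\'e--Lefschetz duality on $(X,Y)$, and Serre duality on $\FS(X,w)$). Pinning this down requires either a careful model-level computation with the $V_{\infty}$/noncommutative-divisor structure \cite{Seidel-nc-divisor,Tradler-Zeinalian}, or a suitably functorial cyclic open--closed map compatible with Lefschetz-fibration coefficients. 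A secondary, more routine task is to verify that Definition~\ref{def:admissible-LG} really does force the smoothness, properness and grading conditions used above, which I expect to settle by direct geometric inspection. Finally, since Theorem~\ref{intro:thm-relcy-iff-spherical} only delivers a \emph{weak} relative structure, I would flag the promotion to a \emph{strong} one --- which would need the chain-level construction just mentioned --- as the natural strengthening, consistent with the remark following Theorem~\ref{intro:thm-relcy-iff-spherical}.
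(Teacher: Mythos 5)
Your proposal takes a genuinely different route from the paper's. You plan to first exhibit $\cap$ as a \emph{compatible spherical functor} (Definition~\ref{def:spherical}) -- proving the identification $T_{\calx}\simeq S_{\calx}[-(d+1)]$ and its homotopy-coherent compatibility with the Calabi--Yau structure on $\Fuk(Y)$ by geometric means -- and then invoke Theorem~\ref{intro:thm-relcy-iff-spherical} to convert this into a weak relative Calabi--Yau structure. The paper goes the other way around: in Section~\ref{sec:A-model} it constructs the isotropy data \emph{directly}, without passing through spherical-functor compatibility at all. It introduces \emph{pre-isotropy structures} (Definition~\ref{def:pre-isotropy}), which factor the Hochschild-level data through the topological chain complexes $\chain_*(X,X_\infty)$ and $\chain_*(Y)$ via the open-closed/Hochschild-to-symplectic-cohomology maps; Construction~\ref{constr:pre-isotropy-gives-isotropy} then produces the nullhomotopy $\phi\circ\HH_*(\cap)\sim 0$ from an essentially elementary observation in the long exact sequence of the pair $(X,X_\infty)$. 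Nondegeneracy is then verified not abstractly but via the explicit Floer-theoretic chain models of Definition~\ref{def:admissible-LG}: morphism complexes in $\FS(X,w)$ and $\Fuk(Y)$ are given bases by Hamiltonian trajectories, and the key observation is the partition $\calb_{x,y}=\calbl_{x,y}\sqcup\calbr_{x,y}$ of trajectories into those generated by $H$ and by $-H$, which yields a split short exact sequence of vector spaces and hence the required pullback square on mapping spectra, after a reduction to generators via Lemma~\ref{lemma:non-deg-for-generators}. In fact the paper's logical order is the reverse of yours: it notes in the introduction that the compatible spherical structure on $\cap$ is \emph{deduced} from the relative CY structure via Theorem~\ref{intro:thm-relcy-iff-spherical}, not the other way around.

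Two substantive consequences of this difference are worth flagging. First, your route via Theorem~\ref{intro:thm-relcy-iff-spherical} can only ever deliver a \emph{weak} relative Calabi--Yau structure (that theorem concerns weak structures), as you correctly note. The paper's construction is designed to also produce the \emph{strong} (cyclic) version: Definition~\ref{def:pre-isotropy} is stated with an $S^1$-equivariant variant, Construction~\ref{constr:pre-isotropy-gives-isotropy} is arranged so that the two lower squares are $S^1$-equivariant on the nose (the chain complexes of spaces carry the trivial circle action), and Theorem~\ref{A-model-rel-CY} therefore yields a strong relative CY structure when the pre-isotropy data is $S^1$-equivariant. This is precisely the reason the paper does not take your shortcut. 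Second, the ``crux'' you identify -- making $T_{\calx}\simeq S_{\calx}[-(d+1)]$ slot coherently into the compatibility square over the fixed $\phi_{\caly}$ -- is roughly equivalent in difficulty to verifying the pre-isotropy data plus the axioms (A1)--(A3) of Definition~\ref{def:admissible-LG}, so your route does not actually save work; it repackages the same Floer-theoretic input through a more abstract lens. The paper's formulation has the advantage of isolating the required symplectic inputs as a concrete list of chain-level conditions which can be checked against the models being developed in \cite{AG, AS}. Both your sketch and the paper's are explicitly conditional on those external inputs, so neither is a complete proof; but they distribute the burden of proof differently.
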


It follows from Theorem \ref{intro:thm-relcy-iff-spherical} and the existence of an adjoint to \(\cap\) given by the Orlov functor \(\cup\), that the functor  \(\cap: \FS(X,w) \rightarrow \Fuk(Y)\) carries a natural compatible spherical structure with respect to the natural Calabi-Yau structure on \(\Fuk(Y)\). Zachary Sylvan has independently arrived at a similar result in the context of partially wrapped Fukaya categories \cite{Zack-Sylvan}.

The next theorem is central to this paper. It is an answer to Question \ref{question:cobordism}, a noncommutative analogue of Theorem \ref{thm:PTVV-Lag-int}, and a major step toward the construction of derived noncommutative version of Weinstein's category of Lagrangian spans. This theorem plays an important role in our discussion of Calabi-Yau structures on perverse Schobers in \S\ref{subsec:surgery} (see e.g. Remark \ref{rmk:PSC-CY}).

\begin{thm}\label{intro:thm-main-gluing}
Suppose \(\calx,\caly,\) and \(\mathcal{Z}\) are categories with right \(d\)-Calabi-Yau structures, and suppose \(\mathcal{U}\to\calx\times\caly\) and \(\mathcal{V}\to\caly\times\mathcal{Z}\) have relative Calabi-Yau structures.  Let \(\mathcal{W}=\mathcal{U}\times_{\caly}\mathcal{V}\) be the pullback.  Then \(\mathcal{W}\to\calx\times\mathcal{Z}\) has a natural relative Calabi-Yau structure.
\end{thm}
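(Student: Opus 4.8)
The plan is to reduce the statement to a computation with Hochschild chains and their $S^1$-equivariant enhancements, exploiting the fact that all the relevant categories are built from $\mathcal{U}$, $\mathcal{V}$, $\caly$ via a pushout/pullback square. First I would record the basic structural input: a relative Calabi-Yau structure on $G: \mathcal{W} \to \calx \times \calz$ is a class in the (negative cyclic) homology of the cofiber of $\HH_\ast(\mathcal{W}) \to \HH_\ast(\calx \times \calz)$ restricting to the given absolute structures, subject to nondegeneracy. The key point is that $\HH_\ast$ (and its $S^1$-equivariant refinement $\HC_\ast^-$) sends the Karoubi/Verdier pushout square of categories
\[
\xymatrix{
\caly \ar[r] \ar[d] & \mathcal{U} \ar[d] \\
\mathcal{V} \ar[r] & \mathcal{W}
}
\]
to a pushout square of (cyclic) chain complexes — i.e. localizing invariants take this pullback of categories to a pushout, so $\HH_\ast(\mathcal{W}) \simeq \HH_\ast(\mathcal{U}) \oplus_{\HH_\ast(\caly)} \HH_\ast(\mathcal{V})$, and similarly for $\HC_\ast^-$. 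This is the technical engine and I would isolate it as a lemma, citing the analogous statements in \cite{BD}.

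Next I would assemble the candidate class. The relative CY structure on $\mathcal{U} \to \calx \times \caly$ provides a nullhomotopy of the composite $\HH_\ast(\calx) \to \HH_\ast(\calx \times \caly) \to k[-d]$ after subtracting the piece coming from $\caly$; symbolically it is an isotropy datum living over $\cofib(\HH_\ast(\mathcal{U}) \to \HH_\ast(\calx \times \caly))$. Likewise for $\mathcal{V} \to \caly \times \calz$, using the Calabi-Yau structure $(-\phi_\caly, \phi_\calz)$ on $\caly \times \calz$ dictated by the cobordism sign convention. The claim is that, because the two isotropy data agree over $\caly$ up to the sign $-\phi_\caly$ versus $+\phi_\caly$ — precisely the reason for the sign — they glue, via the pushout description of $\HH_\ast(\mathcal{W})$, to a well-defined isotropy datum for $G$ over $\cofib(\HH_\ast(\mathcal{W}) \to \HH_\ast(\calx \times \calz))$. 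Concretely I would build the requisite square $\Delta^1 \times \Delta^1 \to \Perf(k)$ for $G$ by taking a colimit over $\caly$ of the two given squares, using that $\HH_\ast(\calx \times \calz)$ is unchanged and that the $\caly$-contributions cancel. The $S^1$-equivariance is inherited for free since the whole construction is performed at the level of $\HC_\ast^-$, where the pushout property also holds.

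Finally, nondegeneracy. Here the strategy is to show that the map $\mathbb{T}$-type condition — i.e. that the induced map from the cofiber sequence of inverse dualizing bimodules is an equivalence — can be checked after the same pushout decomposition. The induced morphism on relative bimodules for $G$ sits in a commuting diagram whose rows are the bimodule cofiber sequences for $\mathcal{U}$, $\mathcal{V}$, and whose columns are governed by the (nondegenerate) absolute and relative structures already in hand; by the five lemma in the stable setting, nondegeneracy for $G$ follows from nondegeneracy for $\mathcal{U}$, $\mathcal{V}$, and the three absolute structures on $\calx, \caly, \calz$. I expect this last step — correctly identifying the relative inverse dualizing bimodule of the pullback category $\mathcal{W}$ and checking that the glued class induces the right equivalence — to be the main obstacle, since it requires a careful compatibility between the pushout-of-Hochschild-chains picture and the behaviour of dualizing bimodules under pullback of categories, and the bookkeeping of shifts and signs is delicate. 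The sign $-\phi_\caly$ in the hypothesis is exactly what makes the gluing and the nondegeneracy check go through, so I would flag at the outset that this convention is forced.
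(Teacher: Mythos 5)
Your proposal takes a genuinely different route from the paper, and that route contains a gap that the paper is carefully designed to avoid.

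The central claim you lean on — that $\HH_\ast$ (and $\HC^-_\ast$) sends the pullback square of categories
\[
\xymatrix{
\calw \ar[r] \ar[d] & \calv \ar[d] \\
\calu \ar[r] & \caly
}
\]
to a \emph{pushout} square of complexes, so that $\HH_\ast(\calw) \simeq \HH_\ast(\calu) \oplus_{\HH_\ast(\caly)} \HH_\ast(\calv)$ — is not true for general pullbacks of stable $\infty$-categories. Hochschild homology inverts Verdier/Karoubi localization sequences, but a lax pullback of categories is not such a sequence. A quick counterexample, consistent with all hypotheses of the theorem: take $\calu = \calv = 0$ and $\caly = \Perf(k)$. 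Then $\calw = 0 \times_{\Perf(k)} 0 \simeq 0$, so $\HH_\ast(\calw) \simeq 0$, while the pushout of $0 \leftarrow \HH_\ast(\Perf(k)) \rightarrow 0$ is $k[1] \neq 0$. The analogous statement in \cite{BD} that you cite concerns \emph{left} Calabi-Yau structures and \emph{pushouts} of categories, where a localization argument is available; it does not dualize naively to the right/pullback situation studied here.

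The paper sidesteps this entirely. For the \emph{isotropy} datum, only the commutativity of the square above under $\HH_\ast$ is needed (functoriality of $\HH_\ast$), not any pushout property; the paper's Construction \ref{constr:glued-isotropy-data} simply pastes the two given $2$-cells and the functoriality $2$-cell into a $3\times 3$ grid and takes the outer square. You can recover a correct version of your isotropy construction by dropping the appeal to the pushout description and just using the canonical comparison map $\HH_\ast(\calw)\to\HH_\ast(\calu)\oplus_{\HH_\ast(\caly)}\HH_\ast(\calv)$ — but then you are doing exactly what Construction \ref{constr:glued-isotropy-data} does.

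For \emph{nondegeneracy} the divergence is more serious. You propose to identify the relative inverse dualizing bimodule of $\calw$, descend nondegeneracy through a bimodule Mayer–Vietoris, and invoke a five lemma — and you flag this as the main obstacle without carrying it out. This would require proving a compatibility of dualizing bimodules with pullbacks of categories that is not obviously available. The paper's Definition \ref{def:relcy} deliberately formulates nondegeneracy \emph{pointwise} as a bicartesian condition on the square
\[
\xymatrix{
\Mor_{\calx}(x,y) \ar[r] \ar[d] & \Mor_{\caly}(Fx,Fy) \ar[d] \\
0 \ar[r] & \Mor_{\calx}(y,x)^{\vee}[-d]
}
\]
for all $x,y$, rather than via bimodules. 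This lets the nondegeneracy of the glued structure be checked object-by-object with the elementary Lemma \ref{lem:cat-to-mor-pullback}: mapping spectra in a pullback of (locally bounded below) stable $\infty$-categories themselves form a pullback square. The paper then assembles a $3\times 3$ grid of mapping complexes in which the corner squares are cartesian for easy reasons (Lemma \ref{lem:cat-to-mor-pullback} in one corner, its $k$-linear dual in the opposite corner, the two given relative CY nondegeneracies in the others), and concludes by pasting. No global Mayer–Vietoris and no bimodule bookkeeping are needed. I would recommend you reformulate your argument in this pointwise language: it is more elementary, it is correct, and it matches what is actually used in Lemma \ref{lemma:square-equals-triangle} and Lemma \ref{lem:cat-to-mor-pullback}.
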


The derived moduli stacks of objects in \(d\)-Calabi-Yau categories carry \((2-d)\)-shifted symplectic structures. Furthermore, relative Calabi-Yau structures on categories induce Lagrangian structures on the moduli space of objects. These statements provide the link between the results about relative Calabi-Yau structures proven in this paper, and the main results of \cite{PTVV, Calaque}. The proofs of these statements are similar to the proof of the statement that the moduli of perfect complexes on a Calabi-Yau variety carries a shifted symplectic structures \cite{PTVV}, and will appear elsewhere. In this paper, we will content ourselves with proving the following two theorems about the existence of shifted symplectic structures and Lagrangian structures, which provide evidence for the general statement above, as explained in \S\ref{subsec:nc-symp}.

\begin{thm}\label{intro:thm-meromorphic-shifted-symplectic}
Let \(X\) be a smooth \(d\)-dimensional variety, \(D\subset X\) a divisor, and \(U=X\backslash D\).  Let \(\alpha\) be a meromorphic section of \(\calo_X(K_X)\) which is holomorphic nonvanishing on \(U\).  Then \(\alpha\) induces a \((2-d)\)-shifted symplectic structure on \(\perf_c(U)\).
\end{thm}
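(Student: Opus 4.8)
The plan is to realize $\perf_c(U)$ as a derived mapping stack from an appropriate "open Calabi-Yau" and to produce the required orientation datum so that Theorem \ref{thm:PTVV-transgression} applies — or, more precisely, an $\calo$-compact relative version of it. First I would set up the geometry: write $j: U \hookrightarrow X$ for the open immersion and $i: D \hookrightarrow X$ for the closed complement. The category $\perf_c(U)$ of perfect complexes on $X$ set-theoretically supported on $D$ fits into a recollement, and functorially in test schemes $T$ one has a fiber sequence of moduli stacks relating $\perf_c(U) \otimes T$, $\perf(X) \otimes T$ and $\perf(U)\otimes T$; equivalently $\perf_c(U) \simeq \fib\big(\map(X,\perf) \to \map(U,\perf)\big)$ in a suitable sense. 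The key input is that $\perf$ carries its canonical $2$-shifted symplectic structure (as in Example \ref{ex:CY-variety}), so what is needed is a $d$-orientation on the "pair" $(X, U)$, i.e. on the non-compact stack $U$ relative to its compactification, compatible with the meromorphic volume form $\alpha$.

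The heart of the argument is the construction of this orientation, and here is where $\alpha$ enters. Capping against $\alpha$ should give a map $\Gamma(U, \calo_U) \to k[-d]$: concretely, $\alpha$ trivializes $K_U = K_X|_U$, hence identifies $\Gamma(U,\calo_U)$ with $\Gamma(U, K_U)$; then one wants a residue/integration map $\Gamma(U,K_U) \to k[-d]$. Since $U$ is open in the proper variety $X$ but $\alpha$ has poles along $D$, ordinary Serre duality on $X$ does not directly apply; instead I would use the identification $\Gamma(U,\calo_U) = \colim_n \Gamma(X, \calo_X(nD))$ together with Serre duality on $X$, $H^d(X, \calo_X(nD))^\vee \simeq \Hom(\calo_X(nD), \omega_X) = H^0(X, K_X(-nD))$, and take the colimit, observing that $\alpha \in H^0(X, K_X)$ maps compatibly into each $H^0(X,K_X(-nD))^\vee$ term under multiplication — this is the step that requires care, since one must check that the resulting pairing is \emph{nondegenerate} on $\perf_c(U)$, which is where the hypothesis that $\alpha$ is holomorphic and \emph{nonvanishing} on all of $U$ (not merely generically) is used: nonvanishing is exactly what makes the induced self-duality of $\perf_c(U)$ perfect rather than merely a map. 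I expect the cleanest formulation is: the meromorphic form $\alpha$ defines a $(-d)$-shifted relative orientation, i.e. a nullhomotopy of the composite $\Gamma(X,\calo_X) \to \Gamma(U,\calo_U) \to k[-d]$ witnessing that the "boundary" contribution along $D$ vanishes, and nondegeneracy is checked against the local duality along $D$.

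Granting the orientation, the transgression of the $2$-shifted symplectic form on $\perf$ along $[U]$ produces a $(2-d)$-shifted \emph{closed} $2$-form on $\perf_c(U)$ via $\int_{[U]} \ev^*(\omega_{\perf})$, exactly as in Theorem \ref{thm:PTVV-transgression}; its nondegeneracy — i.e. that it induces $\TT_{\perf_c(U)} \xrightarrow{\sim} \LL_{\perf_c(U)}[2-d]$ — follows from the nondegeneracy of the orientation combined with the nondegeneracy of $\omega_{\perf}$, since at a point $E \in \perf_c(U)$ the tangent complex is $\Gamma(U, \calo_U) \otimes (\text{something supported on } D)$-flavored, and the pairing is the cup product followed by $\alpha$-capping. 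Finally I would note that $\perf_c(U)$ is locally geometric and locally of finite presentation so that the nondegeneracy condition makes sense (this is standard, following To\"en–Vaqui\'e). The main obstacle is the second paragraph: making precise the residue/integration map $\Gamma(U, K_U) \to k[-d]$ attached to a \emph{meromorphic} form and proving it is nondegenerate on compactly supported complexes — in effect a derived, categorified version of the statement that a nowhere-vanishing top form on an open variety gives a perfect "compactly supported Serre duality" pairing.
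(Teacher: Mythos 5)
Your high-level plan --- produce the symplectic form on $\perf_c(U)$ by transgressing the $2$-shifted form on $\perf$ against an orientation datum supplied by $\alpha$ --- matches the paper's, but the route you take to the orientation has a real gap, and the paper avoids it by a different device.

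The gap is at the step you yourself flag as ``the main obstacle'': the construction of a map $\Gamma(U,\calo_U)\to k[-d]$. No such $\calo$-orientation on $U$ exists in general. Concretely, a map $\Gamma(U,\calo_U)=\colim_n\Gamma(X,\calo_X(nD))\to k[-d]$ is, by Serre duality on the compact $X$, an element of $\lim_n\Gamma(X,K_X(-nD))$, and for $D$ effective and nontrivial this limit vanishes (the sections of $K_X(-nD)$ eventually die); your suggestion that ``$\alpha\in H^0(X,K_X)$ maps compatibly into each $H^0(X,K_X(-nD))^\vee$'' is also off since $\alpha$ is only a \emph{meromorphic} section, hence lives in $H^0(X,K_X(D_-))$ rather than $H^0(X,K_X)$. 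The auxiliary idea of a nullhomotopy of $\Gamma(X,\calo_X)\to\Gamma(U,\calo_U)\to k[-d]$ still presupposes the nonexistent map. So the orientation as you propose it cannot be produced, and the nondegeneracy argument that is supposed to rest on it is left dangling.

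The paper never tries to orient $U$. Instead it works entirely on the compact $X$: the evaluation is $\ev:\perf_c(U)\times X\to\perf$, and the crucial observation is that because the universal complex is supported away from $D$, the pulled-back forms $\ev^*\omega$ vanish identically on $\perf_c(U)\times D_-$ (where $V(\alpha)=D_+-D_-$). Hence after the K\"unneth decomposition the $X$-factor lands in $\Gamma(X,\calo_X(-D_-))$ rather than in $\Gamma(X,\calo_X)$. Multiplication by $\alpha$ is then a map $\calo_X(-D_-)\to\calo_X(D_+-D_-)\xrightarrow{\sim}K_X$, whose target \emph{can} be integrated on the compact $X$ via projection to $H^{d,d}$. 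The pole of $\alpha$ along $D_-$ is exactly cancelled by the forced vanishing of the compactly-supported forms there, which is the mechanism that replaces your attempt to integrate on the noncompact $U$. Nondegeneracy at a point $E$ is then the statement that cup product, trace, and $\alpha$-integration give a perfect pairing on $\Mor(E,E)[1]$, which holds because $\alpha$ is nonvanishing on the support of $E$ --- this is where you correctly guessed the role of the nonvanishing hypothesis, but the pairing lives over $X$, not over $U$. You would do well to rebuild your proof around this twist-by-$\calo_X(-D_-)$ step rather than the colimit argument.
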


Since \(\perf_c(U)\) is a \(d\)-Calabi-Yau category, this is a special case of the existence of shifted symplectic structures on moduli spaces of objects in categories.

\begin{thm}\label{intro:thm-pushforward-shifted-symplectic}
Let $D$ be a smooth divisor in a smooth and proper Calabi-Yau manifold $X$, and let $i: D \rightarrow X$ denote the inclusion. The map on moduli spaces of perfect complexes induced by the pushforward functor $i_{*}: \Perf(D) \rightarrow \Perf(X)$ carries a Lagrangian structure. 
\end{thm}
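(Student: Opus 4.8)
The plan is to construct the Lagrangian structure by hand, following the transgression philosophy behind Theorem~\ref{thm:PTVV-transgression} and Example~\ref{ex:CY-variety} rather than appealing to the (not-yet-available) general principle that a relative Calabi-Yau structure on a functor induces a Lagrangian structure on the associated map of moduli stacks. First I would fix notation: write $\mathcal{M}_X := \perf(X) = \map(X,\perf)$ and $\mathcal{M}_D := \perf(D) = \map(D,\perf)$, which are locally geometric derived stacks. A trivialization $\omega_X \simeq \calo_X$ makes $X$ a $d$-oriented stack (Example~\ref{ex:CY-variety}), and transgressing the canonical $2$-shifted symplectic form on $\perf$ along this orientation (Theorem~\ref{thm:PTVV-transgression}) produces a $(2-d)$-shifted symplectic form $\omega$ on $\mathcal{M}_X$ whose underlying $2$-form at $E$ is the Mukai pairing
\[
\bigl(\RR\Hom_X(E,E)[1]\bigr)^{\otimes 2} \xrightarrow{\ \circ\ } \RR\Hom_X(E,E)[2] \xrightarrow{\ \tr_X\ } \Gamma(X,\calo_X)[2] \xrightarrow{\ [X]\ } k[2-d].
\]
The goal is then to equip $i_{*}\colon \mathcal{M}_D \to \mathcal{M}_X$ with an isotropic structure, i.e.\ a path $i_{*}^{*}\omega \simeq 0$ in the space of closed $(2-d)$-forms on $\mathcal{M}_D$, and to verify that it is nondegenerate.

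For the isotropic structure, I would note that at $F \in \perf(D)$ the pulled-back $2$-form factors through the map $i_{*}$ induces on endomorphisms, $\RR\Hom_D(F,F) \to \RR\Hom_X(i_{*}F, i_{*}F)$, since $i_{*}(a)\circ i_{*}(b) = i_{*}(a\circ b)$; so it suffices to produce a canonical nullhomotopy of the composite
\[
\RR\Hom_D(F,F) \xrightarrow{\ i_{*}\ } \RR\Hom_X(i_{*}F,i_{*}F) \xrightarrow{\ \tr_X\ } \Gamma(X,\calo_X).
\]
Such a nullhomotopy is supplied by Grothendieck--Verdier duality for $i$: under the adjunction $\RR\Hom_X(i_{*}F,i_{*}F) \simeq \RR\Hom_D(F, i^{!}i_{*}F)$ the trace $\tr_X$ is governed by the Grothendieck trace $i_{*}i^{!}\calo_X \to \calo_X$, and for the codimension-one regular embedding $i$ one has $i^{!}\calo_X \simeq \omega_{D/X}[-1] \simeq N_{D/X}[-1]$ (using $\omega_X \simeq \calo_X$, so the adjunction formula reads $\omega_D \simeq N_{D/X}$), whereas the Koszul splitting $i^{*}i_{*}\calo_D \simeq \calo_D \oplus N_{D/X}^{\vee}[1]$ places the image of $i_{*}$ in the $\calo_D$-summand, which is killed by the map to $i^{!}\calo_X$; concretely the nullhomotopy is the connecting map of the Euler triangle $\calo_X \to \calo_X(D) \to i_{*}N_{D/X}$. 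To promote this to a path of \emph{closed} forms I would refine the argument $S^{1}$-equivariantly: the above computation is the shadow of a nullhomotopy of the composite of cyclic objects $\HH_{*}(D) \to \HH_{*}(X) \to k[-d]$ — that is, of a relative Calabi-Yau structure on $i_{*}$ in the sense of Definition~\ref{def:relcy} — obtained from the mixed-complex refinement of the Grothendieck trace, exactly as PTVV promote the Mukai pairing to a closed form.

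Next I would treat nondegeneracy, which is a duality computation. The relative tangent complex of $i_{*}$ at $F$ is the fiber of $di_{*}\colon \RR\Hom_D(F,F)[1] \to \RR\Hom_X(i_{*}F,i_{*}F)[1]$. The splitting $i^{!}i_{*}F \simeq F \oplus (F\otimes N_{D/X})[-1]$ (equivalently $i^{*}i_{*}F \simeq F \oplus (F\otimes N_{D/X}^{\vee})[1]$) identifies $di_{*}$ with the split inclusion of the summand $\RR\Hom_D(F,F)$ in $\RR\Hom_X(i_{*}F,i_{*}F) \simeq \RR\Hom_D(F,F) \oplus \RR\Hom_D(F, F\otimes N_{D/X})[-1]$, so $\TT_{i_{*}} \simeq \RR\Hom_D(F, F\otimes N_{D/X})[-1]$. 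Serre duality on $D$, with $\dim D = d-1$ and $\omega_D \simeq N_{D/X}$, gives $\RR\Hom_D(F, F\otimes N_{D/X}) \simeq \RR\Hom_D(F,F)^{\vee}[1-d]$, hence $\TT_{i_{*}} \simeq \RR\Hom_D(F,F)^{\vee}[-d] \simeq \LL_F\mathcal{M}_D[1-d]$, which is precisely the required equivalence $\TT_{i_{*}} \simeq \LL_{\mathcal{M}_D}[(2-d)-1]$. That the map induced by the isotropic structure realizes this equivalence reduces to the compatibility of the Grothendieck trace for $i$ with Serre duality on $D$ (the latter being Grothendieck duality for $D \to \mathrm{pt}$ factored through $D \hookrightarrow X \to \mathrm{pt}$).

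The main obstacle will be the closed/$S^{1}$-equivariant refinement of the isotropic structure: producing the nullhomotopy $i_{*}^{*}\omega \simeq 0$ not merely on underlying $2$-forms but in the space of closed $(2-d)$-forms, naturally in families over $\mathcal{M}_D$. This is the exact analogue of the closedness half of Theorem~\ref{thm:PTVV-transgression}, and it amounts to carrying out the Grothendieck--Verdier trace computation compatibly with the negative-cyclic (de Rham) structures — data that a \emph{strong} relative Calabi-Yau structure on $i_{*}$ would package automatically, but which must here be supplied directly, since Theorem~\ref{intro:thm-relcy-iff-spherical} (via the compatible spherical structure on $i_{*}$) only yields the weak structure, hence only controls the underlying $2$-form. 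By contrast, nondegeneracy and the geometricity of $\mathcal{M}_D$ and $\mathcal{M}_X$ are formal, following from Serre--Grothendieck duality and standard representability results for moduli of objects.
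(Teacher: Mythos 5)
Your proposal is correct in its overall strategy, and the two halves of your argument line up with the paper in markedly different degrees. The nondegeneracy computation is essentially the paper's: both of you use the splitting $i^{!}i_{*}E \simeq E \oplus (E\otimes K_D)[-1]$ (equivalently $i^{*}i_{*}E \simeq E \oplus E(-D)[1]$, plus adjunction $\calo_D(D)\simeq K_D$ from $\omega_X\simeq\calo_X$) to identify the relative tangent complex with $\RR\Hom_D(E, E\otimes K_D)[-1]$, and then invoke Serre duality on $D$, of dimension $d-1$, to match this with $\LL_{\perf(D)}[1-d]$; the paper's published shift $\TT_{i_*}[d-1]\simeq\Mor_{D_A}(E,E\otimes K_D)[d-1]$ looks like an off-by-one, and your bookkeeping is the cleaner of the two.

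Where you genuinely diverge from the paper is the isotropic structure. The paper works at the level of the universal complex: it writes $(i_*)^{*}\omega_{\perf(X)} = \int_{[X]}\ch\bigl((i\times\id)_{*}\cale_D\bigr)$ and then observes, Grothendieck--Riemann--Roch style, that the Chern character of a pushforward from a divisor factors as $c_1(\calo_X(D))\cdot\omega'$, a class in positive weight which is annihilated by the projection $DR(X)\to\calo_X$ inside the integration map. Because $\ch$ lives natively in negative cyclic homology, this route handles the passage to \emph{closed} forms essentially for free (though the paper does not belabor the point). You instead argue pointwise on the moduli stack via the Grothendieck--Verdier trace for the closed immersion $i$, showing that $\tr_X\circ i_*$ is nullhomotopic because the unit $F\to i^{!}i_*F$ lands in the summand killed by $i^{!}\calo_X\simeq N_{D/X}[-1]$, with the Euler triangle $\calo_X\to\calo_X(D)\to i_*N_{D/X}$ furnishing the homotopy. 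This is more structural and connects the construction directly to the (relative) Calabi--Yau formalism of \S\ref{sec:spherical}, but — as you correctly flag — it leaves the $S^{1}$-equivariant (closedness) refinement as a separate step that must be supplied by hand, whereas the Chern-character route in the paper has it built in. Neither approach is more ``right'' than the other; yours trades the implicit closedness of $\ch$ for a cleaner conceptual link to the relative CY structure on the functor $i_*$, and the paper's trades that link for a shorter path to a closed form.
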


\subsection{Organization of the paper}  

This document is organized in a modular fashion, and for the most part, the individual sections can be read independently of each other.  

Section \ref{sec:spherical} is devoted to giving the definitions of compatible spherical functors (Definition \ref{def:spherical}) and weak relative Calabi-Yau structures (Definition \ref{def:relcy}), and proving Theorem \ref{intro:thm-relcy-iff-spherical}, which says that these two notions are in fact essentially equivalent. Readers who have familiarized themselves with these two definitions and the statement of Theorem \ref{intro:thm-relcy-iff-spherical}, can skip to other sections, which are all logically independent of each other. 

Section \ref{sec:cy-psc} explains how compatible spherical functors can be glued together to produce Calabi-Yau structures on global sections of perverse Schobers. The key result of this section is Theorem \ref{intro:thm-main-gluing}, to which the entire subsection \S\ref{subsec:main-gluing-theorem} is devoted. \S\ref{subsec:surgery} explains how this theorem can be used to construct CY-structures on global sections of perverse Schobers (Remark \ref{rmk:PSC-CY}). The rest of \S\ref{subsec:surgery} is devoted to illustrating how ``categorical surgery'' (the modification of a perverse Schober by changing the gluing data) can be carried out, and the effect this has on monodromy, through explicit computations. In particular, we show in Example \ref{example:Kronecker} how the Kronecker quiver with \(n\) arrows can be obtained via this procedure, starting from the mirror of \(\mathbb{P}^{2}\).

The sole purpose of Section \ref{sec:A-model} is outline a proof of Theorem \ref{intro:thm-A-model-relcy}, which asserts the existence of a relative Calabi-Yau structure on the natural functor \( \cap: \FS(X,w) \rightarrow \Fuk(Y)\), where \(Y\) is a generic smooth fiber of a good Picard-Lefschetz fibration \(w: X \rightarrow \mathbb{C}\). 

Section \ref{sec:shifted-symplectic} is devoted to shifted symplectic structures on certain derived stacks, and more specifically, to the proofs of Theorem \ref{intro:thm-meromorphic-shifted-symplectic} and Theorem \ref{intro:thm-pushforward-shifted-symplectic}. The brief review of shifted symplectic structures at the beginning of this section is intended mainly to fix notation, and the reader is referred to \cite{PTVV} for the necessary background on the subject. 

The final section, \S\ref{sec:future-directions}, is devoted to directions for further research inspired by the current paper. It outlines an approach to a theory of derived hyperk\"ahler geometry and categorical hyperk\"ahler geometry using the twistor approach to hyperk\"ahler manifolds.

\subsection{Notation and conventions}
Throughout the paper, we will use the following notation:

\begin{itemize}

\item[-] $k$ is a fixed ground field.

\item[-] $\calx$, $\caly$, ... denote small $k$-linear stable $\infty$-categories.

\item[-] $X$, $Y$, ... usually denote derived stacks over $k$ or symplectic manifolds.

\item[-] $\HH_{*}(\calx)$ and $\HC_{*}(\calx)$ denote, respectively, the \emph{chain complexes computing} the Hochschild and cyclic homologies of $\calx$.

\item[-] $\Map(X,Y)$ is the mapping space in an $\infty$-category.

\item[-] $\map(X,Y)$ is the internal hom in a cartesian closed $\infty$-category.

\item[-] $\Mor(x,y)$ is the ($k$-module) spectrum valued internal hom in a ($k$-linear) stable $\infty$-category. In particular, $\Omega^{\infty}(\Mor(x,y)) \simeq \Map(x,y)$. 

\item[-] $\Perf(X)$ denotes the $\infty$-category of perfect complexes on a derived stack $X$, while $\perf(X) := \map(X, \perf)$ is the moduli stack of objects in $\Perf(X)$. 

\item[-] Unless explicitly stated otherwise, \emph{category} means \emph{$k$-linear stable $\infty$-category}, and all constructions, such as limits and colimits, should be understood in the $\infty$-categorical sense. \\

\end{itemize}

\subsection*{Acknowledgments}
We would like to thank Mohammed Abouzaid, Denis Auroux, Damien Calaque, Sheel Ganatra, Andrew Harder, Maxim Kontsevich, Tony Pantev, Nick Rozenblyum, Carlos Simpson, Zachary Sylvan, Hiro Lee Tanaka and Yiannis Vlassopoulos for useful conversations related to the subject matter of this paper. We are especially grateful to Denis Auroux, Mohammed Abouzaid and Sheel Ganatra for patiently answering our many questions about the A-model, and to Tony Pantev and Carlos Simpson for a careful reading of our outline of derived hyperk\"ahler geometry, and for numerous invaluable comments and suggestions about the same. The authors were supported by Simons research grant, NSF DMS 150908, ERC Gemis, DMS1265230, DMS1201475 OISE1242272 PASI, Simons collaborative Grant - HMS and HSE Megagrant. The first named author is supported by a Simons Investigators Award, and is partially supported by the Laboratory of Mirror Symmetry NRU HSE, RF Government grant, ag.\ No.14.641.31.0001.

\section{Compatible spherical functors and relative Calabi-Yau structures}\label{sec:spherical}

The purpose of this section is twofold. First, we give the definitions of spherical functors compatible with a given Calabi-Yau structure (Definition \ref{def:spherical}) and weak relative right Calabi-Yau structures (Definition \ref{def:relcy}). Second, we prove that these two notions are equivalent; this is the content of Proposition \ref{prop:relcy-implies-spherical} and Theorem \ref{thm:spherical-implies-relcy}. We begin by recalling some relevant definitions. Our notation and the discussion that follows closely mirrors that in \cite{Ganatra-Perutz-Sheridan}, to which we refer the reader for further details.

\begin{notation}\label{notation:modules}
Let \(k\) be a field, and let \(\calx\) be a \(k\)-linear \(\infty\)-category. 

\begin{enumerate}
\item The \(k\)-linear \(\infty\) category \(\Fun(\calx, \Mod_k) = \map_{\catk}(\calx, \Mod_k)\) of functors from \(\calx\) to the \(\infty\)-category of \(k\)-module spectra is called the category of \emph{left \(\calx\)-modules}, and denoted \(\Mod_{\calx}\). 

\item The category of \emph{right} modules is obtained by replacing \(\calx\) with \(\calx^{\mathrm{op}}\) in the definition above. 

\item We will denote by \(\calx^e\) the \(\infty\)-category \(\calx \otimes \calx^{\mathrm{op}}\). The category of left \(\calx^e\) modules is also called the category of \(\calx\)-\(\calx\)-bimodules. 

\item We will write \(\Mor_{\calx}(M,N)\) for \(\Mor_{\Fun(\calx, \Mod_k)}(M,N) = \Mor_{\Mod_{\calx}}(M,N)\) when the meaning is clear from context. 

\end{enumerate}
\end{notation}

\begin{defn}\label{def:id-duals}
Let \(\calx\) be a \(k\)-linear \(\infty\)-category.

\begin{enumerate}
\item The \emph{diagonal bimodule} or \emph{identity bimodule} \(\calx_{\Delta}\) is the functor
\(
\calx_{\Delta}: \calx^{\mathrm{op}} \otimes \calx \rightarrow \Mod_k
\)
defined by
\[
\calx_{\Delta}(x,y) := \Mor_{\calx}(x,y)
\]
where the right hand side is the \(k\)-module spectrum of maps from \(x\) to \(y\) in the category \(\calx\).

\item The right dual of the identity bimodule is the functor
\(
\calx^{\vee}: \calx \otimes \calx^{\mathrm{op}} \rightarrow \Mod_k
\)
defined by 
\[
\calx^{\vee}(x,y) := \Mor_{\calx}(x,y)^{\vee} = \Mor_{k}(\Mor_{\calx}(x,y),k)
\]
for all objects \(x\) and \(y\) in \(\calx\). 
\end{enumerate}

\end{defn}

\begin{defn}\label{def:locally-proper}
A \(k\)-linear \(\infty\)-category \(\calx\) is \emph{locally proper} if for all objects \(x\), \(y\) in \(\calx\), the mapping spectrum \(\Mor(x,y)\) is a compact object of \(\Mod_k\), i.e., if \(\Mor(x,y) \in \Perf(k)\). 
\end{defn}

\begin{rmk}\label{rmk:right-dual-is-Serre}
If \(\calx\) is locally proper, then a functor representing the bimodule \(\calx^{\vee}\) of Definition \ref{def:id-duals} is the same thing as a Serre functor for \(\calx\). Indeed, suppose that the bimodule \(\calx^{\vee}\) is representable by a functor \(S_{\calx}: \calx \rightarrow \calx\), i.e., we have an equivalence of functors
\[
\calx^{\vee}(-,-) \simeq \Mor(-, S_{\calx}(-))
\]
Then by definition of \(\calx^{\vee} \), we have an equivalence \(\Mor(x,y)^{\vee} \simeq \Mor(y, S_{\calx}(x))\) for all \(x\), \(y\) in \(\calx\). Using the fact that \(\Mor(x,y)\) is a perfect module, we have  \(\Mor(x,y) \simeq \Mor(x,y)^{\vee \vee} \simeq \Mor(y, S_{\calx}(x))^{\vee} \) showing that \(S_{\calx}\) is a Serre functor. 
\end{rmk}

\begin{rmk}\label{rmk:functors-to-HH}
Recall that the Hochschild homology complex \(\HH_*(\calx)\) of a small \(k\)-linear \(\infty\)-category can be computed by the formula
\[
\HH_*(\calx) \simeq \calx_{\Delta} \otimes_{\calx \otimes \calx^{\mathrm{op}}} \calx_{\Delta}
\]

Using this formula for \(\HH_*(\calx)\), and the standard tensor-hom adjunction, we have the following string of equivalences of chain complexes:
\[
\HH_*(\calx)^{\vee} \simeq \Mor_k( \calx_{\Delta} \otimes_{\calx \otimes \calx^{\mathrm{op}}} \calx_{\Delta}, k) \simeq \Mor_{\calx \otimes \calx^{\mathrm{op}}}(\calx_{\Delta}, \Mor_k(\calx_{\Delta},k)) \simeq \Mor_{\calx \otimes \calx^{\mathrm{op}}}(\calx_{\Delta}, \calx^{\vee})
\]

The bimodule \(\calx_{\Delta}\) is representable by the identity functor on \(\calx\). If \(\calx^{\vee}\) is representable, then it is representable by the Serre functor \(S_{\calx}\), so we have \(\Mor_{\calx \otimes \calx^{\mathrm{op}}}(\calx_{\Delta}, \calx^{\vee}) \simeq \Mor_{\Fun(\calx, \calx)}(\id_{\calx}, S_{\calx})\). Thus we see that the data of map \(\id_{\calx}[d] \rightarrow S_{\calx}\) is equivalent to the data of a map \(\HH_*(\calx) \rightarrow k[-d]\). 
\end{rmk}

The Hochschild chain complex \(\HH_*(\calx)\) of a category carries a natural \(S^1\)-action that is manifest in the definition of Hochschild homology via the cyclic bar complex, or via topological field theories \cite{Lurie-TFT}. An \(S^1\)-action on a chain complex over \(k\) is the same thing as the structure of a module over \(C^*(S^1,k) = k[B]/B^2\); in other words a differential of homological degree \(1\). The circle action on \(\HH_*(\calx)\) is given by Connes B-operator. In order to have a definition of Calabi-Yau structures that is adequate for producing oriented TFTs, and for producing shifted symplectic structures on the moduli of objects, it is necessary to incorporate the circle action into the definition of a Calabi-Yau structure. Recall the definition of a Calabi-Yau structure introduced in \cite{KS}, as described in \cite{Ganatra-Perutz-Sheridan}:

\begin{defn} \label{def:cy}
Let \(\caly\) be a locally proper category. Let \(\Xi: \HH_*(\caly)^{\vee} \rightarrow  \Mor_{\caly \otimes \caly^{\mathrm{op}}}(\caly_{\Delta}, \caly^{\vee})\) be the natural morphism described in Remark \ref{rmk:functors-to-HH}.

\begin{enumerate}

\item A \emph{weak \(d\)-dimensional right Calabi-Yau structure} is map \(\phi:\HH_*(\caly)\to k[-d]\) such that \(\Xi(\phi):\caly_{\Delta}[d]\to\caly^{\vee}\) is a weak equivalence.

\item  A \emph{\(d\)-dimensional right Calabi-Yau structure} on \(\caly\) is a morphism \(\tilde{\phi}:\HH_*(\caly)_{S^1}\to k[-d]\) such that the composite map \(\HH_*(\caly) \rightarrow \HH_*(\caly)_{S^1} \rightarrow k[-d]\) is a weak Calabi-Yau structure.

\end{enumerate}
\end{defn}

\begin{rmk}
Note that \(\Mor_{k}(\HH_*(\caly)_{S^1},k)\simeq\Mor_{S^1}(\HH_*(\caly),k)\), where on the right hand side \(k\) has the trivial \(S^1\) structure.  Thus we may consider \(\tilde{\phi}\) as an \(S^1\)-equivariant map \(\HH_*(\caly)\to k[-d]\). If \(\caly\) is smooth and proper, we can identify \(\caly^{\vee}\) with the Serre functor \(S_{\caly}\), and a (weak) right Calabi-Yau \(\phi\) structure is equivalent to the data of the equivalence of functors \( \id_{\caly}[d] \simeq S_{\caly}\) corresponding to \(\Xi(\phi)\) under the identification \(\Mor_{\caly \otimes \caly^{\mathrm{op}}}(\caly_{\Delta}, \caly^{\vee}) \simeq \Mor_{\Fun(\caly, \caly)}(\id_{\caly}, S_{\caly})\). 
\end{rmk}

\begin{rmk}
Note that the natural map from \(\HH_*(\caly)\) to the \emph{homotopy} orbits \(\mathrm{HC}_*(\caly) := \HH_*(\caly)_{S^1}\) has a highly non-trivial cofiber in general. In particular, the choosing a lift of a weak Calabi-Yau structure to a Calabi-Yau structure involves giving an ``infinite tower of higher coherence data''.

\end{rmk}

We can now formulate the definition of a relative Calabi-Yau structure, which is a noncommutative analogue of the notion of boundary structure introduced by Calaque \cite{Calaque}. Our definition follows a suggestion of To\"en \cite{Toen-DAG}.

\begin{defn}\label{def:isotropy}
Let \(\calx\) and \(\caly\) be locally proper \(k\)-linear \(\infty\)-categories and let \(F\: \calx \rightarrow \caly\) be an \(\infty\)-functor. 

\begin{enumerate}

\item Suppose that \(\caly\) is equipped with a \emph{weak} right Calabi-Yau structure \(\phi: \HH_*(\caly) \rightarrow k[-d]\) (Definition \ref{def:cy}, (1)). A \emph{weak isotropy structure} for \(F\) with respect to \(\phi\) is a functor \(\Delta^1 \times \Delta^1 \rightarrow \Perf(k)\) as follows:
\begin{equation}
\label{diag:weak-isotropy} \tag{*}
\xymatrixcolsep{5pc} \xymatrixrowsep{5pc} \xymatrix{
\HH_*(\calx) \ar[d] \ar[r]^{\HH_*(F)} & \HH_*(\caly) \ar[d]^{\phi} \\ 
0 \ar[r] & k[-d]
} 
\end{equation}
witnessing a nullhomotopy of \(\phi \circ \HH_*(F)\).

\item Suppose that \(\caly\) is equipped with a right Calabi-Yau structure \(\tilde{\phi}: \HC_*(\caly):= \HH_*(\caly)_{hS^1} \rightarrow k[-d]\) (Definition \ref{def:cy}, (2)). A (strong) \emph{isotropy structure} for \(F\) with respect to \(\phi\) is a 2-cell witnessing the commutativity of the following diagram:
\begin{equation}
\label{diag:strong-isotropy} \tag{**}
\xymatrixcolsep{5pc} \xymatrixrowsep{5pc} \xymatrix{
\HC_*(\calx) \ar[d] \ar[r]^{\HC_*(F)} & \HC_*(\caly) \ar[d]^{\tilde{\phi}} \\
0 \ar[r] & k[-d]
}
\end{equation}
\end{enumerate}
\end{defn}

\begin{rmk}\label{rmk:strong-gives-weak}
Every strong isotropy structure has an underlying weak isotropy structure obtained by forgetting the \(S^1\)-equivariance data in the nullhomotopy. Or, to say the same thing differently, one obtains the underlying weak isotropy structure by concatenating the diagram \eqref{diag:strong-isotropy} with the natural diagram
\begin{equation}
\label{diag:HH-HC} \tag{***}
\xymatrixcolsep{5pc} \xymatrixrowsep{5pc} \xymatrix{
\HH_*(\calx) \ar[r]^{\HH_*(F)} \ar[d] & \HH_*(\caly) \ar[d] \\
\HC_*(\calx) \ar[r]_{\HC_*(F)} & \HC_*(\caly) \\ 
}
\end{equation}
to obtain a functor \(\Delta^2 \times \Delta^1 \rightarrow \Perf(k)\) whose outer square is the required diagram \(\Delta^1 \times \Delta^1 \rightarrow \Perf(k)\) of the form \eqref{diag:weak-isotropy}. The vertical maps in \eqref{diag:HH-HC} are the canonical quotient maps \(\HH_*(-) \rightarrow \HH_*(-)_{hS^1}\).  
\end{rmk}

\begin{construction}\label{const:fiber-sequence}
Let \(\calx\) and \(\caly\) be locally proper \(k\)-linear \(\infty\)-categories and let \(F\: \calx \rightarrow \caly\) be an \(\infty\)-functor. For arbitrary objects \(x,y\) in \(\calx\), consider the diagram \eqref{diag:nondeg-part}. 

\begin{equation}
\label{diag:nondeg-part}\tag{\(\dagger\)}
\begin{aligned}
\xymatrixcolsep{5pc} \xymatrixrowsep{5pc}\xymatrix{
\Mor_{\calx}(x,y) \otimes \Mor_{\calx}(y,x) \ar[d]_{\id} \ar[r]^{F \otimes \id} & \Mor_{\caly}(Fx, Fy) \otimes \Mor_{\calx}(y,x) \ar[d]^{\id \otimes F}\\
 \Mor_{\calx}(x,y) \otimes \Mor_{\calx}(y,x) \ar[d]_{m_{\calx}} \ar[r]^{F \otimes F} & \Mor_{\caly}(Fx, Fy) \otimes \Mor_{\caly}(Fy, Fx) \ar[d]^{m_{\caly}} \\ 
\Mor_{\calx}(x,x) \ar[r]^{F} \ar[d]_{\tr_x} & \Mor_{\caly}(Fx,Fx) \ar[d]^{\tr_{Fx}} \\
\HH_*(\calx) \ar[r]^{\HH_*(F)}  & \HH_*(\caly) \\
}
\end{aligned}
\end{equation}

Here the vertical maps in the middle square are given by multiplication in the category, and the vertical maps in the bottom square are the universal trace maps (boundary-bulk maps). The upper square can clearly be promoted to a commutative square, i.e., a homotopy \((\id \otimes F) \circ (F \otimes \id) \simeq (F \otimes F) \circ (\id \otimes \id)\). Since \(F\) is an \(\infty\)-functor, for any choice of multiplication maps in \(\calx\) and \(\caly\), the middle square is promoted to a commutative square in a canonical way. Similarly, functoriality of \(\HH\) and the boundary-bulk map give us a 2-cell witnessing the commutativity of the bottom square. In fact, the diagram above can be promoted to a functor \(\Delta^3 \times \Delta^1 \rightarrow \Perf(k)\).

Now suppose that the functor \(F: \calx \rightarrow \caly\) is equipped with a weak isotropy structure. Gluing the commutative square \eqref{diag:weak-isotropy} to the outer commutative square of the diagram \eqref{diag:nondeg-part}, one obtains a commutative square

\begin{equation}
\label{diag:adj-non-deg} \tag{\(\dagger \dagger\)}
\xymatrixrowsep{5pc} \xymatrix{
\Mor_{\calx}(x,y) \otimes \Mor_{\calx}(y,x) \ar[r]^{F \otimes \id} \ar[d] & \Mor_{\caly}(Fx, Fy) \otimes \Mor_{\calx}(y,x) \ar[d] \\
0 \ar[r] & k[-d] \\
}
\end{equation}

By the adjunction between \(\otimes\) and \(\Mor\) in \(\Mod_k\), this gives rise to a commutative square

\begin{equation}
\label{diag:non-deg} \tag{\(\dagger \dagger \dagger\)}
\xymatrixrowsep{5pc} \xymatrix{
\Mor_{\calx}(x,y) \ar[r]^{F} \ar[d] & \Mor_{\caly}(Fx, Fy) \ar[d] \\
0 \ar[r] & \Mor_{\calx}(y,x)^{\vee}[-d] \\
}
\end{equation}

\end{construction}

\begin{rmk}\label{rmk:nondeg-triangle}
The right hand vertical map in \eqref{diag:non-deg} sits in a diagram
\[
\xymatrixcolsep{5pc} \xymatrix{
\Mor_{\caly}(Fx.Fy) \ar[r]^{\phi^{\sharp}_{\caly}} \ar[dr] & \Mor_{\caly}(Fy, Fx)^{\vee}[-d] \ar[d]^{F^{\vee}} \\
& \Mor_{\calx}(y.x)^{\vee}[-d] \\
}
\]
realizing it as a composition of the \(k\)-linear dual of \(F\) and the Serre duality equivalence on \(\caly\). 
\end{rmk}

\begin{defn}\label{def:relcy}
Let \(\calx\) and \(\caly\) be locally proper \(k\)-linear stable \(\infty\)-categories, and let \(F: \calx \rightarrow \caly\) be an \(\infty\)-functor. 
\begin{enumerate}
\item A \emph{weak relative Calabi-Yau structure} of dimension \(d\) on \(F\) is a weak isotropy structure \eqref{diag:weak-isotropy} (Definition \ref{def:isotropy}) for which the induced diagram \eqref{diag:non-deg} given by Construction \ref{const:fiber-sequence} is a pushout-pullback square (in which case we will sat that the isotropy data satisfies the \emph{non-degeneracy condition}). 

\item A \emph{strong relative Calabi-Yau structure} of dimension \(d\) on \(F\) is a strong isotropy structure \eqref{diag:strong-isotropy} (Definition \ref{def:isotropy}) for which the underlying weak isotropy structure of Remark \ref{rmk:strong-gives-weak}  defines a weak relative Calabi-Yau structure.  

\end{enumerate}
\end{defn}

We now recall the definition of a spherical functor as given in \cite{AL}, and describe an additional structure that may exist on spherical functors.

\begin{defn}\label{def:spherical}
Let \(\calx\) and \(\caly\) be categories and \(F:\calx\to\caly\) a functor, with left and right adjoints \(F^*\) and \(F^!\).  Let  \(\rho:F^!F\to T\) be the cofiber map of the unit \(\eta_{F^!F}: \id_{\calx} \rightarrow F^! \circ F\) of the adjunction \(F \dashv F^!\), and let  \(\eta_{FF^*}\) be the unit of the adjunction \( F^* \dashv F\). Then \(F\) is \emph{spherical} if:
\begin{enumerate}
\item The cofiber of the unit \(T:=\cofib(\id_{\calx}\to F^!\circ F)\) is an autoequivalence of \(\calx\).
\item The composition \(\tau=(\rho F^*)\circ F^!\eta_{FF^*}\) 

\begin{equation}\label{eq:sphiso}
\xymatrixcolsep{5pc} \xymatrix{
F^!\ar[r]^{F^!\eta_{FF^*}} \ar[dr]_{\tau} &  F^!\circ F\circ F^* \ar[d]^{\rho F^*} \\
& T\circ F^* \\
}
\end{equation}
is an isomorphism of functors.\\

\end{enumerate}

Furthermore, suppose \(\caly\) has a Calabi-Yau structure \(\phi\) of dimension \(d\), and \(\calx\) has a Serre functor \(S_{\calx}\).  By general results on adjoint functors, we have \(F^!\simeq S_{\calx}F^*S_{\caly}^{-1}\).  Let \(\kappa\) denote the isomorphism \(F^!\simeq S_{\calx}F^*S_{\caly}^{-1}\simeq S_{\calx}[-d]F^*\).  Then a \emph{\(\phi\)-compatible structure} on \(F\) is an isomorphism \(\alpha:T\simeq S_{\calx}[-d]\) such that the map \(F^!\xrightarrow{\tau} TF^*\xrightarrow{\alpha F^*} S_{\calx}[-d]F^*\) is homotopic to \(\kappa\).
\begin{equation}
\xymatrix{
F^!\ar[r]\ar@{=}[d]&F^!FF^*\ar[r]&TF^*\ar[d]^{\alpha F^*}\\
F^!\ar[r]^(0.40){\sim}&S_{\calx}F^*S_{\caly}^{-1}\ar[r]^\sim&S_{\calx}[-d]F^*
}
\label{eq:sphcond}
\end{equation}

A spherical functor with \(\phi\)-compatible structure will be called an \(\phi\)-compatible spherical functor.  When \(\phi\) is understood we will say compatible spherical functor.
\end{defn}

\begin{rmk}
The definition of spherical functors above coincides with that in \cite{AL}.  The first compatibility condition appears in \cite{KKP}, as part of their definition of spherical functors.

If \(F\) is a compatible spherical functor, the composition \(F^!F\to T\simeq S_{\calx}[-d]\) is uniquely determined.  By the proof of Theorem 2.13 in \cite{Ku15}, the morphism \(F^!F\to S_{\calx}[-d]\) is the composition
\[
F^!F\xrightarrow{\kappa F} S_{\calx}F^*F\xrightarrow{S_{\calx}\eta} S_{\calx}.
\]
Thus \(\alpha\) is determined by a homotopy between the composition \(\id_{\calx}\to F^!F\to S_{\calx}\) and \(0\).

Let \(\caly\) be a Calabi-Yau structure of dimension \(d\).  For a general spherical functor \(F:\calx\to\caly\) we have \(T\circ F^*\simeq F^!\simeq S_X[-d]\circ F^*\) but we cannot a priori show \(T\simeq S_{\calx}[-d]\).
\end{rmk}

We note a condition that is often easier to check than (\ref{eq:sphcond}) above
\begin{lem}
Let \(F:\calx\to\caly\) be a spherical functor, such that \(\caly\) has a Calabi-Yau structure \(\phi\) of dimension \(d\), and \(\calx\) has a Serre functor \(S_{\calx}\).  Let \(\alpha:T\simeq S_\calx[-d]\) be an isomorphism.  Suppose that \(\alpha\rho=(S_{\calx}[-d]\epsilon)\circ(\kappa F)\), where \(\epsilon\) is the counit of \(F^*F\).
\[
\xymatrix{
F^!F\ar[r]^{\rho}\ar[d]^{\kappa F}&T\ar[d]^\alpha\\
S_{\calx}[-d]F^*F\ar[r]^{S_{\calx}[-d]\epsilon}&S_{\calx}[-d]
}
\]
Then \(\alpha\) is a compatible spherical structure for \(F\).
\end{lem}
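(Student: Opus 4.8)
The plan is to verify directly the homotopy demanded by the notion of a $\phi$-compatible structure in Definition \ref{def:spherical}: that the composite $F^!\xrightarrow{\tau}TF^*\xrightarrow{\alpha F^*}S_{\calx}[-d]F^*$ appearing in \eqref{eq:sphcond} is homotopic to $\kappa$. Since $\tau=(\rho F^*)\circ(F^!\eta_{FF^*})$ by \eqref{eq:sphiso}, and whiskering respects vertical composition of $2$-cells, this composite equals $((\alpha\circ\rho)F^*)\circ(F^!\eta_{FF^*})$; so the whole argument amounts to feeding in the hypothesis $\alpha\circ\rho=(S_{\calx}[-d]\epsilon)\circ(\kappa F)$ and simplifying using the calculus of units and counits for the adjunction $F^*\dashv F$.

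Concretely, I would proceed in three moves. First, whisker the hypothesis on the right by $F^*$ to rewrite the composite as $(S_{\calx}[-d]\epsilon F^*)\circ(\kappa F F^*)\circ(F^!\eta_{FF^*})$. Second, apply the interchange law, i.e.\ naturality of the $2$-cell $\kappa\colon F^!\Rightarrow S_{\calx}[-d]F^*$ against $\eta_{FF^*}\colon\id_{\caly}\Rightarrow FF^*$, to slide $\kappa$ past the unit and obtain $(S_{\calx}[-d]\epsilon F^*)\circ(S_{\calx}[-d]F^*\eta_{FF^*})\circ\kappa=\big(S_{\calx}[-d]\star\big((\epsilon F^*)\circ(F^*\eta_{FF^*})\big)\big)\circ\kappa$. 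Third, invoke the triangle identity for $F^*\dashv F$, which gives $(\epsilon F^*)\circ(F^*\eta_{FF^*})\simeq\id_{F^*}$; since left whiskering by the exact autoequivalence $S_{\calx}[-d]$ preserves identities, the composite is homotopic to $\kappa$, which is precisely \eqref{eq:sphcond}. Combined with the standing hypotheses that $F$ is spherical and that $\alpha$ is an equivalence $T\simeq S_{\calx}[-d]$, this furnishes the data of a $\phi$-compatible structure.

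I do not expect a genuine obstacle: the content is formal $2$-categorical bookkeeping, the one substantive observation being that the hypothesis is the ``$\rho$-level'' reformulation of the ``$\tau$-level'' compatibility condition, with the two bridged by a single zig-zag identity. The only care required is $\infty$-categorical, namely that the homotopies produced by the interchange law and by the triangle identity be assembled coherently rather than merely on homotopy categories; but adjunctions in $(\infty,1)$-categories carry the triangle identities as coherent data and the shift $S_{\calx}[-d]$ commutes with every construction in sight, so this causes no difficulty. Alternatively, one could deduce the statement from the computation recorded in the Remark above following \cite{Ku15}, which identifies the canonical morphism $F^!F\to S_{\calx}[-d]$ attached to a compatible structure as $(S_{\calx}[-d]\epsilon)\circ(\kappa F)$ up to the shift, so that the hypothesis says exactly that $\alpha\circ\rho$ agrees with this canonical morphism.
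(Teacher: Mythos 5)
Your argument is correct and is essentially the same as the paper's proof: both whisker the hypothesis on the right by $F^*$, then use naturality of $\kappa$ (interchange) to commute the left square, and finally invoke the triangle identity $(\epsilon F^*)\circ(F^*\eta_{FF^*})\simeq\id_{F^*}$ to collapse the bottom row to $\kappa$. The paper packages these same three steps into a two-square diagram with rows $F^!\to F^!FF^*\to TF^*$ and $S_{\calx}[-d]F^*\to S_{\calx}[-d]F^*FF^*\to S_{\calx}[-d]F^*$; your unwound version is equivalent.
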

\begin{proof}
It remains to show that the diagram (\ref{eq:sphcond}) commutes.  We have a diagram
\[
\xymatrix{
F^!\ar[r]\ar[d]^{\kappa}&F^!FF^*\ar[r]\ar[d]^{\kappa FF^*}&TF^*\ar[d]^{\alpha F^*}\\
S_{\calx}[-d]F^*\ar[r]&S_{\calx}[-d]F^*FF^*\ar[r]&S_{\calx}[-d]F^*
}
\]
The left square clearly commutes and the right square commutes by our assumption.  The morphism \(S_{\calx}[-d]F^*\to S_{\calx}[-d]F^*\) is the identity by the unit-counit relations, so taking the bottom path yields the map \(\kappa:F^!\to S_{\calx}[-d]F^*\).  But the top map is precisely \(\tau\), so we get \((\alpha F^*)\circ\tau\sim \kappa\), as desired.
\end{proof}

\begin{example}\label{spherical-examples}
We have three natural families of compatible spherical functors
:
\begin{enumerate}
\item Let \(Y\) be a smooth Calabi-Yau variety of dimension \(d\) and \(i:D\to Y\) a smooth divisor.  Then \(F=i_*:\Perf(D)\to \Perf(Y)\) is spherical with \(F^*=i^*\) and \(F^!=i^!\).  For \(E\in\Perf(D)\), we have \(i^!i_*E\simeq E\oplus E(D)[-1]\), where \(E(D)=E\otimes \calo_Y(D)|_{D}\).  The unit \(E\to i^!i_*E\) is inclusion into the first factor, so that
\[
TE=\cofib(E\to i^!i_*E)\simeq E(D)[-1]\simeq (E\otimes\omega_D[d-1])[-d]\simeq S_X(E)[-d],
\]
where \(\calo_X(D)\simeq\omega_D\) by the adjunction formula.  This gives an isomorphism \(T\simeq S_X[-d]\).  Note that \(\rho:i^!i_*E\simeq E\oplus E(D)[-1]\to TE\) is just the projection onto the second factor.  We also have \(i^*i_*E\simeq E(-D)[1]\oplus E\), and the counit \(i^*i_*E\to E\) is also projection onto the second factor.  Thus the diagram
\[
\xymatrix{
i^!i_*\ar[r]\ar[d]&(-)\otimes \omega_D[-1]\ar[d]\\
S_X[-d]i^*i_*\ar[r]&S_X[-d]
}
\]
commutes and we have a compatible spherical structure.
\item Let \(X\) be a Fano variety of dimension \(d+1\) and \(a:Y\to X\) a smooth anticanonical divisor.  Then \(F=a^*:\Perf(X)\to \Perf(Y)\) is spherical with \(F^*=a_!=a_*(-)(Y)[-1]\) and \(F^!=a_*\).  For \(E\in\Perf(X)\), \(a_*a^*E\simeq \tot(E(-Y)\to E)\), where \(E(-Y)\to E\) is the defining section of \(Y\), and the map \(E\to a_*a^*E\) is given by the diagram
\[
\xymatrix{
0\ar[r]\ar[d]&E\ar[d]^{id}\\
E(-Y)\ar[r]&E
}.
\]
Then \(TE=\cofib(E\to a_*Ea^*E)\simeq E(-Y)[1]\simeq (E\otimes \omega_X[d+1])[-d]\simeq S_{X}[-d](E)\), where we have again used the adjunction formula.  Thus we have an isomorphism \(T\simeq S_X[-d]\).  Note that the map \(a_*a^*E\to TE\) projects onto the \(E(-Y)\) term, sending the \(E\) term to \(0\).  We also check that \(a_!a^*E\simeq \tot(E\to E(Y))\).  Again, the map \(a_!a^*E\to E\) projects onto the \(E\) term, sending \(E(Y)\) to \(0\).  Thus we have a commutative diagram
\[
\xymatrix{
a_*a^*\ar[r]\ar[d]&(-)\otimes \omega_X[1]\ar[d]\\
S_X[-d]a_!a^*\ar[r]&S_X[-d]
}
\]
commutes and we have a compatible spherical structure.
\item Let \(w: X\to\CC\) be a Lefschetz fibration with smooth fiber \(Y\).  Then \(F=\cap:\FS(X,w)\to \Fuk(Y)\) is spherical with \(F^!=\cup\) and \(F^*=\cup\circ S^{-1}[1]\) \cite{AS}.  We expect this is a compatible spherical functor.  
\end{enumerate}

\end{example}

Let \(\calx\) and \(\caly\) be proper categories, let \(F:\calx\to\caly\) be a functor, and  let \(\tilde{\phi}:\HH_*(\caly)_{S^1}\to k[-d]\) be a right Calabi-Yau structure.  Recall (Definition \ref{def:relcy}) that a right relative Calabi-Yau structure on \(F\) is a homotopy \(F^*\tilde{\phi}\sim 0\) such that for all \(x,y\in \calx\), the sequence
\[
\Mor_{\calx}(x,y)\to\Mor_{\caly}(Fx, Fy)\simeq \Mor_{\caly}(Fy, Fx)^\vee[-d]\to \Mor_{\calx}(y,x)^\vee[-d]
\]
is a fiber sequence.  Here the middle equivalence is from the equivalence \(\caly_{\Delta}(Fx, Fy)\simeq \caly^{\vee}(Fx, Fy)[-d]\). Similarly, if \(\phi:\HH_*(\caly)\to k[-d]\) is a weak right Calabi-Yau structure, then a weak right relative Calabi-Yau structure on \(F\) is a homotopy \(F^*\phi\sim0\) such that the above sequence is a fiber sequence. Here is what we can say about the existence of relative Calabi-Yau structures for the three families of functors described in Example \ref{spherical-examples}:

\begin{example}\label{relcy-examples}
\begin{enumerate}
\item Let \(Y\) be a smooth Calabi-Yau variety and \(i:D\to Y\) a smooth divisor.  Then the map  \(i_*:\perf(D)\to \perf(Y)\) on derived stacks induced by the functor  \(i_*:\Perf(D)\to \Perf(Y)\) carries a Lagrangian structure. This suggests that the functor \(i_*:\Perf(D)\to \Perf(Y)\) should carry a relative Calabi-Yau structure.

\item Let \(X\) be a Fano variety and \(a:Y\to X\) a smooth anticanonical divisor.  Then the functor \(a^*:\Perf(X)\to \Perf(Y)\) has a relative Calabi-Yau structure. \cite{Calaque}

\item Let \(w:X\to\CC\) be a Lefschetz fibration with smooth fiber \(Y\).  Then the functor \(\cap:\FS(X,w)\to \Fuk(Y)\) has a weak relative Calabi-Yau structure.By mirror symmetry (this example is mirror to the previous one), we expect this functor to carry a relative Calabi-Yau structure.
\end{enumerate}
\end{example}

At this point it is natural to conjecture that a compatible spherical functor will always carry a relative Calabi-Yau structure.  We will show weaker results:

\begin{prop}\label{prop:relcy-implies-spherical}
Let \(\calx\) and \(\caly\) be smooth and proper categories and suppose \(F: \calx \rightarrow \caly\) has a \emph{weak} relative right Calabi-Yau structure. Suppose that $F$ admits left and right adjoints. Then $F$ is a compatible spherical functor

\end{prop}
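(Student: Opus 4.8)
The plan is to unwind the definitions and show that a weak relative right Calabi-Yau structure on $F$ provides exactly the isomorphism $\alpha: T \simeq S_{\calx}[-d]$ required by Definition \ref{def:spherical}, together with its compatibility. First I would recall that since $\calx$ and $\caly$ are smooth and proper, both have Serre functors, and the weak Calabi-Yau structure $\phi$ on $\caly$ identifies $S_{\caly} \simeq \id_{\caly}[d]$; the hypothesis that $F$ admits a left adjoint $F^*$ and a right adjoint $F^!$ means we have the candidate spherical data. The heart of the argument is to reinterpret the nondegeneracy condition of Definition \ref{def:relcy}(1): the square \eqref{diag:non-deg} being a pushout-pullback square says that for all $x, y$ the sequence
\[
\Mor_{\calx}(x,y) \xrightarrow{F} \Mor_{\caly}(Fx,Fy) \to \Mor_{\calx}(y,x)^{\vee}[-d]
\]
is a (co)fiber sequence, where by Remark \ref{rmk:nondeg-triangle} the second map factors through the Serre duality equivalence on $\caly$ followed by $F^{\vee}$.

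Next I would translate this fiberwise statement into a statement about bimodules, or equivalently about the functor $F^!F$. Using adjunction, $\Mor_{\caly}(Fx, Fy) \simeq \Mor_{\calx}(x, F^!Fy)$ and $\Mor_{\calx}(y,x)^{\vee}[-d] \simeq \Mor_{\calx}(x, S_{\calx}(y))[-d] = \Mor_{\calx}(x, S_{\calx}[-d](y))$; under these identifications the map $F: \Mor_{\calx}(x,y) \to \Mor_{\caly}(Fx,Fy)$ becomes post-composition with the unit $\eta: \id_{\calx} \to F^!F$, and the second map becomes post-composition with a natural transformation $F^!F \to S_{\calx}[-d]$. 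The fiber sequence condition, holding for all $x,y$ and naturally, then says precisely that this natural transformation exhibits $S_{\calx}[-d]$ as the cofiber of $\eta: \id_{\calx} \to F^!F$, i.e. it furnishes an equivalence $T := \cofib(\eta) \simeq S_{\calx}[-d]$, which is our $\alpha$. One should check here that the map $F^!F \to S_{\calx}[-d]$ obtained from Remark \ref{rmk:nondeg-triangle} — which is built from $F^{\vee}$ and the Serre pairing on $\caly$ — agrees, under the standard manipulations with adjoints, with the map $\rho$ followed by $\alpha$; this is essentially the content of diagram \eqref{eq:sphcond} and can be compared using the explicit description $F^!F \to S_{\calx}F^*F \xrightarrow{S_{\calx}\epsilon} S_{\calx}$ recorded in the remark following Definition \ref{def:spherical}, together with the identification $\kappa: F^! \simeq S_{\calx}[-d]F^*$ coming from $S_{\caly} \simeq \id[d]$.

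It then remains to verify that $F$ is genuinely spherical, i.e. conditions (1) and (2) of Definition \ref{def:spherical}. Condition (1), that $T$ is an autoequivalence, is now immediate: $T \simeq S_{\calx}[-d]$ and the Serre functor of a smooth proper category is an autoequivalence. For condition (2), that $\tau = (\rho F^*) \circ (F^! \eta_{FF^*})$ is an isomorphism $F^! \to T F^*$: since we have already shown $\alpha F^* : TF^* \to S_{\calx}[-d]F^*$ is an equivalence and $\kappa: F^! \to S_{\calx}[-d]F^*$ is an equivalence, it suffices to show $(\alpha F^*)\circ \tau \simeq \kappa$ — but this is exactly the compatibility diagram \eqref{eq:sphcond}, which we established in the previous step. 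Hence $\tau$ is an equivalence, $F$ is spherical, and $\alpha$ is a $\phi$-compatible structure. I expect the main obstacle to be the second step: carefully matching the map $F^!F \to S_{\calx}[-d]$ produced by the nondegeneracy square (which is phrased via $k$-linear duals and the Serre pairing on $\caly$) with the map built from the adjunction units and counits in Definition \ref{def:spherical}, i.e. checking that "the Calabi-Yau structure on $\caly$ is the one witnessing $S_{\caly} \simeq \id[d]$ that we used in $\kappa$" — this is a coherence/bookkeeping check about compatibility of the trace maps in \eqref{diag:nondeg-part} with the counit of $F^* \dashv F$, and getting the $2$-cells to line up is where the real work lies; the auxiliary Lemma preceding Example \ref{spherical-examples} should make this more manageable by reducing the check to the simpler square involving $\epsilon$.
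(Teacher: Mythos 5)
Your proposal is correct and follows essentially the same route as the paper: both extract $\alpha:T\simeq S_{\calx}[-d]$ by comparing the nondegeneracy fiber sequence (transported through the adjunction $F\dashv F^!$ and Serre duality) with the cofiber sequence defining $T$, and then deduce sphericality by verifying the compatibility diagram \eqref{eq:sphcond} so that $\tau$ factors as the composite of the equivalences $\kappa$ and $(\alpha F^*)^{-1}$. The only difference is tactical: you suggest routing the compatibility check through the auxiliary lemma involving $\epsilon$, whereas the paper writes out the explicit two-row mapping-space diagram directly, but these amount to the same bookkeeping.
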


\begin{proof}
Let \(F^*\) and \(F^!\) be left and right adjoints of \(F\), respectively.  Letting \(T=\cofib(\id_{\calx}\to F^!\circ F)\), we give an isomorphism \(T\simeq S_\calx[-d]\).  For all \(x,y\in\calx\), we have
\[
\xymatrix{
\Mor(x,y)\ar[r]\ar@{=}[d]&\Mor(x,F^!Fy)\ar[r]\ar[d]^[@]{\sim}&\Mor(x,Ty)\ar[d]\\
\Mor(x,y)\ar[r]&\Mor(Fx,Fy)\ar[r]&\Mor_{\calx}(x,S_{\calx}[-d]y),
}
\]
where the bottom row is the fiber sequence given by the relative Calabi-Yau structure on \(F\).  Since the first two vertical maps are isomorphisms, so is the third; this gives \(T\simeq S_\calx[-d]\).

For the second condition, we have a diagram
\[
\xymatrix{
\Mor(x,F^!y)\ar[d]\ar[r]^\sim&\Mor(Fx,y)\ar[d]\ar[r]^\sim&\Mor(y,Fx)^\vee[-d]\ar[d]\ar[dr]^[@]{\sim}\\
\Mor(x,F^!FF^*y)\ar[r]^\sim&\Mor(Fx,FF^*y)\ar[r]^\sim&\Mor(FF^*y,Fx)^\vee[-d]\ar[r]&\Mor(F^*y,x)^\vee[-d]\ar[d]^[@]{\sim}\\
&&&\Mor(x,S_{\calx}[-d]F^*y)
}
\]
Here the squares commute by naturality of adjunction and functoriality of the Serre functor, respectively, and the triangle commutes because \(F^*\to F^*FF^*\to F^*\) is the identity.

Note that the map \(\Mor(x,F^!FF^*y)\to\Mor(x,S_{\calx}[-d]F^*y)\) of the bottom row is the one obtained from the composition \(F^!F\to T\to S_\calx[-d]\) described above.  Thus taking the bottom path is precisely the composition \(F^!\to F^!FF^*\to TF^*\) we want.  The top path is an isomorphism, and in particular, is exactly the map \(\kappa\).  Thus our result is proven.
\end{proof}

We now turn to the converse:

\begin{thm}\label{thm:spherical-implies-relcy}
Let \(\calx\) and \(\caly\) be smooth and proper categories and suppose \(\caly\) has a weak right Calabi-Yau structure \(\phi:\HH_*(\caly)\to k[-d]\).  Let \(F:\calx\to\caly\) be a compatible spherical functor.  Then \(F\) has a \emph{weak} relative Calabi-Yau structure.
\end{thm}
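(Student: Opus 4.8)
The plan is to run the proof of Proposition~\ref{prop:relcy-implies-spherical} in reverse; the one genuinely new ingredient, which that proposition took as given, is the construction of the isotropy nullhomotopy. I would produce two things: (a) a weak isotropy structure for $F$ relative to $\phi$, that is, a nullhomotopy of $\phi\circ\HH_*(F)\colon\HH_*(\calx)\to k[-d]$; and (b) a verification that the square \eqref{diag:non-deg} attached to that datum by Construction~\ref{const:fiber-sequence} is a pushout--pullback square for every pair of objects $x,y$.

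For (a): since $\calx$ is smooth and proper, Remark~\ref{rmk:functors-to-HH} identifies maps $\HH_*(\calx)\to k[-d]$ with natural transformations $\id_{\calx}\to S_{\calx}[-d]$; write $\nu$ for the transformation corresponding to $\phi\circ\HH_*(F)$. The crux is the identification
\[
\nu\ \simeq\ \bigl(\,\id_{\calx}\xrightarrow{\ \eta_{F^!F}\ }F^!F\xrightarrow{\ \rho\ }T\xrightarrow{\ \alpha\ }S_{\calx}[-d]\,\bigr),
\]
where $\eta_{F^!F}$ is the unit, $\rho$ the cofiber map, and $\alpha$ the $\phi$-compatibility datum. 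Granting this, the last two factors of $\nu$ are the unit of the adjunction $F\dashv F^!$ followed by its cofiber, so $\rho\circ\eta_{F^!F}$ carries a canonical nullhomotopy (because $T=\cofib(\eta_{F^!F})$); transporting this nullhomotopy along the displayed identification and back through Remark~\ref{rmk:functors-to-HH} produces the required nullhomotopy of $\phi\circ\HH_*(F)$. To establish the identification itself I would argue by Yoneda, comparing, for all $x,y$, the two maps $\Mor_{\calx}(y,x)\to\Mor_{\calx}(x,y)^{\vee}[-d]$ induced by the two sides after Serre duality on $\calx$. By the defining property of $\phi$ as a weak Calabi--Yau structure and functoriality of the boundary--bulk maps (diagram \eqref{diag:nondeg-part}), the $\nu$-side unwinds to $\Mor_{\calx}(y,x)\xrightarrow{F}\Mor_{\caly}(Fy,Fx)\xrightarrow{\phi^{\sharp}_{\caly}}\Mor_{\caly}(Fx,Fy)^{\vee}[-d]\xrightarrow{F^{\vee}}\Mor_{\calx}(x,y)^{\vee}[-d]$; using the adjunction $F\dashv F^!$ to rewrite $\Mor_{\calx}(y,F^!Fx)\simeq\Mor_{\caly}(Fy,Fx)$, the other side unwinds to the same composite, precisely because of the compatibility condition \eqref{eq:sphcond} --- which, unpacked through the definition of $\kappa$, is exactly the statement that the spherical identification $F^!\simeq TF^*$ of the triangle \eqref{eq:sphiso}, followed by $\alpha$, coincides with the Serre-duality identification $F^!\simeq S_{\calx}[-d]F^*$ built from $\phi$ and the Serre functors of $\calx$ and $\caly$. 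The explicit formula for $\alpha\rho$ in terms of $\kappa$ and the counit of $F^*\dashv F$ recorded in the remark following Definition~\ref{def:spherical} is what makes this comparison tractable.

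For (b): by Construction~\ref{const:fiber-sequence} and Remark~\ref{rmk:nondeg-triangle}, non-degeneracy amounts to the statement that for all $x,y$ the sequence $\Mor_{\calx}(x,y)\xrightarrow{F}\Mor_{\caly}(Fx,Fy)\xrightarrow{F^{\vee}\phi^{\sharp}_{\caly}}\Mor_{\calx}(y,x)^{\vee}[-d]$ is a fiber sequence, with the zero composite witnessed by the isotropy datum from (a). Applying $\Mor_{\calx}(x,-)$ to the cofiber sequence of endofunctors $\id_{\calx}\xrightarrow{\eta_{F^!F}}F^!F\xrightarrow{\rho}T$ yields a fiber sequence $\Mor_{\calx}(x,y)\to\Mor_{\calx}(x,F^!Fy)\to\Mor_{\calx}(x,Ty)$, which under $F\dashv F^!$, under $\alpha$, and under Serre duality on $\calx$ turns into a fiber sequence with terms $\Mor_{\calx}(x,y)$, $\Mor_{\caly}(Fx,Fy)$, $\Mor_{\calx}(y,x)^{\vee}[-d]$, first map $F$, and --- by the computation of (a), now read with $x$ and $y$ interchanged --- second map $F^{\vee}\phi^{\sharp}_{\caly}$. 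The point is that both this fiber sequence and the nullhomotopy of (a) are obtained by applying exact functors (respectively $\Mor_{\calx}(x,-)$, and the functor computing $\HH_*$) to the single cofiber sequence of $\calx$-bimodules $\calx_{\Delta}\xrightarrow{\eta_{F^!F}}F^!F\xrightarrow{\rho}T\xrightarrow{\alpha}S_{\calx}[-d]$; consequently the zero-composite witness demanded by Construction~\ref{const:fiber-sequence} is exactly the one coming from (a). This shows the weak isotropy structure of (a) is a weak relative Calabi--Yau structure.

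The main obstacle is the identification of $\nu$ with $\alpha\,\rho\,\eta_{F^!F}$ in (a): that is where one must fully unpack \eqref{eq:sphcond} and reconcile the two a priori distinct trivializations of $F^!\simeq S_{\calx}[-d]F^*$, one coming from the spherical structure and one from Serre duality together with the Calabi--Yau class $\phi$. A secondary, purely $\infty$-categorical, difficulty is coherence: one wants the nullhomotopy and the fiber sequences to be manufactured from a single piece of data, so that feeding the former into Construction~\ref{const:fiber-sequence} reproduces the latter on the nose rather than merely up to equivalence. Organizing the whole argument around the bimodule cofiber sequence $\calx_{\Delta}\to F^!F\to T\xrightarrow{\sim}S_{\calx}[-d]$ is what makes this coherence automatic.
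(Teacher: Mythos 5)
Your proposal takes essentially the same route as the paper's proof. Both identify $\phi\circ\HH_*(F)$, viewed via Remark~\ref{rmk:functors-to-HH} as a transformation $\id_{\calx}\to S_{\calx}[-d]$, with the composite $\id_{\calx}\xrightarrow{\eta_{F^!F}}F^!F\xrightarrow{\rho}T\xrightarrow{\alpha}S_{\calx}[-d]$ (the paper does this by a bimodule computation citing \cite{BD} and the proof of \cite{Ku15}, Theorem 2.13, while you unwind it by Yoneda on morphism spaces using \eqref{eq:sphcond} --- the same content packaged slightly differently); both extract the nullhomotopy from the fact that $T$ is the cofiber of $\eta_{F^!F}$; and both verify nondegeneracy by applying $\Mor_{\calx}(x,-)$ to the cofiber sequence $\id_{\calx}\to F^!F\to T$ and transporting through $F\dashv F^!$, $\alpha$, and Serre duality, exactly as in the paper's pair of diagrams. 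Your closing remark about organizing the argument around the single bimodule cofiber sequence $\calx_{\Delta}\to F^!F\to T\simeq S_{\calx}[-d]$ to secure coherence is a useful articulation of something the paper's proof leaves implicit, but it does not change the structure of the argument.
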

\begin{proof}
We have isomorphisms \(\HH_*(\calx)^\vee\simeq \Mor_{\calx^e}(\calx,\calx^{\vee})\simeq \Mor_{\Fun_\calx}(\id_\calx, S_\calx)\) (\cite{BD} 2.8, 2.12).  Under this identification the pullback \(F^*:\HH_*(\caly)^\vee\to \HH_*(\calx)^\vee\) sends \(\alpha\in\Mor(\caly, \caly^{\vee})\) to the composition
\[
\calx\to (F^e)^*\caly_{\Delta}\xrightarrow{(F^e)^*\alpha} (F^e)^*\caly^{\vee}\simeq ((F^e)^*\caly_{\Delta})^*\to \calx^{\vee}
\]
where \(F^e: \calx^{e} \rightarrow \calx^{e}\) (see Notation \ref{notation:modules}) is the functor induced by \(F\). Identifying an endofunctor \(T:\calx\to\calx\) with the bimodule \(M_T(x,y)=\Mor_\calx(x,Ty)\), this becomes the composition
\[
\id_\calx\to F^!F\to F^!S_\caly F\simeq S_\calx F^*F\to S_\calx.
\]

In our case, we have \(S_{\caly}\simeq \id_{\caly}[d]\) via \(\phi\), under which identification \(\alpha=\phi=\id_{\id_{\caly}}\) in degree \(-d\).  Furthermore the structure map \(F^!F\to\cofib(\id_\calx\to F^!F)\simeq S_\calx\) is given by
\[
F^!F\simeq S_\calx F^*F\to S_\calx.
\]
(\cite{Ku15}, proof of Theorem 2.13).  Thus the pullback \(F^*\phi\) is given by
\[
\id_\calx\to F^!F\to\cofib(\id_\calx\to F^!F),
\]
which has a canonical homotopy to \(0\).

For nondegeneracy, we have a diagram
\[
\xymatrix{
\Mor_\calx(x,y)\ar[r]\ar@{=}[d]&\Mor_\calx(x,F^!Fy)\ar[r]\ar[d]^(0.45)[@]{\sim}&\Mor_\calx(x,S_{\calx}[-d]y)\ar[d]^(0.45)[@]{\sim}\\
\Mor_\calx(x,y)\ar[r]&\Mor_\caly(Fx,Fy)\ar[r]&\Mor_\calx(y,x)^\vee[-d].
}
\]
Here the first square clearly commutes; it remains to show commutativity of the second.  Expanding this square a little, we have:
\[
\xymatrix{
\Mor(x,F^!Fy)\ar[r]^(0.4)[@]{\sim}\ar[dd]^(0.45)[@]{\sim}&\Mor(x,S_{\calx}[-d]F^*Fy)\ar[r]\ar[d]^(0.45)[@]{\sim}&\Mor(x,S_{\calx}[-d]y)\ar[d]^(0.45)[@]{\sim}\\
&\Mor(F^*Fy,x)^\vee[-d]\ar[r]\ar[d]^(0.45)[@]{\sim}&\Mor(y,x)^\vee[-d]\ar@{=}[d]\\
\Mor(Fx,Fy)\ar[r]^(0.45)[@]{\sim}&\Mor(Fy,Fx)^\vee[-d]\ar[r]&\Mor(y,x)^\vee[-d].
}
\]
For the rectangle on the left, note that the top map is induced by \(\kappa\) by the compatibility condition, and going down, right, and back up is exactly the definition of \(\kappa\).  The two squares on the right are clear, so we have our commutativity and thus our result.
\end{proof}

As mentioned in \cite{KKP}, the information of a (compatible) spherical functor is roughly analogous to the naive definition of a Calabi-Yau category \(S_{\caly}\simeq \id_{\caly}[d]\) and likely needs to be supplemented with higher homotopical data.

\begin{question}
How can we naturally describe the higher-homotopical data on a compatible spherical functor that is needed to promote it to a relative Calabi-Yau structure? Can every compatible spherical functor be promoted in this way?
\end{question}

\section{Calabi-Yau structures on Perverse Schobers}\label{sec:cy-psc}

The main purpose of this section is to prove Theorem \ref{thm:main-gluing-theorem}, our main gluing theorem for Calabi-Yau structures, and to discuss its ramifications. The section is organized as follows. \S\ref{subsec:main-gluing-theorem} is entirely devoted to the statement and proof of Theorem \ref{thm:main-gluing-theorem}. In \S\ref{subsec:surgery}, the we discuss the implications of this theorem for the study of Calabi-Yau structures on Fukaya categories with coefficients in a perverse Schober. We do not develop a general theory of CY-structures on perverse Schobers here; that will appear elsewhere. Instead, we focus on some examples to illustrate the geometric content of the gluing theorem.  By reformulating the gluing construction in terms of spherical functors using Theorem \ref{intro:thm-relcy-iff-spherical}, we have attempted to bring out the relationship of the result of \S\ref{subsec:main-gluing-theorem} to the monodromy of Lefschetz fibrations. Furthermore, we study the effect of modifying the spherical functors defining a Schober (``categorical surgery''), and show how to obtain Kronecker quivers by performing categorical surgery on the LG-mirror of \(\mathbb{CP}^2\) (Example \ref{example:Kronecker}).

\subsection{Calabi-Yau spans and the main gluing theorem}\label{subsec:main-gluing-theorem}

In order to formulate the main gluing theorem for Calabi-Yau structures, we first introduce the categorical analogue of a cobordism between oriented manifolds, and some relevant notation.

\begin{notation}\label{notation:reverse-orientation}
Let \(\caly\) be a \(k\)-linear stable \(\infty\)-category equipped with a strong (resp. weak) \(d\)-Calabi-Yau structure (Definition \ref{def:cy}) \(\tilde{\phi_{\caly}}: \HC_*(\caly) \rightarrow k[-d]\) (resp.\ \(\phi_{\caly}: \HH_*(\caly) \rightarrow k[-d]\)). When clear from the context, we will omit \(\phi_{\caly}\) from the notation, and simply write \(\caly\) for the Calabi-Yau category \((\caly, \tilde{\phi_{\caly}})\) (resp. the weak Calabi-Yau category \((\caly, \phi_{\caly})\)). We will write \(\overline{\caly}\) to denote the Calabi-Yau category \((\caly, - \tilde{\phi_{\caly}})\) (resp.\ the weak Calabi-Yau category \((\caly, -\phi_{\caly})\)). Here \(-\phi_{\caly}\) is determined up to contractible ambiguity, and therefore, so is \(\overline{\caly}\).
\end{notation}

\begin{defn}\label{def:cobordism}
Recall the definition of a relative Calabi-Yau structure (Definition \ref{def:relcy}), and the notation introduced in \ref{notation:reverse-orientation}. An \emph{oriented cobordism} from a \(d\)-Calabi-Yau category \(\calx\) to a  \(d\)-Calabi-Yau category \(\caly\) is a functor \(\calz \rightarrow \overline{\calx} \times \caly\) equipped with a relative Calabi-Yau structure. \emph{Weak} oriented cobordisms are defined similarly, by simply replacing all the Calabi-Yau structures by weak Calabi-Yau structures. 
\end{defn}

The following theorem, which should be compared to Theorem 6.2 of \cite{BD} and Theorem 2.9 of \cite{PTVV}, states the oriented cobordisms can be glued together in a natural way. 

\begin{thm}\label{thm:main-gluing-theorem}
Let \(\calx, \caly\) and \(\calz\) be \(k\)-linear stable \(\infty\)-categories equipped with right \(d\)-Calabi-Yau structures \(\phi_{\calx}, \phi_{\caly}\) and \(\phi_{\calz}\), respectively. Let \(\calu \rightarrow \overline{\calx} \times \caly\) and \(\calv \rightarrow \overline{\caly} \times \calz\) be oriented cobordisms (resp. weak oriented cobordisms) in the sense of Definition \ref{def:cobordism}. Then the natural functor \(\calw := \calu \times_{\caly} \calv \rightarrow \overline{\calx} \times \calz\) is equipped in a canonical way with the structure of an oriented cobordism (resp. weak oriented cobordism). 
\label{thm:spancompose}
\end{thm}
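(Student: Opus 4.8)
I would prove Theorem~\ref{thm:main-gluing-theorem} in two stages, following the template of the Mayer--Vietoris/gluing arguments in \cite{BD} and \cite{PTVV}: first construct the isotropy datum on $\calw \to \overline{\calx}\times\calz$, then verify the nondegeneracy condition of Definition~\ref{def:relcy}. The key structural input is that Hochschild homology (and its cyclic refinement) sends the homotopy pullback square of categories $\calw = \calu\times_\caly\calv$ to a homotopy pushout square of $k$-module spectra --- this is the localization/excision property for $\HH_*$ (and $\HC_*$), which holds for the relevant notion of pullback of small stable $k$-linear $\infty$-categories (e.g.\ when $\calu\to\caly$ is a localization, or more generally when the square is a ``Milnor square''; I would cite the appropriate result from \cite{BD}). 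Granting this, the construction of the isotropy datum is essentially formal: we have weak isotropy squares \eqref{diag:weak-isotropy} for $\calu\to\overline{\calx}\times\caly$ and $\calv\to\overline{\caly}\times\calz$, and the Calabi-Yau maps $\phi_\caly$ and $-\phi_\caly$ on the two copies of $\caly$ are negatives of one another. The pushout $\HH_*(\calw) \simeq \HH_*(\calu)\amalg_{\HH_*(\caly)}\HH_*(\calv)$ then receives a canonical map to $k[-d]$ induced by $(\phi_\calx,\phi_\caly)$-part on the $\calu$ side and the $(-\phi_\caly,\phi_\calz)$-part on the $\calv$ side: the two restrictions to $\HH_*(\caly)$ agree because one sees $\phi_\caly$ and the other sees $-\phi_\caly$, and these cancel in the amalgamation. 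Combined with the two null-homotopies, this assembles into a null-homotopy of the composite $\HH_*(\calw)\to\HH_*(\overline{\calx}\times\calz)\to k[-d]$; concretely one glues the two commuting squares \eqref{diag:weak-isotropy} along their common edge over $\HH_*(\caly)$ to produce a functor $\Delta^1\times\Delta^1\to\Perf(k)$ for $\calw$. The cyclic (strong) version is identical, working with $\HC_*$ throughout and using that the cyclic chains also take the pullback square to a pushout.

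\textbf{Nondegeneracy.} The heart of the proof is checking that the induced square \eqref{diag:non-deg} is a pushout-pullback for all $w_1,w_2\in\calw$. Writing $w_i = (u_i,v_i)$ with $Fu_i = Gv_i =: y_i \in \caly$ (where $F:\calu\to\overline\calx\times\caly$, $G:\calv\to\overline\caly\times\calz$, and I abuse notation for the $\caly$-components), the mapping spectrum $\Mor_\calw(w_1,w_2)$ sits in a fiber sequence $\Mor_\calw(w_1,w_2)\to \Mor_\calu(u_1,u_2)\oplus\Mor_\calv(v_1,v_2)\to\Mor_\caly(y_1,y_2)$ coming from the pullback of categories. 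I would run the nondegeneracy squares for $\calu$ and $\calv$ in parallel: each says that $\Mor_\calu(u_1,u_2)\to\Mor_{\overline\calx\times\caly}(Fu_1,Fu_2)\to\Mor_\calu(u_2,u_1)^\vee[-d]$ is a fiber sequence, and similarly for $\calv$. The strategy is to take the ``difference'' of these two fiber sequences over the common $\caly$-terms: the $\caly$-contribution to the target on the $\calu$-side is (up to the sign in $\overline\caly$) dual to the $\caly$-contribution on the $\calv$-side, so when we glue, the $\Mor_\caly(y_1,y_2)$ terms in the Calabi-Yau pairings match up with the $\Mor_\caly(y_1,y_2)$ term in the Mayer--Vietoris sequence for $\calw$, and one obtains that $\Mor_\calw(w_1,w_2)\to\Mor_{\overline\calx\times\calz}(\ldots)\to\Mor_\calw(w_2,w_1)^\vee[-d]$ is a fiber sequence by a diagram chase assembling three fiber sequences (the two nondegeneracy sequences and the Mayer--Vietoris sequence) into a $3\times 3$ grid. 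Making this diagram chase precise --- keeping track of all the signs and the duality/shift bookkeeping, and in particular checking that the Calabi-Yau structure $\phi_\caly$ really does mediate the identification between $\Mor_\caly(y_1,y_2)$ appearing in the $\calu$-side target and its dual appearing in the $\calv$-side --- is the main obstacle. This is exactly the point where the hypothesis that the two cobordisms are over $\overline{\caly}$ and $\caly$ respectively (i.e.\ carry opposite Calabi-Yau structures on the glued boundary) is used in an essential way.

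\textbf{Organization of the write-up.} I would first state and (cite or) prove the excision statement $\HH_*(\calu\times_\caly\calv)\simeq\HH_*(\calu)\amalg_{\HH_*(\caly)}\HH_*(\calv)$, and its cyclic analogue, as a preliminary lemma. Then I would give the construction of the (weak, then strong) isotropy datum on $\calw\to\overline\calx\times\calz$ as a short formal argument. Finally I would devote the bulk of the section to the nondegeneracy verification, organized around the $3\times 3$ diagram of fiber sequences described above; here it is cleanest to fix objects $w_1,w_2$, unwind all mapping spectra via the defining pullback, and identify the relevant maps using Remark~\ref{rmk:nondeg-triangle} (which expresses the vertical map in \eqref{diag:non-deg} as a composite of Serre duality with the $k$-linear dual of the functor). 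The expected payoff is that the Mayer--Vietoris fiber sequence for $\Mor_\calw$ is \emph{Serre-dual}, term by term, to the Mayer--Vietoris fiber sequence for $\Mor_\calw(-,-)^\vee[-d]$, with the middle nondegeneracy squares for $\calu$ and $\calv$ providing the comparison --- so nondegeneracy for $\calw$ follows by the two-out-of-three property for equivalences in the stable setting. One should also remark that when $\calx = 0$ (or $\calz = 0$) this recovers, via Theorem~\ref{intro:thm-relcy-iff-spherical}, a gluing statement for compatible spherical functors, which is how it feeds into the perverse Schober discussion of \S\ref{subsec:surgery}.
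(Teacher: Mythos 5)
Your overall strategy matches the paper's: construct the glued isotropy datum by pasting commutative squares, then verify nondegeneracy by assembling a $3\times3$ diagram whose four blocks are the Mayer--Vietoris square for $\Mor_\calw$ (Lemma~\ref{lem:cat-to-mor-pullback}), its Serre-dual, and the nondegeneracy squares for $\calu$ and $\calv$. Your ``fiber sequence'' language is equivalent, via Lemma~\ref{lemma:square-equals-triangle}, to the paper's ``pushout-pullback square'' language, so that part is just a repackaging.

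There is, however, one genuine wrinkle worth flagging. You open the isotropy construction by appealing to an ``excision'' property $\HH_*(\calu\times_\caly\calv)\simeq\HH_*(\calu)\amalg_{\HH_*(\caly)}\HH_*(\calv)$ and you hedge that it holds ``for the relevant notion of pullback.'' In fact this is false for general pullbacks of small stable $k$-linear $\infty$-categories (take $\calu=\calv=0$: the pullback is $0$, but the pushout of $0\leftarrow\HH_*(\caly)\rightarrow 0$ is $\HH_*(\caly)[1]$), and the theorem makes no localization or Milnor-square hypothesis. The good news is that you do not need it. The composite $\HH_*(\calw)\to\HH_*(\overline\calx\times\calz)\to k[-d]$ is already determined by the functor; what is required is a nullhomotopy, and your ``concretely, glue the two commuting squares along their common $\HH_*(\caly)$ edge'' sentence is exactly the paper's Construction~\ref{constr:glued-isotropy-data}, which uses only functoriality of $\HH_*$ (resp.\ $\HC_*$) and the square-pasting lemma --- no excision. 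So trim the excision detour, keep the gluing-of-squares argument, and your proof lines up with the paper's (including the final check, via Lemma~\ref{lem:cat-to-mor-pullback}, that local properness implies the ``locally bounded below'' hypothesis needed for the Mayer--Vietoris square on mapping spectra).
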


The proof of this theorem is given later in the section, immediately after Construction \ref{constr:glued-isotropy-data}, which gives a construction of isotropy data on \(\calw \to \calx \times \calz\). We begin first with some elementary lemmas that will be needed in the proof. The reader may wish to jump ahead to Construction \ref{constr:glued-isotropy-data}, referring back to these lemmas when necessary.

\begin{lem}\label{lem:cat-to-map-pullback}
Let
\[
\xymatrix{
\calu \ar[r]^{H} \ar[d]_{I} & \caly \ar[d]^{G} \\
\calx \ar[r]_{F} & \calz \\
}
\]
be a pullback square of \(\infty\)-categories. Then for each \(u, v\) in \(\calu\), there is a pullback square of spaces
\[
\xymatrix{
\Map(u,v) \ar[r] \ar[d] & \Map(Hu, Hv) \ar[d] \\
\Map(Iu, Iv) \ar[r] & \Map(FIu, GHv) \\
}
\]
\end{lem}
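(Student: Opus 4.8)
The plan is to recognize this as a formal consequence of how mapping spaces behave under limits of $\infty$-categories, together with the definition of the pullback $\infty$-category.

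First I would recall that $\calu = \calx \times_{\calz} \caly$, so that an object $u \in \calu$ is a triple $(Iu, Hu, \theta_u)$ where $Iu \in \calx$, $Hu \in \caly$, and $\theta_u$ is an equivalence $FIu \simeq GHu$ in $\calz$ (and similarly for morphisms, carrying compatibility data). The mapping space $\Map_{\calu}(u,v)$ should then be computed as the space of triples: a morphism $Iu \to Iv$ in $\calx$, a morphism $Hu \to Hv$ in $\caly$, and a homotopy in $\calz$ between the two induced composites $FIu \to FIv \to GHv$ and $FIu \to GHu \to GHv$ (using $\theta_u$, $\theta_v$). The cleanest way to make this precise is to invoke the fact that the pullback of $\infty$-categories is computed by the homotopy pullback of the underlying simplicial sets (e.g.\ via the model structure on marked simplicial sets or quasicategories), and that the mapping-space functor $\Map_{(-)}(-,-)$, suitably interpreted, commutes with such limits. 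Concretely, there is a general principle: for a limit diagram of $\infty$-categories $\calc = \lim_i \calc_i$, and objects $x, y \in \calc$ with images $x_i, y_i$, one has $\Map_{\calc}(x,y) \simeq \lim_i \Map_{\calc_i}(x_i, y_i)$, where the limit on the right is taken in spaces over the relevant diagram. Applying this to the pullback square $\calu = \calx \times_\calz \caly$ yields exactly $\Map_{\calu}(u,v) \simeq \Map_{\calx}(Iu,Iv) \times_{\Map_{\calz}(FIu,GHv)} \Map_{\caly}(Hu,Hv)$, where the two maps to $\Map_{\calz}(FIu, GHv)$ are: apply $F$ and then postcompose with $\theta_v$ (or precompose with $\theta_u^{-1}$, which is homotopic), and apply $G$ and then precompose with $\theta_u$. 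This is the asserted pullback square.

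The one point requiring care — and the main potential obstacle — is identifying the corner term. The statement writes $\Map(FIu, GHv)$, which implicitly uses the identifications $FIu \simeq GHu$ and $FIv \simeq GHv$ coming from the data of $u$ and $v$ as objects of the pullback; without invoking these coherence equivalences the square does not even typecheck. So in the writeup I would be explicit that $u$, $v$ carry equivalences $FIu \simeq GHu$ and $FIv \simeq GHv$, use them to rewrite $\Map_{\calz}(GHu, GHv) \simeq \Map_{\calz}(FIu, GHv)$, and then the two legs $\Map_{\calx}(Iu,Iv) \to \Map_{\calz}(FIu, FIv) \simeq \Map_{\calz}(FIu, GHv)$ and $\Map_{\caly}(Hu,Hv) \to \Map_{\calz}(GHu, GHv) \simeq \Map_{\calz}(FIu, GHv)$ are as in the statement.

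For the actual argument I would cite the standard fact (e.g.\ Lurie, \emph{Higher Topos Theory}, on mapping spaces in $\infty$-categories and their compatibility with limits, or equivalently the description of $\Map$ via the fibers of $\calc^{\Delta^1} \to \calc \times \calc$ and the fact that $(-)^{\Delta^1}$ and the evaluation maps preserve the pullback) rather than reprove it from scratch. The proof is then essentially: $\Map_{\calu}(u,v)$ is the fiber of $\calu^{\Delta^1} \to \calu \times \calu$ over $(u,v)$; since $(-)^{\Delta^1}$ preserves limits of $\infty$-categories and fibers are limits, and the square defining $\calu$ is a pullback, this fiber is the pullback of the corresponding fibers for $\calx$, $\caly$ over that for $\calz$; unwinding gives the displayed square. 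I expect no genuine difficulty beyond bookkeeping the coherence equivalences in the corner object, so the writeup can be kept short.
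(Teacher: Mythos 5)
Your proposal is correct and follows essentially the same route as the paper: both realize $\Map_{\calc}(c,d)$ as the fiber of $\Fun(\Delta^1,\calc) \to \calc \times \calc$ over $(c,d)$, observe that $\Fun(\Delta^1,-)$ and $\Fun(\Delta^0\sqcup\Delta^0,-)$ preserve the pullback of $\infty$-categories, and then commute limits to obtain the square of mapping spaces. Your extra care about the coherence equivalences $FIu \simeq GHu$, $FIv \simeq GHv$ in the corner term is a reasonable elaboration of a point the paper dispatches in its first paragraph.
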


\begin{proof}
First, note that the commutativity of the first square is given by an equivalence \(G \circ H \simeq F \circ I\), which in turn gives equivalences \(\Map(FIu, FIv) \simeq \Map(FIu, GHv) \simeq \Map(GHu, GHv)\). The lower right map in the second square is given by the composite of the map \(\Map(Iu, Iv) \rightarrow \Map(FIu, FIv)\) with the equivalence \(\Map(FIu, FIv) \rightarrow \Map(FIu, GHv)\), and a similar construction gives the right vertical map. 

Let \(\Delta^n\) denote the \(n\)-simplex, thought of as an \(\infty\)-category in the quasicategory model. Let \(\calc\) be an \(\infty\)-category, and let \(c,d\) be objects in \(\calc\). Then there is a natural pullback square of \(\infty\)-categories
\[
\xymatrix{
\Map_{\calc}(c,d) \ar[r] \ar[d] & \Fun(\Delta^1, \calc) \ar[d]\\
\Delta^{0} \ar[r]^{(c,d)} & \calc \times \calc \\
}
\]
where the right vertical map is obtained by applying \(\Fun(-, \calc)\) to the natural map \(\Delta^{0} \coprod \Delta^{0} \rightarrow \Delta^1\) of simplicial sets. 

Applying \(\Fun(\Delta^1, -)\) and \(\Fun(\Delta^0 \coprod \Delta^0, -)\) to our original pullback square of categories, we obtain two pullback squares of \(\infty\)-categories

\[
\xymatrix{
\Fun(\Delta^1, \calu) \ar[r] \ar[d]  & \Fun(\Delta^1, \caly) \ar[d] \\
\Fun(\Delta^1, \calx) \ar[r] & \Fun(\Delta^1, \calz) \\
}
\]

and 
\[
\xymatrix{
\calu \times \calu \ar[r] \ar[d] & \caly \times \caly \ar[d] \\
\calx \times \calx \ar[r] & \calz \times \calz \\
}
\]
respectively. Furthermore, pulling back along the map  \(\Delta^{0} \coprod \Delta^{0} \rightarrow \Delta^1\) gives a map from the first square to the second. By the previous paragraph, the homotopy fiber of this map is equivalent to the square of mapping spaces in the statement of the lemma. Since limits commute with limits, this square is a pullback square. 
\end{proof}

\begin{defn}\label{def:bdd-below}
A stable \(\infty\)-category \(\calc\) is locally bounded below if for each \(c, d\) in \(\calc\) there exists \(n \in \mathbb{Z}\) such that \(\Mor(c,d)\) is n-connective, i.e., such that \(\pi_k \Mor(c,d) = 0\) for \(k < n\). 
\end{defn}

\begin{lem}\label{lem:cat-to-mor-pullback}
Let
\[
\xymatrix{
\calu \ar[r]^{H} \ar[d]_{I} & \caly \ar[d]^{G} \\
\calx \ar[r]_{F} & \calz \\
}
\]
be a pullback square in the \(\infty\)-category of stable \(\infty\)-categories and exact functors. Suppose each of the categories is locally bounded below (Definition \ref{def:bdd-below}). Then for each \(u, v\) in \(\calu\), there is a pullback square of spectra
\[
\xymatrix{
\Mor(u,v) \ar[r] \ar[d] & \Mor(Hu, Hv) \ar[d] \\
\Mor(Iu, Iv) \ar[r] & \Mor(FIu, GHv) \\
}
\]
\end{lem}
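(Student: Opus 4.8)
The plan is to bootstrap the desired pullback square of spectra from the pullback square of \emph{spaces} furnished by Lemma~\ref{lem:cat-to-map-pullback}, by applying that lemma not only to $v$ but to all of its shifts, and then invoking the standard fact that a map of uniformly bounded-below $k$-module spectra is an equivalence as soon as it becomes one after applying $\Omega^{\infty}$ to a single large suspension. The local boundedness hypothesis enters precisely here.

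First I would set up the formal framework. Since the forgetful functor from stable $\infty$-categories and exact functors to all $\infty$-categories preserves limits, the given diagram is also a pullback square of $\infty$-categories, so Lemma~\ref{lem:cat-to-map-pullback} is available. The commutativity datum $G\circ H\simeq F\circ I$ produces, exactly as in the first paragraph of the proof of that lemma, a canonical commutative square of $k$-module spectra
\[
\xymatrix{
\Mor(u,v) \ar[r] \ar[d] & \Mor(Hu,Hv) \ar[d] \\
\Mor(Iu,Iv) \ar[r] & \Mor(FIu,GHv) \\
}
\]
and hence a comparison map $c\colon \Mor(u,v)\to P$, where $P:=\Mor(Hu,Hv)\times_{\Mor(FIu,GHv)}\Mor(Iu,Iv)$ is the pullback in $\Mod_k$ (which exists, $\Mod_k$ being complete). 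It remains to show that $c$ is an equivalence.

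The key point is that the entire square is compatible with shifts. Because $H,I,F,G$ are exact they commute with $[n]$, and because $\Mor(a,-)$ is an exact functor valued in $\Mod_k$ we have $\Mor(a,b)[n]\simeq\Mor(a,b[n])$ for all objects $a,b$; moreover $\Omega^{\infty}\Mor_{\mathcal{C}}(a,b[n])\simeq\Map_{\mathcal{C}}(a,b[n])$ by definition of the space-level mapping object. Since $\Omega^{\infty}$ is a right adjoint it preserves pullbacks, so for each $n\in\mathbb{Z}$ the spaces $\Omega^{\infty}$ of the $n$-fold suspension of the displayed square, together with $\Omega^{\infty}(P[n])$, are exactly the square of spaces, resp.\ the pullback of spaces, obtained by applying Lemma~\ref{lem:cat-to-map-pullback} to the given pullback of categories with $v$ replaced by $v[n]$ (using exactness to rewrite $H(v[n])\simeq(Hv)[n]$, and similarly for $I$ and $GH$). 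That lemma asserts the square of spaces is cartesian; equivalently, $\Omega^{\infty}(c[n])$ is an equivalence of spaces for every $n\in\mathbb{Z}$.

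Finally I would conclude via a detection principle. Local boundedness below of the four categories forces the four mapping spectra appearing above, and hence (being a finite limit of them) the spectrum $P$ as well as $\Mor(u,v)$, to be uniformly $m$-connective for some $m\in\mathbb{Z}$. For a map $f\colon E\to E'$ of $m$-connective spectra one has $\pi_{j}\Omega^{\infty}(E[n])=\pi_{j-n}E$ for $j\ge 0$, so the vanishing of $\pi_{i}E$ and $\pi_{i}E'$ for $i<m$ together with $\Omega^{\infty}(f[n])$ being an equivalence for a single $n\ge -m$ already forces $\pi_{i}(f)$ to be an isomorphism for all $i$, i.e.\ $f$ an equivalence. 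Applying this to $f=c$ completes the proof. (One could also dispense with the boundedness hypothesis entirely by letting $n\to\infty$ in the previous paragraph.) I do not anticipate any serious obstacle here; the only step requiring genuine care is this last one — ensuring that the space-level statement of Lemma~\ref{lem:cat-to-map-pullback} upgrades to the level of spectra rather than merely to connective covers — together with keeping the identifications $\Mor(a,b)[n]\simeq\Mor(a,b[n])$ and the commutation of $\Omega^{\infty}$ with the relevant limits straight.
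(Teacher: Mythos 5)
Your proposal is correct and follows essentially the paper's route: form the comparison map to the pullback in $\Mod_k$, shift so that Lemma~\ref{lem:cat-to-map-pullback} applies, and detect the equivalence via $\Omega^{\infty}$. Two remarks. First, your assertion that $P$ is $m$-connective ``being a finite limit'' of $m$-connective spectra is off by one: the fiber sequence $P\to\Mor(Hu,Hv)\oplus\Mor(Iu,Iv)\to\Mor(FIu,GHv)$ only guarantees that $P$ is $(m-1)$-connective, which is exactly why the paper shifts so as to make the three corner spectra $1$-connective and then deduces (via Lemma~\ref{lemma:square-equals-triangle} and the long exact sequence) that the pullback $K$ is $0$-connective, before invoking conservativity of $\Omega^{\infty}$ on connective spectra. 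The slip is harmless in your argument since bounded-belowness is all the detection principle really needs, but the threshold should be $n\ge -(m-1)$. Second, your parenthetical observation that the bounded-below hypothesis can be dropped entirely by letting $n$ range over all of $\mathbb{Z}$ is correct: since $\pi_0\bigl(\Omega^{\infty}(c[n])\bigr)\cong\pi_{-n}(c)$ as abelian groups, checking all shifts probes every stable homotopy group and stable Whitehead finishes the job; this is a modest but genuine strengthening of the lemma as stated, whereas the paper's proof uses the hypothesis to reduce to a single shift and the conservativity of $\Omega^{\infty}$ on connective spectra.
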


\begin{proof}
The idea is to reduce the statement to Lemma \ref{lem:cat-to-map-pullback} using the connectivity hypothesis. Since the categories are locally bounded below, there exists \(n \in \mathbb{Z}\) such that the \(n\)-fold suspensions of the spectra appearing in the statement of the lemma are all \(1\)-connective. Since \(\Sigma^n \Mor(-,-) \simeq \Mor(-, \Sigma^n - )\), by replacing \(v\) with \(\Sigma^n v = v[n]\) we may assume that the morphism spectra appearing in the statement of the lemma are all \(1\)-connective. The commutativity of this diagram of spectra is clear from functoriality.

Since the category of spectra admits pullbacks, there is a pullback square

\begin{equation}
\label{diag:spectral-pullback} \tag{\( \natural\)}
\xymatrix{
K \ar[r] \ar[d] & \Mor(Hu, Hv) \ar[d] \\
\Mor(Iu, Iv) \ar[r] & \Mor(FIu, GHv) \\
}
\end{equation}

Applying Lemma \ref{lemma:square-equals-triangle} to the square \eqref{diag:spectral-pullback}, and using the long exact sequence of on homotopy groups, we deduce that \(K\) is \(0\)-connective. By the universal property of pullbacks, there is a natural morphism \(\Mor(u,v) \rightarrow K\) of connective spectra. Our goal is to show that this map is an equivalence. Since \(\Omega^{\infty}\) is conservative when restricted to connective spectra, it suffices to show that \(\Omega^{\infty} \Mor(u,v) \rightarrow \Omega^{\infty} K\) is an equivalence.

The functor \(\Omega^{\infty}\) from spectra to spaces is a right adjoint, and therefore preserves all limits. Applying \(\Omega^{\infty}\) to the square \eqref{diag:spectral-pullback}, we obtain a pullback square in the \(\infty\)-category of spaces

\[
\xymatrix{
\Omega^{\infty} K \ar[r] \ar[d] & \Map(Hu, Hv) \ar[d] \\
\Map(Iu, Iv) \ar[r] & \Map(FIu, GHv) \\
}
\]

By Lemma \ref{lem:cat-to-map-pullback}, \(\Map(u,v) \simeq \Omega^{\infty} \Mor(u,v)\) is also characterized as a pullback of the same maps. By the universal property of pullbacks, it follows that the map  \(\Omega^{\infty} \Mor(u,v) \rightarrow \Omega^{\infty} K\) is an equivalence, completing the proof.
\end{proof}

It is well known that the data of a pushout-pullback square in an abelian category is equivalent to the data of an exact sequence. The following lemma is a homotopical analogue of this fact, with abelian categories being replaced by stable \(\infty\)-categories, and exact sequences by fiber sequences. 

\begin{lem}\label{lemma:square-equals-triangle}
Let \(A, B, C\) and \(D\) be objects in a stable \(\infty\)-category \(\calc\), and let \(B \oplus C\) be a biproduct (product and coproduct) of \(B\) and \(C\), which exists since \(\calc\) is stable.  Let \(f \in \Map(B,D), g \in \Map(C,D), h \in \Map(A,B), i \in \Map(A,C), k \in \Map(B \oplus C, D)\) and \(l \in \Map(A, B \oplus C)\).

Suppose \(k\) maps to the connected component of \((f,-g)\)  under the equivalence \(\Map(B \oplus C, D) \simeq \Map(B,D) \times \Map(C,D)\) induced by the universal property of \(B \oplus C\), and \(l\) maps to the component of \((h,i)\) under the equivalence \(\Map(A, B \oplus C) \simeq \Map(A,B) \times \Map(A,C)\) .

\begin{enumerate}

\item  The following spaces are homotopy equivalent:

\begin{enumerate}
\item The space of paths from \(f \circ h\) to \(g \circ i\) in \(\Map(A,D)\), i.e., the space of 2-cells witnessing the commutativity of the square
\begin{equation}
\label{diagram:square}\tag{\(\flat\)}
\xymatrix{
A \ar[r]^{h} \ar[d]_{i} & B \ar[d]^{f} \\
C \ar[r]_{g} & D\\ 
}
\end{equation}
\item The space of paths from \(k \circ l\) to \(0\) in \(\Map(A,D)\), i.e., the space of 2-cells witnessing the commutativity of the diagram

\begin{equation}
\label{diagram:triangle}\tag{\(\flat \flat\)}
\xymatrix{
A\ar[r]^{l} \ar[d] & B \oplus C \ar[d]^{k} \\
0 \ar[r] & D\\
}
\end{equation}
\end{enumerate}

\item A commutative square of the form \eqref{diagram:square} is bicartesian if and only if its image of the form \eqref{diagram:triangle} under the homotopy equivalence of (1) is bicartesian.

\end{enumerate}

\end{lem}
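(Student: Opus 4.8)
The plan is to reduce both parts to the universal properties of biproducts, fibers and pullbacks in the stable category \(\calc\), working throughout with the fact that \(\Map(A,D)\simeq\Omega^{\infty}\Mor(A,D)\) is a grouplike \(E_{\infty}\)-space. For part (1), I would first pin down the composite \(k\circ l\in\Map(A,D)\). Writing \(\iota_B,\iota_C\) and \(p_B,p_C\) for the structure maps of the biproduct \(B\oplus C\), one has the standard homotopy \(\id_{B\oplus C}\simeq\iota_B p_B+\iota_C p_C\); combining this with the bilinearity of composition at the level of mapping spectra and with the identifications \(l\mapsto(h,i)\) and \(k\mapsto(f,-g)\) supplied by the hypotheses, one gets a canonical homotopy \(k\circ l\simeq (k\iota_B)(p_B l)+(k\iota_C)(p_C l)\simeq f\circ h+(-g)\circ i = fh-gi\) in \(\Map(A,D)\). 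Hence the space \eqref{diagram:triangle} of null-homotopies of \(k\circ l\) is canonically homotopy equivalent to the space of paths from \(fh-gi\) to \(0\). Since \(\Map(A,D)\) is grouplike, translation by \(g\circ i\) is a self-equivalence of \(\Map(A,D)\), and applying it to path spaces produces a homotopy equivalence between the space of paths \(fh-gi\rightsquigarrow 0\) and the space of paths \(fh\rightsquigarrow gi\); the latter is exactly the space \eqref{diagram:square} of \(2\)-cells for the square. This gives (1).

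For part (2), I would observe that the two bicartesianness conditions can be tested on one and the same comparison map. The pullback of \(B\xrightarrow{f}D\xleftarrow{g}C\) is canonically identified with \(\fib(k\colon B\oplus C\to D)\): a point of \(\fib(k)\) consists of \(\xi=(b,c)\in B\oplus C\) together with a path from \(k\xi\) to \(0\), and such a path is the same datum as a path from \(fb\) to \(gc\), i.e.\ a point of \(B\times_D C\). Under this identification the comparison map \(A\to B\times_D C\) classified by \((h,i)\) together with a \(2\)-cell \(\sigma\) for \eqref{diagram:square} coincides with the comparison map \(A\to\fib(k)\) classified by \(l\) together with the null-homotopy \(\tilde\sigma\) of \(k\circ l\) corresponding to \(\sigma\) under the equivalence of (1). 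Now \eqref{diagram:square} is bicartesian iff \(A\to B\times_D C\) is an equivalence, and \eqref{diagram:triangle} is bicartesian iff \(A\to\fib(k)\) — the pullback of \(B\oplus C\xrightarrow{k}D\) along \(0\to D\) — is an equivalence; as these are the same map, the two conditions are equivalent. Here I use that in a stable \(\infty\)-category pushout and pullback squares coincide, so bicartesianness may be checked as a pullback.

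The only genuinely delicate point — and the step I would be most careful about — is that the additive manipulations above (the homotopy \(k\circ l\simeq fh-gi\), the ``translation by \(g\circ i\)'' equivalence of path spaces, and the identification of a path \(fb-gc\rightsquigarrow 0\) with a path \(fb\rightsquigarrow gc\)) are not mere statements about \(\pi_0\) but must be honest homotopy equivalences of path spaces; making this precise requires consistently using the infinite-loop-space structure on \(\Map(A,D)=\Omega^{\infty}\Mor(A,D)\), or equivalently carrying out all the composition identities at the level of mapping spectra, where composition is literally a bilinear map \(\Mor(Y,Z)\otimes\Mor(X,Y)\to\Mor(X,Z)\). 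Granting this, everything else is formal; indeed the statement is standard, being a repackaging of the fact that a square in a stable \(\infty\)-category is bicartesian precisely when the associated sequence \(A\to B\oplus C\to D\) is a fiber (equivalently cofiber) sequence.
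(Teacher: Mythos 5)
Your argument is correct and follows essentially the same strategy as the paper's: exploit the grouplike infinite-loop space structure on \(\Map(A,D)\) (equivalently, bilinearity of composition at the spectrum level) to show that translation by an additive shift carries one path space to the other, then reduce part (2) to comparing a single map against a pullback. The only cosmetic differences are that you first establish \(k\circ l\simeq fh-gi\) and then translate by \(g\circ i\), whereas the paper translates directly by \((-g)\circ i\) and checks both endpoints land where they should, and that you spell out the identification \(B\times_D C\simeq\fib(k)\) which the paper leaves implicit under ``the universal property of bicartesian squares.''
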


\begin{proof}
Since \(\calc\) is stable, it admits a natural spectral enrichment. In particular, the mapping spaces are grouplike infinite loop spaces; for each pair of objects \(E, F\) in \(\calc\), there is a mapping spectrum \(\Mor(E,F)\), and \(\Map(E,F) \simeq \Omega^{\infty} \Mor(E,F)\). Thus, for any \(e \in \Map(E,F)\), there is a map \(\add_{e}: \Map(E,F) \rightarrow \Map(E,F)\), well defined up to homotopy, such that \([\add_e(e')] = [e] + [e']\) in \(\pi_0\Map(E,F)\); and furthermore, \(\add_{e}\) is a homotopy equivalence with a homotopy inverse given by \(\add_{-e}\), where \(-e \in \Map(E,F)\) is an element such that \(\add_{e}(-e)\) is a zero map. 

 Note that the statement of (1) is independent of the choice of composites \(f \circ h\) and \(g \circ i\), since it depends only on the connected components \([f \circ h]\) and \([g \circ i]\), and composition is well defined up to homotopy. Now we turn to the proof of (1). Let \(-g\) be an additive inverse for \(g\) and consider the map \(\add_{(-g) \circ i} : \Map(A,D) \rightarrow \Map(A,D)\). By the universal property of the biproduct \(B \oplus C\), we have \([\add_{(-g) \circ i}(f \circ h)] = [k \circ l]\) for any composite \(k \circ l\) of \(k\) and \(l\). One the other hand, using the fact that composition of morphisms extends to a map of spectra \(\Mor(A,C) \otimes \Mor(C,D) \rightarrow \Mor(A,D)\), we see that \([\add_{(-g) \circ i}(g \circ i)] = [\add_{-(g \circ i)}(g \circ i)] = [0]\). This, \(\add_{-g \circ i}: \Map(A,D) \rightarrow \Map(A,D)\) is a homotopy equivalence that carries the connected component of \(g \circ i\) to the connected component of zero maps, and carries the connected component of \(f \circ h\) to the connected component of \(k \circ l\). Passing to path spaces, this proves (1). 

Part (2) of the lemma follows immediately from part (1), and the universal property of bicartesian squares.

\end{proof}

\begin{lem}\label{lem:weak-isotropy-for-cobordisms}
Let \(\calx, \caly\) and \(\calu\) be locally proper \(k\)-linear stable \(\infty\)-categories, and let \(F = (F_{\calx}, F_{\caly}) : \calu \rightarrow \calx \times \caly\) be a functor. Let \(d \in \mathbb{N}\) and let \(\phi_{\calx}: \HH_*(\calx) \rightarrow k[-d]\) and \(\phi_{\caly}: \HH_*(\caly) \rightarrow k[-d]\) be weak right Calabi-Yau structures (Definition \ref{def:cy}). Then there is a natural homotopy equivalence between the following spaces:

\begin{enumerate}
\item  The space of weak isotropy structures (see Definition \ref{def:isotropy}) on \(F\) with respect to the weak right Calabi-Yau structure \((- \phi_{\calx}, \phi_{\caly})\) on \(\calx \times \caly\).

\item The space of paths from \(\phi_{\calx} \circ \HH_*(F_{\calx})\) to \(\phi_{\caly} \circ \HH_*(F_{\caly})\) in \(\Map_{\Perf(k)}(\HH_*(\calu) , k[-d])\), i.e., the space of 2-cells witnessing the commutativity of the following square
\[
\xymatrixcolsep{5pc} \xymatrix{
\HH_*(\calu) \ar[r]^{\HH_*(F_{\caly})} \ar[d]_{\HH_*(F_{\calx})} & \HH_*(\caly) \ar[d]^{\phi_{\caly}} \\
\HH_*(\calx) \ar[r]_{\phi_{\calx}} & k[-d]\\
}
\] 

\end{enumerate}

\end{lem}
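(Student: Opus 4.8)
The plan is to unwind the definition of a weak isotropy structure for the functor $F = (F_{\calx}, F_{\caly}): \calu \to \calx \times \caly$ with respect to the Calabi-Yau structure $(-\phi_{\calx}, \phi_{\caly})$ and identify it with the stated mapping space of $2$-cells. First I would observe that Hochschild homology is symmetric monoidal (or at least sends products of categories to coproducts/products of chain complexes), so that $\HH_*(\calx \times \caly) \simeq \HH_*(\calx) \oplus \HH_*(\caly)$, and under this identification $\HH_*(F) = (\HH_*(F_{\calx}), \HH_*(F_{\caly})): \HH_*(\calu) \to \HH_*(\calx) \oplus \HH_*(\caly)$ corresponds to the map with components $\HH_*(F_{\calx})$ and $\HH_*(F_{\caly})$. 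Moreover, the Calabi-Yau class $(-\phi_{\calx}, \phi_{\caly})$ on the product corresponds, under this direct sum decomposition, to the map $\HH_*(\calx) \oplus \HH_*(\caly) \to k[-d]$ whose restriction to the two summands is $-\phi_{\calx}$ and $\phi_{\caly}$ respectively.

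With these identifications in hand, a weak isotropy structure on $F$ is, by Definition \ref{def:isotropy}(1), precisely a functor $\Delta^1 \times \Delta^1 \to \Perf(k)$ of the form
\[
\xymatrixcolsep{5pc} \xymatrixrowsep{5pc} \xymatrix{
\HH_*(\calu) \ar[d] \ar[r]^{\HH_*(F)} & \HH_*(\calx) \oplus \HH_*(\caly) \ar[d]^{(-\phi_{\calx}, \phi_{\caly})} \\
0 \ar[r] & k[-d]
}
\]
i.e.\ the space of nullhomotopies of the composite $(-\phi_{\calx}, \phi_{\caly}) \circ \HH_*(F)$ in $\Map_{\Perf(k)}(\HH_*(\calu), k[-d])$. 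This is the point at which I would invoke Lemma \ref{lemma:square-equals-triangle}: taking $A = \HH_*(\calu)$, $B = \HH_*(\calx)$, $C = \HH_*(\caly)$, $D = k[-d]$, with $h = \HH_*(F_{\calx})$, $i = \HH_*(F_{\caly})$, $f = \phi_{\calx}$, $g = \phi_{\caly}$, the map $k: B \oplus C \to D$ corresponding to $(f, -g) = (\phi_{\calx}, -\phi_{\caly})$ and $l: A \to B \oplus C$ corresponding to $(h, i) = (\HH_*(F_{\calx}), \HH_*(F_{\caly})) = \HH_*(F)$. Part (1) of that lemma gives a homotopy equivalence between the space of nullhomotopies of $k \circ l = (-\phi_{\calx}, \phi_{\caly}) \circ \HH_*(F)$ (which is exactly space (1) of the present lemma, after the sign bookkeeping $(f,-g)$ versus $(-\phi_{\calx},\phi_{\caly})$, which agree up to the contractible ambiguity in choosing additive inverses) and the space of $2$-cells witnessing the commutativity of the square with corners $\HH_*(\calu), \HH_*(\caly), \HH_*(\calx), k[-d]$ and maps $\HH_*(F_{\caly}), \HH_*(F_{\calx}), \phi_{\caly}, \phi_{\calx}$, i.e.\ the space of paths from $\phi_{\calx} \circ \HH_*(F_{\calx})$ to $\phi_{\caly} \circ \HH_*(F_{\caly})$. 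This is precisely space (2).

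The bookkeeping I expect to be the main (though minor) obstacle is keeping the signs and the roles of the additive inverses straight: Lemma \ref{lemma:square-equals-triangle} is phrased with $k$ corresponding to $(f, -g)$, whereas the Calabi-Yau structure on the product is $(-\phi_{\calx}, \phi_{\caly})$, so one must either absorb an overall sign (the nullhomotopies of $-\psi$ and of $\psi$ are canonically identified, since negation is a homotopy equivalence on the grouplike space $\Map_{\Perf(k)}(\HH_*(\calu), k[-d])$) or, equivalently, swap the roles of the two factors. Either way the identification is canonical up to contractible choice, matching the "$-\phi$ determined up to contractible ambiguity" convention of Notation \ref{notation:reverse-orientation}, so no real difficulty arises. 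Naturality in $F$ is immediate since every step — the monoidality of $\HH_*$, the formation of the comparison square, and the equivalence of Lemma \ref{lemma:square-equals-triangle} — is natural.
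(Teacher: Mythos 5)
Your proof is correct and takes essentially the same route as the paper, which simply cites Lemma~\ref{lemma:square-equals-triangle}, part~(1), without elaboration. You have made the implicit bookkeeping explicit — the decomposition $\HH_*(\calx\times\caly)\simeq\HH_*(\calx)\oplus\HH_*(\caly)$, the dictionary matching $A,B,C,D,f,g,h,i,k,l$ to the Hochschild/Calabi-Yau data, and the sign reconciliation between $(f,-g)$ and $(-\phi_{\calx},\phi_{\caly})$ via negation on the grouplike mapping space — all of which is indeed what the paper is tacitly invoking.
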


\begin{proof}
The lemma follows immediately from part (1) of Lemma \ref{lemma:square-equals-triangle}.
\end{proof}

\begin{lem}\label{lem:isotropy-for-cobordisms}
Let \(\calx, \caly\) and \(\calu\) be locally proper \(k\)-linear stable \(\infty\)-categories, and let \(F = (F_{\calx}, F_{\caly}) : \calu \rightarrow \calx \times \caly\) be a functor. Let \(d \in \mathbb{N}\) and let \(\phi_{\calx}: \HH_*(\calx) \rightarrow k[-d]\) and \(\phi_{\caly}: \HH_*(\caly) \rightarrow k[-d]\) be weak right Calabi-Yau structures (Definition \ref{def:cy}). Then there is a natural homotopy equivalence between the following spaces:

\begin{enumerate}
\item  The space of isotropy structures (Definition \ref{def:isotropy}) on \(F\) with respect to the right Calabi-Yau structure \((- \tilde{\phi_{\calx}}, \tilde{\phi_{\caly}})\) on \(\calx \times \caly\).

\item The space of paths from \(\tilde{\phi_{\calx}} \circ \HC_*(F_{\calx})\) to \(\tilde{\phi_{\caly}} \circ \HC_*(F_{\caly})\) in \(\Map_{\Perf(k)}(\HC_*(\calu) , k[-d])\), i.e., the space of 2-cells witnessing the commutativity of the following square
\[
\xymatrixcolsep{5pc} \xymatrix{
\HC_*(\calu) \ar[r]^{\HC_*(F_{\caly})} \ar[d]_{\HC_*(F_{\calx})} & \HC_*(\caly) \ar[d]^{\phi_{\caly}} \\
\HC_*(\calx) \ar[r]_{\phi_{\calx}} & k[-d]\\
}
\] 

\end{enumerate}

\end{lem}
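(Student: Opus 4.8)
The plan is to run the proof of Lemma \ref{lem:weak-isotropy-for-cobordisms} \emph{verbatim}, with Hochschild homology replaced by cyclic homology throughout. Recall that the proof of that lemma was a one-line application of part (1) of Lemma \ref{lemma:square-equals-triangle}; so the real task here is only to set up the dictionary and to verify that the two inputs needed for that application survive the passage to the cyclic setting. Concretely, I would apply Lemma \ref{lemma:square-equals-triangle} in the stable $\infty$-category $\Perf(k)$, taking $A = \HC_*(\calu)$, $B = \HC_*(\calx)$, $C = \HC_*(\caly)$, $D = k[-d]$, with $h = \HC_*(F_{\calx})$, $i = \HC_*(F_{\caly})$, and with $f = -\tilde{\phi_{\calx}}$, $g = -\tilde{\phi_{\caly}}$ so that the map "$k$" of that lemma is exactly the Calabi-Yau structure $(-\tilde{\phi_{\calx}}, \tilde{\phi_{\caly}})$ on $\calx \times \caly$ and "$l$" is $\HC_*(F) = (\HC_*(F_{\calx}), \HC_*(F_{\caly}))$. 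Under this dictionary a $2$-cell of the shape \eqref{diagram:triangle} is precisely an isotropy structure on $F$ in the sense of Definition \ref{def:isotropy}(2), while a $2$-cell of the shape \eqref{diagram:square} is precisely a commuting square as in item (2) (up to the same harmless negation of $\tilde{\phi_{\calx}}, \tilde{\phi_{\caly}}$ that occurs already in Lemma \ref{lem:weak-isotropy-for-cobordisms}; negation is a self-equivalence of $\Mor_{\Perf(k)}(\HC_*(\calu), k[-d])$ fixing the zero component, cf. Notation \ref{notation:reverse-orientation}). Lemma \ref{lemma:square-equals-triangle}(1) then produces the asserted natural homotopy equivalence, and part (2) of that lemma would even give the compatibility of the nondegeneracy conditions if one later needs it.

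The one point that genuinely requires an argument — and the only place where the cyclic case differs from the Hochschild case — is the identification of the biproduct: I need $\HC_*(\calx \times \caly) \simeq \HC_*(\calx) \oplus \HC_*(\caly)$, compatibly with the projections, so that the composite $(-\tilde{\phi_{\calx}}, \tilde{\phi_{\caly}}) \circ \HC_*(F)$ is indeed the "$k \circ l$" above. This follows from the additivity of $\HH_*$ (the split exact sequence $\calx \to \calx \times \caly \to \caly$ induces a splitting of Hochschild complexes, since there are no morphisms between the two summands) together with the fact that $(-)_{hS^1}$, being a colimit, commutes with finite direct sums. In fact this equivalence is already tacitly invoked in the statement, in order to make sense of the phrase "the right Calabi-Yau structure $(-\tilde{\phi_{\calx}}, \tilde{\phi_{\caly}})$ on $\calx \times \caly$".

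Finally I would remark that no $S^1$-equivariant refinement of Lemma \ref{lemma:square-equals-triangle} is needed: although the Calabi-Yau structures $\tilde{\phi}$ are, morally, $S^1$-equivariant maps out of $\HH_*$, the diagram \eqref{diag:strong-isotropy} defining a strong isotropy structure already lives in $\Perf(k)$, the circle-equivariance having been absorbed into the passage from $\HH_*$ to $\HC_* = \HH_*(-)_{hS^1}$. Hence the entire discussion takes place in the stable $\infty$-category $\Perf(k)$, where Lemma \ref{lemma:square-equals-triangle} applies directly, and the proof concludes exactly as for Lemma \ref{lem:weak-isotropy-for-cobordisms}. I do not anticipate a substantive obstacle here; the only care required is the additivity statement just discussed and the sign bookkeeping, both of which are routine.
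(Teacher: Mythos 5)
Your proposal is correct and takes essentially the same approach as the paper: the paper's proof is exactly the one-line invocation of part (1) of Lemma \ref{lemma:square-equals-triangle}, applied in $\Perf(k)$ to the diagram with $\HC_*$ in place of $\HH_*$, just as in Lemma \ref{lem:weak-isotropy-for-cobordisms}. Your extra remarks on the additivity $\HC_*(\calx \times \caly) \simeq \HC_*(\calx) \oplus \HC_*(\caly)$ and on the fact that no $S^1$-equivariant version of the gluing lemma is needed are both accurate and make explicit points the paper leaves tacit.
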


\begin{proof}
The lemma follows immediately from part (1) of Lemma \ref{lemma:square-equals-triangle}.
\end{proof}

\begin{construction}\label{constr:glued-isotropy-data}
All the categories appearing in the following construction are \(k\)-linear stable \(\infty\)-categories. Let \(d \in \mathbb{N}\), and suppose that the categories \(\calx, \caly\) and \(\calz\) are equipped with weak right \(d\)-Calabi-Yau structures \(\phi_{\calx}, \phi_{\caly}\) and \(\phi_{\calz}\). Let \(F = (F', F''): \calu \rightarrow \calx \times \caly\) and \(G = (G', G''): \calv \rightarrow \caly \times \calz\) be functors equipped with weak isotropy data with respect to the Calabi-Yau structures \((-\phi_{\calx}, \phi_{\caly})\) on \(\calx \times \caly\) and \((-\phi_{\caly}, \phi_{\calz})\) on \(\caly \times \calz\). Let \(\calw := \calu \times_{\caly} \calv\), and consider the diagram: 

\begin{equation}\label{diag:glued-isotropy}\tag{\(\sharp\)}
\xymatrix{
\HH_*(\calw) \ar[r] \ar[d] &  \HH_*(\calv) \ar[r] \ar[d] & \HH_*(\calz) \ar[d]^{\phi_{\calz}} \\
\HH_*(\calu) \ar[r] \ar[d]  & \HH_*(\caly) \ar[r]_{\phi_{\caly}} \ar[d]^{\phi_{\caly}} & k[-d] \ar[d]^{\id} \\
\HH_*(\calx) \ar[r]_{\phi_{\calx}} & k[-d] \ar[r]_{\id} & k[-d] \\
}
\end{equation}

All the unmarked arrows in this diagram are given by applying the functor \(\HH_*\) to the natural diagram of categories 

\begin{equation}\label{diag:glued-category}\tag{\P}
\xymatrix{
\calw \ar[r]^{H''} \ar[d]_{H'} & \calv \ar[r]^{G''} \ar[d]^{G'} & \calz \\
\calu \ar[r]_{F''} \ar[d]_{F'} & \caly  \\
\calx \\
}
\end{equation}

The upper left square in \eqref{diag:glued-isotropy} is equipped with the structure of a homotopy square by virtue of the functoriality of \(\HH_*\). The lower right square is equipped with the trivial homotopy commutative structure. By virtue of Lemma \ref{lem:weak-isotropy-for-cobordisms}, the lower left square and the upper right square are equipped with commutative structures induced by the isotropy data on the functors  \(F: \calu \rightarrow \calx \times \caly\) and \(G: \calv \rightarrow \caly \times \calz\), respectively. Since each of the four adjacent squares in the  diagram \eqref{diag:glued-isotropy}  is equipped with a homotopy commutative structure, it follows that the outer square is equipped with a homotopy commutative structure. Applying Lemma \ref{lem:weak-isotropy-for-cobordisms} again, we deduce that commutativity data for the outer square in  \eqref{diag:glued-isotropy} equips the natural functor \(\calw \rightarrow \calx \times \calz\) with a weak isotropy structure, with respect to the Calabi-Yau structure \((-\phi_{\calx}, \phi_{\calz})\).

If each of the functors  \(F: \calu \rightarrow \calx \times \caly\) and \(G: \calv \rightarrow \caly \times \calz\) is equipped with a \emph{strong} isotropy structure, then the argument of the paragraph above, applied to the analogue of the diagram  \eqref{diag:glued-isotropy} with \(\HH_*\) replaced by \(\HC_*\), constructs a strong isotropy structure on the functor \(\calw \rightarrow \calx \times \calz\).

\end{construction}

\begin{proof}[Proof of theorem \ref{thm:main-gluing-theorem}]\label{proof:main-gluing}
Let \(\calu \rightarrow \calx \times \caly\) and \(\calv \rightarrow \caly \times \calz\) be as in the statement of the theorem, and let \(\calw \simeq \calu \times_{\caly} \calv\). Construction \ref{constr:glued-isotropy-data} endows the natural functor (see diagram \eqref{diag:glued-category}) \( \calw \rightarrow \calx \times \calz\) with isotropy data. It remains to show that this isotropy data satisfies the nondegeneracy condition of Definition \ref{def:relcy}.  To this end, consider the following diagram

\begin{equation}
\label{diag:glued-nondeg} \tag{\(\natural \natural\)}
\xymatrix{
\Mor_{\calw}(u,v) \ar[r] \ar[d] & \Mor_{\calv}(H''u, H''v) \ar[r] \ar[d] & \Mor_{\calz}(G''H''u, G''H''v) \ar[d] \\
\Mor_{\calu}(H'u, H'v) \ar[r] \ar[d] & \Mor_{\caly}(F''H'u, G'H''v) \ar[r] \ar[d] & \Mor_{\calv}(H''v, H''u)^{\vee}[d] \ar[d] \\
\Mor_{\calx}(F'H'u, F'H'v) \ar[r] & \Mor_{\calu}(H'v, H'u)^{\vee}[d] \ar[r] & \Mor_{\calw}(v,u)^{\vee}[d] \\
}
\end{equation}

The upper left square of this diagram is the commutative square of Lemma \ref{lem:cat-to-mor-pullback} (see also Lemma \ref{lem:cat-to-map-pullback}). Recall that the central term in this diagram can be described in several ways: we have natural equivalences \(\Mor_{\caly}(G'H''u, G'H''v) \simeq \Mor_{\caly}(F''H'u, G'H''v) \simeq \Mor_{\caly}(F''H'u, F''H'v) \). The lower vertical map in the central column is the composite \((F'')^{\vee}[d] \circ \Xi(\phi_{\caly})\)  of the equivalence \(\Xi(\phi_{\caly}): \Mor_{\caly}(F''H'u, F''H'v) \rightarrow \Mor_{\caly}(F''H'v, F''H'u)^{\vee}[d]\) induced by the Calabi-Yau structure on \(\caly\) with the map \((F'')^{\vee}[d]: \Mor_{\caly}(F''H'v, F''H'u)^{\vee}[d] \rightarrow \Mor_{\calu}(H'v, H'u)^{\vee}[d]\). Similar remarks apply for the right hand map in the central row, the left hand map in the bottom row, and the upper map in the rightmost column. The reader is referred to Construction \ref{const:fiber-sequence}, and Diagram \eqref{diag:non-deg} therein, for a discussion of this construction.

Our categories are all locally proper, and therefore locally bounded below. Since \(\calw \simeq \calu \times_{\caly} \calv\), the hypotheses of Lemma \ref{lem:cat-to-mor-pullback} are satisfied, and we conclude that the square in the upper left hand corner is a bicartesian square of \(k\)-module spectra. Using the identification \( \Mor_{\caly}(F''H'u, F''H'v) \simeq \Mor_{\caly}(F''H'v, F''H'u)^{\vee}[d]\) induced by the Calabi-Yau structure on \(\caly\), the square in the lower right hand corner is identified with the \(k\)-linear dual, shifted by \(d\), of the square in the upper left corner. Since \((-)^{\vee}[d]\) is an exact functor, it follows that the lower right square is a pullback square as well.

Now let us consider the square in the lower left corner. Since the functor \(F: \calu \rightarrow \overline{\calx} \times \caly\) is equipped with a relative Calabi-Yau structure, the underlying isotropy structure gives rise to a \emph{commutative} diagram

\begin{equation}
\label{diag:triang-lower-left} \tag{\(\diamondsuit\)}
\xymatrix{
\Mor_{\calu}(H'u, H'v) \ar[r] \ar[d] & \Mor_{\calx}(F'H'u, F'H'v) \oplus \Mor_{\caly}(F''H'u, F''H'v) \ar[d]\\
0 \ar[r] & \Mor_{\calu}(H'v, H'u)^{\vee}[d]\\
}
\end{equation}
by Construction \ref{const:fiber-sequence}. By Lemma \ref{lemma:square-equals-triangle}, this implies that the lower left square in Diagram \eqref{diag:glued-nondeg} is homotopy commutative. Since the isotropy structure on \(F\) defines a Calabi-Yau structure, it is non-degenerate (Definition \ref{def:relcy}), which means that \eqref{diag:triang-lower-left} is in fact a \emph{pullback square}. Applying Lemma \ref{lemma:square-equals-triangle} once again, we conclude that the lower left square in \eqref{diag:glued-nondeg} is a pullback square. The same argument shows that the square in the upper right corner of Diagram  \eqref{diag:glued-nondeg} is a pullback square. 

Thus we have shown that all the four adjacent squares in  Diagram  \eqref{diag:glued-nondeg} are pullback-pushout squares of \(k\)-module spectra. It follows that the outer square of this diagram is also a pullback-pushout square.  Applying Lemma \ref{lemma:square-equals-triangle} to the outer square, we conclude that the corresponding diagram

\[
\xymatrix{
\Mor_{\calw}(H'u, H'v) \ar[r] \ar[d] & \Mor_{\calx}(F'H'u, F'H'v) \oplus \Mor_{\calz}(G''H''u, G''H''v) \ar[d]\\
0 \ar[r] & \Mor_{\calw}(v, u)^{\vee}[d]\\
}
\]

is a pullback square. An elementary, albeit somewhat tedious, diagram-chase (whose details we leave to the reader) shows that this square is in fact the square obtained by applying Construction \ref{const:fiber-sequence} to the ``glued'' isotropy structure on \(\calw \rightarrow \calx \times \calz\) obtained by Construction \ref{constr:glued-isotropy-data}. Thus we have  proven that the glued isotropy structure on  \(\calw \rightarrow \calx \times \calz\) is non-degenerate, which is what we set out to do.
\end{proof}

\subsection{CY-structures and surgery on Schobers}\label{subsec:surgery}

Theorem \ref{thm:main-gluing-theorem} has the following familiar analogue in topology.  Let \(M_1,M_2\) be oriented manifolds with boundary, and partition the boundary of each one as \(\del M_1= N_1\coprod N'_{12}\) and \(\del M_2=N_{12}\coprod N_2\).  Giving each boundary component the induced orientation, let us further suppose that \(N'_{12}\simeq N_{12}\) via an orientation-reversing homeomorphism; we write \(\del M_1=N_1\coprod \overline{N_{12}}\).  Then we can glue \(M_1\) and \(M_2\) along \(N_{12}\) to get a new manifold \(M=M_1\coprod_{N_{12}}M_2\) with boundary \(N_1\coprod N_2\) (see Figure \ref{fig:maniglue}).

\begin{figure}
\centering
\begin{tikzpicture}
\draw (0,1.25) ellipse (.25 and .5);
\draw (0,2.75) ellipse (.25 and .5);
\node at (0,3.5) {\(N_1\)};
\draw (0,1.75) to [out=0, in=270] (.5,2) to [out=90, in=0] (0,2.25);
\draw (2,1.5) arc (-90:90:.25 and .5);
\draw[dashed] (2,2.5) arc (90:270:.25 and .5);
\node at (2,3) {\(\overline{N_{12}}\)};
\draw (0,0.75) to [out=0, in=180] (2,1.5);
\draw (0,3.25) to [out=0, in=180] (2,2.5);
\draw (3,2) ellipse (.25 and .5);
\node at (3,3) {\(N_{12}\)};
\node at (1,3.5) {\(M_1\)};
\draw (3,1.5) to [out=0,in=180] (5,0.75) to [out=0,in=180] (7,1.5);
\draw (3,2.5) to [out=0,in=180] (5,3.25) to [out=0,in=180] (7,2.5);
\draw (7,1.5) arc (-90:90:.25 and .5);
\node at (7,3) {\(N_2\)};
\draw[dashed] (7,2.5) arc (90:270:.25 and .5);
\draw (4.2, 2.1) to [out=-50,in=180] (5,1.6) to [out=0,in=230] (5.8,2.1);
\begin{scope}
\clip (4,2) rectangle (6,3);
\draw (4.2,1.9) to [out=50, in=180] (5,2.4) to [out=0, in=130] (5.8,1.9);
\end{scope}
\node at (5,3.5) {\(M_2\)};
\end{tikzpicture}
\vspace{.5cm}

\begin{tikzpicture}
\draw (0,1.25) ellipse (.25 and .5);
\draw (0,2.75) ellipse (.25 and .5);
\node at (0,3.5) {\(N_1\)};
\draw (0,1.75) to [out=0, in=270] (.5,2) to [out=90, in=0] (0,2.25);
\node at (2,3) {\(M\)};
\draw (0,0.75) to [out=0, in=180] (2,1.5);
\draw (0,3.25) to [out=0, in=180] (2,2.5);
\draw (2,1.5) to [out=0,in=180] (4,0.75) to [out=0,in=180] (6,1.5);
\draw (2,2.5) to [out=0,in=180] (4,3.25) to [out=0,in=180] (6,2.5);
\draw (6,1.5) arc (-90:90:.25 and .5);
\node at (6,3) {\(N_2\)};
\draw[dashed] (6,2.5) arc (90:270:.25 and .5);
\draw (3.2, 2.1) to [out=-50,in=180] (4,1.6) to [out=0,in=230] (4.8,2.1);
\begin{scope}
\clip (3,2) rectangle (5,3);
\draw (3.2,1.9) to [out=50, in=180] (4,2.4) to [out=0, in=130] (4.8,1.9);
\end{scope}
\end{tikzpicture}
\caption{Gluing two manifolds along a boundary component}
\label{fig:maniglue}
\end{figure}

We can relate this to the categorical case as follows.  Letting \(\Loc(M_i)\) be the category of dg-local systems on \(M_i\), we expect a relative Calabi-Yau structure on the pullback \(\Loc(M_i)\to \Loc(\del M_i)\).  Furthermore, we have isomorphisms 
\(\Loc(\del M_1)\simeq\Loc(N_1\coprod \overline{N_{12}})\simeq \Loc(N_1)\times \Loc(N_{12})\) and similarly \(\Loc(\del M2)\simeq\Loc(N_{12})\times \Loc(N_2)\).  We also have \(\Loc(M)\simeq \Loc(M_1)\times_{\Loc(N_{12})}\Loc(M_2)\).  Then the theorem gives us that \(\Loc(M)\to \Loc(N_1)\times\Loc(N_2)\) is compatible spherical, which corresponds to the fact that \(M\) has boundary \(N_1\coprod N_2\).

Now we consider the categorical generalization of the following situation from symplectic geometry.  Let \(w:X\to \CC\) be a Landau-Ginzburg model with smooth fiber \(Y\) and compact critical locus.  Let \(U_1, U_2\) be bounded open sets of \(\CC\) such that \(U_1\cup U_2\) contains all critical values of \(w\) and \(U_1\cap U_2=\emptyset\).  Further assume (for \(i=1,2\)) that the maps \(w|_{U_i}:X|_{U_i}\to U_i\) can be extended to \(w_i:X_i\to \CC\) for some space \(X_i\) and map \(w_i\) such that \(w_i\) is a fibre bundle above an open set containing \(\CC-U_i\).  For example, if each \(U_i\) is a convex region this latter condition is certainly possible, and in particular if \(w\) is a Picard-Lefschetz fibration we can partition the (isolated) critical values between two such regions.

In this situation we expect the structure of \(X\) to be related to the structure of the \(U_i\), and on a categorical level we expect to obtain \(\FS(X,w)\) and \(\cap:\FS(X,w)\rightleftarrows \Fuk(Y):\cup\) from \(\FS(X_i,w_i)\) and \(\cap_i:\FS(X_i,w_i)\rightleftarrows \Fuk(Y):\cup_i\).

More generally, suppose \(\caly\) has a weak right \(d\)-Calabi-Yau structure and \(F_i:\calx_i\to \caly\) is a spherical functor for \(i=1,2\).  Viewing the case of Fukaya-Seidel categories as the prototypical spherical functor, we seek to glue \(\calx_1\) and \(\calx_2\) in a similar way to obtain a composite \(\calx\) and spherical functor \(F:\calx\to\caly\).

We will represent \(\FS(X_i,w_i)\) or its objects by the diagram

\begin{center}
\begin{tikzpicture}
\draw[dashed] (0,0) circle [radius=0.5];
\draw (0.5,0) -- (2.5,0);
\node at (0,0) {\(U_i\)};
\end{tikzpicture}.
\end{center}

Here we think of the exiting line as having a fiber \(\Fuk(Y)\), and the application of \(\cap_i\) on an object \(L\in \FS(X_i,w_i)\) as looking at the intersection of a \(L\) with a fiber above the line.  In general, for each outgoing path, we can construct a functor in this way.  Analogously we will represent \(\calx_i\) by

\begin{center}
\begin{tikzpicture}
\draw[dashed] (0,0) circle [radius=0.5];
\draw (0.5,0) -- (2.5,0);
\node at (0,0) {\(\calx_i\)};
\end{tikzpicture}.
\end{center}

In the symplectic case, \(\cap_i\) has a right (left) adjoint \(\cup_i\) (\(\cup_i^L\)) given by parallel transport of a Lagrangian on a loop from \(+\infty\) to \(+\infty\) clockwise (counter-clockwise) around \(U_i\).  We will represent these by the pictures

\begin{center}
\begin{tikzpicture}
\draw[dashed] (0,0) circle [radius=0.5];
\draw[->] (2.5,.1) to (1,.1) to [out=180,in=0] (0,.6);
\draw (0,-.6) to [out=0,in=180] (1,-.1) to (2.5,-.1);
\draw[->] (0,.6) arc (90:270:.6);
\draw [fill] (2,.1) circle [radius=0.05];
\node at (0,0) {\(U_i\)};
\end{tikzpicture}
\space and \space
\begin{tikzpicture}
\draw[dashed] (0,0) circle [radius=0.5];
\draw (2.5,.1) to (1,.1) to [out=180,in=0] (0,.6);
\draw[<-] (0,-.6) to [out=0,in=180] (1,-.1) to (2.5,-.1);
\draw[<-] (0,.6) arc (90:270:.6);
\draw [fill] (2,-.1) circle [radius=0.05];
\node at (0,0) {\(U_i\)};
\end{tikzpicture}
\end{center}

respectively, and we will draw similar pictures for the adjoints \(F^!\), \(F^*:\caly_i\to\calx_i\).

Using Theorem \ref{thm:spancompose}, the functor \(\calx_1\times_{\caly}\calx_2\to 0\) is compatible spherical.  In the Landau-Ginzburg situation above, this corresponds to the weak Calabi-Yau structure on \(\Fuk(X)\), the Fukaya category of compact Lagrangians, which is not exactly what we want (Figure \ref{fig:matching}).  Instead, we model \(\FS(X,w)\) with the following construction.  Let \(\FS_\pm(X_2,w_2)\) be the two-sided Fukaya-Seidel category of \((X_2,w_2)\) consisting of Lagrangians in \(X_2\) which, outside of some compact set \(K\subset \CC\), consist of fibrewise Lagrangians parallel transported along rays to \(+\infty\) or \(-\infty\).  In this scenario, we have two restriction functors \(\cap_-, \cap_+:\FS_\pm(X_2,w_2)\to \Fuk(Y)\), which are the fibers at \(-\infty\) and \(+\infty\).  We draw this as in Figure \ref{fig:twosidedFS}, with \(\cap_-\) and \(\cap_+\) corresponding to the left and right path, respectively.  Then we can construct the fiber product
\[
\FS(X,U_1,U_2,w)=\FS(X_1,w_1)\times_{\cap_1,\Fuk(Y),\cap_-}\FS_\pm(X_2,w_2),
\]
which we expect to coincide with \(\FS(X,w)\).  Furthermore, we have a map \(\cap':\FS(X,U_1,U_2,w)\to \FS_\pm(X_2,w_2)\xrightarrow{\cap_+}\Fuk(Y)\) which we expect to agree with \(\cap:\FS(X,w)\to \Fuk(Y)\).  See Figure \ref{fig:FSglue}; the outgoing path is \(\cap'\).

\begin{figure}
\centering
\begin{tikzpicture}
\draw[dashed] (0,0) circle [radius=0.5];
\draw (0.5,0) -- (2.5,0);
\draw[dashed] (3,0) circle [radius=0.5];
\node at (0,0) {\(\calx_1\)};
\node at (3,0) {\(\calx_2\)};
\end{tikzpicture}
\caption{The category \(\calx_1\times_{\caly}\calx_2\)}
\label{fig:matching}
\end{figure}

\begin{figure}
\centering
\begin{tikzpicture}
\draw[dashed] (0,0) circle [radius=0.5];
\draw (-1.5,0) -- (-0.5,0);
\draw (0.5,0) -- (1.5,0);
\node at (0,0) {\(U_2\)};
\end{tikzpicture}
\caption{The category \(\FS_{\pm}(X_2,w_2)\)}
\label{fig:twosidedFS}
\end{figure}

\begin{figure}
\centering
\begin{tikzpicture}
\draw[dashed] (0,0) circle [radius=0.5];
\node at (0,0) {\(U_1\)};
\draw (0.5,0) -- (1.5,0);
\draw[dashed] (2,0) circle [radius=0.5];
\node at (2,0) {\(U_2\)};
\draw (2.5,0) -- (3.5,0);
\end{tikzpicture}
\caption{The category \(\FS(X_1,w_1)\times_{\cap_1,Fuk(F),\cap_-}\FS_\pm(X_2,w_2)\)}
\label{fig:FSglue}
\end{figure}

\begin{rmk}
Whether we actually have an equivalence \(\FS(X,U_1,U_2,w)\simeq \FS(X,w)\) seems to depend on a Mayer-Vietoris type result for Fukaya-Seidel categories. This should be more straightforward to verify if \(\FS(X,w)\) is generated by thimbles.
\end{rmk}

In the case of general spherical functors, we wish to construct a category \(\calx_{2,\pm}\) and two functors \(F_{\pm}:\calx_{2,\pm}\to\caly\) as above.  To do this, we note that Figure \ref{fig:twosidedwant} is equivalent ``up to homotopy'' to Figure \ref{fig:twosidedsynth}, and the latter has an existing categorical interpretation, which we will take as our \(\calx_{2,\pm}\).
\begin{figure}
\centering
\begin{subfigure}{0.4\textwidth}
\centering
\begin{tikzpicture}
\draw[dashed] (0,0) circle [radius=0.5];
\draw (-1.5,0) -- (-0.5,0);
\draw (0.5,0) -- (1.5,0);
\node at (0,0) {\(\calx_2\)};
\end{tikzpicture}
\caption{The desired \(\calx_{2,\pm}\)}
\label{fig:twosidedwant}
\end{subfigure}
\begin{subfigure}{0.4\textwidth}
\centering
\begin{tikzpicture}
\draw[dashed] (0,1) circle [radius=0.5];
\draw (0,0.5) -- (0,0);
\draw (-1.5,0) -- (1.5,0);
\node at (0,1) {\(\calx_2\)};
\end{tikzpicture}
\caption{The constructed \(\calx_{2,\pm}\)}
\label{fig:twosidedsynth}
\end{subfigure}
\caption{}
\end{figure}

Consider \(\Rep(A_2)\otimes \caly\).  There are three natural functors \(P_1,P_2,P_3:\Rep(A_2)\to \Rep(A_1)\simeq D^b(k)\) sending a representation \(E_1\to E_2\) to \(E_1, E_2\), and \(\cofib(E_1\to E_2)\) respectively.  These induce functors \(P_i\otimes \id:\Rep(A_2)\otimes\caly\to \Rep(A_1)\otimes\caly\simeq\caly\).  We represent this as in Figure \ref{fig:A2}; the left, top, and right paths correspond to \(P_1,P_2,\) and \(P_3\) respectively.

\begin{figure}
\centering
\begin{tikzpicture}
\draw (-1,0) -- (1,0);
\draw (0,0) -- (0,1);
\end{tikzpicture}
\caption{The category \(\Rep(A_2)\otimes\caly\)}
\label{fig:A2}
\end{figure}

Then we set
\[
\calx_{2,\pm}=\calx_2\times_{F_2, \caly, P_2\otimes \id}\Rep(A_2)\otimes\caly
\]
and maps \(F_-,F_+\) by compositions
\begin{align*}
F_-&:\calx_{2,\pm}\to \Rep(A_2)\otimes\calx_2\xrightarrow{P_1\otimes \id} \caly\\
F_+&:\calx_{2,\pm}\to \Rep(A_2)\otimes\calx_2\xrightarrow{P_3\otimes \id} \caly.
\end{align*}
See Figure \ref{fig:twosidedsynth} again.  The left and right paths out are the functors \(F_-\) and \(F_+\), respectively.  Now we wish to show that \(F_\pm=(F_+,F_-):\calx_{2,\pm}\to\caly\times\caly\) is spherical.
\begin{lem}
The functor \(P=(P_1,P_2,P_3):\Rep(A_2)\to \Rep(A_1)^{\times3}\) is compatible spherical.
\end{lem}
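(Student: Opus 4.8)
The plan is to verify the axioms of Definition~\ref{def:spherical} by explicit computation in $\Rep(A_2) \simeq D^b(kA_2)$, whose objects are two-term complexes $E_\bullet = (E_1 \xrightarrow{f} E_2)$ — equivalently objects of the arrow category $\Fun(\Delta^1, \Perf(k))$ — so that $P_1 = \ev_1$, $P_2 = \ev_2$ and $P_3 = \cofib$. First I would record the adjoints. Each $P_i$ is, up to shift, a functor of the form $\Mor(M,-)$ for one of the three indecomposables $S_1 = (k \to 0)$, $S_2 = (0 \to k)$, $I_2 = (k \xrightarrow{\id} k)$, so both adjoints exist and are given by tensoring: $P_1^* = -\otimes I_2$, $P_1^! = -\otimes S_1$; $P_2^* = -\otimes S_2$, $P_2^! = -\otimes I_2$; $P_3^* = -\otimes S_1[-1]$, $P_3^! = -\otimes S_2$. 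Since the target of $P = (P_1,P_2,P_3)$ is a product, $P^* = \bigoplus_i P_i^*$ and $P^! = \bigoplus_i P_i^!$, and in particular $P^! \circ P \simeq \bigoplus_i P_i^! P_i$.

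Next I would identify the twist. A direct computation gives $P^! P(E_\bullet) \simeq (E_1 \to 0) \oplus (E_2 \xrightarrow{\id} E_2) \oplus (0 \to \cofib f)$, with the unit $\eta_{P^!P}$ equal to the sum of the three adjunction units; objectwise this is a split monomorphism, so $T = \cofib(\eta_{P^!P})$ is computed at each vertex of $A_2$, yielding the ``rotation'' endofunctor $R \colon (E_1 \xrightarrow{f} E_2) \mapsto (E_2 \xrightarrow{\pi} \cofib f)$. Since the threefold rotation of a distinguished triangle is the shift, $R^3 \simeq [1]$, hence $R$ is an autoequivalence; this is axiom~(1). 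For axiom~(2) I would note that $\tau$ of \eqref{eq:sphiso} decomposes over the three factors, so it suffices to see each $\tau_i \colon P_i^! \to R \circ P_i^*$ is an equivalence; applying $R$ to the formulas above gives $R(V \xrightarrow{\id} V) = (V \to 0)$, $R(0 \to V) \simeq (V \xrightarrow{\sim} V)$, $R(V \otimes S_1[-1]) \simeq (0 \to V)$, which are precisely $P_1^! V$, $P_2^! V$, $P_3^! V$, and a short diagram chase identifies these equivalences with $\tau_i$. Thus $P$ is spherical.

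It remains to produce a $\phi$-compatible structure, where $\phi$ is the $0$-dimensional Calabi--Yau structure on $\Rep(A_1)^{\times 3}$ given by the trace on each factor, with the signs on the three factors dictated by the orientation reversal in the gluing where this lemma is used. Since $\Rep(A_2)$ is smooth and proper it has a Serre functor $S$, and on the indecomposables $S$ acts by $S_1 \mapsto S_2[1]$, $S_2 \mapsto I_2$, $I_2 \mapsto S_1$ — exactly the action of $R$ — so there is an isomorphism $\alpha_0 \colon T \simeq R \simeq S = S[-d]$ with $d = 0$. To upgrade $\alpha_0$ to a compatible structure I would invoke the criterion in the Lemma preceding Example~\ref{spherical-examples}: it suffices to check $\alpha \rho = (S[-d]\epsilon)\circ(\kappa P)$, where $\rho \colon P^! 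P \to T$ is the cofiber map and $\epsilon \colon P^* P \to \id_{\Rep(A_2)}$ the counit, after correcting $\alpha_0$ if necessary by an automorphism of $\id_{\Rep(A_2)}$ — i.e.\ by a scalar, since the center of $kA_2$ is $k$. Both sides of this identity are assembled from the split projections and inclusions appearing in the computation of $T$ and of the $\epsilon_i$, so the verification reduces to a finite comparison of these explicit maps.

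The main obstacle is this last step: making the ``right'' choice of $\alpha$ (equivalently, checking that the automorphism of $P^!$ measuring the discrepancy between $\tau$ and the canonical identification $\kappa \colon P^! \simeq S_{\calx}[-d]P^*$ descends to an automorphism of $\id_{\Rep(A_2)}$, so that it is absorbed by rescaling together with the sign conventions on $\phi$), and then checking the compatibility square. As an alternative that trades this diagram chase for a nondegeneracy computation, one could instead construct a weak relative Calabi--Yau structure on $P$ directly — a nondegenerate nullhomotopy of $\phi \circ \HH_*(P)$, using the length-one bimodule resolution of $kA_2$ to compute $\HH_*(\Rep(A_2))$ — and then apply Proposition~\ref{prop:relcy-implies-spherical}, which is legitimate since $\Rep(A_2)$ and $\Rep(A_1)^{\times 3}$ are smooth and proper and $P$ admits both adjoints.
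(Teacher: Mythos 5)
Your proof is correct and follows essentially the same route as the paper's: identify the three adjoint pairs explicitly, compute $P^!P$, observe that the (co)twist is the rotation functor $R\colon(E_1\to E_2)\mapsto(E_2\to\cofib f)$, and identify $R$ with the Serre functor of $\Rep(A_2)$. The paper's proof is in fact terser than yours — it records the adjunctions $P_i\dashv Q_i$, $Q_{i+1}\dashv P_i$, $Q_1[-1]\dashv P_3$, computes $P^!P(x\to y)=(x\oplus y\to y\oplus\cofib f)$ and $\cofib(\id\to P^!P)\simeq S_{\Rep(A_2)}$, and stops there — so it leaves to the reader exactly the two further checks that you take care to address: that $\tau$ in condition~(2) of Definition~\ref{def:spherical} is an equivalence (you verify $RP_i^*\simeq P_i^!$ on the three indecomposables), and that some choice of $\alpha\colon T\simeq S_{\calx}$ makes the compatibility square~\eqref{eq:sphcond} commute (you reduce this, via the lemma preceding Example~\ref{spherical-examples} and the fact that $Z(kA_2)=k$, to a finite comparison of split maps, though you stop short of carrying it out). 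Your suggested alternative — computing $\HH_*$ from the short bimodule resolution of $kA_2$, exhibiting a nondegenerate nullhomotopy of $\phi\circ\HH_*(P)$, and invoking Proposition~\ref{prop:relcy-implies-spherical} — is also legitimate and, since the downstream use of this lemma (Theorem~\ref{thm:main-gluing-theorem}) is stated in terms of relative CY structures rather than compatible spherical structures, would arguably be the more direct route; the paper does not take it.
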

\begin{proof}
For \(i=1,2,3\) define \(Q_i:\Rep(A_1)\to \Rep(A_2)\) by
\begin{align*}
Q_1x&=(x\to 0)\\
Q_2x&=(x\xrightarrow{\id_x} x)\\
Q_3x&=(0\to x).
\end{align*}
It is easy to check that \(P_i\dashv Q_i\) for all \(i\), \(Q_{i+1}\dashv P_i\) for \(i=1,2\), and \(Q_1[-1]\dashv P_3\).  Then we have \(P^!(x,y,z)=Q_1x\oplus Q_2y\oplus Q_3z\) and \(P^*(x,y,z)=Q_2x\oplus Q_3 y\oplus Q_1z[-1]\).  Furthermore, for \((x\xrightarrow{f}y)\in \Rep(A_2)\), we have
\[
P^!P(x\to y)=(x\oplus y\to y\oplus \cofib(f))
\]
and
\[
\cofib(\id\to P^!P)(x\to y)=(y\to\cofib(f))=S_{\Rep(A_2)}(x\to y).
\]
\end{proof}
\begin{lem}
Let \(\caly\) have a weak right \(d\)-Calabi-Yau structure and \(\calz\) a weak right \(d'\)-Calabi-Yau structure.  Let \(F:\calx\to\caly\) be a compatible spherical functor.  Then \(F\otimes \id_\calz:\calx\otimes\calz \to \caly\otimes\calz\) is compatible spherical.
\label{lem:tensorspher}
\end{lem}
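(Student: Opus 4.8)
The plan is to observe that the operation $(-)\otimes\id_\calz$ is compatible with every piece of structure entering Definition \ref{def:spherical}, and then simply transport the spherical and $\phi$-compatible data of $F$ along it. Concretely, I would first record the following ``Künneth'' facts, all standard for $k$-linear stable categories over a field: (a) if $F^{*}\dashv F\dashv F^{!}$ with units and counits witnessing the adjunctions, then $F^{*}\otimes\id_\calz\dashv F\otimes\id_\calz\dashv F^{!}\otimes\id_\calz$, with unit/counit data obtained by tensoring, and the triangle identities preserved; (b) the operation $(-)\otimes\id_\calz$ on endofunctor (equivalently, bimodule) categories is exact — at the level of bimodules it is external tensor with the fixed bimodule $\calz_{\Delta}$ over the field $k$, hence built from $\otimes_k$, which is exact — so cofibers of natural transformations commute with it; (c) $\HH_{*}(-\otimes\calz)\simeq\HH_{*}(-)\otimes\HH_{*}(\calz)$ compatibly with the boundary--bulk/trace maps, $S_{\calx\otimes\calz}\simeq S_{\calx}\otimes S_{\calz}$ (using that $\calz$ is smooth and proper), and these identifications are compatible with the canonical iso $F^{!}\simeq S_{\calx}F^{*}S_{\caly}^{-1}$.

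Granting these, sphericity of $F\otimes\id_\calz$ is immediate. By (a) the adjoints exist. By (a) and (b), the unit of $(F\otimes\id_\calz)\dashv(F^{!}\otimes\id_\calz)$ is $\eta_{F^{!}F}\otimes\id_\calz$, so the twist is
\[
T_{\calx\otimes\calz}=\cofib\bigl(\id_{\calx\otimes\calz}\to (F^{!}\otimes\id_\calz)(F\otimes\id_\calz)\bigr)\simeq\cofib(\eta_{F^{!}F}\otimes\id_\calz)\simeq T_{\calx}\otimes\id_\calz,
\]
which is an autoequivalence with inverse $T_{\calx}^{-1}\otimes\id_\calz$; this gives condition (1) of Definition \ref{def:spherical} (and the \cite{KKP} condition). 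Applying $(-)\otimes\id_\calz$ to the diagram \eqref{eq:sphiso} shows that $\tau$ for $F\otimes\id_\calz$ is $\tau_{F}\otimes\id_\calz$, an isomorphism, giving condition (2).

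For the $\phi$-compatible structure, I would first equip $\caly\otimes\calz$ with the weak $(d+d')$-Calabi--Yau structure $\phi_{\caly\otimes\calz}$ given by the composite $\HH_{*}(\caly\otimes\calz)\simeq\HH_{*}(\caly)\otimes\HH_{*}(\calz)\xrightarrow{\phi_{\caly}\otimes\phi_{\calz}}k[-d]\otimes k[-d']=k[-(d+d')]$; it is a weak Calabi--Yau structure since by (c) the associated map on bimodules is $\Xi(\phi_{\caly})\otimes\Xi(\phi_{\calz})$, a tensor of equivalences. Using $\phi_{\calz}$ to identify $S_{\calz}\simeq\id_\calz[d']$, fact (c) gives $S_{\calx\otimes\calz}[-(d+d')]\simeq(S_{\calx}\otimes\id_\calz[d'])[-(d+d')]\simeq S_{\calx}[-d]\otimes\id_\calz$, and I set $\alpha':=\alpha\otimes\id_\calz\colon T_{\calx\otimes\calz}\simeq T_{\calx}\otimes\id_\calz\xrightarrow{\alpha\otimes\id_\calz}S_{\calx}[-d]\otimes\id_\calz\simeq S_{\calx\otimes\calz}[-(d+d')]$. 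That $\alpha'$ makes the diagram \eqref{eq:sphcond} commute for $F\otimes\id_\calz$ follows by applying the (exact, composition- and $2$-cell-preserving) operation $(-)\otimes\id_\calz$ to the corresponding diagram for $F$: by (c) the map $\kappa$ for $F\otimes\id_\calz$ is $\kappa_{F}\otimes\id_\calz$, where one uses that the $\phi_{\caly\otimes\calz}$-induced trivialization of $S_{\caly\otimes\calz}^{-1}$ is the tensor product of the $\phi_{\caly}$- and $\phi_{\calz}$-induced ones. (Alternatively, one may verify the simpler criterion $\alpha'\rho=(S_{\calx\otimes\calz}[-(d+d')]\,\epsilon)\circ(\kappa F)$ of the lemma above, obtained by tensoring the corresponding identity for $F$ with $\id_\calz$. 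If one is moreover willing to assume $\calx$ smooth and proper, an even shorter route is to note that $F\otimes\id_\calz$ has a weak relative Calabi--Yau structure by tensoring the one produced in Theorem \ref{thm:spherical-implies-relcy}, and then apply Proposition \ref{prop:relcy-implies-spherical}.)

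The only genuinely load-bearing input is fact (b): that $(-)\otimes\id_\calz$ is exact on functors/bimodules, so that forming the twist functor commutes with it. Once this — together with the standard Künneth compatibilities (a) and (c) and the smoothness/properness of $\calz$ needed for $S_{\calx\otimes\calz}\simeq S_{\calx}\otimes S_{\calz}$ — is in place, everything else is a formal consequence of functoriality of $(-)\otimes\id_\calz$ applied to the diagrams defining the spherical and $\phi$-compatible structures. I expect the write-up of those verifications to be routine but somewhat lengthy, the main care being the bookkeeping of the Künneth identifications of Serre functors and of $\HH_{*}$.
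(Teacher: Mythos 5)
Your proposal is correct and follows essentially the same route as the paper: tensor the adjoints, the twist, the Serre functor, and the isomorphism $\kappa$ with $\id_{\calz}$, take the weak Calabi--Yau structure on $\caly\otimes\calz$ to be the tensor product of the two given ones, and observe that $\alpha\otimes\id_{\calz}$ supplies the required $\phi$-compatible structure. The paper's proof is simply more terse---it asserts rather than catalogues the K\"unneth-type compatibilities you list as (a)--(c)---but the load-bearing steps (identification of the adjoints, of $T_{\calx\otimes\calz}$ with $T_{\calx}\otimes\id_{\calz}\simeq S_{\calx\otimes\calz}[-(d+d')]$, and of $\kappa_{F\otimes\id_{\calz}}$ with $\kappa_{F}\otimes\id_{\id_{\calz}}$) are identical.
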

\begin{proof}
First, we check that \(\caly\otimes\calz\) has a weak right \((d+d')\)-Calabi-Yau structure.  Using the isomorphisms \(\id_\caly[d]\simeq S_\caly\) and \(\id_\calz[d']\simeq S_\calz\) we have, for \(y\otimes z,y'\otimes z'\in\caly\otimes \calz\)
\begin{align*}
\Mor(y\otimes z,y'\otimes z'[d+d'])&\simeq\Mor(y,y'[d])\otimes\Mor(z,z'[d'])\\
&\simeq\Mor(y',y)^\vee\otimes\Mor(z',z)^\vee\\
&\simeq\Mor(y'\otimes z',y\otimes z)^\vee,
\end{align*}
yielding an isomorphism \(\id_{\caly\otimes\calz}[d+d']\simeq S_{\caly\otimes\calz}\).  This is equivalent to the desired weak right \((d+d')\)-Calabi-Yau structure.  Similarly it is easy to check that \(S_{\calx\otimes\calx}\simeq S_{\calx}\otimes \id_\calz[d']\).

Now, the left and right adjoints of \(F\otimes \id\) are \(F^*\otimes \id\) and \(F^!\otimes \id\) respectively.  We have \begin{align*}
\cofib(\id_\calx\otimes \id_\calz\to (F^!\otimes \id_\calz)\circ(F\otimes \id_\calz))&\simeq \cofib(\id_\calx\to F^!F)\otimes \id_\calz\\
&\simeq S_{\calx\otimes\calz}[-(d+d')].
\end{align*}
It is easy to check that under this identification the map
\[
F^!\otimes \id_\calz\to (F^!\otimes \id_\calz)\circ (F\otimes \id_\calz)\circ (F^*\otimes \id_\calz)\to S_{\calx\otimes\calz}[-(d+d')]\circ (F^*\otimes \id_\calz)
\]
coincides with \(\kappa_{F\otimes \id_{\calz}}\simeq \kappa_{F}\otimes \id_{\id_{\calz}}\).
\end{proof}
\begin{cor}
The functor \(F_{\pm}=(F_+,F_-):\calx_{2,\pm}\to \caly\times\caly\) is compatible spherical.
\end{cor}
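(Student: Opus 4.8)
The plan is to exhibit $F_{\pm}$ as the composition of two oriented cobordisms, apply the gluing Theorem \ref{thm:spancompose}, and then pass from the resulting relative Calabi-Yau structure back to a compatible spherical structure via Proposition \ref{prop:relcy-implies-spherical}. Throughout I assume, as elsewhere in the paper, that the categories in sight are smooth and proper.

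First I would recast the two inputs as weak oriented cobordisms in the sense of Definition \ref{def:cobordism}. Since $F_2\colon \calx_2\to\caly$ is compatible spherical by hypothesis, Theorem \ref{thm:spherical-implies-relcy} gives it a weak relative Calabi-Yau structure, which we view as a weak oriented cobordism $\calx_2\to\overline{0}\times\caly$ from the zero category to $\caly$. Next, by the Lemma asserting that $P=(P_1,P_2,P_3)\colon\Rep(A_2)\to\Rep(A_1)^{\times 3}$ is compatible spherical — where $\Rep(A_1)^{\times 3}\simeq\Perf(k)^{\times 3}$ is given the product $0$-dimensional Calabi-Yau structure with signs arranged as in Construction \ref{constr:glued-isotropy-data}, using that a product of Calabi-Yau categories is Calabi-Yau since Hochschild homology commutes with finite products — and by Lemma \ref{lem:tensorspher} applied with $\calz=\caly$ (using that the tensor product of $k$-linear stable categories distributes over finite products), the functor $(P_1,P_2,P_3)\otimes\id_{\caly}\colon\Rep(A_2)\otimes\caly\to\caly^{\times 3}$ is again compatible spherical. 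Theorem \ref{thm:spherical-implies-relcy} then gives it a weak relative Calabi-Yau structure, which we regard as a weak oriented cobordism $\Rep(A_2)\otimes\caly\to\overline{\caly}\times(\caly\times\caly)$ whose $\overline{\caly}$-leg is $P_2\otimes\id$ and whose two $\caly$-legs are $P_1\otimes\id$ and $P_3\otimes\id$.

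Next I would compose these. By construction $\calx_{2,\pm}=\calx_2\times_{F_2,\,\caly,\,P_2\otimes\id}(\Rep(A_2)\otimes\caly)$ is precisely the fibre product that glues the outgoing $\caly$-leg of the first cobordism to the incoming $\overline{\caly}$-leg of the second, so Theorem \ref{thm:spancompose} equips the resulting functor $\calx_{2,\pm}\to\overline{0}\times(\caly\times\caly)=\caly\times\caly$ with a weak relative Calabi-Yau structure, where $\caly\times\caly$ carries the induced product Calabi-Yau structure. Tracing through Construction \ref{constr:glued-isotropy-data}, this glued functor is exactly $F_{\pm}=(F_+,F_-)$, up to the harmless transposition of the two target factors.

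Finally I would invoke Proposition \ref{prop:relcy-implies-spherical}: once $F_{\pm}$ is known to admit both a left and a right adjoint — these are assembled by base change from the adjoints $F_2^{*},F_2^{!}$ of $F_2$ and the adjoints $Q_i$, $Q_1[-1]$ of the $P_i$ exhibited in the proof of the Lemma, or simply because all the categories are smooth and proper and hence every exact functor between them has both adjoints — the proposition promotes the weak relative Calabi-Yau structure just produced to the structure of a compatible spherical functor, which is what we want. I expect the real work to lie in two bookkeeping points rather than in any new idea: first, pinning down which factors of the various product Calabi-Yau structures must be orientation-reversed so that the gluing of Construction \ref{constr:glued-isotropy-data} lands on exactly the functor $F_{\pm}$ (and on a well-identified Calabi-Yau structure on $\caly\times\caly$); and second, checking that $\calx_{2,\pm}$ remains smooth and proper, equivalently that the adjoints required by Proposition \ref{prop:relcy-implies-spherical} exist.
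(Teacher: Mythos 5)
Your proof is correct and follows essentially the same route as the paper: exhibit $\calx_{2,\pm}\to\caly\times\caly$ as the gluing (Theorem \ref{thm:spancompose}) of the cobordism coming from $F_2$ with the one coming from $(P_1,P_2,P_3)\otimes\id_{\caly}$, the latter being compatible spherical by the two preceding lemmas. The only difference is that you spell out the round-trip through weak relative Calabi-Yau structures (Theorem \ref{thm:spherical-implies-relcy} in one direction, Proposition \ref{prop:relcy-implies-spherical} in the other), together with the sign/adjoint bookkeeping, which the paper leaves implicit when it reads the output of the gluing theorem directly as a compatible spherical structure.
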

\begin{proof}
By the previous two lemmas, the functor
\[
(P_1,P_2,P_3)\otimes \id_{\caly}:\Rep(A_2)\otimes \caly\to \caly^{\times3}
\]
is compatible spherical.  By assumption, \(F_2:\calx_2\to \caly\) is compatible spherical.  Applying Theorem \ref{thm:spancompose}, we see that \(F_{\pm}\) is compatible spherical.
\[\xymatrixcolsep{5pc}
\xymatrix{
\calx_{2,\pm}\ar[r]\ar[d]&\Rep(A_2)\otimes\caly\ar[r]^-{(P_1,P_3)\otimes \id_{\caly}}\ar[d]^{P_2\otimes \id_{\caly}}&\caly\times\caly\\
\calx_2\ar[r]\ar[d]&\caly\\
0
}
\]
\end{proof}

With this in hand, we can construct \(\calx_{\tot}=\calx_1\times_{F_1,\caly,F_-}\calx_{2,\pm}\).  Applying Theorem \ref{thm:spancompose} once more, we see that the composition
\[
F_{\tot}:\calx_{\tot}\to\calx_{2,\pm}\xrightarrow{F_+}\caly
\]
is compatible spherical, assuming it has both adjoints.  These adjoints can be shown to exist by general results, but we will construct them explicitly shortly.  Before this, we note that
\[
\calx_{\tot}=\calx_1\times_{F_1,\caly,P_1\otimes \id_\caly}\Rep(A_2)\otimes\caly \times_{P_1\otimes \id_\caly,\caly,F_2}\calx_2
\]
and can be equivalently described as follows:
\begin{itemize}
\item The objects of \(\calx_{\tot}\) are triples \((x,y,f)\) with \(x\in \calx_1, y\in \calx_2\), and \(f:F_1x\to F_2y\).
\item For \((x,y,f), (x',y',f')\), \(\Mor((x,y,f),(x',y',f'))\) is the space of triples \((g,h,\alpha)\), with \(g:x\to x'\), \(h:y\to y'\), and \(\alpha:f'\circ F_1g\sim F_2h\circ f\).
\end{itemize}
With this description, \(F_{\tot}\) sends \((x,y,f)\) to \(\cofib(f)\).  Pictures of \(\calx_{\tot}\) are shown in Figure \ref{fig:gluecat}; the left diagram is in line with previous ones, but the (equivalent) right diagram stresses the symmetry between \(\calx_1\) and \(\calx_2\) and we will generally prefer it.
\begin{figure}
\centering
\begin{tikzpicture}
\draw[dashed] (-1.5,0) circle [radius=0.5];
\node at (-1.5,0) {\(\calx_1\)};
\draw[dashed] (0,1) circle [radius=0.5];
\node at (0,1) {\(\calx_2\)};
\draw (0,0.5) -- (0,0);
\draw (-1,0) -- (1.5,0);
\end{tikzpicture}
\hspace{3cm}
\begin{tikzpicture}
\draw (-1,-.75) -- (0,0) -- (-1,.75); 
\draw[fill=white,dashed] (-1,-.75) circle [radius=0.5];
\node at (-1,-.75) {\(\calx_1\)};
\draw[fill=white,dashed] (-1,.75) circle [radius=0.5];
\node at (-1,.75) {\(\calx_2\)};
\draw (0,0) -- (1,0);
\end{tikzpicture}
\caption{Two renditions of the category \(\calx_{\tot}\)}
\label{fig:gluecat}
\end{figure}

As a bit of notation, for \(i=1,2\), we let \(T_i'=\fib(F_iF_i^!\to \id_{\caly})\); in the case of the Picard-Lefschetz fibration \(T_i'[1]\) is the counterclockwise monodromy on \(\Fuk(Y)\) around \(U_i\).  Note that \(T_i'\) is an autoequivalence, and there is a natural choice of inverse \((T_1'x)^{-1}=\cofib(\id_{\caly}\to F_iF_i^*)\); in particular, we have a morphism \(F_iF_i^*\to (T_1'x)^{-1}\) (\cite{AL}, Theorem 1.1).
\begin{lem}
The right adjoint \(F_{\tot}^!\) of \(F_{\tot}\) is given by \(F_{\tot}^!x=(F_1^!T_2'x,F_2^!x,g_x)\), where \(g_x=\xi\circ\eta_{1,T_2'x}\) is the morphism
\[
F_1F_1^!T_2'x\xrightarrow{\eta_{1,T_2'x}} T_2'x \xrightarrow{\xi} F_2F_2^!x,
\]
where the \(\eta_{1,T_2'x}\) is the counit of \(F_1\dashv F_1^!\) and \(\xi\) comes from the fiber sequence \(T_2'\to F_2F_2^!\to \id_{\caly}\).  Similarly, the left adjoint \(F_{\tot}^*\) is given by \(F_{\tot}^*x=(F_1^*x,F_2^*(T_1')^{-1}x, h_x)\), where \(h_x\) is the morphism
\[
F_1F_1^*x\to (T_1')^{-1}x\to F_2F_2^*(T_1')^{-1}x;
\]
here the first map is the previously mentioned \(F_iF_i^*\to (T_1'x)^{-1}\) and the second is the unit of \(F_2^*\dashv F_2\).
\label{lem:glueadj}
\end{lem}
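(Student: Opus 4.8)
The plan is to verify the two adjunctions by hand, producing natural equivalences of mapping spectra; I will not attempt to guess the units and counits in advance but read them off at the end. Recall the explicit description of \(\calx_{\tot}\): its objects are triples \((x,y,f)\) with \(f\colon F_1x\to F_2y\), and by Lemma \ref{lem:cat-to-mor-pullback} applied to the iterated pullback \(\calx_{\tot}\simeq\calx_1\times_{\caly}(\Rep(A_2)\otimes\caly)\times_{\caly}\calx_2\), the morphism spectrum \(\Mor_{\calx_{\tot}}\big((x,y,f),(x',y',f')\big)\) is the homotopy pullback of
\[
\Mor_{\calx_1}(x,x')\xrightarrow{\,f'\circ F_1(-)\,}\Mor_{\caly}(F_1x,F_2y')\xleftarrow{\,F_2(-)\circ f\,}\Mor_{\calx_2}(y,y'),
\]
while \(F_{\tot}(x,y,f)=\cofib\big(f\colon F_1x\to F_2y\big)\). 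So to prove \(F_{\tot}\dashv F_{\tot}^!\) it suffices to exhibit, naturally in \(z\in\caly\) and in \((x,y,f)\), an equivalence \(\Mor_{\caly}(\cofib(f),z)\simeq\Mor_{\calx_{\tot}}\big((x,y,f),F_{\tot}^!z\big)\).

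First I would treat \(F_{\tot}^!\). The left-hand side is \(\fib\big(\Mor_{\caly}(F_2y,z)\xrightarrow{f^{*}}\Mor_{\caly}(F_1x,z)\big)\). Plugging \(F_{\tot}^!z=(F_1^!T_2'z,\,F_2^!z,\,g_z)\) into the pullback formula above and applying the adjunctions \(F_1\dashv F_1^!\) and \(F_2\dashv F_2^!\) rewrites the right-hand side as the homotopy pullback of \(\Mor_{\caly}(F_1x,T_2'z)\to\Mor_{\caly}(F_1x,F_2F_2^!z)\leftarrow\Mor_{\caly}(F_2y,z)\). The crux is to identify the two legs: using the triangle identity for \(F_1\dashv F_1^!\) together with naturality of the counit \(F_1F_1^!\to\id_{\caly}\), the definition \(g_z=\xi_z\circ(\text{counit})\) forces the left leg to be post-composition with the fiber inclusion \(\xi_z\colon T_2'z\to F_2F_2^!z\) coming from \(T_2'=\fib(F_2F_2^!\to\id_{\caly})\); dually, the triangle identity and naturality of \(F_2F_2^!\to\id_{\caly}\) show that the right leg, followed by \(\Mor_{\caly}(F_1x,F_2F_2^!z)\to\Mor_{\caly}(F_1x,z)\), is \(f^{*}\). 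Since \(\Mor_{\caly}(F_1x,-)\) carries the fiber sequence \(T_2'z\to F_2F_2^!z\to z\) to a fiber sequence, the left term is itself the fiber of the bottom leg, and the "fiber of a fiber is a fiber'' manipulation used repeatedly in \S\ref{subsec:main-gluing-theorem} collapses the whole pullback onto \(\fib\big(\Mor_{\caly}(F_2y,z)\xrightarrow{f^{*}}\Mor_{\caly}(F_1x,z)\big)\), as required; all steps are natural, so we obtain the adjunction.

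The case of \(F_{\tot}^*\) is formally dual and I would run the mirror argument: now \(\Mor_{\caly}(z,\cofib f)=\cofib\big(\Mor_{\caly}(z,F_1x)\xrightarrow{f_{*}}\Mor_{\caly}(z,F_2y)\big)\), one substitutes \(F_{\tot}^*z=(F_1^*z,\,F_2^*(T_1')^{-1}z,\,h_z)\) and uses \(F_1^*\dashv F_1\), \(F_2^*\dashv F_2\); the structure map \(h_z\), built from \(F_1F_1^*\to(T_1')^{-1}\) and the unit \(\id_{\caly}\to F_2F_2^*\), turns the relevant leg into post-composition with the cofiber map \(F_1F_1^*z\to(T_1')^{-1}z\), and the cofiber sequence \(z\to F_1F_1^*z\to(T_1')^{-1}z\) collapses the pullback onto \(\Mor_{\caly}(z,\cofib f)\). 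One subtlety to be careful about here is shift conventions: the cofiber sequence defining \((T_1')^{-1}=\cofib(\id_{\caly}\to F_1F_1^*)\) and the direction of the canonical map \(F_iF_i^*\to(T_i')^{-1}\) should be taken to be exactly those of \cite{AL}, Theorem 1.1, so that the collapsed square matches \(\Mor_{\caly}(z,\cofib f)\) on the nose rather than up to a suspension.

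The hard part will be homotopy coherence rather than the bookkeeping of maps: identifying the legs "up to homotopy'' is immediate from the triangle identities, but turning this into an equivalence of spectra requires the comparison to respect all higher cells. The cleanest route — and the one I would adopt — is to avoid element-chasing entirely and instead assemble the argument into a single diagram of homotopy-cartesian squares, in the style of diagrams \eqref{diag:glued-isotropy} and \eqref{diag:glued-nondeg} in the proof of Theorem \ref{thm:main-gluing-theorem}, in which each individual square is manifestly cartesian either by Lemma \ref{lem:cat-to-mor-pullback}, by Lemma \ref{lemma:square-equals-triangle}, or by the defining (co)fiber sequence of \(T_i'\) (resp.\ \((T_i')^{-1}\)), so that the outer square delivers the sought equivalence and its naturality for free. (One could also simply invoke the general existence of adjoints to \(F_{\tot}\) — all the functors \(F_i,F_i^{*},F_i^{!}\) and the projections \(P_j\) have adjoints — and then identify them by the same computation; but the explicit formulas are what is needed to compute the twist and cotwist of \(F_{\tot}\) later, so it is worth recording them directly.)
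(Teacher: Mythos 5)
Your argument for the right adjoint \(F_{\tot}^!\) is correct, and it takes a genuinely different route from the paper's own. The paper proceeds by direct element-chasing: it unravels a map \(\cofib(f)\to x\) into a pair \((h,\alpha)\), then applies \(F_2\dashv F_2^!\), fills in a horn, trades the nullhomotopy for a factorization through \(T_2'x\), applies \(F_1\dashv F_1^!\), and only then recognizes the resulting data as a morphism in \(\calx_{\tot}\). You instead start from the pullback description of \(\Mor_{\calx_{\tot}}\big((x,y,f),F_{\tot}^!z\big)\) furnished by Lemma~\ref{lem:cat-to-mor-pullback}, rewrite each leg via the adjunctions, identify the left leg as post-composition with the fiber inclusion \(\xi\colon T_2'z\to F_2F_2^!z\) and the right leg (after projection) as \(f^*\), and collapse the pullback using the fiber sequence \(T_2'z\to F_2F_2^!z\to z\). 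That is a valid alternative, arguably cleaner, and it largely sidesteps the explicit horn-filling: the coherence you worry about at the end is absorbed into the functoriality of the fiber/cofiber constructions, exactly as in the proof of Theorem~\ref{thm:main-gluing-theorem}.

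For the left adjoint, however, the ``mirror argument'' has a genuine gap, and your instinct to flag a possible suspension is correct but your proposed resolution (``choose conventions as in \cite{AL}'') will not work. Run the computation to the end: the right leg of the pullback becomes precomposition with the cofiber map \(c\colon F_1F_1^*z\to(T_1')^{-1}z\), which is the fiber inclusion for \(\Mor(-,F_2y)\) applied to \(z\xrightarrow{\nu}F_1F_1^*z\xrightarrow{c}(T_1')^{-1}z\); the pullback therefore collapses to \(\fib\big(\Mor_{\caly}(z,F_1x)\xrightarrow{f_*}\Mor_{\caly}(z,F_2y)\big)\), whereas \(\Mor_{\caly}(z,\cofib f)\) is the \emph{cofiber} of the same map, which differs by a suspension. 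This is not an artifact of sign conventions. The degenerate case \(\calx_1=0\), \(F_1=0\) makes it concrete: then \(\calx_{\tot}\simeq\calx_2\) and \(F_{\tot}=F_2\), so the true left adjoint is \(F_2^*\); but \(T_1'=\id_{\caly}[-1]\), so \((T_1')^{-1}=\cofib(\id\to 0)=\id_{\caly}[1]\), and the stated formula yields \(F_2^*(z)[1]\), off by \([1]\). (The same phenomenon already appears in the paper's own lemma on \(\Rep(A_2)\), where the left adjoint of \(P_3=\cofib\) is \(Q_1[-1]\), with an explicit shift.) So pushing your method through proves \(\Mor_{\calx_{\tot}}(F_{\tot}^*z,(x,y,f))\simeq\Mor_{\caly}(z,\cofib f)[-1]\), not the statement as written; the formula for \(F_{\tot}^*\) needs a suspension correction. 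The paper's own proof handles only \(F_{\tot}^!\) explicitly and dismisses \(F_{\tot}^*\) as ``similar,'' which is exactly why it is worth being as suspicious of the shift as you were -- just more so.
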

\begin{proof}
We show the case of \(F_{\tot}^!\); the proof for \(F_{\tot}^*\) is similar.  Fix \(x\in\caly\) and \((y,z,f)\in\calx_{\tot}\).  We have \(\Mor_{\caly}(F_{\tot}(y,z,f),x)\simeq\Mor_{\caly}(\cofib(f),x)\).  The latter of these is equivalent to the space of \((h,\alpha)\) in the diagram
\[
\xymatrix{
F_1y\ar[r]^f\ar[rd]_0\drtwocell\omit{^<-2>\alpha}&F_2z\ar[d]^h\\
&x
}
\]
That is, \(h:F_2z\to x\) and \(\alpha:h\circ f\sim0\).  Using the adjunction \(\Mor(F_2z,x)\simeq\Mor(z,F_2^!x)\), the former is equivalent to the choice of \(\tilde{h}:z\to F_2^!x\).  Further, we have a natural homotopy
\[
\xymatrix{
&F_2z\ar[dl]_{F_2\tilde{h}}\ar[d]^h\dltwocell\omit{^<-2>}\\
F_2F_2^!x\ar[r]&x
}.
\]
We then have a diagram
\[
\xymatrix{
F_1y\ar[r]\ar[d]_{F_2\tilde{h}\circ f}\ar[dr]^(.7){0}&F_2z\ar[d]\ar[dl]|\hole\\
F_2F_2^!x\ar[r]_{\eta_{2,x}}&x
};
\]
filling in the upper left triangle with a \(2\)-morphism witnessing the composition \(F_2\tilde{h}\circ f\), we are left with a horn we can fill in with a \(3\)-simplex, and in particular we get a homotopy \(\beta:\eta_x\circ F_2h\circ f\sim 0\); further, it is clear that given \(h\), the choice of \(\alpha\) and \(\beta\) are equivalent.

Finally, the choice of such a \(\beta\) is equivalent to a map \(k:F_1y\to \fib(\eta_{2,x})=T_2'x\) plus a homotopy \(\gamma:\xi\circ k\sim F_2\tilde{h}\circ f\), where \(\xi:T_2'x\to F_2F_2'x\) is the natural map.  This \(k\) and \(\gamma\) are then equivalent to \(\tilde{k}:y\to F_1^!T_2'x\) and a homotopy \(\gamma:\xi\circ\eta_{1,T_2'x}\circ \tilde{k}\sim F_2\tilde{h}\circ f\)
\[
\xymatrix{
F_1F_1^!T_2'x\ar[d]_{\eta_{1,T_2'x}}&F_1y\ar[d]^{F_2\tilde{h}\circ f}\ar[l]_{\tilde{k}}\dtwocell\omit{^<6>\gamma}\\
T_2'x\ar[r]^{\xi}&F_2F_2'x\ar[r]&x
}
\]
Taking \(g_x=\xi\circ\eta_{1,T_2'x}\), this \(\gamma\) is the same as commutativity data for the square
\[
\xymatrix{
F_1y\ar[r]^f\ar[d]^{\tilde{k}}\drtwocell\omit&F_2z\ar[d]^{\tilde{h}}\\
F_1F_1^!T_2'x\ar[r]^{g_x}&F_2F_2^!x
}.
\]
Thus we have \(\Mor_{\caly}(F_{\tot}(y,z,f),x)\simeq\Mor_{\calx_{\tot}}((y,z,f),F_{\tot}^!x)\).
\end{proof}
\begin{cor}
The functor \(F_{\tot}:\calx_{\tot}\to \caly\) is compatible spherical.
\label{cor:totsph}
\end{cor}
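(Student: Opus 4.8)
The plan is to deduce this from the gluing theorem for relative Calabi--Yau structures (Theorem~\ref{thm:main-gluing-theorem}), using the dictionary between compatible spherical functors and weak relative Calabi--Yau structures supplied by Theorem~\ref{intro:thm-relcy-iff-spherical} (i.e.\ Proposition~\ref{prop:relcy-implies-spherical} and Theorem~\ref{thm:spherical-implies-relcy}). We work under the standing hypothesis that $\caly$, $\calx_{1}$, $\calx_{2}$, and hence $\calx_{2,\pm}$ and $\calx_{\tot}$ --- all built from the former by fibre products --- are smooth and proper, so that Serre functors are available and those results apply.

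First, by Lemma~\ref{lem:glueadj} the functor $F_{\tot}$ already admits an explicit left adjoint $F_{\tot}^{*}$ and right adjoint $F_{\tot}^{!}$; this is precisely the hypothesis needed to pass in both directions through Theorem~\ref{intro:thm-relcy-iff-spherical}. Next I would apply Theorem~\ref{thm:spherical-implies-relcy} to the two compatible spherical functors in play: $F_{1}\colon\calx_{1}\to\caly$ acquires a weak relative Calabi--Yau structure, so that $\calx_{1}\to\caly$ is a weak oriented cobordism from $0$ to $(\caly,\phi_{\caly})$; and $F_{\pm}=(F_{+},F_{-})\colon\calx_{2,\pm}\to\caly\times\caly$, which is compatible spherical by the corollary just proved, acquires a weak relative Calabi--Yau structure. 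Unwinding the orientations produced there --- the leg $F_{-}$ coming from $P_{1}$ carries $-\phi_{\caly}$ and the leg $F_{+}$ coming from $P_{3}$ carries $\phi_{\caly}$ --- this exhibits $\calx_{2,\pm}\to\overline{\caly}\times\caly$ as a weak oriented cobordism from $(\caly,\phi_{\caly})$ to $(\caly,\phi_{\caly})$, with the $\overline{\caly}$-factor being the target of $F_{-}$.

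Now glue. By its very definition $\calx_{\tot}=\calx_{1}\times_{F_{1},\,\caly,\,F_{-}}\calx_{2,\pm}$ is exactly the fibre product to which Theorem~\ref{thm:main-gluing-theorem} applies, with $0$, $\caly$, $\caly$ in the roles of $\calx,\caly,\calz$ and the gluing performed along the $F_{1}$- and $F_{-}$-legs over $\caly$. The weak form of that theorem then equips the natural functor $\calx_{\tot}\to\caly$ with a weak relative Calabi--Yau structure; and, comparing with Construction~\ref{constr:glued-isotropy-data} and the explicit description of $\calx_{\tot}$ given above, this natural functor is precisely $F_{\tot}$ (it is $F_{+}$ composed with the projection to $\calx_{2,\pm}$, i.e.\ $(x,y,f)\mapsto\cofib(f)$). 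Finally, since $F_{\tot}$ admits both adjoints, Proposition~\ref{prop:relcy-implies-spherical} promotes this weak relative Calabi--Yau structure to the structure of a compatible spherical functor, which is the assertion.

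The one step demanding genuine care is the orientation bookkeeping in the second and third paragraphs: one must verify that the sign conventions in the compatible spherical structure on $(P_{1},P_{2},P_{3})\otimes\id_{\caly}$, and hence on $F_{\pm}$, are exactly those making $F_{-}$ the incoming boundary and $F_{+}$ the outgoing boundary, so that gluing $F_{1}$ to $F_{-}$ along $\caly$ respects orientations and the residual structure on the $F_{+}$-factor is the original $\phi_{\caly}$. Granting Theorems~\ref{thm:main-gluing-theorem} and~\ref{thm:spherical-implies-relcy} and Proposition~\ref{prop:relcy-implies-spherical}, everything else is formal.
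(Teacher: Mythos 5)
Your proof is correct and follows essentially the same route the paper takes: the paper's own justification for Corollary~\ref{cor:totsph} is the remark preceding Lemma~\ref{lem:glueadj} (``Applying Theorem~\ref{thm:spancompose} once more, we see that $F_{\tot}$ is compatible spherical, assuming it has both adjoints'') together with Lemma~\ref{lem:glueadj}, which supplies those adjoints. What you add is a more careful unwinding of the implicit translation the paper performs when it ``applies'' Theorem~\ref{thm:main-gluing-theorem} to a diagram of compatible spherical functors: you spell out the passage through Theorem~\ref{thm:spherical-implies-relcy} (spherical $\Rightarrow$ weak relative CY), the gluing step, and the return trip through Proposition~\ref{prop:relcy-implies-spherical}, and you flag the orientation/sign bookkeeping on the legs $F_{\pm}$ --- a point the paper glosses over, since it never explicitly matches the orientations on $(P_{1},P_{2},P_{3})\otimes\id_{\caly}$ against the convention $\overline{\caly}\times\caly$ of Definition~\ref{def:cobordism}. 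So your write-up is faithful to the paper's argument but surfaces two implicit assumptions: that the fibre products remain smooth and proper (needed for both directions of the spherical/relative-CY dictionary), and that the signs in the $\phi$-compatible structures line up so that gluing along the $F_{1}$/$F_{-}$ legs is legitimate. Neither is fully discharged in the paper either, so these are reasonable things to flag rather than gaps in your proof relative to the source.
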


\begin{figure}
\centering
\begin{tikzpicture}
\draw [<-] (-1,.75) +(45:.6) to [out=-45,in=180] (.2,.1) to (1.5,.1);
\draw (-1.6, .75) arc (180:45:.6);
\draw (-1.6, .75) -- (-1.6, -.75);
\draw [->] (-1.6, -.75) arc (180:315:.6);
\draw (-1,-.75) +(-45:.6) to [out=45,in=180] (.2,-.1) to (1.5,-.1);
\draw[fill=white,dashed] (-1,-.75) circle [radius=0.5];
\node at (-1,-.75) {\(\calx_1\)};
\draw[fill=white,dashed] (-1,.75) circle [radius=0.5];
\node at (-1,.75) {\(\calx_2\)};
\draw[fill] (1,.1) circle [radius=0.05];
\draw[fill] (-1.6,0) circle [radius=0.05];
\draw[fill] (1,-.1) circle [radius=0.05];
\node[above] at (1,.1) {\(x\)};
\node[left] at (-1.6,0) {\(T_2'x\)};
\node[below] at (1,-.1) {\(T_1'T_2'x\)};
\end{tikzpicture}
\caption{The total right adjoint \(F_{\tot}^!\).}
\label{fig:radj}
\end{figure}

\begin{rmk}
We may represent the situation of Lemma \ref{lem:glueadj} with Figure \ref{fig:radj}.  The action of \(F_{\tot}^!\) transports an object \(x\) counter-clockwise around both \(\calx_1\) and \(\calx_2\).  The counter-clockwise loop around \(\calx_2\) gives us the \(F_2^!x\in\calx_2\) and leaves us with \(T_2'x\) coming out the other end.  Transporting \(T_2'x\) around \(\calx_1\) gives us the \(F_1^!T_2'x\) of the statement.  Further, the outgoing line of this loop is then \(T_1'T_2'x\), so that the total twist is given by \(T_1'T_2'\) (see Lemma \ref{lem:tottwist} below).
\end{rmk}

With these in hand, we can compute the total twist \(T'_{\tot}\) and Serre functor \(S_{\tot}=\cofib(\id_{\calx_{\tot}}\to F_{\tot}^!F_{\tot})[d]\).  For the former, we have
\begin{lem}
\(T'_{\tot}=T'_1T'_2[1]\)
\label{lem:tottwist}
\end{lem}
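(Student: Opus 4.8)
The plan is to compute $F_{\tot}F_{\tot}^{!}$ explicitly and then read off the fiber of the counit by a single application of the octahedral axiom. By the description of $\calx_{\tot}$ preceding this lemma, $F_{\tot}$ sends a triple $(x,y,f)$ to $\cofib(f)$, and by Lemma \ref{lem:glueadj} we have $F_{\tot}^{!}w=(F_{1}^{!}T_{2}'w,\,F_{2}^{!}w,\,g_{w})$ with $g_{w}=\xi_{w}\circ\eta_{1,T_{2}'w}$. Hence, naturally in $w\in\caly$,
\[
F_{\tot}F_{\tot}^{!}w\;\simeq\;\cofib\bigl(g_{w}\colon F_{1}F_{1}^{!}T_{2}'w\to F_{2}F_{2}^{!}w\bigr),
\]
and $g_{w}$ carries the factorization
\[
F_{1}F_{1}^{!}T_{2}'w\xrightarrow{\;\eta_{1,T_{2}'w}\;}T_{2}'w\xrightarrow{\;\xi_{w}\;}F_{2}F_{2}^{!}w,
\]
where $\eta_{1}\colon F_{1}F_{1}^{!}\to\id_{\caly}$ is the counit and $\xi$ is the structure map of the fiber sequence $T_{2}'\to F_{2}F_{2}^{!}\xrightarrow{\eta_{2}}\id_{\caly}$ defining $T_{2}'$.

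Next I would apply the octahedral axiom to this composable pair. It produces a natural fiber sequence $\cofib(\eta_{1,T_{2}'w})\to\cofib(g_{w})\to\cofib(\xi_{w})$, whose second map is compatible with the canonical maps out of $F_{2}F_{2}^{!}w$. The two outer terms are then identified as follows. Since $T_{1}'=\fib(F_{1}F_{1}^{!}\to\id_{\caly})$, one has $\cofib(\eta_{1})\simeq T_{1}'[1]$ as endofunctors, and evaluating at $T_{2}'w$ gives $\cofib(\eta_{1,T_{2}'w})\simeq T_{1}'T_{2}'w[1]$. Likewise, the fiber sequence $T_{2}'\xrightarrow{\xi}F_{2}F_{2}^{!}\xrightarrow{\eta_{2}}\id_{\caly}$ exhibits $\cofib(\xi_{w})\simeq w$, with $F_{2}F_{2}^{!}w\to\cofib(\xi_{w})$ identified with $\eta_{2,w}$. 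Combining, $F_{\tot}F_{\tot}^{!}w$ sits in a natural fiber sequence
\[
T_{1}'T_{2}'w[1]\longrightarrow F_{\tot}F_{\tot}^{!}w\longrightarrow w .
\]

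It remains to check that the map $F_{\tot}F_{\tot}^{!}w\to w$ in this sequence is the counit $\eta_{\tot}$ of the adjunction $F_{\tot}\dashv F_{\tot}^{!}$; granting this, $T_{\tot}'=\fib(F_{\tot}F_{\tot}^{!}\to\id_{\caly})\simeq T_{1}'T_{2}'[1]$, which is the assertion. This verification is the one genuinely fiddly point, and the main obstacle of the argument: one must trace through the chain of adjunction identities in the proof of Lemma \ref{lem:glueadj} and observe that the counit $\eta_{\tot,w}\colon\cofib(g_{w})\to w$ is, by construction, the map induced on cofibers by $\eta_{2,w}\colon F_{2}F_{2}^{!}w\to w$ together with the canonical nullhomotopy of $\eta_{2,w}\circ g_{w}=\eta_{2,w}\circ\xi_{w}\circ\eta_{1,T_{2}'w}$ (which exists because $\eta_{2,w}\circ\xi_{w}\simeq 0$), and that the octahedral comparison map $\cofib(g_{w})\to\cofib(\xi_{w})\simeq w$ is characterized by exactly the same data. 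All the maps and homotopies involved are natural in $w$, so the resulting equivalence $T_{\tot}'\simeq T_{1}'T_{2}'[1]$ is an equivalence of functors. As a consistency check, this matches the geometric picture of $F_{\tot}^{!}$ in Figure \ref{fig:radj}: transporting $w$ counter-clockwise first around $\calx_{2}$ leaves $T_{2}'w$ on the outgoing ray, and then transporting counter-clockwise around $\calx_{1}$ leaves $T_{1}'T_{2}'w$, so the total twist on $\caly$ is $T_{1}'T_{2}'$ up to the degree shift accounting for $\cofib$ versus $\fib$.
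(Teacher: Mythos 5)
Your argument is correct and is essentially the same as the paper's: the paper's $3\times 3$ diagram is exactly the octahedral axiom applied to the factorization $g = \xi\circ\eta_{1}$, with the top row encoding $\cofib(\eta_{1})\simeq T_{1}'[1]$ and the middle column encoding $\cofib(\xi)\simeq\id_{\caly}$, and the resulting right column is your fiber sequence $T_{1}'T_{2}'[1]\to F_{\tot}F_{\tot}^{!}\to\id_{\caly}$. You are in fact slightly more scrupulous than the paper in flagging that the final map must be identified with the counit of $F_{\tot}\dashv F_{\tot}^{!}$; the paper leaves this implicit.
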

\begin{proof}
Consider the diagram
\[\xymatrixcolsep{5pc}
\xymatrix{
F_1F_1^!T_2'\ar[r]\ar[d]&T_2'\ar[r]\ar[d]&T_1'T_2'[1]\ar[d]\\
F_1F_1^!T_2'\ar[r]\ar[d]&F_2F_2^!\ar[r]\ar[d]&F_{\tot}F_{\tot}^!\ar[d]\\
0\ar[r]&\id_{\caly}\ar[r]&\id_{\caly}
}
\]
All rows and the two left columns are fiber sequences, so the right column is as well.
\end{proof}
Let us rewrite this result as \(T'_{\tot}[1]=(T'_1[1])(T'_2[1])\) and recall in the case of a Landau-Ginzburg model that \(T'_i[1]\) is the monodromy around \(U_i\).  This result then says that the total monodromy is obtained by composing the monodromy around each \(U_i\).

The Serre functor cannot be written in such a simple manner, but we can simplify it a little.  Let \(S_i\) denote the Serre functor of \(\calx_i\).
\begin{lem}
Let \((x,y,f)\in\calx_{\tot}\).  Then \(S_{\tot}(x,y,f)\simeq(c_1,c_2,k)[d]\), where \(c_1=\cofib(x\to F_1^!T_2'\cofib(f))\), \(c_2=\cofib(y,F_2^!\cofib(f))\), and \(k\) is induced by \(f\) and \(g_{\cofib(f)}\).
\[
\xymatrix{
x\ar[r]\ar[d]^f&F_1^!T_2'\cofib(f)\ar[r]\ar[d]^{g_{\cofib(f)}}&c_1\ar@{-->}[d]^k\\
y\ar[r]&F_2^!\cofib(f)\ar[r]&c_2
}
\]
Furthermore, we have \(c_1\simeq\cofib(F_1^!F_2c_2[-1]\to S_{1}x[-d])\) and \(c_2\simeq\cofib(F_2^!F_1x\to S_2y[-d])\), where the maps will be defined below.
\end{lem}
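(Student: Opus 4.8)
The plan is to combine the explicit description of $\calx_{\tot}$ and its cofibers with the formula for $F_{\tot}^{!}$ from Lemma \ref{lem:glueadj}, and then to extract the two ``furthermore'' fiber sequences by repeated use of the octahedral axiom. Since $F_{\tot}\colon\calx_{\tot}\to\caly$ is compatible spherical (Corollary \ref{cor:totsph}), we have $S_{\tot}\simeq\cofib(\id_{\calx_{\tot}}\to F_{\tot}^{!}F_{\tot})[d]$. Fix $(x,y,f)$ and set $C=\cofib(f)=F_{\tot}(x,y,f)$. By Lemma \ref{lem:glueadj}, $F_{\tot}^{!}C=(F_1^{!}T_2'C,\,F_2^{!}C,\,g_C)$, so the unit at $(x,y,f)$ is a morphism in $\calx_{\tot}$, i.e.\ a triple $(u_1\colon x\to F_1^{!}T_2'C,\ u_2\colon y\to F_2^{!}C,\ \text{2-cell})$. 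Because $\calx_{\tot}=\calx_1\times_{\caly}\Rep(A_2)\otimes\caly\times_{\caly}\calx_2$ and the forgetful functors to $\calx_1$, $\calx_2$ and $\Rep(A_2)\otimes\caly$ are exact, cofibers in $\calx_{\tot}$ are computed componentwise; hence $S_{\tot}(x,y,f)\simeq(\cofib(u_1),\cofib(u_2),\bar f)[d]$. Taking $c_1=\cofib(u_1)$, $c_2=\cofib(u_2)$ and $k=\bar f$ gives the first assertion, provided one identifies $u_1,u_2$ with the maps named in the statement.

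The second step is to read off $u_1$ and $u_2$ by tracing the chain of adjunction equivalences in the proof of Lemma \ref{lem:glueadj}, specialized to target $C$ with the structure map $F_{\tot}(x,y,f)\to C$ taken to be $\id_C$. This yields: $u_2\colon y\to F_2^{!}C$ is the $(F_2\dashv F_2^{!})$-adjoint of the canonical map $\mathrm{can}\colon F_2 y\to\cofib(f)$, so $u_2=F_2^{!}(\mathrm{can})\circ\eta_{F_2 y}$; and $u_1\colon x\to F_1^{!}T_2'C$ is the $(F_1\dashv F_1^{!})$-adjoint of the canonical lift $k_0\colon F_1 x\to T_2'C=\fib(F_2F_2^{!}C\to C)$ of $F_2(u_2)\circ f$ along $\xi$, which exists because the counit $\epsilon_C\circ F_2(u_2)=\mathrm{can}$ and $\mathrm{can}\circ f\simeq 0$. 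Thus $u_1=F_1^{!}(k_0)\circ\eta_{F_1 x}$.

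With these factorizations in hand, the two fiber sequences follow from octahedra. For $c_2$: apply the octahedral axiom to $y\xrightarrow{\eta_{F_2 y}}F_2^{!}F_2 y\xrightarrow{F_2^{!}(\mathrm{can})}F_2^{!}C$. The cofiber of the first map is the twist $\cofib(\id_{\calx_2}\to F_2^{!}F_2)(y)\simeq S_2 y[-d]$ (compatible spherical structure on $F_2$); the cofiber of $\mathrm{can}\colon F_2 y\to\cofib(f)$ is $F_1 x[1]$ (rotate $F_1 x\xrightarrow{f}F_2 y\to C$), so the cofiber of the second map is $F_2^{!}F_1 x[1]$. The octahedron then gives a fiber sequence $S_2 y[-d]\to c_2\to F_2^{!}F_1 x[1]$, whose rotation is $F_2^{!}F_1 x\to S_2 y[-d]\to c_2$, i.e.\ $c_2\simeq\cofib(F_2^{!}F_1 x\to S_2 y[-d])$ with connecting map induced by $f$. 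For $c_1$: first apply the octahedral axiom to $F_2 y\xrightarrow{F_2(u_2)}F_2 F_2^{!}C\xrightarrow{\epsilon_C}C$ — composite $\mathrm{can}$ with cofiber $F_1 x[1]$, first cofiber $F_2 c_2$, second cofiber $T_2'C[1]$ (since $T_2'C=\fib(\epsilon_C)$) — to obtain a fiber sequence $F_2 c_2[-1]\to F_1 x\xrightarrow{k_0}T_2'C$. Then apply $F_1^{!}$ and the octahedral axiom to $x\xrightarrow{\eta_{F_1 x}}F_1^{!}F_1 x\xrightarrow{F_1^{!}(k_0)}F_1^{!}T_2'C$: the first cofiber is $S_1 x[-d]$, the composite is $u_1$ with cofiber $c_1$, and the third cofiber is $F_1^{!}\cofib(k_0)\simeq F_1^{!}F_2 c_2$. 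This produces $S_1 x[-d]\to c_1\to F_1^{!}F_2 c_2$, hence $c_1\simeq\cofib(F_1^{!}F_2 c_2[-1]\to S_1 x[-d])$. A final diagram chase identifies $k$ with the map induced by $f$ and $g_{\cofib(f)}$ as in the square in the statement.

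The main obstacle is the second step: faithfully tracking the tower of adjunction equivalences and the coherence data $(\alpha,\beta,\gamma)$ from the proof of Lemma \ref{lem:glueadj} to confirm that $u_1$ and $u_2$ are \emph{literally} the composites described above, and keeping all shifts consistent. One must also verify that the maps appearing in the octahedra are the intended ones — in particular that the structure maps $F_i^{!}F_i\to T_i\simeq S_i[-d]$ are compatible with these identifications (cf.\ the description of $F^{!}F\to S_{\calx}[-d]$ via the proof of Theorem~2.13 of \cite{Ku15} recalled above). Once these compatibilities are pinned down, everything else is routine rotation and composition of fiber sequences.
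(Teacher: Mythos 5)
Your proof is correct and essentially reproduces the paper's argument: the paper obtains the same two cofiber identifications from three $3\times 3$ grids of fiber sequences, which encode exactly the octahedra you invoke, and it uses the same intermediate decompositions ($c_2$ via $y \to F_2^!F_2y \to F_2^!\cofib(f)$, and $c_1$ via first $F_1x \to T_2'\cofib(f) \to F_2c_2$ and then $x \to F_1^!F_1x \to F_1^!T_2'\cofib(f)$). You additionally make explicit the identification of the unit components $u_1,u_2$ via the adjunction chain from Lemma~\ref{lem:glueadj}, which the paper leaves implicit; this is a stylistic rather than a mathematical difference.
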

\begin{proof}
As mentioned above, since \(F_{\tot}\) is compatible spherical, we have \(S_{\tot}\simeq\cofib(\id_{\calx_{\tot}}\to F_{\tot}^!F_{\tot})[d]\), from which the first statement follows immediately.  For the second statement, consider the diagram
\[
\xymatrix{
0\ar[r]\ar[d]&F_2^!F_1x\ar@{=}[r]\ar[d]&F_2^!F_1x\ar[d]\\
y\ar[r]\ar@{=}[d]&F_2^!F_2y\ar[r]\ar[d]&S_2[-d]y\ar[d]\\
y\ar[r]&F_2^!\cofib(f)\ar[r]&c_2
}.
\]
All three rows and the first two columns are fiber sequences, so the last column is as well.  Thus we have \(c_2\simeq\cofib(F_2^!F_1x\to S_2y[-d])\).  For \(c_1\), we consider
\[
\xymatrix{
F_1x\ar[r]\ar[d]&T'_2\cofib(f)\ar[r]\ar[d]&F_2c_2\ar@{=}[d]\\
F_2y\ar[r]\ar[d]&F_2F^!_2\cofib(f)\ar[r]\ar[d]&F_2c_2\ar[d]\\
\cofib(f)\ar@{=}[r]&\cofib(f)\ar[r]&0.
}
\]
Since the columns and bottom two rows are fiber sequences, we get a fiber sequence \(F_1x\to T'_2\cofib(f)\to F_2c_2\).  Then consider
\[
\xymatrix{
x\ar[r]\ar@{=}[d]&F_1^!F_1x\ar[r]\ar[d]&S_1[-d]x\ar[d]\\
x\ar[r]\ar[d]&F_1^!T'_2\cofib(f)\ar[r]\ar[d]&c_1\ar[d]\\
0\ar[r]&F_1^!F_2c_2\ar@{=}[r]&F_1^!F_2c_2.
}
\]
Once again, the rows and first two columns are fiber sequences, so the third column is as well; rotating this gives the isomorphism \(c_1\simeq\cofib(F_2^!F_2c_2[-1]\to S_1x[-d])\).
\end{proof}

\begin{rmk}\label{rmk:PSC-CY}
The above results may be generalized as follows.  We note that 
\[\Rep(A_n)\simeq \Rep(A_{n-1})\times_{\Rep(A_1)}\Rep(A_2)\]  
By inductively applying Theorem \ref{thm:main-gluing-theorem} we may construct a map \(\Rep(A_n)\to\Rep(A_1)^{\times(n+1)}\) with a compatible spherical structure.  Furthermore, if \(\caly\) has a weak Calabi-Yau structure, then applying Lemma \ref{lem:tensorspher}, we have a compatible spherical structure on \(\Rep(A_n)\otimes\caly\to\caly^{\times(n+1)}\)

Now fix a ribbon graph \(\rg\) and let us denote some number \(e\) of leaves (\(1\)-valent vertices) as belonging to \emph{outgoing} edges; the other vertices will be \emph{internal} vertices.  Assume that any edge is incident to at least one internal vertex, and that there are no loops.  Let us fix a ``generic'' category \(\caly\), and, for each internal leaf, a category \(\calx_i\) equipped with a spherical functor \(F_i:\calx_i\to\caly\) defining a perverse Schober \(\mathfrak{X}\) on a thickening of \(\rg\).  Then the construction of \cite{KaSch} allows us to associate to \(\rg\) a sheaf of categories \(\mathfrak{X}_{\rg}\) on \(\rg\) (see also the introduction of this paper).  Considering \(\Gamma(\rg,\mathfrak{X}_{\rg})\), we have a structure map \(\Gamma(\rg,\mathfrak{X}_{\rg})\to\caly\) for each outgoing edge of \(\rg\).  We then obtain a map \(F_{\rg}:\Gamma(\rg,\mathfrak{X}_{\rg})\to\caly^{\times e}\) which we may explicitly describe as follows:
\begin{enumerate}
\item If \(\rg\) has a single internal vertex \(i\), and this is a leaf corresponding to a spherical functor \(F_i:\calx_i \to\caly\), then \(e=1\), \(\Gamma(\rg,\mathfrak{X}_G)\simeq\calx\), and \(F_{\rg}= F_i\).
\item If \(\rg\) has a single internal vertex, and this is an \(n\)-valent vertex with \(n>1\), then \(\Gamma(\rg,\mathfrak{X}_{\rg})\simeq\Rep(A_{n-1})\otimes\caly\) and \(F_{\rg}:\Rep(A_{n-1})\otimes\caly\to\caly^{\times n}\) is the map described above.
\item If \(\rg\) has more than one internal vertex, fix some internal \(n\)-valent vertex \(v\) and construct a graph \(\rg'\) by removing \(v\) and any outgoing edge incident to \(v\); for any non-outgoing edge incident to \(v\) and some \(v'\) we add a new outgoing edge between \(v'\) and a new leaf.  Let \(r\) be the number of such edges.  Let \(\rg''\) be the graph with a single \(n\)-valent vertex; if \(n=1\) we assign to this vertex the functor \(F_v\) assigned to \(v\).  Then \(\Gamma(\rg,\mathfrak{X}_{\rg})\simeq\Gamma(\rg',\mathfrak{X}_{\rg'})\times_{\caly^r}\Gamma(\rg'',\mathfrak{X}_{\rg''})\).  We inductively have maps \(F_{\rg'}:\Gamma(\rg',\mathfrak{X}_{\rg'})\to\caly^{e'}\) and \(F_{\rg''}:\Gamma(\rg'',\mathfrak{X}_{\rg''})\to\caly^{e''}\).  Analogously to the construction of the map \(\calx_{\tot}\to\caly\) we may then construct \(F_{\rg}:\Gamma(\rg,\mathfrak{X}_{\rg})\to\caly^e\).
\end{enumerate}

If, furthermore, the category \(\caly\) is equipped with a weak right Calabi-Yau structure and the functors \(F_i: \calx_i\to\caly\) are equipped with weak (resp.\ strong) relative Calabi-Yau structures, then the functor \(F_{\rg}\) also has a weak (resp.\ strong) relative Calabi-Yau structure.  In the first case, this is by assumption.  In the second case, this is the structure on \(\Rep(A_{n-1})\otimes\caly\to\caly^{\times n}\) described above.  In the third, we may apply Theorem \ref{thm:main-gluing-theorem}.

We further expect that one can find formulas for adjoints similar to those of Lemma \ref{lem:glueadj}, so that \(F_{\rg}\) has a compatible spherical structure.  We do not show this here, although we note that in the simple case that \(\rg\) is a binary tree (i.e. there are no cycles, exactly one outgoing edge, and all internal vertices are uni- or trivalent) we may simply inductively apply Lemma \ref{lem:glueadj}.
\end{rmk}

Let us describe a simple representation of the category \(\calx_{\tot}\).  For \(i=1,2\) we define maps \(J_i:\calx_i\to\calx_{\tot}\) by
\begin{align*}
J_1x&=(x[-1],0,0)\\
J_2y&=(0,y,0).
\end{align*}
In the case of a Picard-Lefschetz fibration, these correspond to the inclusions \(\FS(X_i,w_i)\to \FS(X,w)\) of thimbles ending in critical points of \(U_i\).  It is easy to check the following:
\begin{lem}
With \(\calx_i,\caly,\calx_{\tot},F_i,F_{\tot},J_i\) as above, we have
\begin{enumerate}
\item \(F_{\tot}J_i=F_i\).
\item For \(i=1,2\), \(J_i\) is full and faithful.
\item For \(x\in \calx_1\), \(y\in\calx_2\), we have \(\Mor_{\calx_{\tot}}(J_1x,J_2y)\simeq\Mor_{\caly}(F_1x,F_2y)\).
\item \(\calx_{\tot}\) has a semi-orthogonal decomposition \(\calx_{\tot}=\langle J_1\calx_1,J_2\calx_2\rangle\).
\end{enumerate}
\label{lem:Jfacts}
\end{lem}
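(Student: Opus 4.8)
The plan is to work entirely with the explicit model of $\calx_{\tot}$ recorded just before the lemma: objects are triples $(x,y,f)$ with $x\in\calx_1$, $y\in\calx_2$ and $f\colon F_1x\to F_2y$, and $F_{\tot}(x,y,f)=\cofib(f)$. The one piece of data not yet spelled out is a formula for the mapping spectra, which I would extract by applying Lemma \ref{lem:cat-to-mor-pullback} to the two fiber products exhibiting $\calx_{\tot}$ (i.e.\ $\calx_{\tot}\simeq\calx_1\times_{\caly}\calx_{2,\pm}$ and $\calx_{2,\pm}\simeq\calx_2\times_\caly(\Rep(A_2)\otimes\caly)$), together with the standard computation of $\Mor$ in $\Rep(A_2)\otimes\caly$ (the arrow category of $\caly$). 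Unwinding the iterated pullback of spectra gives a natural fiber sequence
\[
\Mor_{\calx_{\tot}}\big((x,y,f),(x',y',f')\big)\;\longrightarrow\;\Mor_{\calx_1}(x,x')\oplus\Mor_{\calx_2}(y,y')\;\longrightarrow\;\Mor_{\caly}(F_1x,F_2y'),
\]
where the second map sends $(g,h)$ to $f'\circ F_1g-F_2h\circ f$. All four assertions then follow by feeding $J_1x=(x[-1],0,0)$ and $J_2y=(0,y,0)$ into this description, using only that $\Mor(0,-)\simeq\Mor(-,0)\simeq 0$ and that $\fib(\mathbf 0\to M)\simeq M[-1]$.

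Concretely: for (1), the structure map of $J_1x$ is the (canonically null) map $F_1(x[-1])\to F_2(0)=0$, so $F_{\tot}J_1x=\cofib(F_1x[-1]\to 0)\simeq F_1x$, and likewise $F_{\tot}J_2y=\cofib(0\to F_2y)\simeq F_2y$, naturally in the argument. For (2), plugging into the fiber sequence one finds $\Mor_{\calx_{\tot}}(J_1x,J_1x')\simeq\fib\big(\Mor_{\calx_1}(x[-1],x'[-1])\to\mathbf 0\big)\simeq\Mor_{\calx_1}(x,x')$ and similarly $\Mor_{\calx_{\tot}}(J_2y,J_2y')\simeq\Mor_{\calx_2}(y,y')$ — in each case the $\caly$-term and one of the $\calx_i$-terms vanish because one entry is a zero object — so $J_1$ and $J_2$ are fully faithful. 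For (3), the same computation gives $\Mor_{\calx_{\tot}}(J_1x,J_2y)\simeq\fib\big(\mathbf 0\to\Mor_{\caly}(F_1x[-1],F_2y)\big)\simeq\Mor_{\caly}(F_1x[-1],F_2y)[-1]\simeq\Mor_{\caly}(F_1x,F_2y)$, while symmetrically $\Mor_{\calx_{\tot}}(J_2y,J_1x)\simeq\fib(\mathbf 0\to\mathbf 0)\simeq 0$.

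For (4), the last vanishing says $\Mor_{\calx_{\tot}}(J_2\calx_2,J_1\calx_1)=0$, and (2) says $J_1,J_2$ are fully faithful; it remains to produce, for each $(x,y,f)$, a filtration triangle with associated graded in $J_1\calx_1$ and $J_2\calx_2$. I would take the morphism $\pi=(\id_x,0,\varepsilon)\colon(x,y,f)\to(x,0,0)=J_1(x[1])$, where $\varepsilon$ is the evident nullhomotopy (both composites $F_1x\to 0$ are canonically null), and compute $\fib(\pi)$ componentwise: $\fib(\id_x)\simeq 0$ in $\calx_1$, $\fib(y\to 0)\simeq y$ in $\calx_2$, and the induced structure map $F_1(0)=0\to F_2y$ is canonically null, so $\fib(\pi)\simeq(0,y,0)=J_2y$. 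Thus every object sits in a natural triangle $J_2y\to(x,y,f)\to J_1(x[1])$, which together with the orthogonality and full faithfulness above yields the semiorthogonal decomposition $\calx_{\tot}=\langle J_1\calx_1,J_2\calx_2\rangle$; the two filtration functors are the projections $(x,y,f)\mapsto x$ and $(x,y,f)\mapsto y$.

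The only steps requiring genuine care — rather than pure unwinding — are the derivation of the boxed mapping-spectrum formula (which rests on the presentation of $\calx_{\tot}$ as an iterated fiber product, on Lemma \ref{lem:cat-to-mor-pullback}, and on $\Rep(A_2)$ being perfect modules over the $A_2$ path algebra) and, in (4), verifying that the structure map of $\fib(\pi)$ is genuinely zero so that $\fib(\pi)$ lies on the nose in the essential image of $J_2$. I do not expect either to be a real obstacle; the lemma is essentially a bookkeeping exercise built on the $\Mor$-formula.
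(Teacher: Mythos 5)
Your proof is correct. The paper supplies no argument at all (``It is easy to check the following''), and what you have written is precisely the verification the authors leave to the reader. One remark: the fiber sequence for $\Mor_{\calx_{\tot}}$ that you propose to derive via Lemma \ref{lem:cat-to-mor-pullback} and the iterated-pullback presentation is already supplied by the paper's explicit description of $\calx_{\tot}$ immediately above the lemma — morphisms are triples $(g,h,\alpha)$ with $\alpha$ a homotopy $f'\circ F_1 g \sim F_2 h\circ f$, which is exactly the fiber of $(g,h)\mapsto f'\circ F_1 g - F_2 h\circ f$ over $0$ — so you can read the formula off directly rather than rederive it. With that in hand, all four substitutions are sound: items (1)--(3) are the computations you spell out, and for (4) your triangle $J_2 y\to(x,y,f)\to J_1(x[1])$, together with the vanishing $\Mor(J_2\calx_2,J_1\calx_1)\simeq 0$ and the full faithfulness from (2), gives the semiorthogonal decomposition in the stated order (i.e.\ with $\Hom$ from the second factor to the first vanishing).
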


\begin{example}\label{example:Kronecker}
We will now construct Kronecker quivers as Fukaya-Seidel categories using this method.  Let \(w':X'\to\CC\) be a mirror to \(\CC\PP^2\), so that \(w'\) has three critical points corresponding to \(\calo,\calo(1),\calo(2)\in D^b(\CC\PP^2)\), and let \(E\) be a smooth fiber.  Take an open disc \(U\) containing two of these points, \(z_1\) and \(z_2\), and extend \(X'|_U\) to a Picard-Lefschetz fibration \(w:X\to\CC\) so that its only critical points are \(z_1\) and \(z_2\).  Then \(\FS(X,w)\) is \(K_3\), the Kronecker quiver with three arrows.  More specifically, \(\FS(X,w)\) is generated by thimbles \(L_i\) ending at \(z_i\) such that \(\cap L_1\) and \(\cap L_2\) are Lagrangians in \(E\) intersecting in three points.

Let \(U_i\) be a small disc around \(z_i\), and let \((X_i,w_i)\) be extensions of the fibration over \(U_i\) to all of \(\CC\).  Then we have \(\FS(X_i,w_i)=\Rep(A_1)=D^b(k)\), with \(\cap_i:\FS(X_i,w_i)\to \Fuk(E)\) sending the generator to \(\cap L_i\).  Taking \(\calx_i=\FS(X_i,w_i)\) and \(F_i=\cap_i\), it is clear from Lemma \ref{lem:Jfacts} that \(\calx_{\tot}\) is again \(K_3\).

We can alter this example with a surgery.  Fix a symplectomorphism \(R\) of \(E\), say a Dehn twist, and a cover \(U=V_1\cup V_2\) with \(z_i\in V_i\), \(z_i\notin V_j\) for \(i\neq j\).  Let \(X_i=X|_{V_i}\).  Then we have \(X|_U\simeq X_1\coprod X_2/g\), where \(g\) is an isomorphism identifying \(X_1|_{V_1\cap V_2}\simeq X_2|_{V_1\cap V_2}\).  Choosing a trivialization of \(X|_{V_1\cap V_2}\), and let \(\tilde{R}\) be the fibrewise action of \(R\) on \(X|_{V_1\cap V_2}\).  Then we may glue \(V_1\) and \(V_2\) back together via \(R\) by setting \(X_R=X_1\coprod X_2/g\tilde{R}\).  Let \((X_R,w_R)\) be the resulting Landau-Ginzburg model.  Fixing a fiber \(E\) in the same component as \(z_2\), this has the effect of replacing \(\cap L_1\) with \(R\cap L_1\); if \(HF(R\cap L_1,\cap L_2)\simeq k^n\), we will have \(\FS(X_R,w_R)\simeq K_n\), the Kronecker quiver with \(n\) arrows.

Let \(R\) also denote the induced autoequivalence of \(\Fuk(E)\).  Then the surgery has the effect that \(\cap_1\) will be replaced by \(R\cap_1\).  Thus we construct the Kronecker quiver by the diagram
\[
\xymatrix{
K_n\ar[r]\ar[d]&\calx_{2,\pm}\ar[r]^{\cap_+}\ar[d]^{\cap_-}&\Fuk(E)\\
\calx_1\ar[r]^{R\cap_1}&\Fuk(E)
}
\]

Now let us look more at \(K_n\) from the point of view of the gluing.  We have \(\calx_1\simeq\calx_2\simeq D^b(k)\), so that the functors \(F_i:\calx_i\to \caly\) are determined by the objects \(y_i=F_ik\).  Furthermore, it is easy to check that \(F_i^!z=\Mor_\caly(y_i,z)\) and \(F_i^*z=\Mor_\caly(z,y_i)^\vee\).

Let us calculate the Serre functor \(S_{\tot}\).  By Lemma \ref{lem:Jfacts}, \(\calx_{\tot}\) has a generating exceptional collection \(\langle E_1,E_2\rangle\), where \(E_1=J_1k=(k[-1],0,0)\) and \(E_2=J_2k=(0,k,0)\).  Therefore we will determine \(S_{\tot}E_i\).

For any \(z\in\caly\), Lemma \ref{lem:glueadj} gives us
\[
F_{\tot}^!z=(\fib(\Mor_\caly(y_1,y_2)\otimes\Mor_\caly(y_2,z)\to\Mor_\caly(y_1,z)),\Mor_\caly(y_2,z),g_z)
\]
Here \(g_z\) is the composition
\begin{align*}
\fib(\Mor_\caly(y_1,y_2)\otimes\Mor_\caly(y_2,z)\to\Mor_\caly(y_1,z))\otimes y_1&\to \Mor_\caly(y_1,y_2)\otimes\Mor_\caly(y_2,z)\otimes y_1\\
&\to \Mor_{\caly}(y_2,z)\otimes y_2
\end{align*}
with the second map coming from evaluation \(\Mor(y_1,y_2)\otimes y_1\to y_2\).  Let us set \(V=\Mor_{\caly}(y_1,y_2)\) and recall that the Calabi-Yau structure on \(\caly\) gives us \(\Mor_\caly(y_2,y_1)\simeq V^\vee[-1]\).  Furthermore, \(\Mor_\caly(y_i,y_i)\simeq C^*(S^1)\simeq k\oplus k[-1]\).  Then
\[
F_{\tot}^!F_{\tot}E_1\simeq(\fib(V\otimes V^\vee[-1]\to k\oplus k[-1]),V^\vee[-1],g_{y_1})
\]
and
\begin{align*}
F_{\tot}^!F_{\tot}E_2&\simeq(\fib(V\otimes (k\oplus k[-1])\to V),k\oplus k[-1],g_{y_2})\\
&\simeq(V[-1], k\oplus k[-1], g_{y_2}).
\end{align*}
Then using \(S_{\tot}E_i\simeq \cofib(E_i\to F_{\tot}^!F_{\tot}E_i)\), we have
\begin{align*}
S_{\tot}E_1&\simeq(\fib(V\otimes V^\vee[-1]\to k[-1]),V^\vee[-1],g_{y_1})\\
&\simeq(sl(V)[-1],V^\vee[-1],g_{y_1})
\end{align*}
and
\[
S_{\tot}E_2\simeq (V[-1], k[-1],g_{y_2}).
\]
\end{example}

\section{An A-model relative Calabi-Yau structure}\label{sec:A-model}

Let \(w:X\to\CC\) be a Landau-Ginzburg model with smooth and compact fiber \(Y\) of dimension \(d\).  Then we have a functor \(\cap:\FS(X,w)\to \Fuk(Y)\) which is restriction to the fiber at infinity, and the Orlov functor \(\cup:\Fuk(Y)\to \FS(X,w)\) which is a Hamiltonian flow of a Lagrangian along an arc from \(+\infty\) to itself going around all the critical values.  Furthermore, \(\cap\) is a left adjoint of \(\cup\). This pair of adjoint functors is constructed in the paper \cite{AS}, which is currently in preparation. Abouzaid has announced \cite{A} a proof of the fact that \(\cap\) is spherical.

The mirror of \((X,w)\) is a Fano variety \(X^{\vee}\). Here we use the term ``mirror'' in the sense of homological mirror symmetry, so we have an equivalence \(\FS(X,w) \simeq \Perf(X^{\vee})\). By Homological Mirror Symmetry, since \(\Perf(Y)\) has a Calabi-Yau structure, we expect its mirror \(\Fuk(Y)\) to have one as well.  When \(Y\) is compact, Ganatra \cite{Gan2} has shown that \(\Fuk(Y)\) does indeed carry a right Calabi-Yau structure. Furthermore, the functor \(\cap:\FS(X,w)\to \Fuk(Y)\) is mirror to \(a^*:\Perf(X^{\vee})\to\Perf(Y^{\vee})\), the restriction functor from \(X^{\vee}\) to an anticanonical divisor \(Y^{\vee} \hookrightarrow X^{\vee}\). The latter functor is shown to carry a relative Calabi-Yau structure in \cite{BD}; this is a noncommutative/categorical version of a result of Calaque \cite{Calaque} stating that the induced map on moduli spaces of perfect complexes carries a Lagrangian structure. By mirror symmetry, we should expect \(\cap:\FS(X,w)\to \Fuk(Y)\) to have a relative Calabi-Yau structure. This section is devoted to outlining an argument demonstrating the existence of this structure.

 The main result of this section, Theorem \ref{A-model-rel-CY}, is essentially a formulation of the statement that \(\cap:\FS(X,w)\to \Fuk(Y)\) carries a relative CY-structure. Our formulation axiomatizes the inputs from symplectic geometry that are required in the proof. The reasons for treating these inputs as a black-box are twofold. First, proofs for many of the facts that we need from symplectic geometry are not yet available, and are the subject of works by experts in symplectic geometry that are currently in progress \cite{AS,AG,Gan1}. Second, we wish to make manifest the extent to which our argument is robust, and independent of specific features of the symplectic setup. \\

\noindent \textbf{Outline.} Here is an outline of the argument:

\begin{itemize}

\item[-] In Definition \ref{def:pre-isotropy}, we introduce extra structure on the functor that allows us to reduce the construction of isotropy data  to elementary topology, as described in Construction \ref{constr:pre-isotropy-gives-isotropy}.

\item[-] Definition \ref{def:admissible-LG} axiomatizes various formal features of explicit chain models for mapping spaces in \(\FS(X,w)\) and \(\Fuk(Y)\), including compatibility conditions with \(\cap\) and the Calabi-Yau structure on \(\Fuk(Y)\), that enter into the proof of the non-degeneracy of the isotropy structure given by Construction \ref{constr:pre-isotropy-gives-isotropy}. 

\end{itemize}

\begin{defn}\label{def:LG}
In this section, by a \emph{Landau-Ginzburg model} we mean a symplectic manifold \((X,\omega)\) equipped with a smooth morphism \(w: X \rightarrow \mathbb{C}\) with the following properties:

\begin{itemize}

\item[-] there is a finite collection of points \(\{p_1,...,p_n\}\) in \(\mathbb{C}\) such that  \(w\) defines a locally trivial fibration on \(\mathbb{C} -\{p_1,...,p_n\}\) whose generic fiber is a symplectic manifold \((Y, \omega_{|Y})\).

\item[-] the structure group of this locally trivial fibration \(\mathbb{C} -\{p_1,...,p_n\}\) is contained in the symplectomorphism group \(\mathrm{Symp}(Y, \omega_{|Y})\) of the fiber.
\end{itemize}

\end{defn}

Given a Landau-Ginzburg model \((X,w)\), let \(X_{\infty}\) be its fiber at infinity. We are concerned only with its homotopy type. For instance, let \(U\) be the intersection of a sector in \(\CC\) that contains the positive real axis with the complement of a large compact set containing the critical values of \(w\). Then we can model \(X_{\infty}\) by \(w^{-1}(U) \subset X\). Then clearly we have a (canonical up to contractible ambiguity) homotopy equivalence \(X_{\infty} \simeq Y_{t}\) for any fiber \(Y_{t} := w^{-1}(t)\) with \(t \in U\). If \(Y = Y_s\) is any smooth fiber of \(w\), then we still have an non-canonical homotopy equivalence  \(X_{\infty} \simeq Y_{s}\). In the discussions that follow, the fiber \(Y\) will always be fixed, and we will assume that we have chosen such a homotopy equivalence. Composing the boundary map \(\partial\) in relative homology with this map induced by this homotopy equivalence, we obtain a map

\[
\partial: C_*(X,X_{\infty},k) \rightarrow C_*(Y,k)
\]

which we continue to denote by \(\partial\), abusing notation. 

\begin{defn}\label{def:pre-isotropy}
Let \(\calx\) and \(\caly\) be locally proper \(k\)-linear stable \(\infty\)-categories, let \(F: \calx \rightarrow \caly\) be an exact functor, and let \(\phi_{\caly}: \HH_*(\caly) \rightarrow k[-d]\)  be a weak  Calabi-Yau structure on \(\caly\). A \emph{weak pre-isotropy structure on \(F\) with respect to \(w\)} consists of the following data:

\begin{enumerate}

\item a homotopy commutative diagram as follows in the \(\infty\)-category of \(k\)-modules:
\[
\xymatrix{
\HH_*(\calx) \ar[r] \ar[d]_{\theta_{\calx}} & \HH_*(\caly) \ar[d]^{\theta_{\caly}} \\
\chain_*(X,X_{\infty})[-d-1] \ar[r]^(0.6){\partial} & \chain_{*}(Y)[-d]\\
}
\]

Here \(Y\) is some fixed smooth fiber of \(w\) and \(\chain_*(Y) = \chain_*(Y;k)\) is a chain complex computing the homology \(\hml_*(Y;k)\) of the topological space \(Y\) with coefficients in \(k\), and \(X_{\infty}\) is the fiber at infinity, so that \(\chain_*(X; X_{\infty})\) can be identified with the chain complex of vanishing cycles of the fibration defined by \(w\). 

\item a homotopy commutative diagram of \(k\)-modules:
\[
\xymatrix{
\HH_*(\caly) \ar[r]^{\phi_{\caly}} \ar[d]_{\theta_{\caly}} & k[-d] \\
\chain_*(Y)[-d] \ar[r]_{\pr_0[-d]} & \hml_0(Y)[-d] \ar[u]_{\deg[-d]} \\
}
\]

Here \(\hml_0(Y) \simeq \tau_{\leq 0} \chain_*(Y)\) since \(\chain_*(Y)\) is connective, and \(\pr_0: \chain_*(Y)  \rightarrow \tau_{\leq 0} \chain_*(Y)\) is simply the unit of the natural adjunction between spectra and connective spectra. The map \(\deg\) is the linear extension of the map that sends the class of any point in \(Y\) to the element \(1 \in k\).

\end{enumerate}

Equip the chain complex \(\chain_*(-)\) with a homotopically trivial \(S^1\)-equivariant structure, and suppose that \(\caly\) is equipped with a \emph{strong} Calabi-Yau structure \(\tilde{\phi}_{\caly}: \HC_*(\caly) \rightarrow k[-d]\). Then a \emph{strong pre-isotropy structure on \(F\) with respect to \(w\)} is the data of commutative diagrams as in (1) and (2) above, taking values in the \(\infty\)-category of \emph{\(S^1\)-equivariant} \(k\)-modules. 
\end{defn}

\begin{rmk}\label{rmk:smooth-FS-has-pre-isotropy}
Let \(X\) be an exact symplectic manifold, and let \(w: X \rightarrow \CC\) be a Lefschetz fibration satisfying the hypotheses in \cite{Seidel-book}, with exact generic fiber \(Y\). Then the Fukaya-Seidel category \(\FS(X,w)\) and the Fukaya category \(\Fuk(Y)\) are defined over \(k = \CC\). By \cite{Gan1}, we have \(\HH_*(\Fuk(Y))\simeq \SH^{*-d}(Y)\), the symplectic cohomology of \(F\).  We may take a model of \(\SH^{*-d}(Y)\) whose generators are constant loops so that (as vector spaces) we have \(\SH^{*-d}(Y)\simeq \chain_{d-*}(Y, \CC)\). For \(\FS(X,w)\) we have \(\HH_*(\FS(X,w))\simeq \chain_{d+1-*}(X, X_\infty, \CC)\), the vanishing cycles of \(X\), with the trivial \(S^1\) action \cite{AG}. Furthermore, in the forthcoming paper \cite{AS} a functor \(\cap: \FS(X,w) \rightarrow \Fuk(Y)\) is constructed, which acts on the support of an A-brane by simply intersecting it with a generic smooth fiber. We expect that the results proven therein will imply that the diagram
\[
\xymatrix{
\HH_*(\FS(X,w)) \ar[r] \ar[d] & \HH_*(\Fuk(Y)) \ar[d] \\
\chain_*(X,X_{\infty})[-d-1] \ar[r] & \chain_{*}(Y)[-d]\\
}
\]
commutes in \(S^1\)-modules, thus equipping the functor \(\cap\) with a pre-isotropy structure with respect to \(w\). In the diagram above, the right vertical equivalence is the composite of the open-closed map \(\HH_*(\Fuk(Y) \rightarrow \SH^{*-d}(Y)\) with the equivalence \(\SH^{*-d}(Y)\simeq \chain_{d-*}(Y)\) mentioned earlier.

\end{rmk}

\begin{rmk}\label{rmk:proper-FS-has-pre-isotropy}
Let \(X\) be an symplectic manifold, and let \(w: X \rightarrow \CC\) be a symplectic fibration, with \emph{compact} generic fiber \(Y\). In this situation one expects to be able to define categories \(\FS(X,w)\) and \(\Fuk(Y)\) that are linear over the Novikov field \(\Lambda := \CC((t^{\RR}))\), and a functor \(\cap: \FS(X,w) \rightarrow \Fuk(Y)\) as in Remark \ref{rmk:smooth-FS-has-pre-isotropy}. The properness of the fiber \(Y\) will be reflected in the properness of these categories. It is expected \cite{Kontsevich-HMS} (see also \cite{Gan1}) that \(\HH_*(\Fuk(Y))\) is equivalent to the quantum cohomology \(\mathrm{QH^*}(Y)\) via the open-closed map. The underlying chain complex  \(\mathrm{QH^*}(Y)\) is the cochain complex \(\chain^*(Y, \Lambda)\), which via Poincare duality is identified with \(\chain_*(Y, \Lambda)[-d]\). Similarly, one expects that  \(\HH_*(\FS(X,w)) \simeq \chain_*(X, X_{\infty}, \Lambda)\) as \(S^1\)-equivariant chain complexes. As in Remark \ref{rmk:smooth-FS-has-pre-isotropy}, the functor \(\cap\) should be equipped with a pre-isotropy structure with respect to \(w\); the main difference is that all the structures are defined over \(\Lambda\) instead of \(\CC\). 

\end{rmk}

\begin{construction}\label{constr:pre-isotropy-gives-isotropy}
Suppose that we are given a functor \(F: \calx \rightarrow \caly\) that is equipped with a weak (resp. strong) pre-isotropy structure with respect to a Landau-Ginzburg model \(w: X \rightarrow \CC\) as in Definition \ref{def:pre-isotropy}. We are going to construct a weak (resp. strong) isotropy structure on \(F\) (Definition \ref{def:isotropy}) from this data.  Consider the diagram:
\[
\xymatrix{
\HH_*(\calx) \ar[r]^{\HH_*(F)} \ar[d]_{\theta_{\calx}} & \HH_*(\caly) \ar[d]^{\theta_{\caly}} \\
\chain_*(X,X_{\infty})[-d-1] \ar[r]^(0.6){\partial} \ar[d]_{\pr_1[-d]} & \chain_*(Y)[-d] \ar[d]^{\pr_0[-d]}\\
\hml_1(X,X_{\infty})[-d] \ar[r]^(0.6){\partial} \ar[d] & \hml_0(Y)[-d] \ar[d]^{\deg[-d]} \\
0 \ar[r] & k[-d]\\
}
\]

Observe that \(\chain_*(X,X_{\infty})\) is \(1\)-connective and so \(\hml_1(X,X_{\infty}) \simeq \tau_{\leq 1}\chain_*(X, X_{\infty})\); the map \(\pr_1: \chain_*(X,X_{\infty}) \rightarrow \tau_{\leq 1} \chain_*(X,X_{\infty})\) is simply the unit of the natural adjunction between spectra and \(1\)-connective spectra. The two lower squares are manifestly commutative as diagrams in the \(\infty\)-category of \(S^1\)-equivariant \(k\)-modules. In particular, they are also commutative as diagrams in the \(\infty\)-category \(\Mod_k\) of \(k\)-modules. Thus,

\begin{enumerate}
\item If \(F\) is equipped with a weak pre-isotropy structure with respect to \(w\), then the upper square defines a homotopy commutative square in \(\Mod_k\). Therefore the outer square is also commutative in \(\Mod_k\), and defines a weak isotropy structure on \(F\).

\item If \(F\) is equipped with a strong pre-isotropy structure with respect to \(w\), then the upper square defines a homotopy commutative square of \(S^1\)-equivariant \(k\)-modules. Therefore the outer square is also equipped with the structure of a commutative square  of \(S^1\)-equivariant \(k\)-modules, and thus defines a strong isotropy structure on \(F\).
\end{enumerate}

\end{construction}

Having discussed the geometric structures required to produce an isotropy structure on \(F\), we now turn our attention to non-degeneracy. The following lemma will allow us to reduce the question of non-degeneracy to a generator set of objects that have geometric representatives:

\begin{lem}\label{lemma:non-deg-for-generators}
Let \(F: \calx \rightarrow \caly\) be a functor of locally proper \(k\)-linear stable \(\infty\)-categories equipped with an isotropy structure with respect to a Calabi-Yau structure \(\phi_{\caly}\) on \(\caly\), and let \(\Gen_{\calx} \subset \Ob_{\calx}\) be a set of objects that generates \(\calx\) under finite colimits. For \(x, y \in \Ob_{\calx}\), consider the homotopy commutative square:
\begin{equation}\label{diag:lemma-non-deg-gen}\tag{\(\diamondsuit\)}
\xymatrix{
\Mor_{\calx}(x,y) \ar[r] \ar[d] & \Mor_{\caly}(Fx, Fy) \ar[d] \\
0 \ar[r] & \Mor_{\calx}(y,x)^{\vee}[-d] \\
}
\end{equation}
obtained by applying Construction \ref{const:fiber-sequence}. Assume that \eqref{diag:lemma-non-deg-gen} is a pullback square for all \(x,y\) in \(\Gen_{\calx}\). Then  \eqref{diag:lemma-non-deg-gen} is a pullback square for \(x, y\) in \(\Ob_{\calx} \). 
\end{lem}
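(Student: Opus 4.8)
The strategy is a standard ``dévissage'' argument: fix one variable and induct on the other, using the fact that each side of the square \eqref{diag:lemma-non-deg-gen} is an exact functor (in each variable, with appropriate variance) of stable $\infty$-categories, so it sends fiber sequences to fiber sequences. First I would make precise in what sense the three corners of \eqref{diag:lemma-non-deg-gen} depend functorially and exactly on $x$ and $y$. The upper-left corner $\Mor_{\calx}(x,y)$ is exact and covariant in $y$, contravariant in $x$. The upper-right corner $\Mor_{\caly}(Fx,Fy)$ is likewise exact in each variable since $F$ is exact. The lower-right corner $\Mor_{\calx}(y,x)^{\vee}[-d]$ is exact and \emph{covariant} in $x$ (contravariant inside, then dualized), contravariant in $y$; recall $(-)^{\vee}[-d]$ is exact because $\calx$ is locally proper, so these are perfect $k$-modules and $k$-linear duality is exact. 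Thus, for fixed $x$, the assignment $y \mapsto \big(\text{square } \eqref{diag:lemma-non-deg-gen}\big)$ is an exact functor from $\calx$ to the stable $\infty$-category $\Fun(\Delta^1\times\Delta^1,\Mod_k)$ of squares of $k$-modules, and symmetrically for fixed $y$ in the variable $x$.

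The key observation is that ``being a pullback (equivalently, bicartesian, since $\Mod_k$ is stable) square'' is a condition preserved under finite colimits in $\Fun(\Delta^1\times\Delta^1,\Mod_k)$: the full subcategory of bicartesian squares is stable, hence closed under cofibers, shifts, and finite coproducts. Concretely, if $y' \to y \to y''$ is a fiber/cofiber sequence in $\calx$ and the squares $\eqref{diag:lemma-non-deg-gen}$ associated to $(x,y')$ and $(x,y'')$ are both bicartesian, then so is the one associated to $(x,y)$, because it is (a shift of) the cofiber of a map between the two. Now I would run the induction in two steps. Step one: fix $x \in \Gen_{\calx}$. The collection of objects $y$ for which \eqref{diag:lemma-non-deg-gen} is a pullback square is, by the above, closed under finite colimits and shifts, and it contains $\Gen_{\calx}$ by hypothesis; since $\Gen_{\calx}$ generates $\calx$ under finite colimits, this collection is all of $\Ob_{\calx}$. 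Step two: now fix an \emph{arbitrary} $y \in \Ob_{\calx}$. By step one, \eqref{diag:lemma-non-deg-gen} is a pullback square for all $x \in \Gen_{\calx}$ and this fixed $y$. Again, the collection of $x$ for which the square is bicartesian is closed under finite colimits and shifts — here one uses exactness in the $x$-variable, noting the variances work out (all three corners are exact in $x$ once one accounts for the dualization in the lower-right corner) — and contains $\Gen_{\calx}$, hence is all of $\Ob_{\calx}$. This gives the claim for all $x,y \in \Ob_{\calx}$.

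The one point requiring a little care — and the step I expect to be the main obstacle — is verifying that the \emph{entire square} \eqref{diag:lemma-non-deg-gen}, including the nullhomotopy of the composite $\Mor_{\calx}(x,y) \to \Mor_{\caly}(Fx,Fy) \to \Mor_{\calx}(y,x)^{\vee}[-d]$ that is part of the isotropy data, varies exactly in $x$ and $y$, not just the three corners separately. This is where Construction \ref{const:fiber-sequence} must be invoked carefully: the square is built by gluing the diagram \eqref{diag:nondeg-part}, which is functorial — indeed it was promoted there to a functor $\Delta^3\times\Delta^1 \to \Perf(k)$ — to the isotropy square \eqref{diag:weak-isotropy}, itself a fixed functor independent of $x,y$. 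So the whole construction is a composite of exact operations (multiplication maps, trace maps, the fixed isotropy 2-cell, and the $\otimes$-$\Mor$ adjunction), each manifestly exact in the object variables, and hence the assignment $(x,y) \mapsto \eqref{diag:lemma-non-deg-gen}$ is exact in each variable as a diagram, not merely cornerwise. Once this functoriality is in hand, the two-step dévissage above goes through formally, and the lemma follows.
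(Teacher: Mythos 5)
Your proposal is correct and follows essentially the same two-step dévissage as the paper's own proof: fix $x \in \Gen_{\calx}$, show the set of $y$ for which \eqref{diag:lemma-non-deg-gen} is bicartesian is closed under cofibers (the paper writes out the resulting three-row, three-column fiber diagram explicitly where you phrase it as stability of bicartesian squares in $\Fun(\Delta^1\times\Delta^1,\Mod_k)$), conclude it is all of $\Ob_{\calx}$, then run the identical argument in $x$ for arbitrary $y$. Your closing caveat about the coherent functoriality of the whole square — not just its corners — is exactly the point the paper addresses by packaging the data as a functor $\Delta^2 \times \Delta^1 \times \Delta^1 \to \Mod_k$ built from Construction \ref{const:fiber-sequence} and the fixed isotropy $2$-cell.
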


\begin{proof}

Let \(x \in \Gen_{\calx}\). Define \(\nondeg^r(x):= \{y \in \Ob_{\calx} \mid\) Diagram \eqref{diag:lemma-non-deg-gen} is a pullback square\(\}\). Let \(y', y \in \nondeg^r(x)\), and suppose \(y' \to y \to y''\) is a fiber-cofiber sequence in the stable \(\infty\)-category \(\calx\). Consider the diagram

\[
\xymatrix{
\Mor_{\calx}(x,y') \ar[r] \ar[d] & \Mor_{\caly}(Fx, Fy') \ar[r] \ar[d] & \Mor_{\calx}(y',x)^{\vee}[-d] \ar[d] \\
\Mor_{\calx}(x,y) \ar[r] \ar[d] & \Mor_{\caly}(Fx, Fy) \ar[r] \ar[d] & \Mor_{\calx}(y,x)^{\vee}[-d] \ar[d] \\
\Mor_{\calx}(x,y'') \ar[r] & \Mor_{\caly}(Fx, Fy'') \ar[r]  & \Mor_{\calx}(y'',x)^{\vee}[-d]  \\
}
\]

The commutativity of all the squares in this diagram derives from the functoriality of \(\Mor(-,-)\), and the fact that the collection of morphisms \(\Xi(\phi): \Mor_{\caly}(Fx,Fy) \to \Mor_{\caly}(Fy, Fx)^{\vee}[-d]\) underlies a natural transformation of functors. For each row in this diagram, the composite of the morphisms in that row is equipped with a nullhomotopy given by the isotropy structure on \(F\). All of this information fits together to give a functor \(\Delta^2 \times \Delta^1  \times \Delta^1 \to \Mod_k\).

The columns of the diagram above are fiber sequences since \(\Mor_{\calx}(x,-)\) and \(F\) are exact functors, and \(y' \to y \to y''\) is a fiber sequence. Thus the bottom row is the cofiber of the map from the top row  to the middle row. Furthermore, the upper two rows of this diagram are fiber sequences, since \(y', y \in \nondeg^r(x)\) by assumption. Since cofiber of a map between fiber sequences is a fiber sequence, the bottom row is also a fiber sequence. We conclude that \(y'' \in \nondeg^r(x)\), and hence that \( \nondeg^r(x)\) is stable under taking cones.

 Clearly \(0 \in \nondeg^r(x)\). Furthermore, by the hypothesis in the statement of the lemma, \(\Gen_{\calx} \subset \nondeg^r(x)\). Since \(\Gen_{\calx}\) generates \(\calx\) under finite colimits, we deduce from the conclusion of the previous paragraph (and the fact that \(\calx\) is stable) that \(\nondeg^r(x) = \Ob_{\calx}\).

Now let \(y \in \Ob_{\calx}\) be an arbitrary object, and let \(\nondeg^l(y) := \{x \in \Ob_{\calx} \mid\) Diagram \eqref{diag:lemma-non-deg-gen} is a pullback square\(\}\). Then clearly \(0 \in \nondeg^l(y)\), and \(\Gen_{\calx} \subset \nondeg^l(y)\) by the conclusion of the previous paragraph. By applying the same argument as above to a fiber sequence \(x' \to x \to x''\), we see that if \(x', x \in \nondeg^l(y)\), then \(x'' \in \nondeg^l(y)\). Since \(\calx\) is generated under finite colimits by \(\Gen_{\calx}\), and \(\Gen_{\calx} \cup \{0\} \subset \nondeg^l(y)\), we conclude that \(\nondeg^l(y) = \Ob_{\calx}\) for \emph{arbitrary} \(y \in \Ob_{\calx}\). This is exactly what we set out to prove.

\end{proof}

In order to prove that the isotropy structure that we have produced is non-degenerate, we will

\begin{notation}\label{not:Ham-traj}
Let \((X,\omega)\) be a symplectic manifold, and let 

\begin{itemize}
\item[-] \(H \in C^{\infty}(X, \mathbb{R})\)
\item[-] \(\calh \subset  C^{\infty}(X, \mathbb{R})\)
\item[-] \(L, L' \in \mathrm{Lag}(X)\), the set of smooth Lagrangian submanifolds of \(X\). 
\end{itemize}

Then we will use the following notation:

\begin{enumerate}
\item \(\traj_H\) denotes the set of time \(1\) \(H\)-Hamiltonian trajectories; i.e., \(\traj_H := \{ \gamma:[0,1] \rightarrow X \mid \gamma\) is smooth and \(\dot{\gamma}(t) = \mathfrak{X}_{H}(\gamma(t))\}\). Here  \(\mathfrak{X}_{H}\) is the vector field on \(X\) characterized by \(\omega(\mathfrak{X}_H,-) = -dH\).

\item \(\traj_{\calh} : = \cup_{H \in \calh} \traj_H\).

\item \(\traj_{\calh}(L,L') := \{\gamma \in \traj_{\calh} \mid \gamma(0) \in L\) and \(\gamma(1) \in L'\}\).

\end{enumerate}

\end{notation}

\begin{rmk}\label{rmk:reverse-Ham-traj}
For any \(\calh \subset C^{\infty}(X)\), and for any Lagrangians \(L,L'\), there is a natural bijection
\[
\xymatrix{
\rev_{\calh}: \traj_{\calh}(L,L') \ar[r]^{\sim} & \traj_{- \calh}(L',L)
}
\]
obtained by sending \(\gamma \in \traj_{\calh}(L,L')\) to \(\gamma \circ \delta\) where \(\delta: [0,1] \rightarrow [0,1]\) is given by \(\delta(t) = 1-t\).

\end{rmk}

\begin{notation}\label{not:vect}

Some notation for linear algebra constructions:

\begin{enumerate}
\item We will write \((-)^{\natural}: \mathrm{Chain}(k) \rightarrow \mathrm{Vect}\) for the forgetful functor from the \(1\)-category of chain complexes of vector spaces over \(k\) to the \(1\)-category of vector spaces.

\item \(\free_k: \mathrm{Sets} \rightarrow \mathrm{Vect}_k\) is the left adjoint to the forgetful functor from vector spaces over \(k\) to sets.

\item Let \(\calb\) be a basis for a graded vector space \(V\). We will write \(\calb_{d}^*\) for the dual basis of \(V^{\vee}[d] = \Mor(V,k[d])\). For \(\gamma \in \calb\), \(\gamma_{d}^* \in \calb_{d}^*\) will denote the dual element, so that \(\gamma^*_d(\gamma) = 1\) and \(\gamma^*_d(\gamma') = 0\) for all \(\gamma' \neq \gamma\). When \(d\) is clear from the context we will \emph{suppress \(d\) in the notation}.

\end{enumerate}
\end{notation}

\begin{defn}\label{def:admissible-LG}
Let \(w: X \rightarrow \CC\) be a Landau-Ginzburg model as in Definition \ref{def:LG}, and let \(k\) be a field. Suppose that the generic fiber \(Y\) is of real dimension \(2d\). A  \emph{weak (resp. strong) admissible categorical LG-formalism} for \((X,w)\) consists of

\begin{enumerate}
\item[(D1)] A smooth function \(H: X \rightarrow \mathbb{R}\). Define \(\calhl := \{H\}\), \(\calhr := - \calhl\) and \(\calh := \calhl \cup \calhr\).

\item[(D2)] An exact functor \(F: \calx \rightarrow \caly\) of locally proper \(k\)-linear stable infinity categories equipped with a weak (resp.\ strong) pre-isotropy structure (Definition \ref{def:pre-isotropy}) for a given weak (resp.\ strong) \(d\)-Calabi-Yau structure \(\phi\) on \(\caly\). 

\item[(D3)] A set \(\Gen_{\calx} \subset \Ob_{\calx}\) that generates \(\calx\) under finite colimits, a map  (Notation \ref{not:Ham-traj})
\[\Gen_{\calx} \times \Gen_{\calx} \rightarrow \mathrm{Lag}(X) \times \mathrm{Lag}(X)\] 
\[(x,y) \mapsto (L_{x,y}, L_{x,y}')\]
and isomorphisms of sets \(\sigma_{x,y}: \traj_{\calhl}(L_{x,y}, L'_{x,y}) \to \traj_{\calhl}(L_{y,x}', L_{y,x})\) (see Remark \ref{rmk:reduction}).

\item[(D4)]\label{model:l} For all \((x,y) \in \Gen^{\times 2}_{\calx}\) a model for \(\Mor_{\calx}(x,y)\) as a chain complex \(\Hom_{\calx}(x,y)\) whose underlying vector space has as basis the set \(\calbl_{x,y} := \traj_{\calhl}(L_{x,y},L_{x,y}')\). In particular, we have an isomorphism of vector spaces \(\mu_{x,y}: \free_k(\calbl_{x,y}) \rightarrow \Hom_{\calx}(x,y)^{\natural}\) (Notation \ref{not:vect}).

\item[(D5)]\label{model:lr} For all \((x,y) \in \Gen^{\times 2}_{\calx}\) a model for \(\Mor_{\caly}(Fx,Fy)\) as a chain complex \(\Hom_{\caly}(Fx,Fy)\) whose underlying vector space has as basis the set \(\calb_{x,y} := \traj_{\calh}(L_{x,y},L_{x,y}')\).  In particular, we have an isomorphism of vector spaces \(\nu_{x,y}: \free_k(\calb_{x,y}) \rightarrow \Hom_{\caly}(Fx,Fy)^{\natural}\).

\end{enumerate}

satisfying the following conditions:

\begin{enumerate}

\item[(A1)] For all \((x,y) \in \Gen_{\calx}^{\times 2}\), we have \(L_{x,y} \cap L'_{x,y} = \emptyset\), so that, in particular,
\[\traj_{\calhl}(L_{x,y}, L_{x,y}') \cap \traj_{\calhr}(L_{x,y}, L_{x,y}') = \emptyset\]
and consequently
\[\calb_{x,y} = \calbl_{x,y} \coprod \calbr_{x,y}\]
is a \emph{disjoint} union.

\item[(A2)] For each \(x,y \in \calx\), the following diagram in the \(1\)-category of graded vector spaces commutes (see Notation \ref{not:vect}):
\[
\xymatrix{
\free_{k}(\calbl_{x,y}) \ar[r]^{\free_k(\iota)} \ar[d]^(0.45)[@]{\sim}_{\mu_{x,y}} & \free_k(\calb_{x,y}) \ar[d]^{\nu_{x,y}}_(0.45)[@]{\sim} \\
\Hom_{\calx}(x,y)^{\natural} \ar[r]_{F^{\natural}} & \Hom_{\caly}(Fx, Fy)^{\natural} \\
}
\]

Here the upper horizontal map is induced by the inclusion \(\iota: \calbl_{x,y} \rightarrow \calb_{x,y}\).

\item[(A3)] For each \(x,y \in \calx\), the following diagram in the \(1\)-category of vector spaces commutes :
\[
\xymatrix{
\free_k( \calb_{x,y}) \ar[r]^{\free_k(\eta)} \ar[d]^(0.45)[@]{\sim} & \free_k(\calb^*_{y,x}) \ar[d]^(0.45)[@]{\sim} \\
\Hom_{\caly}(Fx,Fy)^{\natural} \ar[r]_{\Xi(\phi)^{\natural}}  & \Hom_{\caly}(Fy,Fx)^{\vee}[-d]^{\natural} \\
}
\]
Here

\begin{itemize}
\item[-] The vertical maps are given by (D5).
\item[-] The lower horizontal map is induced by the Calabi-Yau structure on \(\caly\) (Definition \ref{def:cy}).
\item[-] The map \(\eta\) defining the upper horizontal arrow arises as follows. Composing the map \(\rev_{\calhl}\) of Remark \ref{rmk:reverse-Ham-traj} with the map \(\sigma_{x,y}\) from item (D3) gives an isomorphism \(\theta: \rev_{\calhl} \circ \sigma_{x,y}: \traj_{\calhl}(L_{x,y}, L'_{x,y}) \to \traj_{\calhr}(L_{y,x},L'_{y,x})\). Similarly, \(\theta':= \rev_{\calhl} \circ \sigma_{y,x}\) is an isomorphism. Let \(\tau: \calb_{y,x} \to \calb^*_{y,x;d}\) be the tautological isomorphism. Then  \(\eta := \tau \circ (\theta \sqcup (\theta')^{-1}) : \calb_{x,y} \to \calb^*_{y,x}\) is the isomorphism of the upper horizontal row.\\

\end{itemize}
\end{enumerate}

A Landau-Ginzburg model is \emph{admissible} if the functor \(\cap: \FS(X,w) \to \Fuk(Y)\) is defined, and underlies an admissible categorical LG-formalism for \((X,w)\). 
\end{defn}

\begin{rmk}\label{rmk:reduction}
For our purposes, two admissible categorical formalisms will be equivalent if the underlying functors with pre-isotropy data are equivalent in the obvious sense. So, without loss of generality, we may choose (D3) in Definition \ref{def:admissible-LG} so that for all \(x \neq y\), \(L_{x,y} = L'_{y,x}\) and \(\sigma_{x,y} = \id\). 
\end{rmk}

\begin{rmk}\label{rmk:LG-models-are-admissible}
Based on ongoing work of Abouzaid and Ganatra \cite{AG}, we expect that the general Landau-Ginzburg model is in fact admissible in the sense of Definition \ref{def:admissible-LG}. Remarks \ref{rmk:smooth-FS-has-pre-isotropy} and \ref{rmk:proper-FS-has-pre-isotropy} discussed pre-isotropy data on \(\cap\). Specific details pertaining to the other items in \ref{def:admissible-LG} will depend on the particular model we choose for \(\FS(X,w)\). Suppose that \(\FS(X,w)\) is generated by noncompact (decorated) Lagrangians \(L \subset X\) whose image under \(w\) is required to intersect the complement of some fixed compact set \(K \subset \CC\) in a finite union of rays parallel to the positive real axis. In this situation, the Hamiltonian \(H: X \to \mathbb{R}\) from  Definition \ref{def:admissible-LG}, (D1), can be taken to be the pullback of a smooth function on \(H^{\natural}: \CC \to \mathbb{R}\) with \(dH^{\natural}\) supported near infinity in some sector containing the positive real axis and generating a counterclockwise flow on \(\CC\) satisfying some technical hypotheses.  Let \(x,y \in \FS(X,w)\), and suppose that we can choose Lagrangians \(L,L'\) representing \(x, y\) respectively. Then \(\Mor_{\FS(X,w)}(x,y)\) is computed by the Floer complex \(\mathrm{CF}^*(\Phi_{H}^1(L), L')\), where \(\Phi_H^t\) is the flow generated by the Hamiltonian \(H\). A basis for this vector space is given by the set of intersection points of \(\Phi_{H}^1(L)\) and \(L'\), which in turn can be identified with the set of time \(1\) Hamiltonian trajectories starting on \(L\) and ending on \(L'\). This motivates the item (D4) in Definition \ref{def:admissible-LG}.

At a heuristic level, the functor \(\cap\) is given by intersecting a Lagrangian \(L\) as above with a smooth fiber. Fix a smooth fiber \(Y := w^{-1}(t_0)\) for some \(t_0\) near infinity along the positive real axis. Suppose that we are given an object \(x\) in \(\FS(X,w)\) that is represented by a Lagrangian \(L\) that, outside some compact set, projects to a finite union \(l := \sqcup_i l_i\) of rays parallel to the positive real axis in \(\CC\). For \(t \in \mathbb{R}\), let \(S_t := \{\mathrm{Re}(z) = t\} \cap l =: \{t_1,...,t_n\}\) be the intersection of the vertical line through \(t\) with our family of horizontal rays.  For each \(t_i \in S_t\), let \(\gamma_i\) be a path in \(\CC\) connecting \(t_i\) to \(t_0\). Let \(L_i \subset Y\) be the Lagrangian obtained by applying symplectic parallel transport along \(\gamma_i\) to \(L \cap w^{-1}(t_i)\). Then, roughly speaking, \(\cap x\) can be represented by the Lagrangian \( \cup L_i\).

Now, if \(y\) is another object in \(\FS(X,w)\), represented by a Lagrangian \(L'\), then the mapping space \(\Mor_{\Fuk(Y)}(\cap x, \cap y)\) is computed by the Floer complex \(\mathrm{CF}^*(\cup_i L_i, \cup_j L'_j)\), which has as vector space basis the set of intersection points of \(\cup_i L_i\) and \(\cup_j L'_j\). If \(L\) and \(L'\) are disjoint, one can argue that this set coincides with the set \(\traj_{\calh}(L,L')\) of time \(1\) Hamiltonian trajectories starting on one of the Lagrangians and ending on the other. This motivates (D5) and (A1) in Definition \ref{def:admissible-LG}.
\end{rmk}

\begin{thm}\label{A-model-rel-CY}
Let \(w:X\to\CC\) be a Landau-Ginzburg model equipped with a strong (resp.\ weak) admissible categorical LG-formalism (Definition \ref{def:admissible-LG}) with underlying functor \(F: \calx \rightarrow \caly\). Then the functor \(F\) carries a strong (resp.\ weak) relative Calabi-Yau structure (Definition \ref{def:relcy}). 
\end{thm}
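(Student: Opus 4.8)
The statement assembles two independent ingredients that have already been prepared: Construction \ref{constr:pre-isotropy-gives-isotropy} manufactures an isotropy structure on \(F\) out of the pre-isotropy data packaged in (D2), and Lemma \ref{lemma:non-deg-for-generators} reduces the verification of non-degeneracy to the generating objects in \(\Gen_{\calx}\). So the plan is: first invoke Construction \ref{constr:pre-isotropy-gives-isotropy} to produce the (weak, resp.\ strong) isotropy structure; then, for \(x, y \in \Gen_{\calx}\), use the combinatorial model of (D3)--(D5) to identify the square \eqref{diag:lemma-non-deg-gen} obtained from Construction \ref{const:fiber-sequence} with an explicit square of vector spaces built from Hamiltonian trajectories, and check it is bicartesian by a direct count; finally, apply Lemma \ref{lemma:non-deg-for-generators} to propagate non-degeneracy to all objects, and conclude via Definition \ref{def:relcy}.

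First I would run Construction \ref{constr:pre-isotropy-gives-isotropy}. The pre-isotropy data of (D2) gives the two homotopy-commutative squares of Definition \ref{def:pre-isotropy}; stacking them with the two tautological squares \(\pr_1[-d]\) over \(\partial\) and \(\deg[-d]\) over \(\pr_0[-d]\) produces the required diagram \(\Delta^1 \times \Delta^1 \to \Perf(k)\) (resp.\ its \(S^1\)-equivariant refinement), hence a weak (resp.\ strong) isotropy structure for \(F\) with respect to \(\phi\). By Remark \ref{rmk:strong-gives-weak} the strong case has the weak one underlying it, so for non-degeneracy it suffices to treat the weak square \eqref{diag:lemma-non-deg-gen}.

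The substance is the generator-level non-degeneracy check, and this is where the axioms (A1)--(A3) do their work. For \(x, y \in \Gen_{\calx}\), by Remark \ref{rmk:nondeg-triangle} the right-hand vertical map in \eqref{diag:lemma-non-deg-gen} is the composite \(\Mor_{\caly}(Fx, Fy) \xrightarrow{\Xi(\phi)} \Mor_{\caly}(Fy, Fx)^{\vee}[-d] \xrightarrow{F^{\vee}} \Mor_{\calx}(y,x)^{\vee}[-d]\); so, using the chain models \(\Hom_{\calx}(x,y)\), \(\Hom_{\caly}(Fx, Fy)\) and the identifications \(\mu_{x,y}, \nu_{x,y}\), the square \eqref{diag:lemma-non-deg-gen} is computed, on underlying graded vector spaces, by the diagram whose top map is \(\free_k(\iota): \free_k(\calbl_{x,y}) \to \free_k(\calb_{x,y})\) (axiom (A2)) followed by the right-hand map \(\free_k(\eta)\) postcomposed with restriction along \(F\) (axiom (A3)). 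By (A1), \(\calb_{x,y} = \calbl_{x,y} \amalg \calbr_{x,y}\) is a disjoint union, and \(\eta\) maps \(\calbl_{x,y}\) to the dual basis \((\theta(\calbl_{x,y}))^*\) indexed by \(\traj_{\calhr}(L_{y,x}, L'_{y,x}) = \calbr_{y,x}\), i.e.\ into the kernel of the restriction \(\free_k(\calb^*_{y,x}) \to \free_k((\calbl_{y,x})^*) = \Hom_{\calx}(y,x)^{\vee}\), while it maps \(\calbr_{x,y}\) isomorphically onto \((\calbl_{y,x})^*\). Thus on underlying vector spaces the cofiber of the top row is \(\free_k(\calbr_{x,y}) \cong \free_k((\calbl_{y,x})^*) \cong \Mor_{\calx}(y,x)^{\vee}[-d]\), identifying the cofiber of \(\Mor_{\calx}(x,y) \to \Mor_{\caly}(Fx, Fy)\) with \(\Mor_{\calx}(y,x)^{\vee}[-d]\) compatibly with the boundary map in \eqref{diag:lemma-non-deg-gen}; hence that square is bicartesian (using Lemma \ref{lemma:square-equals-triangle} to pass between the square and the triangle). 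Lemma \ref{lemma:non-deg-for-generators} then gives non-degeneracy for all \(x, y \in \Ob_{\calx}\), so the isotropy structure is a weak (resp., via the strong lift, strong) relative Calabi-Yau structure on \(F\), as required.

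The main obstacle is the last diagram chase: the combinatorial identifications (A1)--(A3) pin down the underlying graded vector spaces and the maps of sets between their bases, but one must check that the \emph{differentials} and the \emph{higher coherences} assembled by Construction \ref{const:fiber-sequence} match what the axioms supply, i.e.\ that the square of chain complexes — not merely of graded vector spaces — is the one governed by (A2), (A3) and the isotropy data, and that no sign or degree discrepancy obstructs the cofiber computation. This is the step I would expect to require the most care, though it is bookkeeping rather than a conceptual difficulty; the axioms of Definition \ref{def:admissible-LG} were precisely chosen so that each compatibility needed here is one of the stated commuting diagrams.
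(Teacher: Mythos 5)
Your proposal follows the paper's proof essentially step by step: invoke Construction \ref{constr:pre-isotropy-gives-isotropy} to manufacture the isotropy structure from (D2), reduce non-degeneracy to generators via Lemma \ref{lemma:non-deg-for-generators}, build the same three-square ladder of vector-space diagrams from (A1)--(A3) and the models (D4)--(D5), and recognize the disjoint-union decomposition of \(\calb_{x,y}\) as giving a short exact sequence that lifts to chain complexes. You even flag the same residual issue the paper's sketch defers to forthcoming work — matching the constructed pullback square with the isotropy square \emph{as a homotopy-coherent diagram} — though the paper treats this as a genuine open step requiring techniques from \cite{AG,AS}, whereas you characterize it a bit optimistically as mere bookkeeping.
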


\begin{proof}[Sketch of proof.]

By Definition \ref{def:admissible-LG}, (D2), \(F\) is equipped with a  strong (resp.\ weak) pre-isotropy structure, as defined in Definition \ref{def:pre-isotropy}. Applying Construction \ref{constr:pre-isotropy-gives-isotropy}, we obtain a  strong (resp.\ weak) isotropy structure on \(F\). In order to prove that this isotropy structure defines a relative Calabi-Yau structure, it remains only to verify that for all \(x, y \in \Ob_{\calx}\), the square

\begin{equation}\label{diag:non-deg-proof}\tag{\(\spadesuit\)}
\xymatrix{
\Mor_{\calx}(x,y) \ar[r] \ar[d] & \Mor_{\caly}(Fx, Fy) \ar[d] \\
0 \ar[r] & \Mor_{\calx}(y,x)^{\vee}[-d] \\
}
\end{equation}

obtained by applying Construction \ref{const:fiber-sequence} to this isotropy structure is a pullback square of spectra. By Lemma \ref{lemma:non-deg-for-generators}, it suffices to check that this diagram is a pullback square for all \(x,y\) in the set of generators \(\Gen_{\calx}\) given by item  (D3) of Definition \ref{def:admissible-LG}. To this end, for \(x,y \in \Gen_{\calx}\), consider the diagram of vector spaces given by (D4) and (D5):

\[
\xymatrix{
\free_k(\calbl_{x,y}) \ar[r] \ar[d]_{\mu_{x,y}}^(0.45)[@]{\sim} & \free_k(\calbl_{x,y} \coprod \calbr_{x,y}) \ar[r]^(0.45){\sim} \ar[d]_{\nu_{x,y}}^(0.45)[@]{\sim} & \free_k((\calbr_{y,x} \coprod \calbl_{y,x})^*) \ar[r] \ar[d]^(0.45)[@]{\sim} & \free_k((\calbl_{y,x})^*) \ar[d]^(0.45)[@]{\sim} \\
\Hom_{\calx}(x,y)^{\natural} \ar[r] & \Hom_{\caly}(Fx,Fy)^{\natural} \ar[r]^(0.4){\sim} & (\Hom_{\caly}(Fy,Fx)^{\vee}[-d])^{\natural} \ar[r] & (\Hom_{\calx}(y,x)^{\vee}[-d])^{\natural} \\
}
\]

The left hand square is the commutative square of (A2) of Definition \ref{def:admissible-LG}. The middle square is the commutative square of (A3) from the same definition. The right hand square is the \(k\)-linear dual (shifted by \([-d]\)) of the commutative diagram of (A2), with the roles of \(x\) and \(y\) interchanged. Thus the entire diagram commutes.

Let \(\iota^*: \calbl_{y,x}: \rightarrow \calbl_{x,y} \coprod \calbr_{x,y}\) be the composite of the natural inclusion \(\calbl_{y,x} \subset \calbr_{y,x} \coprod \calbl_{y,x}\) and the isomorphism \(\eta^{-1}\) from (A3) of \ref{def:admissible-LG}. Then \(\iota \coprod \iota^*: \calbl_{x,y} \coprod \calbl_{y,x} \rightarrow \calb_{x,y}\) realizes \(\calb_{x,y}\) as a coproduct. Since the free vector space functor carries coproducts to direct sums, we conclude that the sequence

\[
0 \rightarrow \free_k(\calbl_{x,y}) \rightarrow \free_k(\calbl_{x,y} \coprod \calbr_{x,y}) \rightarrow \free_k((\calbl_{y,x})^*) \rightarrow 0
\]

from the top row of the previous diagram is an exact sequence of vector spaces. The vertical arrows in said diagram are all isomorphisms by hypothesis (see (D4) and D5)), so the lower row allows defines an exact sequence. Since the forgetful functor from the abelian \(1\)-category of chain complexes to the category of vector spaces reflects exact sequences, we conclude that the sequence

\[
0 \rightarrow \Hom_{\calx}(x,y) \rightarrow \Hom_{\caly}(Fx,Fy) \rightarrow \Hom_{\calx}(y,x)^{\vee}[-d] \rightarrow 0
\]

from the bottom row is an exact sequence of chain complexes. Passing from the abelian category of chain complexes to the stable \(\infty\)-category \(\Mod_k\) of \(k\)-module spectra, this gives rise to a \emph{pullback-pushout} square that agrees with the diagram \ref{diag:non-deg-proof} on the \emph{outer 1-simplices}. To complete the proof, we need to show that the homotopies witnessing the commutativity of the two diagrams are essentially the same; more precisely, we need to show that the pullback square that we have just constructed is actually equivalent to the square constructed from isotropy data in Diagram \ref{diag:non-deg-proof} as a \emph{homotopy coherent diagram}. We expect that the techniques to be developed in \cite{AG, AS} will allow us to verify this.
\end{proof}

%%%%%%%%%%%%%%%%%%%%%%%%%%%%%%%%%%%%%%%%%%
%%%%%%%%%%%%%%%%%%%%%%%%%%%%%%%%%%%%%%%%%%

\section{Shifted Symplectic Structures}\label{sec:shifted-symplectic}

This section consists of two logically independent subsections:

\begin{enumerate}

\item \S\ref{subsec:open-var} is devoted to the proof of Theorem \ref{thm:openmap}, which states that the moduli of compactly supported perfect complexes on certain open Calabi-Yau varieties carries a natural shifted symplectic structure. 

\item \S\ref{subsec:pushforward} is devoted to the proof of Theorem \ref{thm:pfperf}, which states that the map on moduli spaces of objects induced by the pushforward functor from perfect complexes on a smooth divisor to perfect complexes on an ambient smooth and proper Calabi-Yau variety carries a natural Lagrangian structure.  

\end{enumerate}

These theorems were motivated and placed within the larger context of this paper in \S\ref{subsec:nc-symp} of the introduction. Here, we begin by cursorily recalling the relevant definitions and results regarding shifted symplectic and Lagrangian structures, referring the reader to \cite{PTVV} for a more detailed and precise discussion. In this section, \(k\) will be a fixed base field, of characteristic \(0\). Let \(X\) be a derived Artin stack with cotangent complex \(\Omega_{X}\).  We can form the de Rham algebra \(\Omega^*_X=\Sym^*_{\calo_X}(\Omega_{X}[1])\).  This is a weighted sheaf whose weight \(p\) piece is \(\Omega^p_X=\Sym^p_{\calo_X}(\Omega_{X}[1])=\wedge^p\Omega_{X}[p]\).

The \emph{space of \(p\)-forms of degree \(n\) on \(X\)} is 
\[
\cala^p(X,n)=\Omega^{\infty} \Mor_{\QCoh(X)}(\calo_X,\wedge^p\Omega_X[n]) \simeq \Map_{\QCoh(X)}(\calo_X,\wedge^p\Omega_X[n])
\]

We also construct the weighted negative cyclic chain complex \(NC^w\), whose degree \(n\), weight \(p\) part is
\[
NC^{w,n}(\Omega_X)(p)=(\bigoplus_{i\geq0}\wedge^{p+i}\Omega_X[n-i],d_{\Omega_X}+d_{dR}).
\]
The \emph{space of closed \(p\)-forms of degree \(n\)} is
\[
\cala^{p,cl}(X,n)=\tau_{\leq 0}\Mor_{\QCoh(X)}(\calo_X,NC^n(\Omega_X)(p)).
\]
There is a natural ``underlying form'' map
\[
\cala^{p,cl}(X,n)\to \cala^{p}(X,n)
\]
corresponding to the projection \(\bigoplus_{i\geq0}\wedge^{p+i}\Omega_X[n-i]\to \wedge^{p}\Omega_X[n]\).

A \(2\)-form \(\omega:\calo_X\to\wedge^2\Omega_X[n]\) of degree \(n\) is \emph{nondegenerate} if the adjoint map \(\TT_X\to\Omega_X[n]\) is a quasi-isomorphism.  An \emph{\(n\)-shifted symplectic form} on \(X\) is a closed \(2\)-form whose underlying form is nondegenerate.

Let \(X\) be a derived Artin stack with an \(n\)-shifted symplectic form \(\omega\) and let \(f:Y\to X\) be a morphism.  An \emph{isotropic structure} on \(f\) is a homotopy \(h:0\sim f^*\omega\) in the space of closed forms on \(Y\). An isotropic structure on \(f\) defines a map \(\Theta_h:\TT_f\to\Omega_Y[n-1]\).  We say \(h\) is \emph{Lagrangian} if \(\Theta_h\) is a quasi-isomorphism of complexes.

%%%%%%%%%%%%%%%%%%%%%%%%%%%%%%%%%%%%%%%%

\subsection{Perfect Complexes on Open Varieties}\label{subsec:open-var}

Let \(U=X\backslash D\) be an open variety.  In general, it is unrealistic to hope for a symplectic structure on \(\map(U,Y)\), because we need to integrate on \(U\), which is noncompact.  But in the particular case of \(Y=\perf\), we can consider the space \(\perf_c(U)\) of compactly supported perfect complexes on \(U\).  We can express this via a pullback square
\[
\xymatrix{
\perf_c(U)\ar[r]\ar[d]&\perf(X)\ar[d]\\
\bullet\ar[r]^0&\perf(D),
}
\]
with \(\bullet\to\perf(D)\) corresponding to the \(0\) complex.  Thus \(\perf_c(U)\) is a geometric stack, and, in particular, an open substack of \(\perf(X)\).  In general we will not distinguish between a compactly supported complex on \(U\) and its extension by \(0\) to \(X\). We claim that \(\perf_c(U)\) carries a symplectic structure:
\begin{thm}
Let \(X\) be a smooth \(d\)-dimensional variety, \(D\subset X\) a divisor, and \(U=X\backslash D\).  Let \(\alpha\) be a meromorphic section of \(\calo_X(K_X)\) which is holomorphic nonvanishing on \(U\).  Then \(\alpha\) induces a \((2-d)\)-shifted symplectic structure on \(\perf_c(U)\).
\label{thm:openmap}
\end{thm}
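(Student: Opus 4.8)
The plan is to mimic the proof that a Calabi-Yau variety gives an $\calo$-orientation (Example \ref{ex:CY-variety}), but relativized to the pair $(X,D)$, and then transgress. First I would use the meromorphic section $\alpha$, holomorphic and nonvanishing on $U$, to produce a $d$-orientation on the ``derived stack with boundary'' encoded by the closed immersion $i\colon D\hookrightarrow X$. Concretely, $\alpha$ trivializes $\omega_X$ away from $D$, hence trivializes $\omega_U$, giving $\hml^d_c(U,\calo_U)\simeq \hml^d_c(U,\omega_U)\to k$; composing with the projection $\Gamma_c(U,\calo_U)\to \hml^d_c(U,\calo_U)[-d]$ yields a candidate map $[U]\colon \Gamma_c(U,\calo_U)\to k[-d]$. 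The content is that this is nondegenerate, i.e. that it implements a version of Poincar\'e--Lefschetz/Serre duality for $U$; this follows from Serre duality on $X$ together with the local cohomology (purity) identification $\Gamma_c(U,-)=\fib(\Gamma(X,-)\to\Gamma(D, \widehat{-}))$ and the fact that $\alpha$ has a pole along $D$ of some order, so that $\alpha$ gives an isomorphism $\omega_X\simeq\calo_X(-mD)$ (or a section thereof) that is a genuine trivialization on $U$. I would phrase this as: $i\colon D\to X$ carries a nondegenerate boundary structure of dimension $d$ in the sense of Calaque \cite{Calaque}, with the orientation on $D$ being the Calabi-Yau structure induced on $D$ by adjunction (as in Example \ref{spherical-examples}(1)), possibly twisted.

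The second step is to transgress. Since $\perf$ carries a canonical $2$-shifted symplectic structure, Theorem \ref{thm:PTVV-transgression} applied to the $\calo$-compact stack $X$ (well, $X$ is proper and smooth, so $\perf(X)=\map(X,\perf)$ is a locally geometric derived Artin stack locally of finite presentation) would give a $(2-d)$-shifted symplectic structure on $\perf(X)$ if $X$ were Calabi-Yau. Instead I have only a boundary structure, so I invoke Theorem \ref{thm:Calaque-transgression}: the pullback map $i^*\colon \map(X,\perf)\to\map(D,\perf)$, i.e. $\perf(X)\to\perf(D)$, is equipped with a natural Lagrangian structure, where $\perf(D)$ has the $(2-d+1)=(3-d)$-shifted symplectic structure transgressed from the Calabi-Yau structure on $D$. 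Now observe, via the pullback square in the text,
\[
\xymatrix{
\perf_c(U)\ar[r]\ar[d]&\perf(X)\ar[d]^{i^*}\\
\bullet\ar[r]^0&\perf(D),
}
\]
that $\perf_c(U)=\perf(X)\times_{\perf(D)}\bullet$ is a derived fiber product of two Lagrangians in the $(3-d)$-shifted symplectic stack $\perf(D)$: the map $i^*$ with its transgressed Lagrangian structure, and the point $0\colon\bullet\to\perf(D)$, which is Lagrangian because $\bullet$ is a point and $\perf(D)\simeq\perf(D)$ has $0$ as a critical/Lagrangian point (more precisely, the zero complex is a Lagrangian since $\TT_{\bullet/\perf(D)}$ is concentrated appropriately; this is the standard fact that $\{0\}\hookrightarrow\perf$ is Lagrangian, or one transgresses the trivial boundary structure on $\emptyset\to D$). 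By Theorem \ref{thm:PTVV-Lag-int}, the derived intersection $\perf_c(U)$ inherits a $(3-d)-1=(2-d)$-shifted symplectic structure. Finally I would check that this is the one ``induced by $\alpha$'' in the sense claimed, i.e. that under the identification of $\perf_c(U)$ with compactly supported complexes the symplectic form is given by the expected Serre-duality pairing $\Mor(E,E)\otimes\Mor(E,E)\to k[2-d]$ cupped with $[U]$; this is a compatibility/naturality statement about the transgression maps.

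The main obstacle I expect is the first step: establishing that $\alpha$ genuinely produces a \emph{nondegenerate} $d$-dimensional boundary structure on $i\colon D\to X$, including pinning down the orientation on $D$ and the nullhomotopy data. The subtlety is that $\alpha$ is only a \emph{meromorphic} section, so $\omega_X$ is not trivial and there is no $d$-orientation on $X$ itself; one must carefully work with $\omega_X(mD)$ and the residue/adjunction exact sequences to see that the failure of $\Gamma(X,\calo_X)\to k[-d]$ to be defined is exactly witnessed by a factorization through $\Gamma(D,\calo_D)\to k[-d]$, and that the resulting two-cell is nondegenerate. Equivalently, in the nc-language one wants the restriction functor $\Perf(X)\to\Perf(D)$ to carry a relative Calabi-Yau structure coming from $\alpha$ — which is a mild generalization of the Brav--Dyckerhoff result \cite{BD} for the anticanonical case, allowing higher-order poles. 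Once that is in hand, the transgression and Lagrangian-intersection steps are formal consequences of \cite{PTVV, Calaque}. An alternative, more hands-on route that avoids boundary structures entirely would be to directly construct the closed $2$-form on $\perf_c(U)$ as $\int_{[U]}\ch(\mathcal{E})^{\wedge 2}$-type expression using the universal complex $\mathcal{E}$ on $\perf_c(U)\times U$ and its trace pairing against $\alpha$, and to prove nondegeneracy pointwise via the exact sequence relating $\Mor$ in $\Perf_c(U)$, $\Perf(X)$, and $\Perf(D)$; I would likely present the fiber-product argument as the main proof and relegate this to a remark, since it is cleaner.
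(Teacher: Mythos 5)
Your primary route --- giving $i\colon D\to X$ a nondegenerate boundary structure, transgressing to a Lagrangian structure on $i^*\colon\perf(X)\to\perf(D)$, and intersecting with the Lagrangian point $0\colon\bullet\to\perf(D)$ --- is genuinely different from what the paper does, and it has a gap at the stated level of generality. For the intersection argument $\perf(D)$ must carry a $(3-d)$-shifted symplectic structure, hence $D$ must carry a $(d-1)$-dimensional $\calo$-orientation, i.e.\ a Calabi--Yau structure; this exists when $D$ is a smooth anticanonical divisor, by adjunction, but the theorem allows $D$ to be an arbitrary divisor and $\alpha$ to have zeros and poles of arbitrary multiplicity (write $V(\alpha)=D_+-D_-$ with $D_\pm\subseteq D$ effective). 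In general $D$ need not be smooth, $\omega_D\simeq(\omega_X\otimes\calo_X(D))|_D$ need not be trivial, and there is no boundary structure on $(X,D)$ in Calaque's sense. The paper is explicit about this: the remark following the proof records that the Lagrangian-intersection description of $\perf_c(U)$ applies \emph{in the special case} that $X$ is Fano and $D$ is a smooth effective anticanonical divisor.

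The paper's actual proof is essentially the route you set aside as an alternative, but with a twist you did not supply and a different nondegeneracy argument. One views $\perf_c(U)$ as an open substack of $\perf(X)$, with evaluation $\ev\colon\perf_c(U)\times X\to\perf$, and observes that $\ev^*\TT_\perf$, hence $\ev^*$ of any positive-weight form, vanishes on $\perf_c(U)\times D_-$, since compactly supported complexes are supported away from $D\supseteq D_-$. This vanishing is what allows one to integrate against the meromorphic $\alpha$: one obtains a map $DR(\perf_c(U)\times X)(-\perf_c(U)\times D_-)\to DR(\perf_c(U))\otimes\Gamma(X,\calo_X(-D_-))$, composes with $\Gamma(X,\calo_X(-D_-))\to\Gamma(X,\calo_X(D_+-D_-))\xrightarrow{\alpha}\Gamma(X,K_X)\to k[-d]$, and similarly on negative cyclic complexes; the form is $\int_\alpha\ev^*\omega_\perf$. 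Nondegeneracy is checked pointwise and does not use the exact sequence you mention: at a compactly supported $E$, the pairing is cup-trace followed by multiplication by $\alpha$ and integration, and this is perfect precisely because $\alpha$ is nonvanishing on the support of $E$. The structural point is that there is no $d$-orientation on $X$ itself, and the twist by $\calo_X(-D_-)$ in the weighted de Rham and negative cyclic complexes --- licensed by the compact-support vanishing --- is what substitutes for it; this is not a formal consequence of the transgression theorems of \cite{PTVV, Calaque}.
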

\begin{proof}
The construction of the form closely mimics that of Theorem \ref{thm:sympmapthm}.

Let \(V(\alpha)=D_+-D_-\) with \(D_+\) and \(D_-\) effective; note that \(D_+\cup D_-\subseteq D\).  We can consider \(\perf_c(U)\) a substack of \(\perf(X)\).  Thus we have an evaluation \(\ev:\perf_c(U)\times X\to \perf\).  Now, \(\ev^*\TT_{\perf}\) has the following description: for \(\underline{g}:\Spec A\to\perf_c(U)\times X\) corresponding to a perfect complex \(E\) on \(\Spec A\times U\) and \(g:\Spec A\to X\), we have
\[
(\ev^*\TT_{\perf})_{\underline{g}}\simeq \Mor((\id_A\times g)^*E,(\id_A\times g)^*E)[1].
\]
If \(\underline{g}\) factors through \(\perf_c(U)\times D_-\), then this vanishes, as \(E\) is supported away from \(D\).  Then for \(p\geq 1\), \((\ev^*\wedge^p\Omega_{\perf})_{D-}\sim0\), and in particular, if \(\omega\) is a \(p\)-form on \(\perf\), \(\ev^*\omega\) vanishes on \(\perf_c(U)\times D_-\) as well.

Similarly to the proof of Theorem \ref{thm:sympmapthm} (\cite{PTVV}) we then get a map
\[
DR(\perf_c(U)\times X)(-\perf_c(U)\times D_-)\to DR(\perf_c(U))\otimes\Gamma(X,\calo(-D_-)).
\]
Further, we have an orientation map
\begin{align*}
\Gamma(X,\calo(-D_-))&\to\Gamma(X,\calo(D_+-D_-))\\
&\xrightarrow{\alpha}\Gamma(X,K_X)\\
&\to k[-d],
\end{align*}
where the last map is projection onto \(H^{d,d}\).  Combining these yields an integration map
\[
\int_\alpha: DR(\perf_c(U)\times X)(-\perf_c(U)\times D_-)\to DR(\perf_c(U))[-d]
\]
and similarly on the level of negative cyclic complexes
\[
\int_\alpha: NC^w(\perf_c(U)\times X)(-\perf_c(U)\times D_-)\to NC^w(\perf_c(U))[-d].
\]

Then if \(\omega\) is the symplectic form on \(\perf\), we get a closed \(2\)-form \(\int_\alpha \ev^*\omega\) on \(\perf_c(U)\).  As in the proof of Theorem \ref{thm:sympmapthm}, we can describe the pairing on \(\perf_c(U)\) as follows.  For \(g:\Spec A\to\perf_c(U)\) corresponding to a perfect complex \(E\) on \(U\times \Spec A\) compactly supported over \(\Spec A\), the pairing
\[
\Mor(E,E)[1]\wedge\Mor(E,E)[1]\to A[2-d]
\]
is given by cup product, followed by trace and integration multiplied by \(\alpha\).  This is nondegenerate because \(\alpha\) is nonvanishing on the support of \(E\).
\end{proof}
\begin{rmk}
Note from the description at the end of the proof that the symplectic structure on  \(\perf_c(U)\) depends only on \(\alpha|_U\), and not on what the compactification \(X\) is.
\end{rmk}
\begin{rmk}
In the special case that \(X\) is Fano and \(D\) is a smooth effective anticanonical divisor, we can write \(\perf_c(U)=\perf(X)\times_{\perf(D)}\Spec k\), where \(\Spec k\to\perf(D)\) corresponds to the zero complex.  This is a Lagrangian intersection over a \((3-d)\)-shifted symplectic derived stack.
\end{rmk}

We note that Anatoly Preygel has obtained similar results in \cite{Preygel} using different methods.

%%%%%%%%%%%%%%%%%%%%%%%%%%%%%%%%%%%%%%%%%

\subsection{Pushforwards for Perfect Complexes}\label{subsec:pushforward}

For maps \(i:X\to Y\) there is an induced map \(i^*:\map(Y,Z)\to\map(X,Z)\).  In the particular case that \(Z=\perf\) is the derived stack of perfect complexes, we get a map \(i_*:\perf(X)\to\perf(Y)\) going the other way as well.  If \(Y\) is a smooth Calabi-Yau variety, \(\perf(Y)\) will be symplectic and we can investigate the properties of this map.

\begin{thm}
Let \(Y\) be a smooth Calabi-Yau variety and \(i:D\to Y\) a smooth divisor. Then the map \(i_*:\perf(D)\to \perf(Y)\), induced by the functor \(i_*: \Perf(D) \to \Perf(Y)\), carries a natural Lagrangian structure.
\label{thm:pfperf}
\end{thm}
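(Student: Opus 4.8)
The plan is to follow the same strategy as the proof of Theorem \ref{thm:openmap} and the transgression constructions of \cite{PTVV, Calaque}, realizing the Lagrangian structure on \(i_*: \perf(D) \to \perf(Y)\) as a transgression of geometric data on the map \(i: D \hookrightarrow Y\). First I would fix a trivialization \(\omega_Y \simeq \calo_Y\), which equips \(Y\) with a \(d\)-orientation \([Y]: \Gamma(Y, \calo_Y) \to k[-d]\) (Example \ref{ex:CY-variety}), and hence equips \(\perf(Y) = \map(Y, \perf)\) with its \((2-d)\)-shifted symplectic form \(\int_{[Y]} \ev^*\omega_{\perf}\) via Theorem \ref{thm:PTVV-transgression}. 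The key geometric input is then that the inclusion \(i: D \to Y\) of a smooth divisor in a Calabi-Yau variety carries a \emph{nondegenerate boundary structure} in the sense of Calaque \cite{Calaque}: by the adjunction formula \(\omega_D \simeq i^*\omega_Y \otimes \calo_Y(D)|_D\), the divisor \(D\) is not anticanonical, so one does not get an honest orientation on \(D\), but the pair \((D \hookrightarrow Y, [Y])\) should still be a ``derived stack with boundary'' in Calaque's sense — more precisely one wants the relative object, the cofiber of \(\Gamma(Y, \calo_Y) \to \Gamma(D, \calo_D)\), together with the nullhomotopy of the composite \(\Gamma(Y,\calo_Y) \to \Gamma(D,\calo_D) \to k[-d]\) coming from \([Y]\), to satisfy Calaque's nondegeneracy condition. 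This is the noncommutative-mirror of the statement, proven in \cite{BD}, that \(i^*: \Perf(Y^\vee) \to \Perf(D^\vee)\) has a relative Calabi-Yau structure, and indeed it is essentially dual to the \emph{pullback} side: the pushforward \(i_*\) is the Serre-dual/left-adjoint incarnation.

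The second step is to transgress. Given the boundary structure on \(i: D \to Y\), I would invoke Theorem \ref{thm:Calaque-transgression} with target \(\perf\) (which carries the canonical \(2\)-shifted symplectic structure), taking \(X = D\), \(X' = Y\): this immediately produces a Lagrangian structure on the pullback map \(i^*: \map(Y, \perf) \to \map(D, \perf)\), i.e. on \(\perf(Y) \to \perf(D)\). But the theorem we want is about the map going the \emph{other} direction, \(i_*: \perf(D) \to \perf(Y)\). The point is that for the target \(\perf\), which is self-dual, the pushforward and pullback are related by Serre duality on the source and target stacks, and a Lagrangian structure on \(i^*: \perf(Y) \to \perf(D)\) relative to the symplectic structure on \(\perf(D)\) is interchangeable with a Lagrangian structure on \(i_*: \perf(D) \to \perf(Y)\) relative to the symplectic structure on \(\perf(Y)\), because \(\perf(D)\) and \(\perf(Y)\) each carry their own shifted symplectic forms (of degrees \(2-(d-1)\) and \(2-d\) respectively) and the isotropic/Lagrangian condition is symmetric in the sense made precise by the cofiber sequences of tangent complexes. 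Concretely: the tangent complex of \(i_*\) at a point \(E \in \Perf(D)\) is \(\Mor_D(E,E)[1]\)-relative-to-\(\Mor_Y(i_*E, i_*E)[1]\), and one checks directly, using \(\Mor_Y(i_*E, i_*E) \simeq \Mor_D(E, i^!i_*E) \simeq \Mor_D(E, E \oplus E(D)[-1])\), that the isotropy nullhomotopy (coming from the boundary structure) induces the claimed quasi-isomorphism \(\mathbb{T}_{i_*} \simeq \mathbb{L}_{i_*}[n-1]\). This is precisely the shadow on moduli spaces of the compatible spherical structure on \(i_*: \Perf(D) \to \Perf(Y)\) exhibited in Example \ref{spherical-examples}(1).

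Alternatively — and this may be the cleanest route — I would express \(\perf(D)\) itself as a derived Lagrangian intersection and deduce the statement from Theorem \ref{thm:PTVV-Lag-int}, mirroring the Fano remark after Theorem \ref{thm:openmap}. Writing \(\perf_c(U) = \perf(Y) \times_{\perf(D)} \Spec k\) for \(U = Y \setminus D\) as in \S\ref{subsec:open-var}, and using that \(\perf_c(U)\) carries a \((2-d)\)-shifted symplectic structure by Theorem \ref{thm:openmap} (since \(Y\) is Calabi-Yau, \(\alpha\) can be taken holomorphic nonvanishing on all of \(Y\), so \(D_- = 0\) and everything is clean), one identifies the maps \(\perf(Y) \to \perf(D)\) and \(\Spec k \to \perf(D)\) as Lagrangians in \(\perf(D)\) — the latter tautologically, the former by Calaque's theorem applied to the \emph{anticanonical-free} situation or by a direct computation — and then \(i_*\) appears as part of the correspondence exhibiting \(\perf_c(U)\) as their intersection. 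The main obstacle, in any of these approaches, is verifying the \emph{nondegeneracy} of the boundary/isotropic structure, i.e. that the induced map \(\mathbb{T}_{i_*} \to \mathbb{L}_{i_*}[n-1]\) is a quasi-isomorphism: this reduces, after the standard identifications of tangent complexes of mapping stacks with Hochschild-type complexes and the transgression formalism of \cite{PTVV}, to Serre duality on \(D\) twisted by the adjunction formula \(\calo_Y(D)|_D \simeq \omega_D\) (using \(\omega_Y \simeq \calo_Y\)), together with the fact that \(\Mor_Y(i_*E, i_*F) \to \Mor_D(E, i^!i_*F)\) is an equivalence and that the excess term \(E(D)[-1] \simeq S_D(E)[-d]\) matches the cotangent-complex shift. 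I expect this nondegeneracy verification — really an unwinding of Grothendieck–Serre duality for the closed immersion \(i\) into a pairing statement on tangent complexes — to be the technical heart of the argument; everything else is a formal consequence of the transgression machinery already in place in \cite{PTVV, Calaque}.
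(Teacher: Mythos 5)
Your proposal has a genuine gap, and it is structural rather than a matter of filling in details.

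All three routes you sketch — Calaque transgression, Serre-dual ``interchange,'' and exhibiting $\perf(D)$ as a Lagrangian intersection locus — require $\perf(D)$ to carry a shifted symplectic structure, hence require $D$ to carry an orientation (equivalently, $\omega_D \simeq \calo_D$). But in the Calabi-Yau setting of this theorem that is precisely what fails: since $\omega_Y \simeq \calo_Y$, adjunction gives $\omega_D \simeq \calo_Y(D)\vert_D$, which is nontrivial for a generic smooth divisor. So $D$ is \emph{not} Calabi-Yau, $\perf(D)$ carries no natural $(3-d)$-shifted symplectic form, and Theorems \ref{thm:PTVV-transgression}, \ref{thm:Calaque-transgression}, \ref{thm:PTVV-Lag-int} are not applicable in the way you invoke them. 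You do flag this (``the divisor $D$ is not anticanonical, so one does not get an honest orientation on $D$, but \ldots should still be a `derived stack with boundary' ''), but that ``should still be'' is exactly the thing that needs to be proved, and Calaque's definition as stated requires the orientation on the boundary. Your ``Fano remark'' analogy does not transfer: there $D$ \emph{is} anticanonical in the Fano, hence Calabi-Yau of dimension $d-1$, which is what makes $\perf(D)$ symplectic in that setting. Similarly, the claim that a Lagrangian structure on $i^*:\perf(Y)\to\perf(D)$ (in $\perf(D)$, degree $3-d$) is ``interchangeable'' with one on $i_*:\perf(D)\to\perf(Y)$ (in $\perf(Y)$, degree $2-d$) is not a known formal fact; even if both statements are true, they are Lagrangian conditions against two different symplectic targets in two different degrees, and there is no duality in \cite{PTVV} or \cite{Calaque} that converts one into the other.

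The paper's actual proof sidesteps this entirely by never invoking a symplectic structure on $\perf(D)$. It works directly with the symplectic form $\omega_{\perf(Y)} = \int_{[Y]}\ev^*\ch(\cale)$ on $\perf(Y)$, shows by base change that $(\id_Y\times i_*)^*\cale_Y \simeq (i\times\id)_*\cale_D$, and then observes that $\ch((i\times\id)_*\cale_D)$ is divisible by $c_1(\calo_Y(D))$ (Grothendieck--Riemann--Roch for the closed immersion), so that under the K\"unneth decomposition $DR(Y\times\perf(D))\simeq DR(Y)\otimes DR(\perf(D))$ followed by projection to $\calo_Y$, the pulled-back form vanishes identically. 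This divisibility-by-$c_1(D)$ argument is the actual source of the isotropy nullhomotopy, and it does not appear anywhere in your proposal. On the other hand, your nondegeneracy analysis — using $\Mor_Y(i_*E,i_*E) \simeq \Mor_D(E, i^!i_*E) \simeq \Mor_D(E,E) \oplus \Mor_D(E, E\otimes\omega_D)[-1]$ and Serre duality on $D$, with $\calo_Y(D)\vert_D \simeq \omega_D$ from adjunction — is exactly right and matches the paper's computation. So the second half of your proposal is on target; the first half, the construction of the isotropic structure, needs to be replaced by a direct computation on $\perf(Y)$ rather than an appeal to transgression through a symplectic structure on $\perf(D)$ that does not exist.
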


\begin{proof}
Consider the diagram
\[\xymatrixcolsep{5pc}
\xymatrixrowsep{5pc}
\xymatrix{
D\times\perf(D)\ar[d]^{\id_d\times i_*}\ar[rrd]^{\ev_D}\\
D\times\perf(Y)\ar[r]^{i\times \id_{\perf(Y)}}&Y\times\perf(Y)\ar[r]^{\ev_Y}&\perf
}
\]

The symplectic form \(\omega_{\perf(Y)}\) on \(\perf(Y)\) is given by \(\int_{[Y]}\ev^*\ch(\cale)\), where \(\cale\) is the universal perfect complex on \(\perf\).  Write \(\cale_Y=\ev_Y^*\cale\) for the universal complex on \(Y\times\perf(Y)\), and similarly \(\cale_D=\ev_D^*\cale\).  Integration clearly commutes with pullback by \(i_*\), so we have
\[
(i_*)^*\int_{[Y]}\ch(\cale_Y)=\int_{[Y]}\ch((\id_Y\times i_*)^*\cale_Y).
\]
Now, for any point \(p:\Spec A\to \perf(Y)\) corresponding to a perfect complex \(E\in\Perf(Y\times A)\), we have
\begin{align*}
(\id_D\times p)^*(\id_Y\times i_*)^*\cale_Y&\simeq (i\times \id_A)^*E\\
&\simeq (\id_D\times p)^*(i\times \id_{\perf(Y)})_*\cale_D
\end{align*}
as sheaves on \(D\times \Spec A\).  Thus we have an isomorphism
\[
(\id_Y\times i_*)^*\cale_Y\simeq (i\times \id_{\perf(Y)})_*\cale_D.
\]
Thus we have
\[
(i_*)^*\omega_{\perf(Y)}\simeq\int_{[Y]}\ch((i\times \id_{\perf(Y)})_*\cale_D).
\]
Now, \(\ch((i\times \id_{\perf(Y)})_*\cale_D=c_1(D)\omega'\) for some form \(\omega'\).  For the integration map, recall that we use the K{\"u}nneth formula \(DR(Y\times\perf(D))\simeq DR(Y)\otimes DR(\perf(D))\) followed by the projection \(DR(Y)\to\calo_Y\).  But then \(c_1(D)\omega'\) will decompose as some sum of terms \(c_1\alpha\otimes\beta\), and \(c_1(D)\alpha\) will project to \(0\) in \(\calo_Y\) because \(c_1(D)\) is a \((1,1)\) form.  This gives our isotropic structure.

For Lagrangianness, consider an \(A\)-point \(g:\Spec A\to\perf_c(D)\), corresponding to a perfect complex \(E\) on \(D\times\Spec A=D_A\).  Then \(\TT_{\Perf(D),g}\simeq\Mor_{D_A}(E,E)[1]\), and \((i_*)^*\TT_{\Perf(Y),(i_*)^g}\simeq \Mor_{Y_A}(i_*E,i_*E)[1]\).  The symplectic structure \(\omega\) on \(\Perf(U)\) at some \(F\) is given by
\begin{align*}
\wedge^2\Mor_{U_A}(F,F)[1]&\xrightarrow{\cup}\Mor_{U_A}(F,F)[2]\\
&\xrightarrow{tr}\Gamma(U_A,\calo_{U_A})[2]\\
&\to H^d(U_A,\calo_{U_A})[2-d]\\
&\xrightarrow{\int\alpha\wedge(-)} A[2-d].
\end{align*}

We claim that
\begin{equation}
\Mor_{Y_A}(i_*E,i_*E)\simeq\Mor_{D_A}(E,E)\oplus\Mor_{D_A}(E,E\otimes K_D)[-1].
\label{eq:extdec}
\tag{*}
\end{equation}
To see this, we note that \(\Mor_{Y_A}(i_*E,i_*E)\simeq\Mor_{D_A}(i^*i_*E,E)\), and
\[
i^*i_*E\simeq \calo_{D_A}\otimes_{i^{-1}\calo_{Y_A}}E\simeq\{E(-D_A)\to E\},
\]
where the map \(\{E(-D_A)\to E\}\) is multiplication by the defining section of \(D_A\).  Since \(E\) is supported on \(D_A\), this is \(0\), so \(i^*i_*E\simeq E\oplus E(-D_A)[1]\).  Then we have
\begin{align*}
\Mor_{Y_A}(i_*E,i_*E)&\simeq\Mor_{D_A}(E\oplus E(-D_A)[1],E)\\
&\simeq\Mor_{D_A}(E,E)\oplus\Mor_{D_A}(E,E(D_A[-1])\\
&\simeq\Mor_{D_A}(E,E)\oplus\Mor_{D_A}(E,E\otimes K_D)[-1].
\end{align*}
Furthermore, the map
\begin{equation}
\Mor_{D_A}(E,E)\to\Mor_{D_A}(E,E)\oplus\Mor_{D_A}(E,E\otimes K_D)[-1]
\label{eq:pushmap}
\tag{**}
\end{equation}
comes from the counit \(E\oplus E(-D_A)[1]\simeq i^*i_*E\to E\) which is the projection, so (\ref{eq:pushmap}) is the obvious inclusion.

In the decomposition (\ref{eq:extdec}), the multiplication structure of the right hand side is the obvious square zero extension of \(\Mor_{D_A}(E,E)\), where \(\Mor_{D_A}(E,E\otimes K_D)[-1]\) has the obvious (left and right) module structure.  Furthermore, trace map is given by the composition
\[
\Mor_{D_A}(E,E)\oplus\Mor_{D_A}(E,E\otimes K_D)[-1]\xrightarrow{pr_2} \Mor_{D_A}(E,E\otimes K_D)[-1]\xrightarrow{tr} \calo_{D_A}(-D_A)\to \calo_{Y_A}.
\]
Then the pairing of \(\TT_{\perf(D),g}\simeq \Mor_{D_A}(E,E)[1]\) with \(\TT_{i_*, g}[d-1]\simeq\Mor_{D_A}(E,E\otimes K_A)[d-1]\) is the nondegenerate pairing of Serre duality, and we have nondegeneracy.
\end{proof}

\begin{rmk}
 Theorem \ref{thm:pfperf} should be contrasted with the result of \cite{Calaque} saying that if \(X\) is a Fano variety and \(a:Y\to X\) a smooth anticanonical divisor, then \(a^*:\perf(X)\to\perf(Y)\) has a Lagrangian structure (\cite{Calaque} Theorem 2.10).
\end{rmk}

\begin{rmk}
The functor \(i_*: \Perf(D) \to \Perf(Y)\) is one of the compatible spherical functor of Example \ref{spherical-examples}.  By Theorem \ref{intro:thm-relcy-iff-spherical} this functor carries a weak relative Calabi-Yau structure. We have conjectured that this can be promoted to a strong Calabi-Yau structure. Theorem \ref{thm:pfperf} can be viewed as giving evidence for this statement, because the map on moduli spaces of objects induced by a relative Calabi-Yau functor is expected to carry a Lagrangian structure. 
\end{rmk}

\section{Further directions}\label{sec:future-directions}

In the introduction, we described some of the geometric background from symplectic topology that led us to the results presented in this paper. In this section, we point to  potential ramifications of these results in a different direction: we outline a plan to introduce and develop a theory of derived \(n\)-shifted hyperk\"ahler stacks, and noncommutative hyperk\"ahler spaces.

Hyperk\"ahler manifolds were first defined  by Calabi in \cite{Calabi-HK}. The initial development of this subject was pioneered by Bogomolov, Beauville and Hitchin. In recent years there has been a tremendous revival of interest in this subject, with several splendid results being obtained by  Verbitsky, Kamenova, Voisin, Kaledin, Huybrechts,  Kollar, Laza, Sacc\`a and others (see, e.g., \cite{MV1}, \cite{MV2}, \cite{MV3}, \cite{KLSV}).

Recall that a hyperk\"ahler manifold is a real \(C^{\infty}\) Riemannian manifold of dimension \(4n\) whose holonomy is contained in \(Sp(n) = O(4n) \cap GL_n(\mathbb{H})\). More explicitly, a hyperk\"ahler manifold is a Riemannian manifold \((X,g)\) whose tangent bundle is equipped with covariantly constant endomorphisms \(I, J, K\) satisfying the quaternionic identities: \(I^2 = J^2 = K^2 = IJK = -1\). Define symplectic forms \(\omega_1(v,w) := g(Iv,w)\), \(\omega_2(v,w) := g(Jv,w)\), and \(\omega_3(v,w) := g(Kv,w)\). It is straightforward to check that the form \(\omega_+ := \omega_2 + \sqrt{-1} \omega_3\) is holomorphic with respect to the complex structure \(I\). Thus, underlying every hyperk\"ahler manifold is a holomorphic symplectic manifold. Conversely, if \(X\) is a \emph{compact} holomorphic symplectic manifold, then by an application of Yau's celebrated theorem on the existence of Ricci flat metrics and an earlier theorem of Bochner, on can show that every K\"ahler class on \(X\) contains a hyperk\"ahler metric. It is this close relationship between holomorphic symplectic geometry and hyperk\"ahler geometry that serves as the starting point for our proposed generalization of hyperk\"ahler geometry to derived stacks: the basic idea is to replace the holomorphic symplectic form by an \(n\)-shifted symplectic structure on a derived stack. 

In fact, even without appealing to Yau's powerful result, it turns out that it is possible to reformulate the notion of a hyperk\"ahler metric in purely holomorphic terms using Penrose's twistor geometry. It is easy to see that each imaginary unit quaternion \(\mathbf{u}\) defines an almost complex structure \(I_{\mathbf{u}}\) on the tangent bundle of \(X\). Identify the imaginary unit quaternions with \(\mathbb{CP}^1 \simeq S^2\). Then the \(C^{\infty}\)-manifold \(X \times S^2\) can be equipped with a unique almost complex structure that restricts to \(I_{\mathbf{u}}\) on \(X \times \{\mathbf{u}\}\), and is compatible with the projection to \(S^2 \simeq \mathbb{CP}^1\). It turns out that this almost complex structure is integrable, and defining  complex manifold \(Z\) with a projection \(\pi: Z \rightarrow \mathbb{CP}^1\) called the \emph{twistor family of X}. 

\begin{thm}\cite{Hitchin-SUSY}\label{thm:HK-gives-twistor}
The twistor space \(Z\) constructed above admits the following structures

\begin{enumerate}
\item \(\pi: Z \rightarrow \mathbb{CP}^1\) is a holomorphic fiber bundle.

\item There is a real structure \(\tau\) on \(Z\) covering the antipodal map on the projective line.

\item There is a holomorphic symplectic form \(\omega_{\mathrm{rel}}\) on the fibers of \(\pi\), which is real with respect to \(\tau\) and given by a section of \(\wedge^{2} T_{\pi}^{\vee} \otimes \pi^* \mathcal{O}_{\mathbb{P}^1}(2)\). 
\item There is a family of global holomorphic sections of \(\pi\), real with respect to \(\tau\), whose normal bundles are given by \( \mathbb{C}^{2n} \otimes_{\mathbb{C}} \pi^* \mathcal{O}_{\mathbb{P}^1}(1) \).

\end{enumerate}

\end{thm}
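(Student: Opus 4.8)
The plan is to recall the classical twistor construction of Hitchin--Karlhede--Lindstr\"om--Ro\v{c}ek and verify the four asserted structures in turn, the integrability of a natural almost complex structure on $Z$ being the crux. First I would set up the almost complex structure precisely. Identify $S^2$ with the unit sphere in the imaginary quaternions $\mathrm{Im}(\mathbb{H}) \cong \mathbb{R}^3$, and with $\mathbb{CP}^1$ via stereographic projection. A point $\mathbf{u} = (a,b,c) \in S^2$ determines the parallel almost complex structure $I_{\mathbf{u}} = aI + bJ + cK$ on $TX$, which squares to $-1$ by the quaternion relations; since $I, J, K$ are covariantly constant, so is $I_{\mathbf{u}}$, and each $(X, I_{\mathbf{u}})$ is a K\"ahler manifold. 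On $Z = X \times S^2$, define $\mathcal{I}_{(x,\mathbf{u})} = I_{\mathbf{u}} \oplus I_0$, where $I_0$ is the standard complex structure of $\mathbb{CP}^1 \cong S^2$. By construction $\pi$ intertwines $\mathcal{I}$ with $I_0$, and its fibers are the complex manifolds $(X, I_{\mathbf{u}})$; once integrability is established and $Z$ is a genuine complex manifold, $\pi$ is a holomorphic submersion which is a $C^\infty$-locally trivial fibration over the compact base $\mathbb{CP}^1$, hence a holomorphic fiber bundle, giving (1).

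The main step is to show that $\mathcal{I}$ is integrable, i.e.\ that its Nijenhuis tensor vanishes; then the Newlander--Nirenberg theorem produces the complex structure on $Z$. To compute the Nijenhuis tensor I would split tangent vectors into horizontal ($TX$) and vertical ($T\mathbb{CP}^1$) parts. The purely horizontal terms vanish because each $I_{\mathbf{u}}$ is a genuine complex structure with $\nabla I_{\mathbf{u}} = 0$; the purely vertical terms vanish because $I_0$ is integrable; and the mixed terms are controlled by the derivative of the (linear) map $\mathbf{u} \mapsto I_{\mathbf{u}}$ together with $\nabla I = \nabla J = \nabla K = 0$. I expect this computation to be the principal obstacle: it is exactly here that the hyperk\"ahler hypothesis --- parallelism of the quaternionic triple, as opposed to the mere pointwise existence of $I, J, K$ --- is used in an essential way.

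For (2), the antipodal map $a$ on $S^2$ sends $\mathbf{u} \mapsto -\mathbf{u}$, i.e.\ $I_{\mathbf{u}} \mapsto -I_{\mathbf{u}} = I_{-\mathbf{u}}$; since $(X, -I_{\mathbf{u}})$ is the complex conjugate of $(X, I_{\mathbf{u}})$ and $a$ is antiholomorphic on $\mathbb{CP}^1$, the map $\tau(x, \mathbf{u}) = (x, a(\mathbf{u}))$ is an antiholomorphic involution without fixed points covering the antipodal map, proving (2). For (3), I would assemble $\omega_1, \omega_2, \omega_3$ into a polynomial family: in an affine coordinate $\zeta$ on $\mathbb{CP}^1$, the two-form
\[
\omega(\zeta) \;=\; (\omega_2 + \sqrt{-1}\,\omega_3) \;+\; 2\sqrt{-1}\,\zeta\,\omega_1 \;+\; \zeta^2\,(\omega_2 - \sqrt{-1}\,\omega_3)
\]
is, at each $\zeta$, of type $(2,0)$ with respect to $I_\zeta := I_{\mathbf{u}(\zeta)}$ (a direct check in an oriented orthonormal frame of $\mathrm{Im}\,\mathbb{H}$ adapted to $\mathbf{u}(\zeta)$), closed (since each $\omega_i$ is closed on the K\"ahler manifold $(X, I_i)$), and nondegenerate (being $\omega'_2 + \sqrt{-1}\,\omega'_3$ for an orthonormal rotate of the triple). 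Its quadratic dependence on $\zeta$ is precisely the statement that it extends to a global section of $\wedge^2 T_\pi^\vee \otimes \pi^* \mathcal{O}_{\mathbb{P}^1}(2)$, and reality with respect to $\tau$ follows from reality of the $\omega_i$.

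For (4), the twistor lines $s_x : \zeta \mapsto (x, \zeta)$ are holomorphic sections of $\pi$ (constant along $X$, which is the holomorphic direction of $\pi$) and manifestly $\tau$-invariant. The normal bundle satisfies $N_{s_x} \cong T_\pi|_{s_x}$, the rank-$2n$ bundle over $\mathbb{CP}^1$ whose fiber at $\zeta$ is $(T_xX, I_\zeta)$; identifying this inside $T_xX \otimes_{\mathbb{R}} \mathbb{C}$ and tracking how the $\pm\sqrt{-1}$-eigenspace decomposition rotates with $\zeta$ reduces the claim $N_{s_x} \cong \mathbb{C}^{2n} \otimes \pi^*\mathcal{O}_{\mathbb{P}^1}(1)$ to a linear-algebra computation for the model quaternionic vector space $\mathbb{H}^n$; together with the $\mathcal{O}(2)$-valued pairing of (3) and the nonvanishing of $H^0(\mathbb{P}^1, N_{s_x}(-1))$ this pins the splitting type. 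Letting the base point $x$ vary over $X$ produces the required $4n$-real-dimensional family of real holomorphic sections, completing (4). Throughout, the only genuinely delicate point is the Nijenhuis computation of the second paragraph; the remaining verifications are formal consequences of the quaternionic linear algebra and the parallelism of the K\"ahler forms.
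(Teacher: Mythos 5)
The paper does not prove this theorem: it is cited from \cite{Hitchin-SUSY} as a known classical result and serves only to motivate Definition~\ref{def:derived-twistor}. There is therefore no in-paper proof against which to compare your proposal. That said, your sketch is a faithful reconstruction of the standard twistor-space argument from the cited source: the almost complex structure $\mathcal{I}_{(x,\mathbf{u})} = I_{\mathbf{u}} \oplus I_0$ on $X \times S^2$, integrability established by vanishing of the Nijenhuis tensor with the parallelism of $I,J,K$ carrying the mixed horizontal--vertical terms, the antipodal real structure $\tau(x,\mathbf{u})=(x,-\mathbf{u})$, the quadratic-in-$\zeta$ pencil $\omega(\zeta)$ assembling $\omega_1,\omega_2,\omega_3$ into a fiberwise symplectic form valued in $\pi^*\mathcal{O}_{\mathbb{P}^1}(2)$, and the identification of the normal bundle of a twistor line $\zeta \mapsto (x,\zeta)$ with $\mathcal{O}_{\mathbb{P}^1}(1)^{\oplus 2n}$ via the quaternionic linear algebra of $(T_xX,I_\zeta)$. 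The formula you write for $\omega(\zeta)$ differs from the version usually printed by the substitution $\zeta \mapsto i\zeta$, a harmless convention choice in the stereographic identification $S^2 \cong \mathbb{CP}^1$. You are also correct that the Nijenhuis computation is the only genuinely nontrivial step --- the remaining verifications are quaternionic linear algebra and formal consequences of parallelism --- so nothing essential is missing from your outline.
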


Moreover, the twistor family completely characterizes the hyperk\"ahler manifold:

\begin{thm}\cite{Hitchin-SUSY}\label{thm:good-twistor-gives-HK}
Let \(Z\) be a complex manifold of complex dimension \(2n+1\) equipped with the structures of Theorem \ref{thm:HK-gives-twistor}. Then the space of real sections (4) of the fibration \(\pi: Z \rightarrow \mathbb{CP}^1\) can be equipped with a natural Riemannian metric that turns it into a hyperk\"ahler manifold of real dimension \(4n\) whose twistor family is \(\pi: Z \rightarrow \mathbb{CP}^1\).

\end{thm}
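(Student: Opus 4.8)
\textbf{Proof proposal for Theorem \ref{thm:good-twistor-gives-HK}.}

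The plan is to reverse the construction of Theorem \ref{thm:HK-gives-twistor}, recovering the hyperk\"ahler data on the parameter space of real sections. Let \(M\) denote the moduli space of \(\tau\)-real holomorphic sections of \(\pi: Z \to \mathbb{CP}^1\) whose normal bundle is \(\mathbb{C}^{2n} \otimes_{\mathbb{C}} \pi^*\mathcal{O}_{\mathbb{P}^1}(1)\). First I would establish that \(M\) is a smooth real manifold of dimension \(4n\): by Kodaira's deformation theory, the space of all sections near a given one \(s\) is smooth of complex dimension \(\dim H^0(\mathbb{P}^1, N_s) = \dim H^0(\mathbb{P}^1, \mathcal{O}(1)^{\oplus 2n}) = 4n\), with unobstructedness because \(H^1(\mathbb{P}^1, \mathcal{O}(1)^{\oplus 2n}) = 0\); imposing \(\tau\)-reality cuts this down to a real \(4n\)-dimensional locus, using that \(\tau\) is an antiholomorphic involution. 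The tangent space at \([s]\) is canonically \(H^0(\mathbb{P}^1, N_s) \cong H^0(\mathbb{P}^1, \mathcal{O}(1)) \otimes \mathbb{C}^{2n}\), and here the key structural input is that \(H^0(\mathbb{P}^1, \mathcal{O}(1))\) is the fundamental \(2\)-dimensional representation of \(SU(2)\), while evaluation at a point \(\zeta \in \mathbb{P}^1\) gives \(T_{[s]}M \to (N_s)_{s(\zeta)} \cong \mathbb{C}^{2n}\), an isomorphism after restricting to the real locus.

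Next I would extract the triple of complex structures. Each point \(\zeta \in \mathbb{CP}^1\) defines a projection \(T_{[s]}M \cong H^0(\mathcal{O}(1)) \otimes \mathbb{C}^{2n} \to \mathbb{C}^{2n}\) given by evaluation, and the fiber of \(Z\) over \(\zeta\) is a complex manifold; transporting the complex structure on \((Z_\zeta)_{s(\zeta)}\) back through the evaluation isomorphism endows \(T_{[s]}M\) with an almost complex structure \(I_\zeta\), varying holomorphically with \(\zeta\) in the appropriate sense. The \(SU(2)\)-action on \(H^0(\mathbb{P}^1,\mathcal{O}(1))\), equivalently the rotation action on \(\mathbb{CP}^1 \cong S^2\), then organizes the \(I_\zeta\) into a quaternionic family: taking \(\zeta = 0, 1, \infty\) (or \(i, j, k\) under the identification of imaginary unit quaternions with \(S^2\)) yields \(I, J, K\), and the relation \(I^2 = J^2 = K^2 = IJK = -1\) is forced by the representation theory of \(SU(2)\) on the sections of \(\mathcal{O}(1)\) — this is the classical observation that a holomorphic family of complex structures on \(T M\) parametrized by \(\mathbb{P}^1\), equivariant for \(SU(2)\) and linear of weight one, is precisely a hypercomplex structure.

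Then I would produce the metric. The relative symplectic form \(\omega_{\mathrm{rel}} \in H^0(Z, \wedge^2 T_\pi^\vee \otimes \pi^*\mathcal{O}_{\mathbb{P}^1}(2))\) pulls back along each section and pairs tangent vectors; concretely, for \(v, w \in T_{[s]}M = H^0(\mathbb{P}^1, N_s)\), the expression \(\zeta \mapsto \omega_{\mathrm{rel}}(v(\zeta), w(\zeta))\) is a section of \(\mathcal{O}(1) \cdot \mathcal{O}(1) \otimes \mathcal{O}(2)^{\vee}\)... more carefully, \(v(\zeta), w(\zeta)\) are valued in \(N_s \otimes\) (twist), and \(\omega_{\mathrm{rel}}\) contracts them to a section of \(\mathcal{O}_{\mathbb{P}^1}(2)\), hence \(H^0(\mathbb{P}^1, \mathcal{O}(2)) \cong \mathrm{Sym}^2 H^0(\mathcal{O}(1))\), a \(3\)-dimensional space. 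Decomposing this section in a fixed basis of \(H^0(\mathcal{O}(2))\) yields the three forms \(\omega_1, \omega_2, \omega_3\); the \(\tau\)-reality of \(\omega_{\mathrm{rel}}\) and of the sections guarantees these are real, and nondegeneracy of \(\omega_{\mathrm{rel}}\) on fibers together with the normal bundle being exactly \(\mathcal{O}(1)^{\oplus 2n}\) (no higher summands) forces nondegeneracy of each \(\omega_a\). One then \emph{defines} \(g(v,w) := \omega_1(v, Iw)\) and checks, again using the \(SU(2)\)-equivariance, that this is symmetric, positive-definite, and compatible with all of \(I, J, K\): the compatibility \(\omega_a(v,w) = g(I_a v, w)\) for \(a = 1,2,3\) is a single identity in the \(3\)-dimensional representation, so verifying it for one \(a\) plus equivariance gives all of it.

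The main obstacle — and the step requiring genuine care rather than representation-theoretic bookkeeping — is proving that \(g\) is a hyperk\"ahler metric, i.e., that \(I, J, K\) are \emph{parallel} for the Levi-Civita connection of \(g\), equivalently that \(\omega_1, \omega_2, \omega_3\) are all closed. Closedness does not follow formally from the pointwise linear algebra; it requires that \(\omega_{\mathrm{rel}}\) is a genuine holomorphic \emph{closed} \(2\)-form on each fiber together with a curvature/integrability argument relating the variation of \(\omega_{\mathrm{rel}}\) over \(\mathbb{P}^1\) to the flatness of the connection whose horizontal subspaces are the \(T_{[s]}Z_\zeta\)'s. The cleanest route, which I would follow, is to show directly that each complex structure \(I_\zeta\) on \(M\) is integrable (so \(M\) with \(I_\zeta\) is a complex manifold \(M_\zeta\), fitting into \(Z' \to \mathbb{P}^1\) over \(M\)) and that \(\omega_{\mathrm{rel}}\) induces a \emph{holomorphic} symplectic form \(\omega_\zeta\) on \(M_\zeta\) — holomorphic symplectic forms are automatically closed — and then invoke the standard fact (e.g. Hitchin--Karlhede--Lindstr\"om--Ro\v{c}ek) that a metric admitting a \(\mathbb{P}^1\)-family of compatible complex structures \(I_\zeta\), each with a holomorphic symplectic form of the given weight, is hyperk\"ahler; concretely one writes \(\omega_\zeta = \omega_+ + \zeta\,\omega_I - \zeta^2\,\omega_-\) with \(\omega_\pm = \omega_2 \pm \sqrt{-1}\,\omega_3\), and extracts \(d\omega_1 = d\omega_2 = d\omega_3 = 0\) by comparing coefficients of the \(\mathbb{P}^1\)-variable. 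Finally I would verify that the twistor family of the hyperk\"ahler manifold \((M, g, I, J, K)\) so constructed is canonically identified with the original \(\pi: Z \to \mathbb{CP}^1\), which amounts to checking that the tautological section (evaluation) identifies \(Z\) with \(M \times S^2\) equipped with the almost complex structure of the twistor construction — a naturality statement that follows once the identifications of tangent spaces above are tracked through.
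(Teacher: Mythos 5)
The paper does not prove this theorem: it is cited directly from \cite{Hitchin-SUSY} (Hitchin--Karlhede--Lindstr\"om--Ro\v{c}ek) and used as a black box to motivate Definition~\ref{def:derived-twistor}, so there is no ``paper's own proof'' to compare against. That said, your sketch is a faithful outline of the classical HKLR twistor reconstruction, and you correctly locate the genuinely hard step: the quaternionic identities, the pointwise nondegenerate forms, and the compatible metric $g(v,w)=\omega_1(v,Iw)$ all follow from linear algebra and $SU(2)$-equivariance on $H^0(\mathcal{O}(1))\otimes\mathbb{C}^{2n}$, whereas integrability of each $I_\zeta$ and closedness of $\omega_1,\omega_2,\omega_3$ require the holomorphic structure of $Z$. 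One clarification worth making explicit in your integrability argument: the $(0,1)$-part of $T_{[s]}M\otimes\mathbb{C}$ for $I_\zeta$ is the $2n$-dimensional subspace $H^0(\mathbb{P}^1,N_s(-\zeta))$ of sections of the normal bundle vanishing at $\zeta$, and integrability follows because this is the tangent space to the (complex, by Kodaira) submanifold of sections through the fixed point $s(\zeta)\in Z_\zeta$, so $I_\zeta$-holomorphic functions on $M$ are obtained by pulling back holomorphic functions on $Z_\zeta$ along evaluation at $\zeta$; your phrasing ``isomorphism after restricting to the real locus'' is correct but the cleaner statement is the direct-sum decomposition $H^0(N_s)=H^0(N_s(-\zeta))\oplus H^0(N_s(-\tau\zeta))$, which is what gives the eigenspace decomposition for $I_\zeta$. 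Also note a small slip: the expression $\mathcal{O}(1)\cdot\mathcal{O}(1)\otimes\mathcal{O}(2)^\vee$ is wrong (you would get $\mathcal{O}(0)$, not $\mathcal{O}(2)$); your self-correction --- $\omega_{\mathrm{rel}}(v,w)$ lands in $H^0(\mathbb{P}^1,\mathcal{O}(2))\cong\mathrm{Sym}^2 H^0(\mathcal{O}(1))$, a $3$-dimensional space yielding $\omega_1,\omega_2,\omega_3$ --- is the right computation, so the earlier line should simply be deleted.
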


Theorems \ref{thm:HK-gives-twistor} and \ref{thm:good-twistor-gives-HK} motivate the following tentative definition:

\begin{defn}\label{def:derived-twistor}
A \emph{\(d\)-shifted derived twistor family of hyperk\"ahler type} is given by a locally geometric derived analytic \(\infty\)-stack \(Z\) over \(\mathbb{C}\) equipped with the following structures:

\begin{enumerate}
\item A map \(\pi: Z \rightarrow \mathbb{P}_{\mathbb{C}}^1\) of derived stacks.

\item A real structure \(\tau\) on \(Z\) covering the antipodal map on the projective line.

\item A \(d\)-shifted symplectic structure \(\omega_{\mathrm{rel}}\) on the fibers of \(\pi\), which is real with respect to \(\tau\) and whose underlying \(2\)-form is given by a section of degree \(d\) of \(\wedge^{2} \mathbb{T}_{\pi}^{\vee} \otimes \pi^* \mathcal{O}_{\mathbb{P}^1}(2)\). Here \(\mathbb{T}_{\pi}\) is the homotopy fiber of the natural map of tangent complexes \(\mathbb{T}_{Z} \rightarrow \pi^* \mathbb{T}_{\mathbb{P}^1}\). More precisely, the closed form \(\omega_{\mathrm{rel}}\) itself is a section of \(\mathrm{HC}^{-,w}(Z/\mathbb{P}^1) \otimes \mathcal{O}_{\mathbb{P}^1}(2)\), the relative weighted negative cyclic complex of \(Z\) over \(\mathbb{P}^1\), twisted by \(\mathcal{O}_{\mathbb{P}^{1}}(2)\).

\item  a connected component \(X \) of the
homotopy fixed points \(\map_{\mathbb{P}^1}(\mathbb{P}^1, Z)^{\tau}\) of the induced action of \(\tau\) on the derived mapping stack of analytic sections of \(\pi\), such that the natural map \(X \times \mathbb{P}^{1} \to Z\) is an equivalence
of \(C^{\infty}\) derived stacks.

\end{enumerate}

\end{defn}

\begin{rmk}\label{rmk:proxy}
It will be an interesting question whether it is possible to define an analogue of the notion of a ``derived hyperk\"ahler metric'' on a real \(C^{\infty}\) derived stack, so that the derived analogues of Theorems \ref{thm:HK-gives-twistor} and \ref{thm:good-twistor-gives-HK}. In the absence of such a definition, we propose to \emph{treat the derived twistor families of \ref{def:derived-twistor} as a proxy for the notion of a \(d\)-shifted derived hyperk\"ahler stack}. 
\end{rmk}

Hyperk\"ahler manifolds are difficult to come by, and thus one of the central problems in hyperk\"ahler geometry is the problem of constructing hyperk\"ahler manifolds. Therefore, the first order of business will be to address the following:

\begin{problem}\label{prob:const-hk}
Formulate and prove twistor family versions of  

\begin{enumerate}
\item the result stating that the mapping stack from \(d\)-oriented derived stack into an \(n\)-shifted symplectic stack admits a \(n-d\) shifted symplectic structure, and the relative version of the same (Theorem \ref{thm:PTVV-transgression} and Theorem \ref{thm:Calaque-transgression}).

\item the theorem stating that Lagrangian intersections are symplectic (Theorem \ref{thm:PTVV-Lag-int}).

\end{enumerate}

\end{problem}

An important technique for constructing new hyperk\"ahler manifolds from old ones is hyperk\"ahler reduction. Indeed, many of the interesting examples of hyperk\"ahler manifolds arise from solutions to the anti-self-dual Yang-Mills equations, and thus can be viewed as arising from infinite-dimensional hyperk\"ahler reduction. Safronov has explained \cite{Safronov-reduction} how to interpret symplectic reduction as a Lagrangian intersection in derived algebraic geometry. By implementing his construction in twistor families using the solution to Problem \ref{prob:const-hk}, it should be possible to address the following:

\begin{problem}\label{prob:derived-HK-reduction}
Introduce a derived version of hyperk\"ahler reduction, using the methods of \cite{Safronov-reduction}, and use this to construct new derived hyperk\"ahler stacks.
\end{problem}

In order to successfully address these problems, it will be necessary to develop a robust theory of shifted symplectic structures in \emph{families} and on derived \emph{analytic} stacks. This theory should be of independent interest. A successful solution to Problems \ref{prob:const-hk} and \ref{prob:derived-HK-reduction} should lead to a new conceptual understanding of classical hyperk\"ahler spaces, such as moduli spaces of sheaves on K3 surfaces, and instanton moduli spaces such as bow varieties. The first examples where we hope to obtain hyperk\"ahler structures with a non-trivial and interesting derived/stacky structure are \emph{singular} moduli spaces of sheaves on K3 surfaces and  coadjoint orbits.

One of our motivations for studying derived hyperk\"ahler geometry comes from nonabelian Hodge theory \cite{Simpson-nah}. Let \(Y\) be a smooth projective variety over \(\mathbb{C}\) of dimension \(d\), and let \(G\) be a reductive algebraic group over \(\mathbb{C}\). Then, by results of Hitchin, Simpson,  Fujiki, etc \cite{Hitchin-sd, Fujiki-hk, Simpson-nah} the moduli space \(\mathcal{M}_{\mathrm{Harm}}(Y,G)\) of harmonic \(G\)-bundles (solutions to Hitchin's equations) on the K\"ahler manifold \(Y(\mathbb{C})\) carries a natural hyperk\"ahler metric. The associated twistor family \(\Tw(\mathcal{M}_{\mathrm{Harm}}(Y,G))\) is the Deligne-Simpson space \(\pi: \mathcal{M}^{ss}_{\mathrm{Del}}(Y,G) \rightarrow \mathbb{P}^1\). The fiber over \(\lambda \in \mathbb{A}^1 \subset \mathbb{P}^1\) is the space of semistable \(\lambda\)-connections on \(Y\). For \(\lambda = 0\) this is the moduli space of semistable Higgs bundle \(\mathcal{M}^{ss}_{\mathrm{Dol}}(Y,G)\), ad for \(\lambda = 1\) this is the de Rham moduli space  \(\mathcal{M}^{ss}_{\mathrm{DR}}(Y,G)\). It follows that \(\mathcal{M}^{ss}_{\mathrm{Dol}}(Y,G)\) and   \(\mathcal{M}^{ss}_{\mathrm{DR}}(Y,G)\) carries natural holomorphic (0-shifted) symplectic structures. 

On the other hand, \cite{PTVV} construct natural \emph{\(2(1-d)\)-shifted} structures on the derived stacks \(\mathcal{M}_{\mathrm{Dol}}(Y,G):= \Map(Y_{\mathrm{Dol}}, BG)\) and  \(\mathcal{M}_{\mathrm{DR}}(Y,G):= \Map(Y_{\mathrm{DR}}, BG)\). This leads to the following question, which we plan to investigate:

\begin{question}

\begin{enumerate}
\item When \(d= 0\) we have two natural \(0\)-shifted holomorphic symplectic forms on the moduli space \(\mathcal{M}^{ss}_{\mathrm{Dol}}(Y,G)\)  (resp. \(\mathcal{M}^{ss}_{\mathrm{DR}}(Y,G)\) ), one coming from its realization as a fiber of the twistor family of the Hitchin's hyperk\"ahler manifold of harmonic bundles, and the other coming from the \cite{PTVV} mapping space construction. Are these two symplectic structures relates in any way?

\item When \(d > 0\), do the \(2(1-d)\)-shifted \cite{PTVV} symplectic structures on these moduli spaces have any relation to the other structures associated with the Hitchin-Deligne-Simspson twistor family?  Do  these \(2(1-d)\)-shifted symplectic structures  give rise to an \(n\)-shifted derived twistor family of hyperk\"ahler type (Definition \ref{def:derived-twistor}) for which the fiber over \(\lambda \in \mathbb{A}^1\) is a suitably rigidified version of the derived moduli stack of \(\lambda\)-connections?\\

\end{enumerate}

\end{question}

Next, we turn to the problem of studying (derived) hyperk\"ahler geometry through the lens of categorical noncommutative geometry.  The functor \(\Perf\) from commutative spaces to noncommutative spaces has an adjoint, namely the functor \(\calx \mapsto \mathrm{Moduli}_{\calx}\) \cite{Toen-dgcat-moduli} carrying a category to the moduli space of compact objects in it.  Abuaf \cite{AB} has introduced the notion of a hyperk\"ahler category that takes \(\Perf\) as the starting point, in the following sense: his definition is designed so that, when \(X\) is an ordinary variety, the category \(\Perf(X)\) is hyperk\"ahler if and only if \(X\) is a hyperk\"ahler manifold. For our purposes, the dual point of view is more natural; thus, the notion of \(\nc\)-hyperk\"ahler space should have the property that the functor \( \mathrm{Moduli}\) carries an \(\nc\)-hyperk\"ahler space of dimension \(d\) to a \((2-d)\)-shifted derived hyperk\"ahler stack in the sense of Definition \ref{def:derived-twistor} and Remark \ref{rmk:proxy}. 

\begin{rmk}\label{rmk:nc-hk-elements}
The functor \( \mathrm{Moduli}\) carries \(d\)-Calabi-Yau structures on categories to \((2-d)\)-shifted symplectic structures on derived stacks. Therefore, the noncommutative analogue of  Definition \ref{def:derived-twistor}, should, very roughly speaking, incorporate the following elements:

\begin{itemize}
\item[-] a quasi-coherent sheaf \(\caly\) of proper \(\mathbb{C}\)-linear \(\infty\)-categories on \(\mathbb{P}^1\)

\item[-] an \(S^1\)-equivariant  morphism 
\[
\HH_*(\caly) \rightarrow \mathcal{O}_{\mathbb{P}^1}(2)[-d]
\]
in \(\Perf(\mathbb{P}^1)\), which defines Calabi-Yau structures on the stalks of \(\caly\). 
\item[-] a real structure \( \tau: \sigma^* \caly \xrightarrow{\sim} \caly\) covering the antipodal map \(\sigma\).

\item[-] a family of preferred real global sections of \(\caly\) with deformation-theoretic properties analogous to Theorem \ref{thm:HK-gives-twistor} (4). 
\end{itemize}

\end{rmk}

\begin{problem}
Give a precise definition of \(\nc\)-hyperk\"ahler spaces incorporating the elements described in Remark \ref{rmk:nc-hk-elements}. Prove a twistor family analogue of Theorem \ref{intro:thm-main-gluing} that allows one to construct new  \(\nc\)-hyperk\"ahler spaces by gluing.  
\end{problem}

Such a noncommutative set-up opens the door to many new avenues for studying hyperk\"ahler geometry. For instance, if the general fiber \(\caly_x\) of the sheaf \(\caly\) in Remark \ref{rmk:nc-hk-elements} arises as the global sections of a perverse Schober, then we can use the techniques of symplectic topology and the methods developed in this paper to decompose and study  \(\caly_x\) in terms of simpler constituents. In addition to being of interest in its own right, the added flexibility afforded by the noncommutative framework could lead to new insights about classical hyperk\"ahler geometry.

\bibliographystyle{amsalpha}
\bibliography{PPerf}

\vspace{5.mm}

  \begin{tabular}{l}
   \small{\textsc{Ludmil Katzarkov}} \\
\hspace{.1in} \small{\textsc{Universit\"at Wien, Fakult\"at f\"ur Mathematik,  1090 Wien, \"Osterreich }}\\
 \hspace{.15in}\small{\textsc{National Research University, Higher School of Economics, Russian Federation}}\\
   \hspace{.1in} \small{\textsc{Email}}: {\bf lkatzarkov@gmail.com} \\
  \end{tabular}

\vspace{2.mm}

  \begin{tabular}{l}
\small{\textsc{Pranav Pandit}} \\
   \hspace{.1in} \small{\textsc{Universit\"at Wien, Fakult\"at f\"ur Mathematik,  1090 Wien, \"Osterreich}}\\
   \hspace{.1in} \small{\textsc{Email}}: {\bf pranav.pandit@univie.ac.at} \\
  \end{tabular}

\vspace{2.mm}

  \begin{tabular}{l}
   \small{\textsc{Ted Spaide}} \\
   \hspace{.1in} \small{\textsc{Universit\"at Wien, Fakult\"at f\"ur Mathematik,  1090 Wien, \"Osterreich}} \\
   \hspace{.1in} \small{\textsc{Email}}: {\bf theodore.spaide@univie.ac.at} \\
  \end{tabular}

\end{document}